\documentclass[a4paper,fleqn,11pt]{article}

\usepackage{amsmath}
\usepackage{amsthm}
\usepackage{amssymb}
\usepackage{accents}

\usepackage[a4paper,top=3cm, bottom=3cm, left=3cm, right=3cm]{geometry}
\usepackage[shortlabels,inline]{enumitem}
\usepackage{microtype}
\usepackage[pdftex,ocgcolorlinks,pagebackref=false]{hyperref}
\usepackage[affil-it]{authblk}

\setlist[enumerate,1]{label={(\roman*)}}

\usepackage[capitalise,noabbrev]{cleveref}

\Crefname{property}{Property}{Properties}

\theoremstyle{plain}
\newtheorem{theorem}{Theorem}[section]
\newtheorem{lemma}[theorem]{Lemma}
\newtheorem{proposition}[theorem]{Proposition}

\theoremstyle{definition}
\newtheorem{definition}[theorem]{Definition}
\newtheorem{corollary}[theorem]{Corollary}
\newtheorem{remark}[theorem]{Remark}
\newtheorem{example}[theorem]{Example}

\newcommand{\reals}{\mathbb{R}}
\newcommand{\complexes}{\mathbb{C}}
\newcommand{\integers}[1][]{\mathbb{Z}_{#1}}
\newcommand{\naturals}{\mathbb{N}}
\newcommand{\rationals}{\mathbb{Q}}
\newcommand{\positivereals}{\reals_{>0}}
\newcommand{\positiveintegers}{\mathbb{N}_{>0}}
\newcommand{\nonnegativereals}{\reals_{\ge 0}}
\newcommand{\tropicalreals}{\mathbb{TR}}
\newcommand{\opposite}{^\mathrm{op}}
\newcommand{\dualnumbers}{\mathbb{D}}

\newcommand{\nonzeros}[1]{#1_{\neq 0}}

\newcommand{\distributions}[1][]{\mathcal{P}_{#1}}
\DeclareMathOperator{\support}{supp}

\newcommand{\entropy}{H}
\newcommand{\relativeentropy}[3][]{\mathop{D_{#1}}\mathopen{}\left(#2\middle\|#3\right)\mathclose{}}
\newcommand{\mutualinformation}{I}

\newcommand{\typeclass}[2]{T^{#1}_{#2}}
\newcommand{\typeclassprojector}[2]{\Pi^{#1}_{#2}}

\newcommand{\graphs}{\mathcal{G}}

\newcommand{\typegraph}[3]{#1^{\strongproduct #2}[\typeclass{#2}{#3}]} 
\renewcommand{\complement}[1]{\overline{#1}}

\newcommand{\disjointunion}{\sqcup}
\newcommand{\strongproduct}{\boxtimes}

\DeclareMathOperator{\subrank}{Q}
\DeclareMathOperator{\tensorrank}{R}
\DeclareMathOperator{\asymptoticsubrank}{\undertilde{Q}}
\DeclareMathOperator{\asymptoticrank}{\undertilde{R}}
\newcommand{\unittensor}[1]{\langle{#1}\rangle}

\newcommand{\CW}[1]{\operatorname{CW}_{#1}}

\newcommand{\preorderle}{\preccurlyeq}

\newcommand{\asymptoticle}{\lesssim}
\newcommand{\asymptoticge}{\gtrsim}
\newcommand{\asymptoticeq}{\approx}

\DeclareMathOperator{\rank}{R}

\newcommand{\completion}[1]{\overline{#1}}

\newcommand{\sequencetimes}{\odot}
\newcommand{\sequenceplus}{\oplus}

\DeclareMathOperator{\Tr}{Tr}

\DeclareMathOperator{\Hom}{Hom}
\DeclareMathOperator{\End}{End}

\newcommand{\norm}[2][]{\left\|#2\right\|_{#1}}
\newcommand{\setbuild}[2]{\left\{#1\middle|#2\right\}}
\newcommand{\ubar}[1]{\underaccent{\bar}{#1}}

\author{Péter Vrana}
\title{Asymptotic completions of preordered semirings}
\affil{Department of Algebra and Geometry, Institute of Mathematics, Budapest University of Technology and Economics, Műegyetem rkp. 3., H-1111 Budapest, Hungary.}
\date{September 25, 2026}

\begin{document}
\maketitle

\begin{abstract}
The study of preordered semirings is motivated by applications in computer science, graph theory, and information theory, and provides tools for understanding the asymptotic preorder, which compares large powers of a pair of elements. This paper studies sequences which behave approximately as sequences of powers, but are not necessarily equivalent to geometric sequences. Our main result is that preordered semirings admit completions where such sequences, that we call approximately geometric, become equivalent to geometric sequences, and that existing characterizations of the asymptotic preorder extend to the completion.

We provide several classes of examples of approximately geometric sequences in the semiring of tensors, and in the semiring of graphs. As a concrete application, we determine the strong converse exponent for binary hypothesis testing with composite Markov hypotheses.
\end{abstract}

\section{Introduction}

Motivated by the study of the computational complexity of matrix multiplication, Strassen developed the theory of asymptotic spectra of preordered semirings \cite{strassen1987relative,strassen1988asymptotic,strassen1991degeneration} (see \cite{wigderson2026asymptotic} for a modern exposition). The central result of the theory is the characterization of the asymptotic preorder in terms of monotone homomorphisms from the semiring to the nonegative real numbers (which form the asymptotic spectrum), under some assumptions on the preordered semiring. Here the asymptotic preorder means inequalities of the form $a^n\le 2^{k_n}b^n$ with $n\to\infty$ and $k_n/n\to 0$, i.e., it compares geometric sequences up to a subexponential factor. Recently, the theory of preordered semirings has found applications in graph theory \cite{zuiddam2019asymptotic}, information theory and statistics \cite{jensen2019asymptotic,perry2022semiring,farooq2024matrix,verhagen2025matrix}, representation theory \cite{strassen2000asymptotic,fritz2024asymptotic}, and probability theory \cite{fritz2024characterizing}, in parallel with a considerable extension of the theory to semirings with weaker properties \cite{fritz2020generalization,vrana2021generalization,fritz2023abstract,fritz2021abstract2}. A common theme in these extensions is that the characterization of the asymptotic preorder requires homomorphisms into other ``test'' semirings in addition to the nonnegative reals.

In many of these applications, the significance of the asymptotic preorder is not immediately obvious. In the case of matrix multiplication, the semiring is the semiring of $3$-tensors (which encode bilinear maps) with direct sum and Kronecker product, and the relevant preorder corresponds to bilinear reductions. Bilinear complexity, which can be shown to be asymptotically the same as the arithmetic complexity, can be expressed by comparing with the unit tensors, i.e., the ``unit problem'' of independent scalar multiplications. Crucially, Kronecker products of matrix multiplication tensors are themselves matrix multiplication tensors, and Kronecker products of unit tensors are unit tensors, which explains why the asymptotic preorder between matrix multiplication tensors and unit tensors captures the growth of the complexity with the input size, at least on an exponential scale. More precisely, the asymptotic rank of the $n\times n$ matrix multiplication tensor is $n^\omega$ for some (unknown) constant $\omega$, the exponent of matrix multiplication.

In a similar way, the Shannon capacity of a graph measures the exponential growth of the independence number of the graph powers with respect to the strong product. In this context, Zuiddam considers the semiring of isomorphism classes of finite simple undirected graphs with disjoint union as addition, strong product as multiplication, and the cohomomorphism preorder \cite{zuiddam2019asymptotic}. The multiplicative unit is the graph with one vertex, and the edgeless graphs form a subsemiring that is isomorphic to the natural numbers. Crucially, the independence number can be characterized in terms of the preorder as the largest edgeless graph less than or equal to the graph in question, therefore the asymptotic preorder between edgeless graphs and an arbitrary graph provides information on the exponential growth of the independence number. The Shannon capacity is then identified as the asymptotic subrank in this semiring.

In the applications in information theory and statistics, the elements of the semiring are, roughly speaking, (unnormalized) probability distributions, quantum states, or parametrized families thereof, and product operations correspond to independently drawn random samples. Geometric sequences therefore capture sources of i.i.d.\ random variables or quantum states (which may play the role of hypotheses), which is a basic model studied in asymptotic questions in information theory and hypothesis testing.

Abstractly, the problems above are about understanding the properties of geometric sequences in preordered semirings, and the dual characterizations in terms of appropriate versions of the spectrum provide powerful tools for this. In this paper, we seek to generalize the theory to a larger class of sequences, thereby expanding the scope in ways that are meaningful in the applications. For instance, as explained above, it is a special property of the family of matrix multiplication tensors that it is closed under the tensor product, and there is no reason to expect that other bilinear computational tasks form geometric families (or have such subfamilies). In this direction, sequences of tensors resembling Kronecker powers have been studied in \cite{bjorklund2026kronecker}. Likewise, in information theory, more realistic models allow correlations between different samples or channel uses, which no longer seems to fit in the geometric sequence picture.

In fact, more general sequences are useful even in the study of geometric sequences as intermediate steps. In the case of bilinear complexity, in practice, bounds on the tensor rank of powers are frequently proved by a chain of reductions, where the intermediate steps may not be Kronecker powers. A mild class of examples are sequences of the form $T_1^{\otimes\lfloor an\rfloor}\otimes T_2^{\otimes \lfloor bn\rfloor}$ with $a,b>0$, which form the basis of a mixed method to get an upper bound on the complexity of rectangular matrix multiplication \cite{coppersmith1997rectangular,christandl2025barriers}. When $a/b\in\rationals$, this contains geometric subsequences as $n$ runs in multiples of some $m\in\positiveintegers$, but in general it only behaves similarly to sequences of the form $T^{\otimes n}$. As another example from graph theory, certain non-geometric sequences of graphs have been essential in obtaining convexity of the asymptotic spectrum of graphs \cite{vrana2021probabilistic}. The present paper provides a framework in which these intermediate sequences can be conveniently placed as well.

\subsection{Results}

We aim to prove our abstract results in a generality that not only applies to semirings satisfying Strassen's conditions, but in more general settings that include all the applications above as well. In particular, we do not assume that $0\le 1$ holds in the semiring. In some cases, however, stronger conclusions or a more complete discussion is possible if we make this assumption.

Our starting point is the observation that, under the mapping of elements of a semiring to the geometric sequence that they generate ($s\mapsto(1,s,s^2,s^3,\dots)$), the semiring operations translate to binomial convolution and elementwise multiplication, and the asymptotic preorder translates the elementwise comparison of the sequences up to a sequence of powers with sublinear exponent (for this we assume the existence of a special element $u$, explained in detail in \cref{sec:preliminaries}), while semiring homomorphisms can be applied elementwise. These operations and relation on the sequences satisfy the compatibility required for a preordered semiring \emph{except distributivity}.
\begin{itemize}
\item We identify a class of sequences that we call \emph{approximately geometric}, prove that it is closed under elementwise multiplication, binomial convolution, and asymptotic equivalence, and within the class, distributivity is satisfied up to asymptotic equivalence.
(\cref{sec:approximatelygeometric})

\item We introduce the property of \emph{asymptotic completeness} of a preordered semiring, requiring that approximately geometric sequences are asymptotically equivalent to geometric ones. We prove that every semiring can be ``almost'' embedded (via a map that only identifies asymptotically equivalent elements) into an asymptotically complete preordered semiring. Moreover, there is an essentially unique ``minimal'' way to do this, with asymptotic equivalence classes of approximately geometric sequences as the elements, and operations and preorder induced by the ones above. It satisfies a universal property with respect to monotone homomorphisms into asymptotically complete \emph{closed partially ordered semirings}.
(\cref{sec:asymptoticcompleteness})

\item Assuming that the asymptotic preorder of a preordered semiring $S$ is characterized by homomorphisms into a set $\mathcal{K}$ of test semirings, all of which are asymptotically complete, closed, and partially ordered (as in the existing characterization theorems), we prove that the canonical extensions of the homomorphisms to the asymptotic completion $\completion{S}$ characterize the order on $\completion{S}$, or equivalently, the asymptotic preorder on approximately geometric sequences in $S$. Under Strassen's conditions, we show that a class of functionals with weaker properties (i.e., which are not elements of the spectrum), the regular upper and lower functionals on $S$ extend in a unique way to functionals $\completion{S}\to\nonnegativereals$ with similar properties.
(\cref{sec:spectrum})

\item As the first (and still abstract) examples, we construct approximately geometric sequences that behave like powers of some fixed (suitable) approximately geometric sequence with non-integer exponents. Under Strassen's conditions on the semiring, every approximately geometric sequence is suitable for the construction. Applied to $\naturals\subseteq S$, they provide direct rank-type characterizations of the \emph{asymptotic rank} and \emph{asymptotic subrank}.
(\cref{sec:powers})

\item We find a sufficient condition for a linear recursion in a preordered semiring to generate an approximately geometric sequence, which is, up to asymptotic equivalence, unchanged under suitable changes in the initial conditions. If $0\le 1$, the conditions are satisfied by primitive matrices (a property depending only on the pattern of nonzero elements in the usual way), which gives rise to a generalization of the dominant eigenvalue to primitive matrices in such semirings.
(\cref{sec:linearrecurrence})

\item Our first concrete setting is the semiring of tensors. We show that approximately geometric sequences of tensors satisfy two previously studied properties: they are \emph{almost exponential} sequences \cite{christandl2025barriers}, and have the \emph{Kronecker scaling property} \cite{bjorklund2026kronecker}. We exhibit a broad family of approximately geometric sequences, which includes one of the sequences considered in \cite{bjorklund2026kronecker}, and tensors appearing in intermediate steps of the laser method for obtaining upper bounds on $\omega$.
(\cref{sec:tensors})

\item In the semiring of graphs $\graphs$, we study induced subgraphs of powers of some graph (with respect to the strong product) on type classes of vertices with converging type \cite{vrana2021probabilistic}. We show that such sequcences are approximately geometric. The corresponding elements of the asymptotic completion satisfy equations and inequalities previosly formulated for evaluations of elements of the asymptotic spectrum of graphs. We also construct approximately geometric sequences from Cauchy sequences of graphs with respect to the asymptotic spectrum distance \cite{de2024asymptotic}, realizing their limit points as elements in the completion $\completion{\graphs}$.
(\cref{sec:graphs})

\item For certain versions of the majorization semiring, all the elements of the relevant kind of spectrum are known, and are simple transforms of (multivariate) Rényi divergences, which therefore give concrete sufficient and necessary conditions for the asymptotic preorder. Our results imply that the characterizations extend to approximately geometric sequences, and can be written in terms of Rényi divergence rates, which always exist for such sequences. In a particular semiring preordered by relative submajorization, approximately geometric sequences can be interpreted as composite and correlated hypotheses, and the strong converse exponent for the corresponding binary hypothesis testing problem can be expressed as an optimization over Rényi divergence rates. We show that tuples of Markov chains with strictly positive transition probabilities form approximately geometric sequences, which provides a single-letter expression for the Rényi divergence rates and of the strong converse exponent.
(\cref{sec:majorization})

\end{itemize}

\subsection{Preliminaries}\label{sec:preliminaries}

\paragraph{Preordered semirings.}
We start by recalling some definitions and results on preordered semirings (see \cite{fritz2023abstract,fritz2021abstract2,wigderson2026asymptotic} for more details and examples).
\begin{definition}
A \emph{semiring} is a set $S$ together with binary operations $+,\cdot:S\times S\to S$ (addition and multiplication) and elements $0,1\in S$ such that both operations are associative and commutative, $0$ is a neutral element for $+$, $1$ is a neutral element for $\cdot$, and multiplication distributes over addition.

A \emph{preordered semiring} is a semiring $S$ equipped with a preorder $\le$ (i.e., transitive and reflexive relation), such that $x\le y$ implies $a+x\le a+y$ and $ax\le ay$ for all $a,x,y\in S$. If the preorder is antisymmetric as well, then we say that $S$ is a \emph{partially ordered semiring}.

If $S$ and $T$ are semirings, then a map $\phi:S\to T$ is a (semiring) \emph{homomorphism} if $\phi(0)=0$, $\phi(1)=1$, $\phi(a+b)=\phi(a)+\phi(b)$, and $\phi(ab)=\phi(a)\phi(b)$ for all $a,b\in S$. If $S$ and $T$ are preordered semirings, then a \emph{monotone homomomorphism} is a semiring homomorphism that in addition satisfies $\phi(a)\le\phi(b)$ whenever $a\le b$. The set of monotone semiring homomorphisms from $S$ to $T$ will be denoted by $\Hom(S,T)$.
\end{definition}
\begin{example}\label{ex:preorderedsemirings}\leavevmode
\begin{enumerate}
\item Natural numbers form a partially ordered semiring $\naturals$ with the usual addition, multiplication, and total order.
\item The set of nonnegative real numbers $\nonnegativereals$ with the usual addition, multiplication, and total order, is a partially ordered semiring.
\item The tropical semiring $\tropicalreals$ is, as a set, the same as $\nonnegativereals$, with the same multiplication and order, but with addition $a+b:=\max\{a,b\}$.
\item The set of positive dual numbers $\dualnumbers=\setbuild{r+sX}{r\in\positiveintegers,s\in\reals}\cup\{0\}$ with
\begin{itemize}
\item polynomial addition and multiplication modulo $X^2$, i.e., $(r_1+s_1X)+(r_2+s_2X)=(r_1+r_2)+(s_1+s_2)X$ and $(r_1+s_1X)(r_2+s_2X)=(r_1r_2)+(r_1s_2+r_2s_1)X$, and
\item the partial order $r_1+s_1X\le r_2+s_2X$ if and only if $r_1=r_2$ and $s_1\le s_2$,
\end{itemize}
is a partially ordered semiring.
\item Every semiring $S$ can be turned into a partially ordered semiring $S^=$, where $x\le y$ if and only if $x=y$.
\item If $S$ is a preordered semiring, then the semiring $S\opposite$ with the same underlying set and operations, and the reversed preorder (i.e. $a\le b$ in $S\opposite$ if and only if $a\ge b$ in $S$), is also a preordered semiring.
\end{enumerate}
\end{example}

Given a preorder $\le$ on a set $X$, the relation $x\approx y$ if and only if $x\le y$ and $y\le x$ is an equivalence relation, the largest equivalence relation contained in $\le$. The quotient $X/\approx$ is a partially ordered set. If $S$ is a preordered semiring, then $S/\approx$ is a partially ordered semiring.

The equivalence relation generated by $\le$, i.e.,  smallest equivalence relation containing $\le$ will be denoted by $\sim$. Concretely, $a\sim b$ if there is a sequence $a=x_0,x_1,\dots,x_n=b$ such that $x_{i-1}\le x_i$ or $x_{i-1}\ge x_i$ holds for all $i=1,2,\dots,n$.

\begin{definition}
A nonzero element $u$ in a preordered semiring $S$ is \emph{power universal}, if $1\le u$ and for every nonzero $x,y\in S$ with $x\le y$, there is a $k\in\naturals$ such that $y\le u^kx$. We say that the preordered semiring $S$ is of \emph{polynomial growth}, if $S$ has a power universal element.
\end{definition}
For nonzero elements $x,y\in S$, the weaker assumption $x\sim y$ also implies $y\le u^kx$ for suitable $k$.

We note that in \cite{fritz2023abstract}, polynomial growth is defined in a more general sense, requiring only the existence of a power universal pair $(u_-,u_+)$, i.e., a pair of nonzero elements in $S$ such that $u_-\le u_+$ and for all $x\le y$ the inequality $u_-^k y\le u_+^kx$ for some $k\in\naturals$. We choose to use the more restrictive definition for simplicity, and because it is satisfied in many applications. It is possible to modify our subsequent definitions and arguments to be valid under the more general condition if necessary (by replacing $y\le u^kx$ with $u_-^k y\le u_+^kx$), or alternatively the stronger condition can be enforced by passing to the semifield of fractions.

\begin{example}\leavevmode
\begin{enumerate}
\item In $\naturals$, $\nonnegativereals$, and $\tropicalreals$, the power universal elements $u$ are precisely those that satisfy $u>1$. In $\nonnegativereals\opposite$ and $\tropicalreals\opposite$, the set of power universal elements is $(0,1)$. $\naturals\opposite$ is not of polynomial growth in the sense of our definition (but it has power universal pairs such as $(2,1)$).
\item The preordered semiring $\dualnumbers$ is of polynomial growth. Its power universal elements are $1+cX$ with $c\in\positivereals$.
\item If $S$ is an arbitrary semiring, then $S^=$ is of polynomial growth. Its only power universal element is $u=1$.
\end{enumerate}
\end{example}

Several ways to relax the preorder on a preordered semiring have been studied in abstract or concrete settings, such as the catalytic (requiring $xa\le xb$ for some nonzero $x$), the many-copy (requiring $a^n\le b^n$ for some $n\in\positiveintegers$), and the asymptotic preorders. In this paper, we focus on the asymptotic one.

\begin{remark}\label{rem:improvedsublinear}
Some of the definitions and many proofs below involve sequences $k:\naturals\to\naturals$ satisfying $\lim_{n\to\infty}\frac{k_n}{n}$. By a sublinear sequence we will always mean such a sequence even when not explicitly stating that it is $\naturals$-valued.

In particular, we will require the existence of sublinear sequences with certain properties that remain true if the sequence is elementwise increased. We can use this freedom to simplify notations and arguments by replacing sublinear sequences with an improved sequence (in the sense of satisfying more properties) as follows.
\begin{enumerate}
\item Let $(k^{(1)}_n)_{n\in\naturals},(k^{(2)}_n)_{n\in\naturals},\dots,(k^{(m)}_n)_{n\in\naturals}$ be sublinear sequences. Then $k'_n:=\max_{1\le i\le m}k^{(m)}_n$ is a sublinear sequence $(k'_n)_{n\in\naturals}$ such that $k^{(i)}_n\le k'_n$ for all $i=1,\dots,m$ and $n\in\naturals$.
\item Let $(k_n)_{n\in\naturals}$ be a sublinear sequence. Then $k'_n:=\max_{0\le l\le n}k_l$ is a monotone increasing sublinear sequence $(k'_n)_{n\in\naturals}$ such that $k_n\le k'_n$ for all $n\in\naturals$.
\item Let $(k_n)_{n\in\naturals}$ be a sublinear sequence. Then $k'_n:=\lceil n\sup_{l\ge n}\frac{k_l}{l}\rceil$ is a subadditive sublinear sequence $(k'_n)_{n\in\naturals}$ such that $k_n\le k'_n$ for all $n\in\naturals$. If in addition $(k_n)_{n\in\naturals}$ is monotone increasing, then $(k'_n)_{n\in\naturals}$ is monotone increasing as well.
\end{enumerate}
\end{remark}

\begin{definition}
Let $S$ be a preordered semiring of polynomial growth with preorder denoted by $\le$. We define the \emph{asymptotic preorder} (or, extending \cite[Definition 3.1.]{wigderson2026asymptotic}, the \emph{closure} of $\le$) as $a\asymptoticle b$ if and only if there exists a sublinear sequence $(k_n)_{n\in\naturals}$ such that for all $n\in\naturals$ the inequality $a^n\le u^{k_n}b^n$ holds.

The preorder $\le$ is \emph{closed} if it coincides with its closure. In this case, we will also say that the preordered semiring is closed.
\end{definition}
Note that, while the definition of the closure refers to a particular power universal element, the new relation does not depend on the choice, only the existence is required. This is because if $u'$ is another power universal element, then $1\le u$ implies $u\le {u'}^p$ for some $p\in\naturals$, therefore $a^n\le u^{k_n}b^n$ implies $a^n\le {u'}^{pk_n}b^n$.

The closure of a semiring preorder is closed, as the terminology suggests \cite[Lemma 3.5.]{wigderson2026asymptotic}. Examples of closed semiring preorders include $\naturals$, $\nonnegativereals$, $\tropicalreals$, and $\dualnumbers$ from \cref{ex:preorderedsemirings}. In general, no information concerning the asymptotic preorder is lost by taking the closure of the preorder and quotienting by $\approx$, which results in a closed preordered semiring.

Preordered semirings can be studied using monotone semiring homomorphisms to certain test objects such as $\nonnegativereals$. If $\phi:S\to T$ is a monotone semiring homomorphism, and both $S$ and $T$ are of polynomial growth, then $a\asymptoticle b$ implies $\phi(a)\asymptoticle\phi(b)$ for all $a,b\in S$ (if $u_S\in S$ and $u_T\in T$ are power universal, then $\phi(u_S)\le u_T^p$ for some $p\in\naturals$). In particular, if the preorder of $T$ is closed, then $\phi$ is also monotone with respect to the closure of the preorder in $S$. Thus, monotone homomorphisms from $S$ into closed preordered semirings are well suited for studying the asymptotic preorder of $S$.

In applications, we would like to know what test objects we need for characterizing the asymptotic preorder. There exist several duality theorems or \emph{Vergleichsstellensätze} that provide conditions on $S$ under which the set of all monotone homomorphisms into a small set of test objects characterize asymptotic inequalities. The first one is due to Strassen, which we now state using terminology introduced in \cite{wigderson2026asymptotic}.
\begin{definition}
A preorder on a semiring $S$ is a \emph{Strassen preorder}, if the unique semiring homomorphism $\naturals\to S$ is an order-embedding, and $u=2$ is power universal.

Let $S$ be a semiring with a Strassen preorder and $s\in S$. The \emph{rank} of $s$ is $\rank(s)=\min\setbuild{m\in\naturals}{s\le m}$, and the \emph{subrank} of $s$ is $\subrank(s)=\max\setbuild{m\in\naturals}{m\le s}$. The \emph{asymptotic rank} and \emph{asymptotic subrank} are $\asymptoticrank(s)=\lim_{n\to\infty}\sqrt[n]{\rank(s^n)}$ and $\asymptoticsubrank(s)=\lim_{n\to\infty}\sqrt[n]{\subrank(s^n)}$.

An element $s\in S$ is called \emph{gapped} if $2\le a^k$ for some $k\in\naturals$ or $\phi(1)=1$ for some monotone homomorphism $\phi:S\to\nonnegativereals$.
\end{definition}

\begin{theorem}[Strassen, {\cite[Corollary 2.6.]{strassen1988asymptotic} and {\cite[Theorem 3.42]{wigderson2026asymptotic}}}]
Let $S$ be a semiring with a Strassen preorder, and let $a,b\in S$. Then
\begin{enumerate}
\item $a\asymptoticle b$ if and only if $\phi(a)\le\phi(b)$ for all $\phi\in\Hom(S,\nonnegativereals)$,
\item if $a\ge 1$, then $\asymptoticrank(s)=\max_{\phi\in\Hom(S,\nonnegativereals)}\phi(a)$,
\item if $a$ is gapped, then $\asymptoticsubrank(s)=\min_{\phi\in\Hom(S,\nonnegativereals)}\phi(a)$.
\end{enumerate}
\end{theorem}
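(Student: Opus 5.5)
The plan is the classical route through the Positivstellensatz for preordered semirings: a Kadison–Dubois type separation argument. The easy direction of (i) comes first. If $a\asymptoticle b$, then $a^n\le 2^{k_n}b^n$ with $k_n/n\to 0$. Any $\phi\in\Hom(S,\nonnegativereals)$ then gives $\phi(a)^n\le 2^{k_n}\phi(b)^n$, and taking $n$-th roots gives $\phi(a)\le\phi(b)$. Here we use that monotone homomorphisms fix the image of $\naturals$, so $\phi(2)=2$.

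For the converse of (i), assume $a\not\asymptoticle b$. The plan is to build a monotone homomorphism with $\phi(a)>\phi(b)$.

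\begin{enumerate}
\item Pass to the Grothendieck ring $R$ of $S$, which is a preordered ring. Let $P\subseteq R$ be the cone generated by differences $y-x$ with $x\le y$. This is a preprime containing $\naturals$.
\item Since $2$ is power universal and $\naturals\to S$ is an order embedding, every element of $R$ is bounded by an integer in $P$. So $P$ is Archimedean (an order unit condition).
\item Use Zorn's lemma to enlarge $P$ to a maximal preprime $Q$ that still avoids the asymptotic inequality $a\asymptoticle b$. More precisely, $Q$ should contain no element witnessing $a^n\le 2^{k_n}b^n$.
\item Show, following Strassen / Becker–Schwarz, that a maximal Archimedean preprime is the positive cone of a unique ring order. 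Its Archimedean quotient embeds into $\reals$ by Hölder's theorem, which yields a homomorphism $\phi$.
\item Show $\phi(b-a)<0$. This step uses the fact that if $\phi(a)\le\phi(b)$ held for every $\phi$ from maximal preprimes, then $b(1+\epsilon)-a$ would lie in every such preprime. A Kadison–Dubois representation theorem would then give $(1+\epsilon)^n b^n\ge a^n$ up to a factor $2^{o(n)}$.
\end{enumerate}

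I expect step (v), the translation back from $\epsilon$-perturbed positivity to the asymptotic inequality, to be the main obstacle. It needs the Archimedean structure to convert rational approximations $b+\frac1m$ into sublinear exponents $k_n$. This is why I would cite \cite[Theorem 3.42]{wigderson2026asymptotic} for the fine details.

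For (ii), first note $\rank(s^n)$ is submultiplicative, so the limit exists by Fekete. The bound $\asymptoticrank(a)\ge\phi(a)$ follows from $a^n\le\rank(a^n)$ and monotonicity. For the reverse bound, if $r>\max_\phi\phi(a)$ is rational, then $\phi(a)\le\phi(r)$ for all $\phi$. The rational exponent needs care: I would apply (i) to $a^q$ versus the integer $p$, where $r=p/q$. This gives $a^{qn}\le 2^{k_n}p^n$, hence $\rank(a^{qn})\le 2^{k_n}p^n$, and so $\asymptoticrank(a)\le r$. The maximum is attained because $\Hom(S,\nonnegativereals)$ is compact in the product topology; it is closed, and bounded using $a\le\rank(a)$.

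For (iii) the argument is symmetric, using $\subrank$ and superadditivity of $\log\subrank(s^n)$. Gappedness is used to show $\subrank(a^n)$ grows, so that $m\le a^{n}$ can be produced from $\phi(m)\le\phi(a^n)$. The reason is that (i) only gives $m^n\le 2^{k_n}a^{qn}$, and the factor $2^{k_n}$ must be absorbed. If $2\le a^k$, then $2^{k_n}\le a^{kk_n}$, so the loss is sublinear in the exponent. The alternative gapped case, where some $\phi$ has $\phi(a)=1$, makes the minimum at most $1$. In that case we only need $\asymptoticsubrank(a)\ge1$ when $a\ge 1$, which is trivial, and otherwise the statement is handled via $\subrank\ge 0$ and the minimum being $\le 1$.
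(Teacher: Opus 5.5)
The paper gives no proof of this statement; it is quoted from \cite{strassen1988asymptotic,wigderson2026asymptotic}, so your sketch has to stand on its own. The easy half of part (i) and the overall strategy for parts (ii) and (iii) are standard. The hard half of part (i), however, has genuine gaps.

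First, the cone $P$ generated by the differences $y-x$ with $x\le y$ is not closed under multiplication in general. Membership $(y-x)(y'-x')\in P$ amounts to $xy'+x'y+w\le xx'+yy'+w$ for some $w\in S$, and the axioms do not provide this. For example, take $\naturals[X]$ with the Strassen preorder generated by $0\le 1\le X\le 2$. Then $X-1\in P$ but $(X-1)^2\notin P$. Indeed, any chain of elementary moves from $2X+w$ to $1+X^2+w$ cannot change the value at $X=1$, so it may only raise exponents. Raising exponents never decreases the number of monomials of positive degree, but the target has one fewer. If you use the preprime generated by $P$ instead, membership no longer corresponds to inequalities in $S$, so your step (v) has nothing to translate back.

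Second, the Zorn argument in your step (iii) is run on a non-inductive condition. The relation $a\asymptoticle_Q b$ is witnessed by the infinite family $2^{k_n}b^n-a^n$, $n\in\naturals$. These elements can enter at different stages of a chain, so a union of preprimes each avoiding the asymptotic inequality need not avoid it, and Zorn's lemma does not apply. You have to exclude a single element instead, e.g.\ $f=(N+1)b-Na$ (and its positive multiples) for some $N$ with $Na\not\asymptoticle(N+1)b$. A resulting $\phi$ has $N\phi(a)\ge(N+1)\phi(b)$, and together with $\phi(b)>0$ (as $b\ne 0$) this gives the strict inequality your step (v) is after. A route avoiding both problems is to apply Zorn's lemma to Strassen preorders of $S$ itself: that condition is inductive, compatibility with multiplication is built in, maximal ones are total, and a total one yields $\phi(x)=\inf\{m/n : nx\le m\}$ directly.

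Your step (v), which you yourself flag as the main obstacle, is the actual content of the theorem. Deferring it to \cite[Theorem 3.42]{wigderson2026asymptotic} is circular, since that is the statement being proved. Note also that $b(1+\epsilon)-a$ is not an element of $R$. Moreover, $y-x\in P$ only means $x+w\le y+w$ in $S$ for some catalyst $w$, because the map $S\to R$ need not be injective.

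However you organize the argument, you need cancellation lemmas specific to Strassen preorders to pass between such catalytic inequalities and $\asymptoticle$. They are needed to find the $N$ above, to check that an extended preorder still restricts to the usual order on $\naturals$, and to turn $m(Nb+1-Na)\in P$ into $a\asymptoticle b$. The ingredients are:
\begin{itemize}
\item since $0\le 1$ and $2$ is power universal, every nonzero $x$ satisfies $x\le 2^k$ and $1\le 2^kx$;
\item $x+w\le y+w$ implies $nx+w\le ny+w$ for all $n$, hence $nx\le(n+c)y$;
\item a natural-number factor is removed via $Mz\le 2Mz'\Rightarrow Mz^j\le 2^jMz'^j$;
\item a diagonal argument over $N$ turns bounds with exponent rate $O(1/N)$ into a sublinear exponent.
\end{itemize}
None of this is in the proposal.

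Two smaller points. In part (ii), $r=p/q$ is the wrong parametrization: $\phi(a)\le p/q$ gives $\phi(a^q)\le p^q/q^q$, not $\phi(a^q)\le p$. Take instead $p=\lceil M^q\rceil$ with $M=\max_\phi\phi(a)\ge 1$. Then part (i) yields $a^{qn}\le 2^{k_n}p^n$, hence $\asymptoticrank(a)\le p^{1/q}\to M$. In part (iii), the case $2\le a^k$ is handled correctly via $p^n\le 2^{k_n}a^{qn}\le a^{qn+kk_n}$. Your ``otherwise'' sentence, however, does not establish equality: for $a\ne 0$ the minimum is positive while $\subrank(a^n)\in\{0,1\}$. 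So the second gapped case needs $a\ge 1$; the paper's definition of gapped is garbled at that point.
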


In \cite{fritz2023abstract,fritz2021abstract2} Fritz developed generalizations of Strassen's theorem in different directions. We state parts of two particular versions that concern the asymptotic preorder, and refer to his papers for the complete statements and other versions.
\begin{theorem}[Fritz, {\cite[7.15. Theorem.]{fritz2023abstract}}]\label{thm:Vergleichsstellentropical}
Let $S$ be a preordered semiring with $0\le 1$ and a power universal element $u$. For nonzero $a,b\in S$, we have $a\asymptoticle b$ if and only if for all $\phi\in\Hom(S,\nonnegativereals)\cup\Hom(S,\tropicalreals)$ the inequality $\phi(a)\le\phi(b)$ holds.
\end{theorem}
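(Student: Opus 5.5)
The forward direction is routine. I would use the observation made above: monotone homomorphisms between semirings of polynomial growth preserve the asymptotic preorder. Both $\nonnegativereals$ and $\tropicalreals$ are closed and of polynomial growth, with power universal element, say, $u=2$. So $a\asymptoticle b$ gives $\phi(a)\asymptoticle\phi(b)$, and closedness of the target gives $\phi(a)\le\phi(b)$. Concretely, $\phi(a)^n\le 2^{pk_n}\phi(b)^n$, and taking $n$-th roots and letting $n\to\infty$ gives the claim in both test semirings, since the multiplication and order of $\tropicalreals$ agree with those of $\nonnegativereals$.

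For the converse I would argue by contraposition. Suppose $a\not\asymptoticle b$; the goal is to produce a monotone homomorphism $\phi$ into $\nonnegativereals$ or $\tropicalreals$ with $\phi(a)>\phi(b)$. The steps, in order, are:

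\begin{enumerate}
\item Pass to the closure of the preorder and quotient by $\approx$. This yields a closed partially ordered semiring, still of polynomial growth with $0\le 1$, in which $a\not\le b$.
\item Localize at the nonzero elements, or at the multiplicative set they generate, to obtain a preordered semifield-like object. The aim is that the separating problem becomes one about a multiplicative ordered abelian group with compatible addition.
\item By a Zorn's lemma argument, extend the preorder to a maximal semiring preorder $\preorderle$ that still satisfies $a\not\preorderle b$ and remains closed in the asymptotic sense. Maximality should force $\preorderle$ to be total on nonzero elements. The key lemma here is that if $x\not\preorderle y$, then adjoining $y\le x$ and taking the generated closed preorder does not produce $a\le b$. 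I would prove this lemma with an asymptotic "cancellation" argument that uses $0\le 1$ and the power universal element $u$: one absorbs sums by monotonicity, $x\le x+z$, and absorbs multiplicative errors by powers $u^{k_n}$ with $k_n$ sublinear.
\item Once the order is total and closed, the power universal element makes the multiplicative group archimedean. Concretely, every nonzero $x$ satisfies $u^{-k}\le x\le u^{k}$ for some $k$. By Hölder's theorem, the map $x\mapsto\lim_n \frac{1}{n}\,\max\setbuild{j}{u^j\le x^n}$ then gives a monotone multiplicative map $\log_u\phi$ into $\reals$, and closedness ensures that $\phi(a)>\phi(b)$.
\item Finally, classify the additive behaviour of $\phi$ on the totally ordered quotient. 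Since $0\le 1$ gives $\max\{x,y\}\le x+y$, and since $x+y\le 2\max\{x,y\}$ in a total order, $\phi(x+y)$ lies between $\max\{\phi(x),\phi(y)\}$ and $\phi(2)\max\{\phi(x),\phi(y)\}$. There are two cases. If $\phi(2)=1$, then $\phi$ is additive in the tropical sense and $\phi\in\Hom(S,\tropicalreals)$. If $\phi(2)>1$, I would show that a suitable power $\phi^{\alpha}$ is genuinely additive, giving an element of $\Hom(S,\nonnegativereals)$. Here one needs the Aczél-type fact that an archimedean totally ordered semifield with non-idempotent addition embeds into $\nonnegativereals$.
\end{enumerate}

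I expect the main obstacle to be step (iii), together with the additivity in the real case of step (v). Step (iii) requires showing that a maximal extension can be kept closed, and that maximality forces totality, without assuming Strassen's condition that $\naturals$ embeds. In the real case of step (v), the difficulty is proving exact additivity rather than additivity up to a bounded multiplicative factor. For this I would first try the standard trick of applying the estimate to $(x+y)^n$, expanded via the binomial theorem. Taking $n$-th roots turns the bounded factor $\phi(2)$, and the polynomially many terms, into a vanishing error. This should give the identity $\phi(x+y)^{1/\alpha}=\phi(x)^{1/\alpha}+\phi(y)^{1/\alpha}$ for the exponent $\alpha$ determined by $\phi(2)=2^{\alpha}$.
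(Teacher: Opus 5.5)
The paper gives no proof of this statement; it quotes it from Fritz. The substance of the cited result lies exactly in your step (iii), and that step is a genuine gap in your proposal.

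The other steps are essentially sound once a suitable total preorder exists:
\begin{itemize}
\item Steps (i)--(ii) go through, because closedness, $0\le 1$ and power universality exclude zero divisors and give multiplicative cancellation.
\item In (iv), the Archimedean property must come from closedness, not merely from $u^{-k}\le x\le u^k$: if $1<x$ but $x^n\le u$ for all $n$, then $x\asymptoticle 1$.
\item In (v), your binomial trick works without any Aczél-type input. Monotonicity and multiplicativity force $\phi(m)=m^{\alpha}$ on $\naturals$ with $2^{\alpha}=\phi(2)$, and $\bigl(\max_j\binom{n}{j}^{\alpha}\phi(x)^j\phi(y)^{n-j}\bigr)^{1/n}\to\bigl(\phi(x)^{1/\alpha}+\phi(y)^{1/\alpha}\bigr)^{\alpha}$.
\end{itemize}

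Step (iii) fails as formulated, for three reasons.

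\textbf{Zorn's lemma does not apply as set up.} Closedness involves infinitely many inequalities $x^n\le u^{k_n}y^n$, and these may be witnessed only further and further up a chain. So the union of a chain of closed preorders need not be closed. Dropping closedness does not help, because a maximal total preorder avoiding $a\le b$ can place $a$ only infinitesimally above $b$. For example, take $S=\positivereals^2\cup\{0\}$ with componentwise operations and order, and $u=(2,2)$. The lexicographic order is maximal among semiring preorders with $(1,2)\not\le(1,1)$. Yet your Hölder map sees only the first coordinate, so it gives $\phi((1,2))=\phi((1,1))$. You need a finitary, chain-stable invariant that encodes a quantitative gap. One option is to show that $a\not\asymptoticle b$ allows adjoining $u^m b^n\le a^n$ for some $m,n\ge 1$ without forcing $u\le 1$, and then run Zorn with $u\not\le 1$ as the invariant. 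That lemma is itself nontrivial and is absent.

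\textbf{Your key lemma is false without using maximality.} If $b\not\preorderle a$, take $x=b$ and $y=a$; then adjoining $y\le x$ is just adjoining $a\le b$. So no argument based only on $0\le 1$, $u$ and sublinear exponents can prove it. The correct statement is two-sided: for incomparable $x,y$, the two extensions cannot both contain the forbidden relation. It is proved by combining two hypothetical derivations.

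\textbf{That combination is where the real difficulty lies.} The natural extension in which $x\le y$ holds is $p\le' q\iff\exists c\colon p+cy\preorderle q+cx$. Combining $a+cy\preorderle b+cx$ with $a+dx\preorderle b+dy$ (multiply by $d$ and $c$ respectively, add, then divide by $c+d$ in the semifield) yields only $a+e\preorderle b+e$ for some $e$. Such additive catalysts cannot be removed using $x\le x+z$ and sublinear powers of $u$. In $\tropicalreals$ one has $\max\{2,3\}\le\max\{1,3\}$ although $2\not\asymptoticle 1$, so the obstruction is genuinely tropical. Overcoming it is the actual content of the theorem, and your proposal does not supply it.
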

The second one does not assume $0\le 1$. The resulting symmetry between $S$ and $S\opposite$ partially explains why homomorphisms into $\nonnegativereals\opposite$ and $\tropicalreals\opposite$ need to be considered as well. It turns out, in addition, that certain infinitesimal information is necessary, provided by monotone derivations.
\begin{theorem}[Fritz, {\cite[8.6. Theorem]{fritz2021abstract2}}]\label{thm:Vergleichsstellenderivations}
Let $S$ be a preordered semiring with a power universal element $u$. Suppose that for some $d\in\naturals$, there is a surjective homomorphism $\norm{\cdot}:S\to\positivereals^d\cup\{0\}$ with trivial kernel such that $a\le b\implies\norm{a}=\norm{b}\implies a\sim b$.

Let $a,b\in S\setminus\{0\}$ with $\norm{a}=\norm{b}$. Then $a\asymptoticle b$ if and only if for every monotone homomorphism $\phi:S\to\mathbb{K}$ with $\mathbb{K}\in\{\nonnegativereals,\tropicalreals,\nonnegativereals\opposite,\tropicalreals\opposite\}$ the inequality $\phi(a)\le\phi(b)$ holds, and for every monotone derivation $D$ at $\norm[i]{\cdot}$ (i.e., the $i$th component of $\norm{\cdot}$) the inequality $D(a)\le D(b)$ holds.
\end{theorem}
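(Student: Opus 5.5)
The plan is to prove the two implications separately. The forward direction is a soft argument; the converse needs a Positivstellensatz-type separation argument.

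\emph{Forward direction.} Assume $a\asymptoticle b$, so $a^n\le u^{k_n}b^n$ for a sublinear sequence $(k_n)$.
\begin{enumerate}
\item For a monotone homomorphism $\phi$ into $\mathbb{K}\in\{\nonnegativereals,\tropicalreals,\nonnegativereals\opposite,\tropicalreals\opposite\}$, we get $\phi(a)^n\le\phi(u)^{k_n}\phi(b)^n$. Taking $n$-th roots and letting $n\to\infty$ gives $\phi(a)\le\phi(b)$, because each of these test semirings is closed. For the opposite semirings the same computation applies with the reversed order.
\item For a monotone derivation $D$ at $\norm[i]{\cdot}$, I would use the Leibniz rule $D(xy)=\norm[i]{x}D(y)+\norm[i]{y}D(x)$. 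This gives $D(a^n)=n\norm[i]{a}^{n-1}D(a)$, and similarly for $u^{k_n}b^n$.
\item Monotonicity then yields
\[
n\norm[i]{a}^{n-1}D(a)\le k_n\norm[i]{u}^{k_n-1}\norm[i]{b}^nD(u)+n\norm[i]{u}^{k_n}\norm[i]{b}^{n-1}D(b).
\]
Here $\norm{u}=\norm{1}$ is forced by $1\le u$ and the hypothesis $a\le b\implies\norm{a}=\norm{b}$. Using also $\norm{a}=\norm{b}$, I divide by $n\norm[i]{a}^{n-1}$ and let $n\to\infty$ to obtain $D(a)\le D(b)$.
\end{enumerate}

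\emph{Converse.} I would follow Fritz's strategy for his abstract Positivstellensatz.
\begin{enumerate}
\item Pass to the closure of the preorder and quotient by $\approx$. Then use $\norm{\cdot}$ to reduce to the case $\norm{a}=\norm{b}$, which places $a$ and $b$ in a common fiber where $\sim$ holds.
\item Enlarge the semiring to its Grothendieck ring or semifield of fractions. Consider the convex cone generated by the differences $y-x$ with $x\le y$, made archimedean using the power universal element $u$.
\item If $a\not\asymptoticle b$, apply a Hahn--Banach / Kadison--Dubois type separation to obtain a maximal preorder extension in which $b<a$ strictly.
\item Show that every such maximal (totally ordered, archimedean) extension is detected by one of the listed test objects. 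Total archimedean semifields embed into $\reals$ by a Hölder-type argument. Depending on whether $1$ is comparable to $0$, and in which direction, one obtains homomorphisms into $\nonnegativereals$ or $\nonnegativereals\opposite$.
\item The tropical cases $\tropicalreals$ and $\tropicalreals\opposite$ arise as limits of the rescaled homomorphisms $\phi^{1/\alpha}$ as $\alpha\to\infty$ (or as $\alpha\to 0^-$ on the opposite side). The derivations arise in the degenerate situation where all the real homomorphisms agree with $\norm[i]{\cdot}$ on $a$ and $b$: there one takes the limit of the difference quotients $(\phi_t-\norm[i]{\cdot})/t$ along a family of homomorphisms $\phi_t$ converging to $\norm[i]{\cdot}$.
\end{enumerate}

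\emph{Main obstacle.} I expect the hardest step to be this degenerate boundary case, where real homomorphisms cannot separate $a$ from $b$ because they coincide with a component of $\norm{\cdot}$. It requires a second-order argument: one shows that a failure of $a\asymptoticle b$ that is invisible to all homomorphisms must be witnessed infinitesimally, by a monotone derivation.

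To make this precise, I would first try to localize at $\norm[i]{\cdot}$ by considering the elements $a^n$ and $b^n$ rescaled by $\norm{a}^n$. I would then examine the first-order term of the asymptotic expansion and apply the separation argument to the resulting tangent cone. This is the point where the surjectivity of $\norm{\cdot}$ onto $\positivereals^d\cup\{0\}$ and the implication $\norm{a}=\norm{b}\implies a\sim b$ should be used.
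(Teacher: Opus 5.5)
The paper gives no proof of this theorem; it is imported from Fritz, so the benchmark is his argument. Your forward direction is correct. The $n$th-root argument handles the four kinds of homomorphisms, and the Leibniz computation handles derivations, using $\norm{u}=\norm{1}$ and $\norm{a}=\norm{b}$. One small caveat: for $\nonnegativereals\opposite$ and $\tropicalreals\opposite$, monotonicity only gives $\phi(u)\le 1$. The step $\phi(u)^{k_n/n}\to 1$ therefore needs $\phi(u)\neq 0$, which is automatic for homomorphisms with trivial kernel.

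The converse, however, is a roadmap rather than a proof, and the steps that would carry the content either fail or are only asserted. Step~2 fails as stated. Since $x\le y$ forces $\norm{x}=\norm{y}$, the cone generated by the differences $y-x$ lies in the kernel of the extension of $\norm{\cdot}$ to the Grothendieck ring. So nothing is an additive order unit for it; $u$ only gives the multiplicative bound $y\le u^kx$. There is thus no archimedean cone for a Kadison--Dubois or Hahn--Banach separation. In any case, additive separation would address relations like $x+c\le y+c$, not $a^n\le u^{k_n}b^n$. The archimedean structure actually available is multiplicative. In the semifield of fractions, $u$ is an order unit for the group of quotients $x/y$ with $\norm{x}=\norm{y}$, since $x\sim y$ gives $u^{-k}\le x/y\le u^k$. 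Any separation or maximal-extension argument has to be set up there, together with a proof that the extension can be chosen so that $a\not\asymptoticle b$ survives.

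The decisive gap is Step~5. Tropical points and derivations cannot in general be obtained as limits of real homomorphisms, because nondegenerate real homomorphisms need not exist.

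Take $S=\setbuild{r+sX}{r\in\positivereals,\ s\in\reals}\cup\{0\}$ with the operations and order of $\dualnumbers$, $\norm{r+sX}=r$ and $u=1+X$; all hypotheses hold with $d=1$. A monotone $\phi:S\to\nonnegativereals$ has $\phi(1+sX)=e^{cs}$ with $c\ge 0$. The identity $(1+sX)+(1+tX)=2\left(1+\frac{s+t}{2}X\right)$ then forces $c=0$. The same computation works for the other three test semirings, so every test homomorphism factors through $\norm{\cdot}$. Yet $1+X\not\asymptoticle 1$, and the only witness is the derivation $D(r+sX)=s$. There is no family $\phi_t\to\norm{\cdot}$ whose difference quotients could produce it.

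Similarly, take $\setbuild{(r,t)}{r,t\in\positivereals}\cup\{0\}$ with $(r,t)+(r',t')=(r+r',\max\{t,t'\})$, componentwise multiplication, $(r,t)\le(r',t')$ iff $r=r'$ and $t\le t'$, and $u=(1,2)$. Every monotone homomorphism into $\nonnegativereals$ or $\nonnegativereals\opposite$ is $(r,t)\mapsto r$. Yet $(1,2)\not\asymptoticle(1,1)$ is detected by $(r,t)\mapsto t$ into $\tropicalreals$, which is not a limit of real ones.

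So the test objects have to be extracted directly from the extremal (maximally extended) preorders or extremal monotone functionals. One must show that, up to normalization, each is one of two things:
\begin{enumerate}
\item a homomorphism into one of the four test semirings, where the multiplicative H\"older-type embedding turns out to be additive or max-additive (this is where the tropical cases come from); or
\item in the degenerate case, of the form $x\mapsto D(x)/\norm[i]{x}$ for a monotone $\norm[i]{\cdot}$-derivation $D$, which must be constructed from the preorder and shown to be additive and to satisfy the Leibniz rule.
\end{enumerate}
That classification is the substance of Fritz's proof. Your ``Main obstacle'' paragraph locates it correctly, but the tangent-cone heuristic relies on a family of homomorphisms converging to $\norm[i]{\cdot}$ that need not exist.
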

Regarding $\dualnumbers$ as a subset of $\nonnegativereals\times\reals$, any monotone homomorphism from $S$ to $\dualnumbers$ has two components $\phi:S\to\nonnegativereals^=$ and $D:S\to\reals$, where $\phi$ is a degenerate homomorphism, and $D$ is a monotone derivation at $\phi$. Conversely, given a homomorphism $\phi:S\to\nonnegativereals^=$ and a monotone $\phi$-derivation $D$, the map $a\mapsto \phi(a)+D(a)\phi$ gives a monotone homomorphism $S\to\dualnumbers$ (see, e.g., the proof of \cite[6.8. Proposition]{fritz2021abstract2}). For this reason, under the conditions of \cref{thm:Vergleichsstellenderivations}, $a\asymptoticle b$ holds if and only if $\phi(a)\le\phi(b)$ for all $\phi\in\Hom(S,\mathbb{K})$ with $\mathbb{K}\in\{\nonnegativereals,\tropicalreals,\nonnegativereals\opposite,\tropicalreals\opposite,\dualnumbers\}$.

\paragraph{Types and entropy.}
In applications to tensors and graphs, we will make use of the concept of types and type classes \cite[Chapter 2]{csiszar2011information}. We identify probability distributions on a finite set $\mathcal{X}$ with functions $P:\mathcal{X}\to\nonnegativereals$ such that $\sum_{x\in\mathcal{X}}P(x)=1$. The set of probability distributions on $\mathcal{X}$ will be denoted by $\distributions(\mathcal{X})$.

For $n\in\naturals$, the set of $n$-\emph{types} is the subset $\distributions[n](\mathcal{X})=\setbuild{P\in\distributions(\mathcal{X})}{\forall x\in\mathcal{X}:nx\in\naturals}$. The \emph{type class} is defined as
\begin{equation}
\typeclass{n}{P}=\setbuild{(x_1,x_2,\dots,x_n)\in\mathcal{X}^n}{\frac{1}{n}\lvert\setbuild{i\in[n]}{x_i=x}\rvert=P(x)\text{ for all $x\in\mathcal{X}$}}
\end{equation}
These sets form a partition of $\mathcal{X}^n$ into $\left\lvert\distributions[n](\mathcal{X})\right\rvert\le(n+1)^{\lvert\mathcal{X}\rvert}$ parts.

The cardinalities of the type classes can be given exactly in terms of multinomial coefficients
\begin{equation}
\left\lvert\typeclass{n}{P}\right\rvert
 = \frac{n!}{\prod_{x\in\mathcal{X}}(nP(x))!},
\end{equation}
and can be estimated in terms of the \emph{Shannon entropy} (with the convention $\log=\log_2$ and $0\log 0=0$)
\begin{equation}
\entropy(P)=-\sum_{x\in\mathcal{X}}P(x)\log P(x)
\end{equation}
as
\begin{equation}
\frac{1}{(n+1)^{\lvert\mathcal{X}\rvert}}2^{n\entropy(P)}
 \le \left\lvert\typeclass{n}{P}\right\rvert
 \le 2^{n\entropy(P)}.
\end{equation}

The entropy is a continuous function of the distribution. We will use the following continuity bound.
\begin{lemma}[{\cite{audenaert2007sharp}}]\label{lem:entropycontinuity}
If $P,Q\in\distributions(\mathcal{X})$ and $\epsilon=\frac{1}{2}\norm[1]{P-Q}$, then
\begin{equation}
\lvert\entropy(P)-\entropy(Q)\rvert\le \epsilon\log(\lvert\mathcal{X}\rvert-1)+h(\epsilon),
\end{equation}
where $h(p)=-p\log p-(1-p)\log(1-p)$.
\end{lemma}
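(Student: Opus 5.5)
The plan is to combine a maximal coupling of $P$ and $Q$ with Fano's inequality. Write $\epsilon=\frac{1}{2}\norm[1]{P-Q}=1-\sum_{x\in\mathcal{X}}\min\{P(x),Q(x)\}$. We may assume $\epsilon>0$, since otherwise $P=Q$ and there is nothing to prove.

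\textbf{Step 1 (coupling).} I will build a pair of random variables $(X,Y)$ on $\mathcal{X}\times\mathcal{X}$ with marginals $P$ and $Q$ and $\Pr[X\neq Y]=\epsilon$. To do this, put mass $\min\{P(x),Q(x)\}$ on the diagonal point $(x,x)$. The leftover masses $P-\min\{P,Q\}$ and $Q-\min\{P,Q\}$ are nonnegative, have disjoint supports, and each has total mass $\epsilon$. I place the product measure $\frac{1}{\epsilon}(P-\min\{P,Q\})\otimes(Q-\min\{P,Q\})$ off the diagonal. This gives the correct marginals, and since the supports are disjoint, the off-diagonal part really sits on $\{X\neq Y\}$.

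\textbf{Step 2 (Fano's inequality).} Let $E=\mathbf{1}[X\neq Y]$. Using the chain rule,
\begin{equation}
\entropy(X)\le\entropy(X,Y)=\entropy(Y)+\entropy(X|Y)\le \entropy(Y)+\entropy(E)+\entropy(X|Y,E).
\end{equation}
The term $\entropy(E)$ equals $h(\epsilon)$. For the last term, condition on $E$:
\begin{itemize}
\item When $E=0$, $X$ is determined by $Y$, so this case contributes nothing.
\item When $E=1$, $X$ takes values in $\mathcal{X}\setminus\{Y\}$, a set with $\lvert\mathcal{X}\rvert-1$ elements, so the conditional entropy is at most $\log(\lvert\mathcal{X}\rvert-1)$.
\end{itemize}
Since $\Pr[E=1]=\epsilon$, we get $\entropy(X|Y,E)\le\epsilon\log(\lvert\mathcal{X}\rvert-1)$. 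Hence
\begin{equation}
\entropy(P)-\entropy(Q)\le\epsilon\log(\lvert\mathcal{X}\rvert-1)+h(\epsilon).
\end{equation}

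\textbf{Step 3 (symmetry).} Exchanging the roles of $X$ and $Y$ gives the same bound for $\entropy(Q)-\entropy(P)$, which proves the absolute-value statement.

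The only step needing care is the construction of the coupling with the exact value $\Pr[X\neq Y]=\epsilon$. The point is that disjointness of the supports of the two residual measures guarantees the off-diagonal mass is genuinely off the diagonal. Everything else is the standard chain rule.
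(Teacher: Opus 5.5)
Your proof is correct. The maximal coupling has the right marginals and puts exactly mass $\epsilon$ off the diagonal, because the residuals $P-\min\{P,Q\}$ and $Q-\min\{P,Q\}$ have disjoint supports. The Fano-type estimate $\entropy(X\mid Y)\le h(\epsilon)+\epsilon\log(\lvert\mathcal{X}\rvert-1)$ then gives the claim. Assuming $\epsilon>0$ also quietly handles the degenerate case $\lvert\mathcal{X}\rvert=1$, where $\log(\lvert\mathcal{X}\rvert-1)$ is undefined.

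The paper itself gives no proof. It imports the inequality from Audenaert, where it is stated and proved for the von Neumann entropy of density matrices; the classical statement is the commuting special case. Your coupling-plus-Fano argument is the standard self-contained classical proof. What it does not give is the quantum version. The paper only uses the lemma for probability distributions on vertex sets (in Lemma~\ref{lem:typegraphcontinuity}), so nothing is lost by your route, and it makes the paper's use of the bound independent of the quantum literature.
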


\section{Approximately geometric sequences}\label{sec:approximatelygeometric}

We consider sequences in preordered semirings. The main examples to have in mind are geometric sequences, and later we will focus on sequences that in some respects behave like geometric ones, but in the following definitions we will not make any restriction. From now on, preordered semirings $S$ and $T$ are assumed to be of polynomial growth, with power universal element $u$. We allow $0\not\le 1$ unless explicitly stated otherwise.
\begin{definition}\label{def:sequenceoperations}
Let $a,b\in S^\naturals$, $a=(a_n)_{n\in\naturals}$ and $b=(b_n)_{n\in\naturals}$.

The \emph{sum} $a\sequenceplus b$ is the binomial convolution of $a$ and $b$, i.e., the sequence
\begin{equation}
(a\sequenceplus b)_n=\sum_{m=0}^n\binom{n}{m}a_{n-m}\cdot b_m.
\end{equation}

The \emph{product} $a\sequencetimes b$ or $ab$ is the elementwise product, i.e., the sequence
\begin{equation}
(a\sequencetimes b)_n=a_n\cdot b_n.
\end{equation}

We define $a\asymptoticle b$ if there exists a sublinear sequence $(k_n)_{n\in\naturals}\in\naturals^\naturals$ such that, for all $n\in\naturals$, the inequality $a_n\le u^{k_n}b_n$ holds. We write $a\asymptoticge b$ if $b\asymptoticle a$, and $a\asymptoticeq b$ if both $a\asymptoticle b$ and $a\asymptoticge b$ hold. In the latter case we say that $a$ and $b$ are \emph{asymptotically equivalent}.

If $\phi:S\to T$ is a monotone semiring homomorphism, then we set $\phi(a)=(\phi(a_n))_{n\in\naturals}\in T^\naturals$.
\end{definition}
Note that $\asymptoticle$ does not depend on the particular choice of the power universal element.

\begin{example}\label{ex:geometric}
For all $x\in S$, let $G(x)=(1,x,x^2,\dots)$ denote the corresponding geometric sequence.

If $a,b\in S$, then
\begin{equation}
(G(a)\sequenceplus G(b))_n=\sum_{m=0}^n\binom{n}{m}a^{n-m}b^m=G(a+b)_n,
\end{equation}
\begin{equation}
(G(a)\sequencetimes G(b))_n=a^nb^n=G(ab)_n,
\end{equation}
and $G(a)\asymptoticle G(b)$ iff $a^n\le u^{k_n}b^n$ for some sublinear sequence $(k_n)_{n\in\naturals}$. Thus the operations and the relation in \cref{def:sequenceoperations}, on geometric sequences correspond to the semiring operations in $S$ and the \emph{asymptotic} preorder of $S$.

If $\phi:S\to T$ is a semiring homomorphism, then $\phi(G(x))=(\phi(1),\phi(x),\phi(x^2),\dots)=(1,\phi(x),\phi(x)^2,\dots)=G(\phi(x))$.
\end{example}

The operations and the relation satisfy the properties that the notations suggest, except distributivity.
\begin{proposition}\label{prop:sequenceoperationsproperties}
Let $a,b,c\in S^\naturals$ be arbitrary sequences and $\phi:S\to T$ a monotone semiring homomorphism. Then the following equalities and implications hold:
\begin{enumerate}
\item\label{it:sequenceplusnull} $a\sequenceplus G(0)=a$
\item\label{it:sequencepluscommutative} $a\sequenceplus b=b\sequenceplus a$
\item\label{it:sequenceplusassociative} $(a\sequenceplus b)\sequenceplus c=a\sequenceplus (b\sequenceplus c)$
\item\label{it:sequencetimesunit} $a\sequencetimes G(1)=a$
\item\label{it:sequencetimescommutative} $a\sequencetimes b=b\sequencetimes a$
\item\label{it:sequencetimesassociative} $(a\sequencetimes b)\sequencetimes c=a\sequencetimes (b\sequencetimes c)$
\item\label{it:sequencelereflexive} $a\asymptoticle a$
\item\label{it:sequenceletransitive} if $a\asymptoticle b$ and $b\asymptoticle c$, then $a\asymptoticle c$
\item\label{it:sequenceleplus} if $a\asymptoticle b$, then $a\sequenceplus c\asymptoticle b\sequenceplus c$
\item\label{it:sequenceletimes} if $a\asymptoticle b$, then $a\sequencetimes c\asymptoticle b\sequencetimes c$
\item\label{it:sequenceplushomomorphism} $\phi(a\sequenceplus b)=\phi(a)\sequenceplus\phi(b)$
\item\label{it:sequencetimeshomomorphism} $\phi(a\sequencetimes b)=\phi(a)\sequencetimes\phi(b)$
\item\label{it:sequencelehomomorphism} if $a\asymptoticle b$, then $\phi(a)\asymptoticle\phi(b)$
\end{enumerate}
\end{proposition}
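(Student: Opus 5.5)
Most items are direct verifications from the definitions, so I will group them by type and check each group at the level of the $n$-th entry, using only the semiring axioms of $S$ and $T$, the compatibility of $\le$ with the operations, and the properties of sublinear sequences in \cref{rem:improvedsublinear}.

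\emph{Algebraic identities.} For \ref{it:sequenceplusnull}, note that $G(0)_m=0^m$ equals $1$ for $m=0$ and $0$ otherwise. Hence only the $m=0$ term survives, and $(a\sequenceplus G(0))_n=a_n$. Commutativity \ref{it:sequencepluscommutative} follows by substituting $m\mapsto n-m$ and using $\binom{n}{m}=\binom{n}{n-m}$ together with commutativity of multiplication. For associativity \ref{it:sequenceplusassociative}, I would expand both sides to the same trinomial sum
\begin{equation*}
\sum_{i+j+l=n}\frac{n!}{i!\,j!\,l!}\,a_ib_jc_l,
\end{equation*}
using $\binom{n}{m}\binom{m}{l}=\frac{n!}{(n-m)!(m-l)!l!}$ and distributivity in $S$. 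Here natural number coefficients act as repeated addition, so no subtraction is needed. Items \ref{it:sequencetimesunit}--\ref{it:sequencetimesassociative} are the corresponding axioms of $S$ applied entrywise. Items \ref{it:sequenceplushomomorphism} and \ref{it:sequencetimeshomomorphism} hold because $\phi$ preserves finite sums, products and the multiples $\binom{n}{m}x=x+\dots+x$.

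\emph{Order properties.} Reflexivity \ref{it:sequencelereflexive} uses $k_n=0$. For transitivity \ref{it:sequenceletransitive}, from $a_n\le u^{k_n}b_n$ and $b_n\le u^{l_n}c_n$, multiplying the second inequality by $u^{k_n}$ gives $a_n\le u^{k_n+l_n}c_n$, and $k+l$ is sublinear. For \ref{it:sequenceletimes}, multiply $a_n\le u^{k_n}b_n$ by $c_n$. For \ref{it:sequencelehomomorphism}, apply $\phi$ to get $\phi(a_n)\le\phi(u)^{k_n}\phi(b_n)$. Since $\phi(u)\ge\phi(1)=1$, $\phi(u)$ is a nonzero element with $1\le\phi(u)$, so power universality of $u_T$ in $T$ yields $\phi(u)\le u_T^p$ for some $p$. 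Monotonicity of multiplication then gives $\phi(u)^{k_n}\le u_T^{pk_n}$, and $pk_n$ is sublinear.

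\emph{Main obstacle: \ref{it:sequenceleplus}.} The difficulty is that the sum mixes indices. From $a_{n-m}\le u^{k_{n-m}}b_{n-m}$ we get
\begin{equation*}
\binom{n}{m}a_{n-m}c_m\le u^{k_{n-m}}\binom{n}{m}b_{n-m}c_m,
\end{equation*}
but the exponents differ from term to term. To factor out a common power I need each $u^{k_{n-m}}$ to be bounded by a fixed $u^{k'_n}$. This requires $1\le u$ and a monotone majorant: by \cref{rem:improvedsublinear} I replace $k$ with a monotone increasing sublinear $k'$. Then $u^{k_{n-m}}x\le u^{k'_n}x$ for every $x$, because $1\le u^{k'_n-k_{n-m}}$ can be multiplied by $u^{k_{n-m}}x$. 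Summing the termwise inequalities, which is allowed since $\le$ is compatible with addition, and then factoring gives $(a\sequenceplus c)_n\le u^{k'_n}(b\sequenceplus c)_n$. No assumption $0\le1$ is needed, since every term on the left is matched by a term on the right.
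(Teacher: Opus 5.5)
\textbf{Items (i)--(xii).} Your treatment of these is correct and follows the paper's proof essentially line by line. The paper takes one common sublinear majorant for transitivity instead of your $k+l$, and uses the same monotone-majorant trick for (ix).

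\textbf{The gap in (xiii).} It sits in the sentence ``Since $\phi(u)\ge\phi(1)=1$, $\phi(u)$ is a nonzero element''. This inference is valid only if $1\not\le 0$ in $T$. The paper's standing assumptions allow $T$ with $1\le 0$; an example is $T=\nonnegativereals\opposite$, which is of polynomial growth with power universal elements $(0,1)$ and is one of the paper's own test semirings. If $\phi(u)=0$, power universality of $u_T$ says nothing about the pair $1\le\phi(u)$, because it only concerns nonzero elements. The bound $\phi(u)\le u_T^p$ can then simply be false: in $\nonnegativereals\opposite$, $0\le w$ holds only for $w=0$. The paper's proof takes the same step silently (``$1\le\phi(u_S)$, which implies $\phi(u_S)\le u_T^p$''). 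So you have not deviated from it, but the gap is genuine: (xiii) fails as stated.

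\textbf{Counterexample.} Let $S=\naturals[X]$ with the semiring preorder generated by $1\le X$. Concretely, $p\le q$ iff there is a degree-nondecreasing bijection from the monomials of $p$ to those of $q$, counted with multiplicity.
\begin{itemize}
\item Then $u=X$ is power universal: for nonzero $p\le q$ one has $q\le X^kp$, where $k$ is the largest degree increase in such a bijection.
\item The map $\phi(p)=p(0)$ is a monotone homomorphism $S\to\nonnegativereals\opposite$, since raising degrees can only decrease the constant term.
\item The constant sequences $a=(X,X,\dots)$ and $b=(1,1,\dots)$ satisfy $a\asymptoticle b$ with $k_n=1$.
\item However $\phi(a)=(0,0,\dots)$ and $\phi(b)=(1,1,\dots)$. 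Here $\phi(a)\asymptoticle\phi(b)$ would require $0\ge v^{k'_n}$ in $\reals$ for a power universal $v\in(0,1)$, which is impossible.
\end{itemize}

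\textbf{Repair.} Add a hypothesis ensuring $\phi(u)\neq 0$, for instance $1\not\le 0$ in $T$, or that $\phi$ maps nonzero elements to nonzero elements. Then power universality applies to the nonzero pair $1\le\phi(u)$, and the rest of your argument goes through unchanged. The same caveat applies wherever the paper reuses this step, e.g.\ part (v) of \cref{prop:approximatelygeometricproperties}.
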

\begin{proof}
Since $G(0)=(1,0,0,\dots)$, we have $(a\sequenceplus G(0))_n=\sum_{m=0}^n\binom{n}{m}a_{n-m}\cdot G(0)_m=a_n$, which proves \ref{it:sequenceplusnull}. \ref{it:sequencepluscommutative} and \ref{it:sequenceplusassociative} are well-known properties of the binomial convolution.

\Cref{it:sequencetimesunit,it:sequencetimescommutative,it:sequencetimesassociative} follow from the corresponding properties of the semiring multiplication and that $\sequencetimes$ is defined elementwise.

To see \ref{it:sequencelereflexive}, we may take $k_n=0$ in the definition.
\ref{it:sequenceletransitive}: Let $k_n$ be a sublinear sequence such that $a_n\le u^{k_n}b_n$ and $b_n\le u^{k_n}c_n$ hold for all $n\in\naturals$. Then $a_n\le u^{k_n}b_n\le  u^{2k_n}c_n$, and $2k_n$ is sublinear, therefore $a\asymptoticle c$.

\ref{it:sequenceleplus}: Let $k_n$ be a monotone increasing sublinear sequence such that $a_n\le u^{k_n}b_n$ holds for all $n$. Then
\begin{equation}
\begin{split}
(a\sequenceplus c)_n
 & = \sum_{m=0}^n\binom{n}{m}a_{n-m}c_m
 \le \sum_{m=0}^n\binom{n}{m}u^{k_{n-m}}b_{n-m}c_m  \\
 & \le u^{k_n}\sum_{m=0}^n\binom{n}{m}b_{n-m}c_m
 = u^{k_n}(b\sequenceplus c)_n,
\end{split}
\end{equation}
which proves $a\sequenceplus c\asymptoticle b\sequenceplus c$.

\ref{it:sequenceletimes}: Let $k_n$ be a sublinear sequence such that $a_n\le u^{k_n}b_n$ holds for all $n$. Then $(a\sequencetimes c)_n=a_nc_n\le u^{k_n}b_nc_n=u^{k_n}(b\sequencetimes c)_n$, therefore $a\sequencetimes c\preorderle b\sequencetimes c$.

\ref{it:sequenceplushomomorphism} and \ref{it:sequencetimeshomomorphism} are obtained by applying $\phi$ to both sides of the definitions of $\sequenceplus$ and $\sequencetimes$.

\ref{it:sequencelehomomorphism}: Let $u_S$ be a power universal element of $S$ and $u_T$ a power universal element of $T$. Since $1\le u_S$ and $\phi$ is monotone, we have $1\le\phi(u_S)$, which implies that $\phi(u_S)\le u_T^p$ holds for some $p\in\naturals$. Let $k_n$ be a sublinear sequence such that $a_n\le u_S^{k_n}b_n$ holds for all $n$. Then $\phi(a)_n=\phi(a_n)\le\phi(u_S)^{k_n}\phi(b_n)\le u_T^{pk_n}\phi(b)_n$. Since $n\mapsto pk_n$ is sublinear, this implies $\phi(a)\asymptoticle\phi(b)$.
\end{proof}

\begin{remark}\leavevmode
\begin{enumerate}
\item If $a,b,c\in S^\naturals$ are arbitrary sequences, then $(a\sequenceplus b)\sequencetimes c=(a_0b_0c_0,a_1b_0c_1+a_0b_1c_1,\dots)$ and $(a\sequencetimes c)\sequenceplus(b\sequencetimes c)=(a_0b_0c_0^2,a_1b_0c_0c_1+a_0b_1c_0c_1,\dots)$ are in general different, therefore the distributive law is not satisfied by the sum and product of sequences.
\item If $a,b\in S^\naturals$ are arbitrary and $c=G(s)$ is a geometric sequence ($s\in S$), then
\begin{equation}
\begin{split}
((a\sequenceplus b)\sequencetimes c)_n
 & = \sum_{m=0}^n\binom{n}{m}a_{n-m}b_mc_n  \\
 & = \sum_{m=0}^n\binom{n}{m}a_{n-m}b_m s^n  \\
 & = \sum_{m=0}^n\binom{n}{m}(a_{n-m}s^{n-m})(b_m s^m)  \\
 & = \sum_{m=0}^n\binom{n}{m}(a_{n-m}c_{n-m})(b_m c_m)  \\
 & = ((a\sequencetimes c)\sequenceplus(b\sequencetimes c))_n.
\end{split}
\end{equation}
\end{enumerate}
\end{remark}

Our next goal is to restrict to a class of sequences on which distributivity is approximately satisfied. We start by proving the equivalence of certain properties, any of which can serve as a definition for these sequences. Of the properties in \cref{prop:approxgeomcharacterizations}, \ref{it:approxgeompowersr} and \ref{it:approxgeompowers1} are probably intuitively the clearest: they essentially say that $a=(a_n)_{n\in\naturals}$ is sandwiched between geometric sequences (with rescaled index) generated by elements of $a$ itself, and the upper and lower bounds can be made arbitrarily close in an asymptotic sense, a more or less direct translation of the existence of $\lim_{n\to\infty}\frac{1}{n}\ln a_n$ when $a_n\in\positivereals$ (see \cref{ex:approximatelygeometric} below). These are also the ones that we find to be simplest to verify in practice for concrete sequences. In contrast, property \ref{it:approxgeomproducts} seems to be the most convenient for proving properties abstractly, in particular constructing new sequences from old ones.
\begin{proposition}\label{prop:approxgeomcharacterizations}
Let $a\in(\nonzeros{S})^\naturals$. The following statements are equivalent:
\begin{enumerate}
\item\label{it:approxgeomproducts} There exists a sublinear sequence $(k_n)_{n\in\naturals}$ such that for all $t\in\positiveintegers$ and $n_1,n_2,\dots,n_t\in\naturals$ the inequalities
\begin{align}
a_{n_1}a_{n_2}\cdots a_{n_t} & \le u^{k_{n_1}+\dots+k_{n_t}}a_{n_1+n_2+\dots+n_t}
\intertext{and}
a_{n_1+n_2+\dots+n_t} & \le u^{k_{n_1}+\dots+k_{n_t}}a_{n_1}a_{n_2}\cdots a_{n_t}
\end{align}
hold.
\item\label{it:approxgeompowersr} For all $n\in\naturals$ the relation $a_n\sim a_1^n$ holds, and for all $\epsilon>0$ there exist $m\in\positiveintegers$, $c\in\naturals$ such that for all $q\in\naturals$, $r\in\{0,1,\dots,m-1\}$, with $n=qm+r$ the inequalities $a_n\le u^{c+\lfloor\epsilon n\rfloor}a_m^qa_r$ and $a_m^qa_r\le u^{c+\lfloor\epsilon n\rfloor}a_n$ hold.
\item\label{it:approxgeompowers1} For all $n\in\naturals$ the relation $a_n\sim a_1^n$ holds, and for all $\epsilon>0$ there exist $m\in\positiveintegers$, $c\in\naturals$ such that for all $q\in\naturals$, $r\in\{0,1,\dots,m-1\}$, with $n=qm+r$ the inequalities $a_n\le u^{c+\lfloor\epsilon n\rfloor}a_m^qa_1^r$ and $a_m^qa_1^r\le u^{c+\lfloor\epsilon n\rfloor}a_n$ hold.
\item\label{it:approxgeomproductsepsilon} For all $\epsilon>0$ there exists $c\in\naturals$ such that for all $t\in\positiveintegers$ and $n_1,n_2,\dots,n_t\in\naturals$ the inequalities $a_{n_1}a_{n_2}\cdots a_{n_t}\le u^{tc+\lfloor\epsilon(n_1+\dots+n_t)\rfloor}a_{n_1+n_2+\dots+n_t}$ and $a_{n_1+n_2+\dots+n_t}\le u^{tc+\lfloor\epsilon(n_1+\dots+n_t)\rfloor}a_{n_1}a_{n_2}\cdots a_{n_t}$ hold.
\end{enumerate}
\end{proposition}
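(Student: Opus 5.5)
The plan is to prove the cycle \ref{it:approxgeomproducts}$\Rightarrow$\ref{it:approxgeomproductsepsilon}$\Rightarrow$\ref{it:approxgeompowersr}$\Rightarrow$\ref{it:approxgeompowers1}$\Rightarrow$\ref{it:approxgeomproductsepsilon}, and then close it with \ref{it:approxgeomproductsepsilon}$\Rightarrow$\ref{it:approxgeomproducts}. Throughout, the sublinear sequences are improved as in \cref{rem:improvedsublinear}. Two facts are used repeatedly. First, $u^{j}\ge 1$ may be inserted freely on the larger side of an inequality. Second, $x\sim y$ for nonzero $x,y$ gives $y\le u^{d}x$ for some $d$; hence for any finite set of indices we get one uniform constant.

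\textbf{The easy implications.} For \ref{it:approxgeomproducts}$\Rightarrow$\ref{it:approxgeomproductsepsilon}, fix $\epsilon$ and choose $N$ with $k_n\le\epsilon n$ for $n\ge N$. Put $c=\max_{n<N}k_n$. Then $k_n\le c+\epsilon n$ for all $n$. Summing over the factors gives $\sum_i k_{n_i}\le tc+\epsilon\sum_i n_i$, and since the left side is an integer it is at most $tc+\lfloor\epsilon\sum n_i\rfloor$. For \ref{it:approxgeomproductsepsilon}$\Rightarrow$\ref{it:approxgeompowersr}, take $t=n$ and all $n_i=1$; this gives $a_n\sim a_1^n$. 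Next choose $m$ with $\epsilon m\ge$ something irrelevant; in fact any $m$ works. Apply \ref{it:approxgeomproductsepsilon} with $\epsilon/2$ to the decomposition $n=m+\dots+m+r$ into $q+1$ parts. The error is $(q+1)c'+\lfloor\epsilon n/2\rfloor$. Choosing $m\ge 2c'/\epsilon$ makes $qc'\le\epsilon n/2$, so the error is at most $c'+\lfloor\epsilon n\rfloor$ (up to harmless rounding). For \ref{it:approxgeompowersr}$\Rightarrow$\ref{it:approxgeompowers1}, note that $r$ ranges over the finite set $\{0,\dots,m-1\}$. Since $a_r\sim a_1^r$, there is a uniform $d$ with $a_r\le u^da_1^r$ and $a_1^r\le u^da_r$, so we may replace $c$ by $c+d$.

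\textbf{\ref{it:approxgeompowers1}$\Rightarrow$\ref{it:approxgeomproductsepsilon}.} Given $\epsilon$, take $m,c$ from \ref{it:approxgeompowers1} for $\epsilon/2$. Let $d$ be a constant with $a_m\le u^da_1^m$ and $a_1^m\le u^da_m$. Write $n_i=q_im+r_i$ and $N=\sum n_i=Qm+R$. Then
\begin{equation}
\prod_i a_{n_i}\asymp \prod_i a_m^{q_i}a_1^{r_i}=a_m^{\sum q_i}a_1^{\sum r_i},
\end{equation}
with error $tc+\epsilon N/2$. Next, $\sum r_i=sm+R$ with $s<t$, and converting $a_1^{sm}$ into $a_m^{s}$ costs $sd\le td$. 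This yields $a_m^{Q}a_1^{R}$, which is within $c+\epsilon N/2$ of $a_N$. Hence \ref{it:approxgeomproductsepsilon} holds with the constant $2c+d$ (in both directions, by symmetry of the argument).

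\textbf{\ref{it:approxgeomproductsepsilon}$\Rightarrow$\ref{it:approxgeomproducts}, the main obstacle.} A single sublinear $k$ must work simultaneously for all tuples. The natural choice $k_n=c_{j(n)}+\lfloor n/j(n)\rfloor$ with $\epsilon_j=1/j$ fails naively, because factors with different $n_i$ would want different precisions $j$. I would resolve this by normalizing each factor separately. First fix, via the argument above, a nested sequence $m_1\mid m_2\mid\cdots$ and constants $c_j$ such that $a_n$ is within $c_j+n/j$ of $a_{m_j}^{q}a_1^{r}$. Choose $m_{j+1}$ so that $a_{m_{j+1}}$ is within $c_j+m_{j+1}/j$ of $a_{m_j}^{m_{j+1}/m_j}$. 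Then set $k_n=C\,(c_{j(n)}+m_{j(n)}+n/j(n))$, where $j(n)\to\infty$ slowly enough that $c_{j(n)}+m_{j(n)}\le n/j(n)$; this $k$ is sublinear.

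Given a tuple, each $a_{n_i}$ is converted to its own normal form at level $j(n_i)$ at cost $\le k_{n_i}/C$. Next, all normal forms are pushed down to the smallest level $j_0=\min_i j(n_i)$ using the nested relations. This costs, for factor $i$, about $\sum_{j_0\le j<j(n_i)}(\text{copies})\,(c_j+m_{j+1}/j)$, which is bounded by a constant times $n_i/j_0$ plus lower order terms. The leftover powers of $a_1$ (at most $m_{j(n_i)}$ per factor) are collected as in the previous paragraph. Finally the result is compared with $a_N$ at level $j_0$.

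The delicate point is that $N/j_0$ must be bounded by $\sum_i k_{n_i}$ even though $j_0$ may be small only because of one small $n_i$. I expect to handle this by splitting the factors into those with $n_i$ below a threshold, whose total contribution is absorbed by the additive constants $c_{j}$ and $m_{j}$ in their own $k_{n_i}$, and large ones, which are all at a comparable level. If this bookkeeping fails, the fallback is to prove \ref{it:approxgeomproducts} directly from \ref{it:approxgeompowers1} by choosing $k$ as a slowly varying interpolation, accepting the per-factor constant as part of $k_{n_i}$ for small $n_i$.
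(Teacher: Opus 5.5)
Your easy implications are fine: (i)$\Rightarrow$(iv) via $k_n\le c+\epsilon n$, (iv)$\Rightarrow$(ii) with $m\ge 2c'/\epsilon$, (ii)$\Rightarrow$(iii) by absorbing $a_r\sim a_1^r$, and (iii)$\Rightarrow$(iv) by regrouping the remainders, as the paper does. The gap is (iv)$\Rightarrow$(i). You do not carry it out, and the scheme you sketch fails.

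You push every normal form down to the coarsest level $j_0=\min_i j(n_i)$ and compare with $a_N$ there. By your own estimate this costs of order $N/j_0$. Take $t=2$, $n_1=1$ and $n_2=n\to\infty$. Then $j_0=j(1)$ stays fixed, so the cost grows like $n/j(1)$, while the budget $k_1+k_n$ is $O(1)+o(n)$. Your proposed repair does not address this. The problem is not the small factor's own cost; it is that the small factor forces the final comparison to be made at coarse precision. Also, the large factors are not at a comparable level in general, since they can be spread over arbitrarily many levels.

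The missing idea is to go the other way. Make the final comparison at the \emph{finest} precision, after grouping factors by scale so that the large constant of that precision is paid only a few times. The paper's construction is:
\begin{itemize}
\item take $\epsilon_j\downarrow 0$ with constants $c_j$ from (iv), and thresholds $N_j=\lceil c_j(j+1)/\epsilon_j\rceil$;
\item set $k_n=4\lceil\epsilon_j n\rceil$ for $N_j\le n<N_{j+1}$, and $k_n=c_0+3\lceil\epsilon_0 n\rceil$ for $n<N_1$;
\item merge the factors with indices in $[N_j,N_{j+1})$ using (iv) at precision $\epsilon_j$. The constant $c_j$ per factor is at most $\epsilon_j n_i$, since $n_i\ge N_j$;
\item merge the at most $j_{\max}+1$ group products using (iv) at precision $\epsilon_{j_{\max}}$. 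The constant $c_{j_{\max}}(j_{\max}+1)$ is at most $\epsilon_{j_{\max}}n_{\max}$ by the choice of $N_{j_{\max}}$. Moreover $\epsilon_{j_{\max}}N\le\sum_i\epsilon_{j(n_i)}n_i$, because $(\epsilon_j)$ is decreasing.
\end{itemize}
Altogether the exponent is at most $\sum_i k_{n_i}$. The reverse inequality follows by the same steps in the opposite order.

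No nested sequence $m_1\mid m_2\mid\cdots$ is needed, since (iv) already handles arbitrary tuples. Essentially the ``natural choice'' you dismissed works once the merging is done in this order.
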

\begin{proof}
\ref{it:approxgeomproducts}$\implies$\ref{it:approxgeompowersr}:
Let $(k_n)_{n\in\naturals}$ be a monotone increasing sublinear sequence satisfying the inequalities. Then $a_1^n\le u^{nk_1}a_n$ and $a_n\le u^{nk_1}a_1^n$, therefore $a_n\sim a_1^n$.

For all $q,m,r\in\naturals$, $0\le r<m$, and $n=qm+r$ we have
\begin{align}
a_n
 & = a_{qm+r}
   \le u^{qk_m+k_r}a_m^qa_r
\intertext{and}
a_m^qa_r
 & \le u^{qk_m+k_r}a_{qm+r}
   = u^{qk_m+k_r}a_n.
\end{align}
Let $\epsilon>0$. Since $qk_m+k_r\le n\frac{k_m}{m}+k_{m-1}$, the inequalities also hold if the exponent of $u$ is replaced with $\lfloor n\frac{k_m}{m}\rfloor+k_{m-1}$. By sublinearity, we can choose an $m$ such that $\frac{k_m}{m}\le\epsilon$ and $c=k_{m-1}$. With these choices, we have $a_n\le u^{c+\lfloor\epsilon n\rfloor}a_m^qa_r$ and $a_m^qa_r\le u^{c+\lfloor\epsilon n\rfloor}a_n$ for all $q,m,r$.

\ref{it:approxgeompowersr}$\implies$\ref{it:approxgeompowers1}:
The difference between the two pairs of inequalities is that $a_r$ is replaced with $a_1^r$. For $\epsilon>0$ we keep the same $m$ and adjust $c$ as follows. The relation $a_r\sim a_1^r$ implies that we can choose $p\in\naturals$ such that $a_r\le u^pa_1^r$ and $a_1^r\le u^pa_r$. In the range $r\in\{0,1,\dots,m-1\}$, we can find a common $p$ that satisfies the finitely many inequalities. Then $a_n\le u^{c+\lfloor\epsilon n\rfloor}a_m^qa_r\le u^{c+p+\lfloor\epsilon n\rfloor}a_m^qa_1^r$ and $a_m^qa_1^r\le u^pa_m^qa_r\le u^{c+p+\lfloor\epsilon n\rfloor}a_n$, and the extra $p$ term can be absorbed into $c$.

\ref{it:approxgeompowers1}$\implies$\ref{it:approxgeomproductsepsilon}:
Suppose that $\epsilon>0$, $m$, $c$ satisfy the conditions in \ref{it:approxgeompowers1} for all $n=qm+r$. Let $p\in\naturals$ such that $a_m\le u^p a_1^m$ and $a_1^m\le u^p a_m$. Let $t\in\naturals$, $n_1,\dots,n_t\in\naturals$, $n_i=q_im+r_i$ ($q_i\in\naturals$, $r_i\in\{0,1,\dots,m-1\}$) for all $i=1,\dots,t$. Let $q=\lfloor\frac{r_1+\dots+r_t}{m}\rfloor\le t$ and $r=r_1+\dots+r_t-qm$. Then
\begin{equation}
\begin{split}
a_{n_1}a_{n_2}\cdots a_{n_t}
 & \le u^{tc+\lfloor\epsilon n_1\rfloor+\dots+\lfloor\epsilon n_t\rfloor}a_m^{q_1+\dots+q_t}a_1^{r_1+\dots+r_t}  \\
 & \le u^{tc+\lfloor\epsilon n_1\rfloor+\dots+\lfloor\epsilon n_t\rfloor+pq}a_m^{q_1+\dots+q_t+q}a_1^r  \\
 & \le u^{(t+1)c+\lfloor\epsilon n_1\rfloor+\dots+\lfloor\epsilon n_t\rfloor+pq+\lfloor \epsilon(n_1+\dots+n_t)\rfloor}a_{n_1+\dots+n_t}  \\
 & \le u^{(t+1)(c+p)+2\lfloor \epsilon(n_1+\dots+n_t)\rfloor}a_{n_1+\dots+n_t}  \\
 & \le u^{2t(c+p)+\lfloor 2\epsilon(n_1+\dots+n_t)\rfloor}a_{n_1+\dots+n_t},
\end{split}
\end{equation}
and in a similar way we obtain
\begin{equation}
a_{n_1+\dots+n_t}
 \le u^{2t(c+p)+\lfloor 2\epsilon(n_1+\dots+n_t)\rfloor}a_{n_1}a_{n_2}\cdots a_{n_t}.
\end{equation}
Therefore the condition in \ref{it:approxgeomproductsepsilon} is satisfied with $2\epsilon$ and $2(c+p)$.

\ref{it:approxgeomproductsepsilon}$\implies$\ref{it:approxgeomproducts}:
Choose $\epsilon_0>\epsilon_1>\epsilon_2>\cdots$ converging to $0$, $c_0,c_1,\dots$ nondecreasing such that for all $j\in\naturals$, $t\in\naturals$ and $n_1,n_2,\dots,n_t\in\naturals$ the inequalities
\begin{align}
a_{n_1}a_{n_2}\cdots a_{n_t} & \le u^{c_jt+\lfloor\epsilon_j(n_1+\dots+n_t)\rfloor}a_{n_1+\dots+n_t}  \\
a_{n_1+\dots+n_t} & \le u^{c_jt+\lfloor\epsilon_j(n_1+\dots+n_t)\rfloor}a_{n_1}a_{n_2}\cdots a_{n_t}
\end{align}
hold. Let $N_0=0$ and
\begin{equation}
N_j=\left\lceil\frac{c_j(j+1)}{\epsilon_j}\right\rceil
\end{equation}
when $j\ge 1$. Then $N_j\to\infty$ increasingly, and if $N_j\le n$, then $c_j\le c_j(j+1)\le\epsilon_j n$. Let
\begin{equation}
k_n=\begin{cases}
c_0+3\lceil\epsilon_0 n\rceil & \text{if $N_0\le n<N_1$}  \\
4\lceil\epsilon_j n\rceil & \text{if $N_j\le n< N_{j+1}$.}
\end{cases}
\end{equation}
Then $\lim_{n\to\infty}\frac{k_n}{n}=0$, since $N_j\le n$ implies $k_n\le 4\lceil\epsilon_j n\rceil$ when $j\ge 1$.

Let $t\in\naturals$, $n_1,\dots,n_t\in\naturals$. Let $t_n$ be the number of times $n$ appears among $n_1,\dots,n_t$, and $n_{\textnormal{max}}$ the largest index appearing. Let $j_{\textnormal{max}}$ be the number satisfying $N_{j_{\textnormal{max}}}\le n_{\textnormal{max}}<N_{j_\textnormal{max}+1}$. Then
\begin{equation}
\begin{split}
a_{n_1}\cdots a_{n_t}
 & = \prod_{j=0}^{j_{\textnormal{max}}}\left(\prod_{n=N_j}^{N_{j+1}-1}a_n^{t_n}\right)  \\
 & \le \prod_{j=0}^{j_{\textnormal{max}}}\left(u^{c_j\sum_{n=N_j}^{N_{j+1}-1}t_n+\left\lfloor\epsilon_j\sum_{n=N_j}^{N_{j+1}-1}t_nn\right\rfloor}a_{\sum_{n=N_j}^{N_{j+1}-1}t_nn}\right)  \\
 & = u^{\sum_{j=0}^{j_{\textnormal{max}}}\left(c_j\sum_{n=N_j}^{N_{j+1}-1}t_n+\left\lfloor\epsilon_j\sum_{n=N_j}^{N_{j+1}-1}t_nn\right\rfloor\right)}\prod_{j=0}^{j_{\textnormal{max}}}a_{\sum_{n=N_j}^{N_{j+1}-1}t_nn}  \\
 & \le u^{\sum_{j=0}^{j_{\textnormal{max}}}\left(c_j\sum_{n=N_j}^{N_{j+1}-1}t_n+\left\lfloor\epsilon_j\sum_{n=N_j}^{N_{j+1}-1}t_nn\right\rfloor\right)+c_{j_{\textnormal{max}}}(j_{\textnormal{max}}+1)+\lfloor\epsilon_{j_{\textnormal{max}}}\sum_{n=0}^{n_{\textnormal{max}}}t_nn\rfloor}a_{\sum_{n=0}^{n_{\textnormal{max}}}t_nn}  \\
 & = u^ra_{n_1+\dots+n_t},
\end{split}
\end{equation}
where we have introduced the notation
\begin{equation}
r
 = \sum_{j=0}^{j_{\textnormal{max}}}\left(c_j\sum_{n=N_j}^{N_{j+1}-1}t_n+\left\lfloor\epsilon_j\sum_{n=N_j}^{N_{j+1}-1}t_nn\right\rfloor\right)+c_{j_{\textnormal{max}}}(j_{\textnormal{max}}+1)+\lfloor\epsilon_{j_{\textnormal{max}}}\sum_{n=0}^{n_{\textnormal{max}}}t_nn\rfloor.
\end{equation}
With similar steps in the opposite order we obtain $a_{n_1+\dots+n_t}\le u^r a_{n_1}\cdots a_{n_t}$. The exponent can be bounded as
\begin{equation}
\begin{split}
r
 & \le c_0\sum_{n=0}^{N_1-1}t_n+\epsilon_0\sum_{n=0}^{N_1-1}t_nn+\sum_{j=1}^{j_{\textnormal{max}}}2\epsilon_j\sum_{n=N_j}^{N_{j+1}-1}t_nn+\epsilon_{j_{\textnormal{max}}}n_{\textnormal{max}}+\epsilon_{j_{\textnormal{max}}}\sum_{n=0}^{n_{\textnormal{max}}}t_nn  \\
 & \le c_0\sum_{n=0}^{N_1-1}t_n+\epsilon_0\sum_{n=0}^{N_1-1}t_nn+\sum_{j=1}^{j_{\textnormal{max}}}2\epsilon_j\sum_{n=N_j}^{N_{j+1}-1}t_nn+2\epsilon_{j_{\textnormal{max}}}\sum_{n=0}^{n_{\textnormal{max}}}t_nn  \\
 & \le c_0\sum_{n=0}^{N_1-1}t_n+3\epsilon_0\sum_{n=0}^{N_1-1}t_nn+\sum_{j=1}^{j_{\textnormal{max}}}4\epsilon_j\sum_{n=N_j}^{N_{j+1}-1}t_nn  \\
 & = \sum_{n=0}^{N_1-1}(c_0+3\epsilon_0n)t_n+\sum_{j=1}^{j_{\textnormal{max}}}\sum_{n=N_j}^{N_{j+1}-1}(4\epsilon_jn)t_n  \\
 & \le \sum_{n=0}^{N_1-1}k_nt_n+\sum_{j=1}^{j_{\textnormal{max}}}\sum_{n=N_j}^{N_{j+1}-1}k_nt_n 
   = \sum_{n=0}^{n_{\textnormal{max}}}k_nt_n 
   = k_{n_1}+k_{n_2}+\dots+k_{n_t}.
\end{split}
\end{equation}
\end{proof}
We note that if $0\le 1$ in $S$, then the relation $a_n\sim a_1^n$ in \ref{it:approxgeompowersr} and \ref{it:approxgeompowers1} is automatically true. In this case, or under the weaker condition that for all $n\in\naturals$, $a_n\sim 1$ holds, then the $a_r$ or $a_1^r$ factor can be absorbed into $u^c$, yielding the inequalities $a_n\le u^{c+\lfloor\epsilon n\rfloor}a_m^{\left\lfloor\frac{n}{m}\right\rfloor}$ and $a_m^{\left\lfloor\frac{n}{m}\right\rfloor}\le u^{c+\lfloor\epsilon n\rfloor}a_n$. In addition, since $a_n\le u^{nk_1}a_1^n$, $a_1^n\le u^{nk_1}a_n$, and $a_1\sim 1$, there exists $p\in\naturals$ such that $a_n\le u^{pn}$ and $1\le u^{pn}a_n$ for all $n\in\positiveintegers$.

\begin{definition}
A sequence $a\in S^\naturals$ is \emph{approximately geometric} if either $a\in(\nonzeros{S})^\naturals$ and $a$ satisfies the equivalent conditions of \cref{prop:approxgeomcharacterizations}, or $a\asymptoticeq G(0)$.
\end{definition}
We note that the definition only uses the multiplicative structure and the preorder. It follows, e.g., that in $\nonnegativereals$ and $\tropicalreals$ the approximately geometric sequences are the same.

\begin{example}\label{ex:approximatelygeometric}\leavevmode
\begin{enumerate}
\item Let $S=\naturals$ and $u=2$. The sequence $a_n=2^{\lfloor n/2\rfloor}$ is approximately geometric, since
\begin{align}
a_{n_1+n_2+\dots+n_t}
 & = 2^{\lfloor (n_1+\dots+n_t)/2\rfloor}
   \le 2^{\lfloor n_1/2\rfloor+1+\dots+\lfloor n_t/2\rfloor+1}
   = 2^ta_{n_1}a_{n_2}\cdots a_{n_t}
\intertext{and}
a_{n_1}a_{n_2}\cdots a_{n_t}
 & = 2^{\lfloor n_1/2\rfloor}\cdots 2^{\lfloor n_t/2\rfloor}
   \le 2^{\lfloor (n_1+\dots+n_t)/2\rfloor}
   = a_{n_1+n_2+\dots+n_t},
\end{align}
therefore \ref{it:approxgeomproducts} in \cref{prop:approxgeomcharacterizations} is satisfied with $k_n=1$.
\item Let $S=\nonnegativereals$ and $u>1$ and $a\in(\nonzeros{S})^\naturals$. Condition \ref{it:approxgeompowers1} in \cref{prop:approxgeomcharacterizations} can be rewritten as follows. For all $\epsilon>0$ there exist $m\in\positiveintegers$ and $c\in\naturals$ such that for all $n\in\naturals$ the inequalities
\begin{align}
a_n & \le u^{c+\left\lfloor\epsilon n\right\rfloor}a_m^{\left\lfloor\frac{n}{m}\right\rfloor}a_1^{n-m\left\lfloor\frac{n}{m}\right\rfloor}  \\
a_m^{\left\lfloor\frac{n}{m}\right\rfloor}a_1^{n-m\left\lfloor\frac{n}{m}\right\rfloor} & \le u^{c+\left\lfloor\epsilon n\right\rfloor}a_n
\end{align}
hold. By taking logarithm, dividing by $n$ and letting $n\to\infty$, the inequalities become
\begin{equation}
\frac{\ln a_m}{m}-\epsilon\le \liminf_{n\to\infty}\frac{\ln a_n}{n}\le\limsup_{n\to\infty}\frac{\ln a_n}{n}\le\frac{\ln a_m}{m}+\epsilon,
\end{equation}
which are equivalent to the preceding ones for suitably large $c$. It follows that $a$ is approximately geometric if and only if $\frac{\ln a_n}{n}$ converges (equivalently: $\sqrt[n]{a_n}$ converges to a nonzero number).
\item Let $S=\dualnumbers$ and $u=1+X$, and let $(a_n)_{n\in\naturals}$ be an approximately geometric sequence with $a_n=r_n+s_nX$, $r_n>0$ for all $n$. Since
\begin{equation}
r_n+s_nX
 \sim (r_1+s_1X)^n
 = r_1^n+r_1^{n-1}s_1nX,
\end{equation}
the sequence satisfies $r_n=r_1^n$ for all $n\in\naturals$.

For $\epsilon>0$ let $m\in\positiveintegers$ and $c\in\naturals$ such that the inequalities above hold for all $n\in\naturals$. The coefficients of $X$ then satisfy
\begin{gather}
s_n
 \le r_1^n(c+\left\lfloor\epsilon n\right\rfloor)+r_1^{n-m}\left\lfloor\frac{n}{m}\right\rfloor s_m+r_1^{n-1}\left(n-m\left\lfloor\frac{n}{m}\right\rfloor\right)s_1  \\
r_1^{n-m}\left\lfloor\frac{n}{m}\right\rfloor s_m+r_1^{n-1}\left(n-m\left\lfloor\frac{n}{m}\right\rfloor\right)s_1
 \le r_1^n(c+\left\lfloor\epsilon n\right\rfloor)+s_n,
\end{gather}
which can be written as
\begin{equation}
\left\lvert \frac{1}{n}\frac{s_n}{r_n}-\frac{1}{n}\left\lfloor\frac{n}{m}\right\rfloor \frac{s_m}{r_m}+\left(1-\frac{m}{n}\left\lfloor\frac{n}{m}\right\rfloor\right)\frac{s_1}{r_1}\right\rvert\le \frac{c+\left\lfloor\epsilon n\right\rfloor}{n}
\end{equation}
As $n\to\infty$ we arrive at the inequality
\begin{equation}
\frac{1}{m}\frac{s_m}{r_m}-\epsilon\le\liminf_{n\to\infty}\frac{1}{n}\frac{s_n}{r_n}\le\limsup_{n\to\infty}\frac{1}{n}\frac{s_n}{r_n}\le\frac{1}{m}\frac{s_m}{r_m}+\epsilon,
\end{equation}
therefore the condition is that $r_n=r_1^n$ and $\frac{1}{n}\frac{s_n}{r_n}$ converges.
\item If $S$ is an arbitrary semiring, then the approximately geometric sequences of $S^=$ are the geometric ones.
\end{enumerate}
\end{example}

\begin{proposition}\label{prop:approximatelygeometricproperties}
Let $a,b,c\in(\nonzeros{S})^\naturals$ be sequences and $\phi:S\to T$ a monotone semiring homomorphism.
\begin{enumerate}
\item\label{it:approximatelygeometricequivalent} If $a$ is approximately geometric and $a\asymptoticeq b$, then $b$ is approximately geometric.
\item\label{it:approximatelygeometricplus} If $a$ and $b$ are approximately geometric, then $a\sequenceplus b$ is approximately geometric.
\item\label{it:approximatelygeometrictimes} If $a$ and $b$ are approximately geometric, then $a\sequencetimes b$ is approximately geometric. If $a$ is approximately geometric, then $G(0)\sequencetimes a\asymptoticeq G(0)$.
\item\label{it:approximatelygeometricdistributive} If $c$ is approximately geometric, then $(a\sequenceplus b)\sequencetimes c\asymptoticeq (a\sequencetimes c)\sequenceplus(b\sequencetimes c)$ holds.
\item\label{it:approximatelygeometrichomomorphism} If $a$ is approximately geometric, then $\phi(a)$ is approximately geometric.
\end{enumerate}
\end{proposition}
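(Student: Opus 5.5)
I will work throughout with characterization \ref{it:approxgeomproducts} of \cref{prop:approxgeomcharacterizations}. By \cref{rem:improvedsublinear}, a single monotone increasing, subadditive sublinear sequence $(k_n)$ can be chosen that serves every approximately geometric sequence in play, and that also witnesses any assumed asymptotic equivalence. The hypothesis $a,b,c\in(\nonzeros{S})^\naturals$ means the degenerate case $a\asymptoticeq G(0)$ only enters through the last assertion of \ref{it:approximatelygeometrictimes}.

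\ref{it:approximatelygeometricequivalent}: Suppose $a_n\le u^{k_n}b_n$ and $b_n\le u^{k_n}a_n$. Then
\[
b_{n_1}\cdots b_{n_t}\le u^{\sum k_{n_i}}a_{n_1}\cdots a_{n_t}\le u^{2\sum k_{n_i}}a_{\sum n_i}\le u^{2\sum k_{n_i}+k_{\sum n_i}}b_{\sum n_i},
\]
and subadditivity gives $k_{\sum n_i}\le\sum k_{n_i}$, so $3k$ works. The reverse inequality is symmetric.

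\ref{it:approximatelygeometrictimes}: The inequalities for $a$ and $b$ multiply termwise, so $2k$ works for $a\sequencetimes b$. For $G(0)\sequencetimes a=(a_0,0,0,\dots)$, note that $a_0\le u^{k_0}a_0a_0$ and $a_0a_0\le u^{k_0}a_0$ (take $t=2$, $n_1=n_2=0$). Hence $a_0\sim 1$, so $a_0$ is within a constant power of $u$ of $1$, which gives $\asymptoticeq G(0)$.

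\ref{it:approximatelygeometrichomomorphism}: Apply $\phi$ to the inequalities, using $\phi(u)\le u_T^p$ as in \cref{prop:sequenceoperationsproperties}\ref{it:sequencelehomomorphism}. Nonzeroness may fail, but I expect to handle it by showing that either all $\phi(a_n)\ne 0$, or else $\phi(a_0)=1$ and $\phi(a_n)=0$ for $n\ge1$ (since $a_n\le u^{nk_1}a_1^n$ propagates vanishing from $n=1$, and $a_0\sim1$), so that $\phi(a)=G(0)$.

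\ref{it:approximatelygeometricdistributive}: Write the $n$-th terms of both sides as $\sum_m\binom nm a_{n-m}b_mc_n$ and $\sum_m\binom nm a_{n-m}b_mc_{n-m}c_m$. Comparing term by term, $c_{n-m}c_m\le u^{k_{n-m}+k_m}c_n\le u^{2k_n}c_n$ and conversely, so the two sides are within $u^{2k_n}$ of each other.

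\ref{it:approximatelygeometricplus}: This is the main obstacle. For $t=2$ the target is
\[
(a\sequenceplus b)_{n_1}(a\sequenceplus b)_{n_2}=\sum_{m_1,m_2}\binom{n_1}{m_1}\binom{n_2}{m_2}a_{n_1-m_1}a_{n_2-m_2}b_{m_1}b_{m_2},
\]
to be compared with $(a\sequenceplus b)_{n_1+n_2}=\sum_m\binom{n_1+n_2}{m}a_{n_1+n_2-m}b_m$. By Vandermonde, grouping the left side by $m=m_1+m_2$ gives coefficients summing to $\binom{n_1+n_2}{m}$, and $a_{n_1-m_1}a_{n_2-m_2}$ is within $u^{k_{n_1}+k_{n_2}}$ of $a_{n_1+n_2-m}$ (monotonicity of $k$), and similarly for $b$. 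This settles one direction cleanly.

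The other direction is the difficulty: here coefficients are not additive, so the left side consists of $\binom{n_1+n_2}{m}$ copies of $a_{\cdot}b_m$ against the split form. I plan to use that the preorder is compatible with addition: replace $a_{n_1+n_2-m}b_m$ by $u^{2(k_{n_1}+k_{n_2})}a_{n_1-m_1}a_{n_2-m_2}b_{m_1}b_{m_2}$ for each pair $(m_1,m_2)$ in the Vandermonde decomposition. This expresses $\binom{n_1+n_2}m a_{\cdot}b_m$ as a sum over pairs, and the resulting sum is exactly the product expansion. So in fact both directions go term by term via Vandermonde with exponent $2(k_{n_1}+k_{n_2})$.

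General $t$ follows identically with the multinomial Vandermonde identity, giving exponent $2\sum k_{n_i}$. So the real obstacle is just careful bookkeeping of monotonicity of $k$ for the indices $n_i-m_i\le n_i$.
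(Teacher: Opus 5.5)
Your arguments for (i), (iv), the first claim of (iii), and the inequalities pushed through $\phi$ in (v) coincide with the paper's.

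\textbf{Part (ii): a different and simpler route.} With a single monotone $k$ serving both $a$ and $b$, you verify condition \ref{it:approxgeomproducts} of \cref{prop:approxgeomcharacterizations} directly. You expand $\prod_l(a\sequenceplus b)_{n_l}$ over tuples $(m_1,\dots,m_t)$, bound each term using $k_{n_l-m_l}+k_{m_l}\le 2k_{n_l}$, and regroup via $\sum_{m_1+\dots+m_t=M}\prod_l\binom{n_l}{m_l}=\binom{N}{M}$ (an identity in $\naturals$, hence in $S$). This works in both directions and gives the witness $2k$. The paper instead verifies condition \ref{it:approxgeompowersr}. It does the same Vandermonde regrouping for $s_m^qs_r$ versus $s_{qm+r}$, but controls exponents by the affine bound $k_n\le c+\epsilon n$. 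That yields $2c(q+1)+\lfloor\epsilon n\rfloor$, so the paper must take $\epsilon$ small and $m$ large, and it also needs a separate check of $s_n\sim s_1^n$. Your version is correct and avoids this bookkeeping. As you end up noticing, the reverse direction is no harder than the forward one.

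\textbf{Part (iii), second claim: an invalid inference.} The relation $a_0\sim 1$ does not follow from $a_0\le u^{2k_0}a_0^2$ and $a_0^2\le u^{2k_0}a_0$ (note the exponent is $2k_0$, not $k_0$). There is no cancellation in a semiring: an idempotent such as $e=(1,0)$ in $(\naturals\times\naturals)^=$ satisfies both relations while $e\not\sim 1$. In fact, since $t\ge 1$, condition \ref{it:approxgeomproducts} alone cannot give $a_0\sim 1$. The constant sequence $(e,e,\dots)$ satisfies it with $k=0$, yet $G(0)\sequencetimes a=(e,0,0,\dots)\not\asymptoticeq G(0)$. What you need is the $n=0$ case of the clause $a_n\sim a_1^n$ in conditions \ref{it:approxgeompowersr} and \ref{it:approxgeompowers1}. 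This is what the paper uses (``$0\neq a_0b_0\sim 1$'').

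\textbf{Part (v): the dichotomy is false in general.} The paper just pushes the inequalities through $\phi$ without checking that $\phi(a_n)\neq 0$. You rightly flag this, but your dichotomy fails: nothing prevents $\phi(a_1)\neq 0$ while $\phi(a_N)=0$ for some $N\ge 2$. Concretely, let $S=\naturals[x]$ and $T=\{r+s\varepsilon : r,s\in\naturals\}$ with $\varepsilon^2=0$, both preordered by equality (so $u=1$), and set $\phi(x)=\varepsilon$. Then $G(x)$ is approximately geometric, but $\phi(G(x))=(1,\varepsilon,0,0,\dots)$ has zero entries. It is also not $\asymptoticeq G(0)$, because in $T$ the relation $\asymptoticeq$ is equality.

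So (v) as stated needs an extra hypothesis. One sufficient choice is that $T$ has no zero divisors and no nonzero $y\le 0$. This holds for $\nonnegativereals$, $\tropicalreals$, and the semirings of tensors, graphs and measures used later. Under it, your dichotomy works in a corrected form:
\begin{itemize}
\item If $\phi(a_1)\neq 0$, then $\phi(a_1)^n\le\phi(u)^{nk_1}\phi(a_n)$ forces $\phi(a_n)\neq 0$.
\item If $\phi(a_1)=0$, you only get $\phi(a_n)\le 0$ rather than $\phi(a_n)=0$, and $\phi(a_0)\sim 1$ rather than $\phi(a_0)=1$. Still, comparing $a_n$ with $a_1a_{n-1}$ in both directions (exponent $k_1+k_{n-1}$, which is sublinear, unlike $nk_1$) and using $a_0\sim 1$ gives $\phi(a)\asymptoticeq G(0)$, which is all that is needed.
\end{itemize}

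\textbf{Nonzeroness in (ii) and (iii).} Neither your proof nor the paper's checks that the entries of $a\sequenceplus b$ and $a\sequencetimes b$ are nonzero, and this can fail without similar assumptions on $S$. For example, $G(1+\varepsilon)\sequenceplus G(-1)=G(\varepsilon)$ in $\{r+s\varepsilon : r,s\in\mathbb{Z}\}$ with equality as preorder.
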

\begin{proof}
\ref{it:approximatelygeometricequivalent}:
If $a\asymptoticeq G(0)$ and $a\asymptoticeq b$, then $b\asymptoticeq G(0)$, therefore $b$ is approximately geometric. Otherwise let $(k_n)_{n\in\naturals}$ be a subadditive sublinear sequence such that for all $t\in\naturals$ and $n_1,\dots,n_t\in\naturals$ the inequalities $a_{n_1}\dots a_{n_t}\le u^{k_{n_1}+\dots+k_{n_t}}a_{n_1+\dots+n_t}$, $a_{n_1+\dots+n_t}\le u^{k_{n_1}+\dots+k_{n_t}}a_{n_1}\dots a_{n_t}$ hold, and for all $n\in\naturals$ we have $a_n\le u^{k_n}b_n$ and $b_n\le u^{k_n}a_n$. Then
\begin{equation}
\begin{split}
b_{n_1}\cdots b_{n_t}
 & \le u^{k_{n_1}+\dots+k_{n_t}}a_{n_1}\cdots a_{n_t}  \\
 & \le u^{2(k_{n_1}+\dots+k_{n_t})}a_{n_1+\dots+n_k}  \\
 & \le u^{2(k_{n_1}+\dots+k_{n_t})+k_{n_1+\dots+n_k}}b_{n_1+\dots+n_k}  \\
 & \le u^{3(k_{n_1}+\dots+k_{n_t})}b_{n_1+\dots+n_k}
\end{split}
\end{equation}
and
\begin{equation}
\begin{split}
b_{n_1+\dots+n_k}
 & \le u^{k_{n_1+\dots+n_t}}a_{n_1+\dots+n_k}  \\
 & \le u^{2(k_{n_1}+\dots+k_{n_t})}a_{n_1}\cdots a_{n_t}  \\
 & \le u^{3(k_{n_1}+\dots+k_{n_t})}b_{n_1}\cdots b_{n_t}.
\end{split}
\end{equation}
Since $n\mapsto 3k_n$ is sublinear, this proves that $b$ is approximately geometric.

\ref{it:approximatelygeometricplus}:
If one of the terms, say $a$ is asymptotically equivalent to $G(0)$, then $a\sequenceplus b\asymptoticeq G(0)\sequenceplus b=b$ is approximately geometric. Otherwise let $k_n$ be a sublinear sequence such that for all $t\in\positiveintegers$ and $n_1,\dots,n_t\in\naturals$ the inequalities $a_{n_1}\dots a_{n_t}\le u^{k_{n_1}+\dots+k_{n_t}}a_{n_1+\dots+n_t}$, $a_{n_1+\dots+n_t}\le u^{k_{n_1}+\dots+k_{n_t}}a_{n_1}\dots a_{n_t}$, $b_{n_1}\dots b_{n_t}\le u^{k_{n_1}+\dots+k_{n_t}}b_{n_1+\dots+n_t}$, and $b_{n_1+\dots+n_t}\le u^{k_{n_1}+\dots+k_{n_t}}b_{n_1}\dots b_{n_t}$ hold. Let $\epsilon>0$ and $c\in\naturals$ such that $k_n\le c+\epsilon n$ for all $n\in\naturals$. Let $s=a\sequenceplus b$.

We verify condition \ref{it:approxgeompowersr} from \cref{prop:approxgeomcharacterizations}. We have
\begin{equation}
s_n
 = \sum_{i=0}^n\binom{n}{i}a_{n-i}b_{n-i}
 \sim \sum_{i=0}^n\binom{n}{i}a_1^{n-i}b_1^{n-i}
 = (a_1+b_1)^n
 = s_1^n.
\end{equation}

For $n=qm+r$ with $q,m,r\in\naturals$, $0\le r<m$, we have
\begin{equation}
\begin{split}
s_m^qs_r
 & = \left(\sum_{i=0}^m\binom{m}{i}a_{m-i}b_i\right)^q\sum_{j=0}^r\binom{r}{j}a_{r-j}b_j  \\
 & = \sum_{i_1=0}^m\sum_{i_2=0}^m\dots\sum_{i_q=0}^m\sum_{j=0}^r\binom{m}{i_1}\cdots\binom{m}{i_q}\binom{r}{j}a_{m-i_1}a_{m-i_2}\cdots a_{m-i_q}a_{r-j}b_{i_1}b_{i_2}\cdots b_{i_q}b_j  \\
 & \le \sum_{i_1=0}^m\sum_{i_2=0}^m\dots\sum_{i_q=0}^m\sum_{j=0}^r\binom{m}{i_1}\cdots\binom{m}{i_q}\binom{r}{j}u^{k_{m-i_1}+\dots+k_{m-i_q}+k_{r-j}}a_{m-i_1+\dots+m-i_2+r-j}  \\
 &\qquad\cdot u^{k_{i_1}+\dots+k_{i_q}+k_{j}}b_{i_1+\dots+i_q+j}
\end{split}
\end{equation}
The total exponent of $u$ satisfies
\begin{equation}
\begin{split}
k_{m-i_1}+\dots+k_{m-i_q}+k_{r-j}+k_{i_1}+\dots+k_{i_q}+k_{j}
 & \le \sum_{l=1}^q(c+\epsilon(m-i_l)+c+\epsilon i_l)+c+\epsilon(r-j)+c+\epsilon j  \\
 & = 2c(q+1)+\epsilon(qm+r)
   = 2c(q+1)+\epsilon n,
\end{split}
\end{equation}
therefore
\begin{equation}
\begin{split}
s_m^qs_r
 & \le u^{2c(q+1)+\lfloor\epsilon n\rfloor}\sum_{i_1=0}^m\sum_{i_2=0}^m\dots\sum_{i_q=0}^m\sum_{j=0}^r\binom{m}{i_1}\cdots\binom{m}{i_q}\binom{r}{j}a_{m-i_1+\dots+m-i_2+r-j}b_{i_1+\dots+i_q+j}  \\
 & = u^{2c(q+1)+\lfloor\epsilon n\rfloor}\sum_{i=0}^n\binom{n}{i}a_{n-i}b_i  \\
 & = u^{2c(q+1)+\lfloor\epsilon n\rfloor}s_n.
\end{split}
\end{equation}
In a similar way we have
\begin{equation}
\begin{split}
s_n
 & = \sum_{i_1=0}^m\sum_{i_2=0}^m\dots\sum_{i_q=0}^m\sum_{j=0}^r\binom{m}{i_1}\cdots\binom{m}{i_q}\binom{r}{j}a_{m-i_1+\dots+m-i_2+r-j}b_{i_1+\dots+i_q+j}  \\
 & \le u^{2c(q+1)+\lfloor\epsilon n\rfloor}\sum_{i_1=0}^m\sum_{i_2=0}^m\dots\sum_{i_q=0}^m\sum_{j=0}^r\binom{m}{i_1}\cdots\binom{m}{i_q}\binom{r}{j}a_{m-i_1+\dots+m-i_2+r-j}b_{i_1+\dots+i_q+j}  \\
 & = u^{2c(q+1)+\lfloor\epsilon n\rfloor}s_m^qs_r.
\end{split}
\end{equation}
Since $2c(q+1)+\lfloor\epsilon n\rfloor\le 2c(\frac{n}{m}+1)+\lfloor\epsilon n\rfloor\le 2c+(\frac{2c}{m}+\epsilon)n$, by choosing $\epsilon$ small and then $m$ large enough, we can make the coefficient of $n$ arbitrarily small.

\ref{it:approximatelygeometrictimes}: If $a\asymptoticeq G(0)$ or $b\asymptoticeq G(0)$, then $a\sequencetimes b=(a_0b_0,0,0,\dots)\asymptoticeq G(0)$, since $0\neq a_0b_0\sim 1$. Otherwise we use
\begin{equation}
\begin{split}
(a\sequencetimes b)_{n_1}(a\sequencetimes b)_{n_2}\dots(a\sequencetimes b)_{n_t}
 & = a_{n_1}a_{n_2}\dots a_{n_t}b_{n_1}b_{n_2}\dots b_{n_t}  \\
 & \le u^{k_{n_1}+\dots+k_{n_t}}a_{n_1+\dots+n_t}u^{k_{n_1}+\dots+k_{n_t}}b_{n_1+\dots+n_t}  \\
 & = u^{2(k_{n_1}+\dots+k_{n_t})}(a\sequencetimes b)_{n_1+\dots+n_t}
\end{split}
\end{equation}
and
\begin{equation}
\begin{split}
(a\sequencetimes b)_{n_1+\dots+n_t}
 & = a_{n_1+\dots+n_t}b_{n_1+\dots+n_t}
  \\
 & \le u^{k_{n_1}+\dots+k_{n_t}}a_{n_1}a_{n_2}\dots a_{n_t}u^{k_{n_1}+\dots+k_{n_t}}b_{n_1}b_{n_2}\dots b_{n_t}  \\
 & = u^{2(k_{n_1}+\dots+k_{n_t})}(a\sequencetimes b)_{n_1}(a\sequencetimes b)_{n_2}\dots(a\sequencetimes b)_{n_t},
\end{split}
\end{equation}
which shows that the sequence $a\sequencetimes b$ is approximately geometric as well.

\ref{it:approximatelygeometricdistributive}:
If $c\asymptoticeq G(0)$, then $(a\sequenceplus b)\sequencetimes c\asymptoticeq(a\sequenceplus b)\sequencetimes G(0)\asymptoticeq G(0)=G(0)\sequenceplus G(0)\asymptoticeq(a\sequencetimes c)\sequenceplus(b\sequencetimes c)$. Otherwise let $k_n$ be a monotone increasing sublinear sequence such that for all $t\in\positiveintegers$ and $n_1,\dots,n_t\in\naturals$ the inequalities $c_{n_1}\dots c_{n_t}\le u^{k_{n_1}+\dots+k_{n_t}}c_{n_1+\dots+n_t}$ and $c_{n_1+\dots+n_t}\le u^{k_{n_1}+\dots+k_{n_t}}c_{n_1}\dots c_{n_t}$ hold. Then
\begin{equation}
\begin{split}
((a\sequenceplus b)\sequencetimes c)_n
 & = \sum_{m=0}^n\binom{n}{m}a_{m-n}b_mc_n  \\
 & \le \sum_{m=0}^n\binom{n}{m}a_{m-n}b_m u^{k_{m-n}+k_m}c_{m-n}c_m  \\
 & \le u^{2k_n}\sum_{m=0}^n\binom{n}{m}a_{m-n}c_{m-n}b_m c_m  \\
 & = u^{2k_n}((a\sequencetimes c)\sequenceplus(b\sequencetimes c))_n
\end{split}
\end{equation}
and
\begin{equation}
\begin{split}
((a\sequencetimes c)\sequenceplus(b\sequencetimes c))_n
 & = \sum_{m=0}^n\binom{n}{m}a_{m-n}c_{m-n}b_m c_m  \\
 & \le \sum_{m=0}^n\binom{n}{m}u^{k_{m-n}+k_m}a_{m-n}b_m c_n  \\
 & \le u^{2k_n}((a\sequenceplus b)\sequencetimes c)_n.
\end{split}
\end{equation}
Since $n\mapsto 2k_n$ is sublinear, these inequalities imply $(a\sequenceplus b)\sequencetimes c\asymptoticeq(a\sequencetimes c)\sequenceplus(b\sequencetimes c)$.

\ref{it:approximatelygeometrichomomorphism}:
If $a\asymptoticeq G(0)$, then $\phi(a)\asymptoticeq\phi(G(0))=G(\phi(0))$, therefore $\phi(a)$ is approximately geometric. Otherwise let $k_n$ be a sublinear sequence such that for all $t\in\positiveintegers$ and $n_1,\dots,n_t\in\naturals$ the inequalities $a_{n_1}\dots a_{n_t}\le u^{k_{n_1}+\dots+k_{n_t}}a_{n_1+\dots+n_t}$ and $a_{n_1+\dots+n_t}\le u^{k_{n_1}+\dots+k_{n_t}}a_{n_1}\dots a_{n_t}$ hold. Let $u_T$ be power universal in $T$ and let $p\in\naturals$ such that $\phi(u)\le u_T^p$. Then
\begin{equation}
\begin{split}
\phi(a)_{n_1}\dots \phi(a)_{n_t}
 & = \phi(a_{n_1}\dots a_{n_t})  \\
 & \le \phi(u^{k_{n_1}+\dots+k_{n_t}}a_{n_1+\dots+n_t})  \\
 & = \phi(u)^{k_{n_1}+\dots+k_{n_t}}\phi(a)_{n_1+\dots+n_t}  \\
 & \le u_T^{pk_{n_1}+\dots+pk_{n_t}}\phi(a)_{n_1+\dots+n_t}
\end{split}
\end{equation}
and
\begin{equation}
\begin{split}
\phi(a)_{n_1+\dots+n_t}
 & = \phi(a_{n_1+\dots+n_t})  \\
 & \le \phi(u^{k_{n_1}+\dots+k_{n_t}}a_{n_1}\dots a_{n_t})  \\
 & = \phi(u)^{k_{n_1}+\dots+k_{n_t}}\phi(a_{n_1}\dots a_{n_t})  \\
 & \le u_T^{pk_{n_1}+\dots+pk_{n_t}}\phi(a)_{n_1}\dots \phi(a)_{n_t}.
\end{split}
\end{equation}
The sequence $n\mapsto pk_n$ is sublinear, therefore $\phi(a)$ is approximately geometric.
\end{proof}

The class of approximately geometric sequences depends on the preorder, and enlarging the preorder, i.e., making more pairs comparable, potentially increases the class as well. In particular, an approximately geometric sequence with respect to the asymptotic preorder $\asymptoticle$ may not be approximately geometric with respect to the preorder $\le$. However, we can show that one can always find another sequence that can act as a substitute that is $\le$-approximately geometric and $\asymptoticle$-asymptotically equivalent to it.
\begin{proposition}\label{prop:asymptoticpreorderapproximatelygeometric}
Let $a\in\nonzeros{S}^\naturals$ be a sequence that is approximately geometric with respect to the asymptotic preorder $\asymptoticle$. Then there exists a sequence $x\in\nonzeros{S}^\naturals$ such that $x$ is approximately geometric with respect to $\le$ and asymptotically equivalent to $a$ with respect to $\asymptoticle$.
\end{proposition}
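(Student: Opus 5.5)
The plan is to replace $a$ by a sequence built from powers of a slowly changing sequence of "block" elements $a_{M_j}$. Asymptotic ($\asymptoticle$) inequalities between such powers can be upgraded to genuine $\le$ inequalities at the cost of a sublinear number of extra factors of $u$. Two facts will be used throughout. First, if $y,z\in\nonzeros{S}$ and $y\asymptoticle z$, then taking $n=1$ in the definition gives $y\le u^{k_1}z$, so $y\sim z$ with respect to $\le$. Second, $y\asymptoticle z$ gives $y^N\le u^{k^{y,z}_N}z^N$ for all $N$ with $k^{y,z}$ sublinear; this is the main tool for converting $\asymptoticle$ into $\le$.

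First I apply \cref{prop:approxgeomcharacterizations}\ref{it:approxgeompowers1} to $a$ with respect to the preorder $\asymptoticle$ (which is of polynomial growth with the same $u$). For $\epsilon_j=2^{-j}$ this gives $m_j$ and $c_j$, and I pass to $M_j:=m_1m_2\cdots m_j$. Since the estimates only improve up to constants when $m$ is replaced by a multiple (using \ref{it:approxgeomproductsepsilon}), I arrange $M_j\mid M_{j+1}$ and $a_n\asymptoticeq u^{\pm(c_j+\lfloor\epsilon_j n\rfloor)}a_{M_j}^{\lfloor n/M_j\rfloor}a_1^{n-M_j\lfloor n/M_j\rfloor}$ in the sense of the two one-sided inequalities in $\asymptoticle$. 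I then fix thresholds $N_1<N_2<\cdots$, to be chosen later, and define
\begin{equation}
x_n=a_{M_j}^{\lfloor n/M_j\rfloor}\,a_1^{\,n-M_j\lfloor n/M_j\rfloor}\qquad\text{for }N_j\le n<N_{j+1}.
\end{equation}
Then $x_n\in\nonzeros{S}$. By the first fact, $a_{M_j}\sim a_1^{M_j}$ with respect to $\le$, so $x_n\sim a_1^n$. Asymptotic equivalence $x\asymptoticeq a$ with respect to $\asymptoticle$ is immediate from the choice of $M_j$: on block $j$ the discrepancy is $c_j+\epsilon_j n$. This is sublinear provided $N_j\ge c_j/\epsilon_j$, by the same diagonal trick as in the proof of \ref{it:approxgeomproductsepsilon}$\implies$\ref{it:approxgeomproducts}.

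The main work is to show that $x$ is approximately geometric with respect to $\le$. I would verify condition \ref{it:approxgeomproductsepsilon} (or \ref{it:approxgeomproducts} directly). A product $x_{n_1}\cdots x_{n_t}$ is a product of powers of finitely many $a_{M_i}$ and $a_1$, while $x_{n_1+\dots+n_t}$ is essentially a power of a single $a_{M_{j'}}$. The needed comparisons are between $a_{M_i}^{M_{j'}/M_i}$ and $a_{M_{j'}}$, and between $a_1^{M_i}$ and $a_{M_i}$. Each of these holds in $\asymptoticle$ with slack $c+\epsilon M$ per block by the choice of $M_j$. Raising it to the $q$-th power and using the second fact gives a $\le$ inequality with exponent $q(c+\epsilon M)+k_q$, where $k$ is the sublinear witness for that particular pair of elements. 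Because only finitely many pairs $(i,j')$ with $i\le j'\le j$ are involved up to block $j$, I can choose $N_{j+1}$ so large that all these witnesses satisfy $k_q\le\epsilon_j qM$ once $q\ge N_{j+1}/M_{j+1}$. For leftover factors with small $q$, the first fact gives a bounded constant which is absorbed per factor into the $tc$ term of \ref{it:approxgeomproductsepsilon}.

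The step I expect to be the main obstacle is this bookkeeping. Products can mix indices from many blocks, so I would group the factors by block exactly as in the proof of \ref{it:approxgeomproductsepsilon}$\implies$\ref{it:approxgeomproducts}. Within each block I would first merge powers of $a_{M_i}$ using $\le$ (which is exact, since they are literal powers), and only then convert between block elements. This way each conversion is applied once with a large exponent, and its sublinear cost is controlled by the choice of $N_j$. Finally I would collect the exponents into a single sublinear $k_n$ and conclude via \cref{prop:approxgeomcharacterizations}.
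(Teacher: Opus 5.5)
Your sequence $x$ is essentially the paper's $x_l=a_{m_l}^{n_l}a_1^{r_l}$: powers of a block element whose length grows slowly, with thresholds chosen after the witnesses are known. Your argument for $x\asymptoticeq a$ is fine. The gap is in the step you flag yourself. For $x_{n_1}\cdots x_{n_t}$ versus $x_{n_1+\dots+n_t}$, your accounting has only two kinds of cost: conversions between block elements applied with a large exponent, and bounded per-factor constants. There is a third, unavoidable kind. Material must be moved between powers of some $a_{M_i}$ and the remainder factor $a_1^{n-M_{j'}\lfloor n/M_{j'}\rfloor}$ of the target, at a cost linear in its length.

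For instance, let $S=\naturals$, $u=2$, $a_n=2^{\lfloor n/2\rfloor}$, and all $M_j$ even. Here $\asymptoticle$ and $\le$ coincide, so no witness plays any role, and $x_n=2^{(M_j/2)\lfloor n/M_j\rfloor}$ for $N_j\le n<N_{j+1}$. Suppose $n_1=q_1M_{j'}$ is in block $j'$, $n_2$ is in an earlier block $i$ with $n_2<M_{j'}$, and $n_1+n_2$ is still in block $j'$. Then $x_{n_1+n_2}=x_{n_1}$ while $x_{n_2}\ge 2^{(n_2-M_i)/2}$, so $x_{n_1}x_{n_2}\le 2^k x_{n_1+n_2}$ forces $k\ge (n_2-M_i)/2$.
\begin{itemize}
\item No comparison between $a_{M_i}^{M_{j'}/M_i}$ and $a_{M_{j'}}$ occurs at all.
\item $x_{n_2}$ carries the large exponent $\lfloor n_2/M_i\rfloor$, so it is not one of your leftover factors with small $q$.
\item The cost is neither a bounded constant nor $\le c+\epsilon n_2$.
\end{itemize}
It can only be paid from the $\epsilon n$ term, using $n_2<M_{j'}\ll N_{j'}\le n_1$. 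That requires $N_j/M_j\to\infty$ and charging remainder costs to the target rather than to individual factors, and your plan contains neither. Similarly, if block-$i$ material is converted up to the target's level, it may produce only $0$ or $1$ copies of $a_{M_{j'}}$. The witness for the pair $(a_{M_i}^{M_{j'}/M_i},a_{M_{j'}})$ is then evaluated at a small argument, giving a constant that depends on $(i,j')$ and cannot go into $tc$. So the claim that each conversion is applied once with a large exponent fails in that direction.

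A workable repair is to convert everything down to a single level $M_{i_0}$ fixed in terms of $\epsilon$. Then each $a_{M_i}$ is converted with the exponent it already carries in some $x_n$, which is at least $N_i/M_i-1$. This needs the witnesses for the finitely many pairs $(a_{M_{i'}}^{M_i/M_{i'}},a_{M_i})$, $i'\le i$, to be small from that exponent on. So $N_i$ must be chosen right after $M_i$; your text ties the control to $N_{i+1}$, which is too late for the start of block $i$.

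The paper sidesteps the general bookkeeping by verifying condition \ref{it:approxgeompowers1} of \cref{prop:approxgeomcharacterizations} instead of \ref{it:approxgeomproductsepsilon} or \ref{it:approxgeomproducts}. For fixed $q$ it compares $x_{iq+j}$ with $x_q^ix_1^j$. Only the one large element $x_{iq+j}$ has to be broken down to the fixed level $m_q$, using its own exponent $n_{iq+j}\ge N_0(m_{iq+j})$. Here $N_0(M)$ is chosen so that the witnesses of all decompositions $a_M\to a_m^{\lfloor M/m\rfloor}a_1^{M-m\lfloor M/m\rfloor}$, $m\le M$, are at most that exponent. This suffices because $n_l/l\le 1/m_l\to 0$. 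All remaining conversions involve fixed elements depending only on $q$, and the equivalence in \cref{prop:approxgeomcharacterizations} then handles arbitrary products.
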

\begin{proof}
Let $(k_m)_{m\in\naturals}$ be a sublinear sequence such that for all $t\in\positiveintegers$, $m_1,\dots,m_t\in\naturals$ the inequalities
\begin{align}
a_{m_1}\cdots a_{m_t} & \asymptoticle u^{k_{m_1}+\dots+k_{m_t}}a_{m_1+\dots+m_t}  \\
a_{m_1+\dots+m_t} & \asymptoticle u^{k_{m_1}+\dots+k_{m_t}}a_{m_1}\cdots a_{m_t}
\end{align}
hold. For each $t\in\positiveintegers$, $m_1,\dots,m_t\in\naturals$, let $(k_{(m_1,\dots,m_t),n})_{n\in\naturals}$ be a sublinear sequence such that for all $n$ the inequalities
\begin{align}
\left(a_{m_1}\cdots a_{m_t}\right)^n & \le u^{k_{(m_1,\dots,m_t),n}+n(k_{m_1}+\dots+k_{m_t})}a_{m_1+\dots+m_t}^n  \\
a_{m_1+\dots+m_t}^n & \le u^{k_{(m_1,\dots,m_t),n}+n(k_{m_1}+\dots+k_{m_t})}\left(a_{m_1}\cdots a_{m_t}\right)^n
\end{align}
hold. In particular, if $t=1$, the inequalities compare $a_{m_1}^n$ with itself, therefore we may assume $k_{(m_1),n}=0$ for all $m_1,n\in\naturals$.

For $M\in\naturals$, let
\begin{equation}
N_0(M)=\min\setbuild{n_0\in\naturals}{n_0\ge M,\forall n\ge n_0,\forall 1\le m\le M: k_{(m,m,\dots,m,1,\dots,1),n}\le n},
\end{equation}
where the index of $k$ contains $m$ with multiplicity $\lfloor\frac{M}{m}\rfloor$ and $1$ with multiplicity $M-m\lfloor\frac{M}{m}\rfloor$. The minimum exists by sublinearity of $n\mapsto k_{(m,m,\dots,m,1,\dots,1),n}$, and $N_0(M)\ge M$ by definition. By our assumption $k_{(m),n}=0$, we have $N_0(1)=1$, while $N_0(0)=0$ since the condition becomes vacuous in this case.

Let $m_0=n_0=r_0=0$, and for all $l\in\positiveintegers$
\begin{align}
m_l & = \max\setbuild{M\in\naturals}{MN_0(M)\le l}  \\
n_l & = \left\lfloor\frac{l}{m_l}\right\rfloor  \\
r_l & = l-n_lm_l,
\end{align}
in particular $m_1=n_1=1$, $r_1=0$, and $n_l\ge N_0(m_l)$ holds for all $l$. Note also that $m_l\le\sqrt{l}$, $\lim_{l\to\infty}m_l=\infty$, and $0\le r_l\le m_l$. For all $l\in\naturals$ we let $x_l=a_{m_l}^{n_l}a_1^{r_l}$.

We claim that $(x_l)_{l\in\naturals}$ is approximately geometric with respect to $\le$. Let $q\in\positiveintegers$, $i,j\in\naturals$, $0\le j<q$. Then
\begin{multline}\label{eq:xtoqblocks}
x_{iq+j}
 = a_{m_{iq+j}}^{n_{iq+j}}a_1^{r_{iq+j}}  \\
 \le u^{k_{(m_q,\dots,m_q,1,\dots,1),n_{iq+j}}+n_{iq+j}\left[\left\lfloor\frac{m_{iq+j}}{m_q}\right\rfloor k_{m_q}+\left(m_{iq+j}-m_q\left\lfloor\frac{m_{iq+j}}{m_q}\right\rfloor\right)k_1\right]}  \\  \cdot a_{m_q}^{n_{iq+j}\left\lfloor\frac{m_{iq+j}}{m_q}\right\rfloor}a_1^{n_{iq+j}\left(m_{iq+j}-m_q\left\lfloor\frac{m_{iq+j}}{m_q}\right\rfloor\right)+r_{iq+j}},
\end{multline}
and a similar reverse inequality holds with the $u$ factor on the opposite side. To compare with $x_q^ix_1^j=x_q^ia_1^j$, we need to change the exponent of $a_{m_q}$ to $in_q$. We use either the inequality (with the $m_q$-tuple $(1,\ldots,1)$)
\begin{align}\label{eq:amqtopower}
a_{m_q}^h & \le u^{k_{(1,\dots,1),h}+hm_qk_1}a_1^{hm_q}
\intertext{or}
a_1^{hm_q} & \le u^{k_{(1,\dots,1),h}+hm_qk_1}a_{m_q}^h
\end{align}
to achieve this, with
\begin{equation}
h=\left\lvert in_q-n_{iq+j}\left\lfloor\frac{m_{iq+j}}{m_q}\right\rfloor\right\rvert.
\end{equation}
We need to estimate the limit superior of the total exponent of $u$ divided by $iq+j$ as $i\to\infty$, $0\le j<q$. By the choice of $m_l$ and $n_l$, we have
\begin{equation}
\frac{k_{(m_q,\dots,m_q,1,\dots,1),n_{iq+j}}}{iq+j}
 \le \frac{n_{iq+j}}{iq+j}
 \le \frac{1}{m_{iq+j}},
\end{equation}
which goes to $0$ since $l\mapsto m_l$ is unbounded. For the two parts of the second term in the exponent in \eqref{eq:xtoqblocks} we get
\begin{equation}
\limsup_{i\to\infty}\frac{1}{iq+j}n_{iq+j}\left\lfloor\frac{m_{iq+j}}{m_q}\right\rfloor=\frac{k_{m_q}}{m_q}
\end{equation}
and
\begin{equation}
\limsup_{i\to\infty}\frac{1}{iq+j}n_{iq+j}\left(m_{iq+j}-m_q\left\lfloor\frac{m_{iq+j}}{m_q}\right\rfloor\right)k_1
 \le \limsup_{i\to\infty}\frac{1}{m_{iq+j}}m_qk_1
 = 0
\end{equation}

Choose $\epsilon>0,c\in\naturals$ such that $k_{(1,\dots,1),n}\le c+\epsilon n$ for all $n$ ($1$ with multiplicity $m_q$). Then the exponent in \eqref{eq:amqtopower} satisfies
\begin{equation}
k_{(1,\dots,1),h}+hm_qk_1
 \le c+h(\epsilon+m_qk_1).
\end{equation}
To bound $h$, we use
\begin{align}
i(q-m_q)
 & \le in_qm_q
   \le iq  \\
n_{iq+j}m_q\left\lfloor\frac{m_{iq+j}}{m_q}\right\rfloor
 & \le n_{iq+j}m_{iq+j}
   \le iq+j
\end{align}
and
\begin{equation}
\begin{split}
n_{iq+j}m_q\left\lfloor\frac{m_{iq+j}}{m_q}\right\rfloor
 & \ge n_{iq+j}m_q\left(\frac{m_{iq+j}}{m_q}-1\right)  \\
 & = n_{iq+j}m_{iq+j}-n_{iq+j}m_q  \\
 & \ge iq+j-m_{iq+j}-n_{iq+j}m_q.
\end{split}
\end{equation}
These imply
\begin{equation}
-m_q-j\le in_qm_q-n_{iq+j}m_q\left\lfloor\frac{m_{iq+j}}{m_q}\right\rfloor\le m_{iq+j}+n_{iq+j}m_q-j,
\end{equation}
therefore
\begin{equation}
h\le \frac{m_{iq+j}}{m_q}+n_{iq+j}+\frac{j}{m_q}.
\end{equation}
With these we bound the growth rate of the exponent in \eqref{eq:amqtopower} as
\begin{equation}
\begin{split}
\limsup_{i\to\infty}\frac{k_{(1,\dots,1),h}+hm_qk_1}{iq+j}
 & \le \limsup_{i\to\infty}\frac{c+h(\epsilon+m_qk_1)}{iq+j}  \\
 & \le (\epsilon+m_qk_1)\limsup_{i\to\infty}\frac{1}{iq+j}\left(\frac{m_{iq+j}}{m_q}+n_{iq+j}+\frac{j}{m_q}\right)  \\
 & \le (\epsilon+m_qk_1)\limsup_{i\to\infty}\left(\frac{1}{m_q\sqrt{iq+j}}+\frac{1}{m_{iq+j}}\right)  \\
 & = 0.
\end{split}
\end{equation}

Combining these estimates, the total exponent (\eqref{eq:xtoqblocks} and \eqref{eq:amqtopower}) divided by $iq+j$ has limit superior at most $\frac{k_{m_q}}{m_q}$, which can be made arbitrarily small by choosing $q$ sufficiently large. Therefore $x$ is approximately geometric with respect to $\le$.

Between the entries of $x$ and $a$, the asymptotic inequalities
\begin{align}
x_l=a_{m_l}^{n_l}a_1^{r_l} & \asymptoticle u^{n_lk_{m_l}+r_lk_1}a_{n_lm_l+r_l}=u^{n_lk_{m_l}+r_lk_1}a_l
\intertext{and}
a_l=a_{n_lm_l+r_l} & \asymptoticle u^{n_lk_{m_l}+r_lk_1}a_{m_l}^{n_l}a_1^{r_l}=u^{n_lk_{m_l}+r_lk_1}x_l
\end{align}
hold. Since
\begin{equation}
\limsup_{l\to\infty}\frac{1}{l}\left(n_lk_{m_l}+r_lk_1\right)
 \le \limsup_{l\to\infty}\left(\frac{n_lm_l}{l}\frac{k_{m_l}}{m_l}+\frac{m_l}{l}k_1\right)
 = 0,
\end{equation}
this implies that $x$ and $a$ are asymptotically equivalent with respect to $\asymptoticle$.
\end{proof}

\section{Asymptotic completeness}\label{sec:asymptoticcompleteness}

We continue with a notion of completeness related to approximately geometric sequences.
\begin{definition}
A preordered semiring $(S,+,\cdot,0,1,\le)$ is \emph{asymptotically complete} if for every approximately geometric sequence $a\in(\nonzeros{S})^\naturals$ there exists an element $s\in S$ such that $a\asymptoticeq G(s)$.
\end{definition}

\begin{example}\label{ex:asymptoticallycomplete}\leavevmode
\begin{enumerate}
\item The semiring $\naturals$ with its usual preorder is not asymptotically complete. For example, the sequence $a_n=2^{\lfloor n/2\rfloor}$ is approximately geometric by \cref{ex:approximatelygeometric}, but if $s\ge 2$, then $s^n\le 2^{k_n}a_n$ requires $k_n\ge n(-\frac{1}{2}+\log_2s)\ge\frac{n}{2}$, while if $s\le 1$, then $a_n\le 2^{k_n}s^n$ implies $k_n\ge \lfloor n/2\rfloor$.
\item For a semiring $S$, the preordered semiring $S^=$ is asymptotically complete.
\item $\nonnegativereals$, $\tropicalreals$, $\nonnegativereals\opposite$, and $\tropicalreals\opposite$ are asymptotically complete. By \cref{ex:approximatelygeometric}, $(a_n)_{n\in\naturals}\in(\nonzeros{S})^n$ is approximately geometric if the limit $\lim_{n\to\infty}\frac{\ln a_n}{n}=:\ln A$ exists, therefore $k_n=\left\lceil\left\lvert n\log_u A-\log_u a_n\right\rvert\right\rceil$ is sublinear. By construction, $a_n\le u^{k_n}A^n$ and $A^n\le u^{k_n}a_n$ holds for all $n$.
\item $\dualnumbers$ is asymptotically complete. By \cref{ex:approximatelygeometric}, a nonzero sequence $(r_n+s_nX)_{n\in\naturals}$ is approximately geometric if and only if $(r_n)_{n\in\naturals}$ is geometric and the limit $\lim_{n\to\infty}\frac{1}{n}\frac{s_n}{r_n}=:\frac{d}{r_1}$ exists, therefore
\begin{equation}
k_n=\left\lceil\left\lvert n\frac{d}{r_1}-\frac{s_n}{r_n}\right\rvert\right\rceil
\end{equation}
is sublinear. By construction, $r_n+s_nX\le(1+X)^{k_n}(r_1+dX)^n$ and $(r_1+dX)^n\le(1+X)^{k_n}(r_n+s_nX)$ holds for all $n$.
\end{enumerate}
\end{example}
Examining the examples above, we see that while $\naturals$ is not asymptotically complete, it does embed into the asymptotically complete preordered semiring $\nonnegativereals$, and we may ask whether this is true in general, and if there is a unique way to make a semiring asymptotically complete. To illustrate that this requires some care and extra conditions, we can consider the preordered semiring $\nonnegativereals\cup\{2'\}$ with $a+2'=a+2$ and $a\cdot 2'=a\cdot 2$, $a\le 2'\iff a\le 2$, and $2'\le a\iff 2\le a$. This preordered semiring is asymptotically complete and $\naturals$ embeds in the same way as into $\nonnegativereals$. Clearly, the non-uniqueness in this example stems from the preorder's inability to separate $2$ from $2'$.

The situation is analogous to the completion of topological vector spaces: in the non-Hausdorff case, infinitely many non-Hausdorff completions exist, but uniqueness can be guaranteed by requiring the completion to be a Hausdorff space. In a similar way, we ensure uniqueness in the following definition by requiring that the preorder is a closed partial order.
\begin{definition}\label{def:asymptoticcompletion}
Let $S$ be a preordered semiring. An \emph{asymptotic completion} of $S$ is a closed partially ordered semiring $\completion{S}$ together with a monotone homomorphism $i:S\to\completion{S}$ such that
\begin{enumerate}
\item $i(u)$ is power universal in $\completion{S}$
\item $\completion{S}$ is asympotically complete
\item if $T$ is an arbitrary asymptotically complete closed partially ordered semiring and $f:S\to T$ is a monotone homomorphism, then there is a unique $g:\completion{S}\to T$ such that $f=g\circ i$.
\end{enumerate}
\end{definition}
Note that the map $i$ is in general not injective, but if $i(a)=i(b)$, then $a\asymptoticeq b$. In the context of topological vector spaces, this may be compared to quotienting by the closure of the origin, as a source of non-injectivity of the canonical map.

It follows from the definition that asymptotic completions are essentially unique. Suppose that $i_1:S\to\completion{S}_1$ and $i_2:S\to\completion{S}_2$ are the monotone homomorphisms for two asymptotic completions $\completion{S}_1$ and $\completion{S}_2$ of $S$. Applying the definition with $\completion{S}_1$ and $f=i_2$ gives a unique $g_{1\to 2}:\completion{S}_1\to\completion{S}_2$ such that $g_{1\to 2}\circ i_1=i_2$, and by reversing the roles, we obtain a unique $g_{2\to 1}:\completion{S}_2\to\completion{S}_1$ such that $g_{2\to 1}\circ i_2=i_1$. The compositions $g_{2\to 1}\circ g_{1\to 2}:\completion{S}_1\to\completion{S}_1$ and $g_{1\to 2}\circ g_{2\to 1}:\completion{S}_2\to\completion{S}_2$ satisfy $(g_{2\to 1}\circ g_{1\to 2})\circ i_1=i_1$ and $(g_{1\to 2}\circ g_{2\to 1})\circ i_2=i_2$, so by uniqueness both compositions are identity maps.

We will make use of the following lemma in the proof of existence of an asymptotic completion.
\begin{lemma}\label{lem:approximatelygeometricdense}
Let $S\subseteq T$ be preordered semirings such that some $u\in S$ is power universal in $T$ and every geometric sequence in $T$ is asymptotically equivalent to an approximately geometric sequence in $S$. Then every approximately geometric sequence in $T$ is asymptotically equivalent to an approximately geometric sequence in $S$.
\end{lemma}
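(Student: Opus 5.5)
Let $a\in T^\naturals$ be approximately geometric. If $a\asymptoticeq G(0)$ we simply take the zero sequence $G(0)$ of $S$, so assume $a\in(\nonzeros{T})^\naturals$. The plan is to approximate $a$ by a diagonal sequence built from the approximations of the geometric sequences $G(a_m)$, mimicking the construction in the proof of \cref{prop:asymptoticpreorderapproximatelygeometric}. For each $m\in\positiveintegers$, by hypothesis there is an approximately geometric $b^{(m)}\in S^\naturals$ with $b^{(m)}\asymptoticeq G(a_m)$ in $T$; fix sublinear $(k^{(m)}_n)_n$ with $b^{(m)}_n\le u^{k^{(m)}_n}a_m^n$ and $a_m^n\le u^{k^{(m)}_n}b^{(m)}_n$. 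Since $a_m\neq0$, $b^{(m)}$ is nonzero (otherwise $G(a_m)\asymptoticeq G(0)$ would force $a_m^n\le 0$ for large $n$, which one rules out using $a_m^n\sim a_1^{mn}$ — or, if needed, one handles this case by noting such $b^{(m)}$ can be replaced by anything equivalent). Also fix, using \ref{it:approxgeomproducts} of \cref{prop:approxgeomcharacterizations} for $a$, a monotone subadditive sublinear $(k_n)$ with the product inequalities, and for $b^{(1)}$ its own sublinear sequence.

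Next choose $m_l\to\infty$ slowly and set $n_l=\lfloor l/m_l\rfloor$, $r_l=l-n_lm_l$, and define
\begin{equation}
x_l=b^{(m_l)}_{n_l}\,b^{(1)}_{r_l}\in S.
\end{equation}
The growth of $m_l$ is chosen, exactly as with $N_0(M)$ before, so that $n_l\ge N_0(m_l)$ where $N_0(M)$ guarantees $k^{(m)}_n\le n$ for all $m\le M$, $n\ge N_0(M)$, and also $m_l\le\sqrt l$. Then in $T$
\begin{equation}
x_l\le u^{k^{(m_l)}_{n_l}+k^{(1)}_{r_l}}a_{m_l}^{n_l}a_1^{r_l}\le u^{k^{(m_l)}_{n_l}+k^{(1)}_{r_l}+n_lk_{m_l}+r_lk_1}a_l,
\end{equation}
and symmetrically, and the exponent divided by $l$ is at most $\frac{1}{m_l}+\frac{k_{m_l}}{m_l}+o(1)\to 0$ (using $r_l<m_l\le\sqrt l$ and sublinearity of $k^{(1)}$ on a bounded range). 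Hence $x\asymptoticeq a$ in $T$, with $u\in S$.

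It remains to show $x$ is approximately geometric in $S$; this is the main obstacle, since the needed inequalities must hold in $S$'s preorder, not just in $T$. Here I expect to need that $S$ carries the restricted preorder of $T$ (implicit in "$S\subseteq T$" as preordered semirings with $u$ power universal in both); granting this, any inequality between elements of $S$ derived in $T$ is valid in $S$, and then \cref{prop:approximatelygeometricproperties}\ref{it:approximatelygeometricequivalent} applied in $T$ gives that $x$ is approximately geometric in $T$ since $x\asymptoticeq a$, and the defining inequalities (\ref{it:approxgeomproducts}) involve only entries of $x$ and powers of $u\in S$, so they hold in $S$. If instead $S$'s preorder is merely contained in $T$'s, I would fall back to the block argument of \cref{prop:asymptoticpreorderapproximatelygeometric}: compare $x_{iq+j}$ with $x_q^ix_1^j$ by splitting $b^{(m_{iq+j})}_{n_{iq+j}}$ — this is where the difficulty lies, as different $b^{(m)}$ are only related through $T$, so I would try to choose all $b^{(m)}$ as $S$-approximately geometric sequences tied to a single one, e.g. replace $b^{(m)}$ by subsequences derived from $b^{(m')}$ with $m\mid m'$, taking $m_l$ along a divisibility chain like $m=j!$.
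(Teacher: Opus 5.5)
Your proposal is correct and follows the paper's proof. Both arguments build the same diagonal sequence from the $S$-approximations of $G(a_m)$, with $m_l$ chosen via $N_0(M)$ and $n_l=\lfloor l/m_l\rfloor$, use the same exponent estimate to get $x\asymptoticeq a$ in $T$, and conclude with \cref{prop:approximatelygeometricproperties}\ref{it:approximatelygeometricequivalent}; the only cosmetic difference is that the paper takes the remainder factor $(b^{(1)}_1)^{r_l}$ instead of your $b^{(1)}_{r_l}$. The paper also implicitly gives $S$ the preorder restricted from $T$, which is how the lemma is applied to $i(S)\subseteq\completion{S}$, so your fallback paragraph is not needed.
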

\begin{proof}
Let $b=(b_n)_{n\in\naturals}$ be an approximately geometric sequence in $T$. If $b\asymptoticeq G(0)$, then it is asymptotically equivalent to an approximately geometric sequence in $S$. Suppose that $b\not\asymptoticeq G(0)$.

For all $m\in\naturals$, let $a_m=(a_{m,n})_{n\in\naturals}\in(\nonzeros{S})^\naturals$ such that $a_m\asymptoticeq G(b_m)$. Let $(k_m)_{m\in\naturals}$ be a sublinear sequence such that for all $t\in\positiveintegers$, $m_1,\dots,m_t\in\naturals$ the inequalities $b_{m_1+\dots+m_t}\le u^{k_{m_1}+\dots+k_{m_t}}b_{m_1}\cdots b_{m_t}$ and $b_{m_1}\cdots b_{m_t}\le u^{k_{m_1}+\dots+k_{m_t}}b_{m_1+\dots+m_t}$ hold. For all $m\in\naturals$, let $(k_{m,n})_{n\in\naturals}$ be sublinear such that $a_{m,n}\le u^{k_{m,n}}b_m^n$ and $b_m^n\le u^{k_{m,n}}a_{m,n}$ for all $m,n$.

For $M\in\naturals$, let
\begin{equation}
N_0(M)=\min\setbuild{n_0\in\naturals}{n_0\ge M,\forall n\ge n_0:k_{M,n}\le n}.
\end{equation}
This minimum exists by sublinearity of $n\mapsto k_{M,n}$, and $N_0(M)\ge M$ by definition.

Let $m_0=n_0=r_0=0$, and for all $l\in\positiveintegers$
\begin{align}
m_l & = \max\setbuild{M\in\naturals}{MN_0(M)\le l}  \\
n_l & = \left\lfloor \frac{l}{m_l}\right\rfloor  \\
r_l & = l-n_lm_l.
\end{align}
Then $m_l\le\sqrt{l}$, $\lim_{l\to\infty}m_l=\infty$, $n_l\ge N_0(m_l)$, $m_ln_l\le l$, and $0\le r_l\le m_l$. Let $x_l=a_{m_l,n_l}a_{1,1}^{r_l}\in\nonzeros{S}$ for all $l\in\naturals$.

We claim that $x\asymptoticeq b$. Indeed,
\begin{equation}
\begin{split}
x_l
 & = a_{m_l,n_l}a_{1,1}^{r_l}  \\
 & \le u^{k_{m_l,n_l}+r_lk_{1,1}}b_{m_l}^{n_l}b_1^{r_l}  \\
 & \le u^{k_{m_l,n_l}+r_lk_{1,1}+n_lk_{m_l}+r_lk_1}b_{m_ln_l+r_l}  \\
 & = u^{k_{m_l,n_l}+r_lk_{1,1}+n_lk_{m_l}+r_lk_1}b_l,
\end{split}
\end{equation}
and we similarly obtain
\begin{equation}
b_l \le u^{k_{m_l,n_l}+r_lk_{1,1}+n_lk_{m_l}+r_lk_1}x_l.
\end{equation}
The growth rate of the exponent can be estimated as
\begin{equation}
\begin{split}
\limsup_{l\to\infty}\frac{k_{m_l,n_l}+r_lk_{1,1}+n_lk_{m_l}+r_lk_1}{l}
 & \le \limsup_{l\to\infty}\frac{n_l+m_l(k_{1,1}+k_1)+n_lk_{m_l}}{l}  \\
 & \le \limsup_{l\to\infty}\frac{1}{m_l}+\frac{k_{1,1}+k_1}{\sqrt{l}}+\frac{k_{m_l}}{m_l}
 = 0
\end{split}
\end{equation}

By \cref{prop:approximatelygeometricproperties}, part \ref{it:approximatelygeometricequivalent}, $x$ is also approximately geometric.
\end{proof}
Suppose that $\completion{S}$, $i:S\to\completion{S}$ is an asymptotic completion of $S$, and consider the subsemiring $i(S)$. For every approximately geometric sequence in $i(S)$ we can find an element $a$ such that the sequence is asymptotically equivalent to $G(a)$. Since the preorder is a closed partial order, the element $a$ is unique ($G(a)\asymptoticeq G(a')$ is equivalent to $a\le a'$ and $a\ge a'$ if the preorder is closed). Let $T$ be the set of all elements $a$ arising in this way. Applying \cref{lem:approximatelygeometricdense} to $i(S)\subseteq T$, we can see that $T$ is asymptotically complete, so by uniqueness, $T=\completion{S}$.

This suggests that an asymptotic completion may be constructed as a quotient of the set of approximately geometric sequences. In the following, the equivalence class of a sequence $s$ with respect to asymptotic equivalence will be denoted by $[s]$.
\begin{theorem}\label{thm:asymptoticcompletion}
Let $\completion{S}$ be the set of asymptotic equivalence classes of approximately geometric sequences in $S$, with addition $+$, multiplication $\cdot$, and preorder $\le$ induced by the operations $\sequenceplus$, $\sequencetimes$, and the preorder $\asymptoticle$. Let $i:S\to\completion{S}$ be defined by $s\mapsto[G(s)]$. Then $\completion{S}$ with $i$ is an asymptotic completion of $S$.

If $\phi:S\to T$ is a monotone semiring homomorphism, then $\completion{\phi}([a]):=[\phi(a)]$ defines a monotone semiring homomorphism $\completion{\phi}:\completion{S}\to\completion{T}$ that satisfies $\completion{\phi}\circ i_S=i_T\circ\phi$.
\end{theorem}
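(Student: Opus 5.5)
We split the verification into four parts: $\completion{S}$ is a well-defined closed partially ordered semiring; $i$ is a monotone homomorphism with $i(u)$ power universal; $\completion{S}$ is asymptotically complete; and the universal property holds. The statement about $\completion{\phi}$ is proved along the way.

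\emph{Well-definedness and semiring axioms.} By \cref{prop:approximatelygeometricproperties}, approximately geometric sequences are closed under $\sequenceplus$ and $\sequencetimes$. By \cref{prop:sequenceoperationsproperties}, parts \ref{it:sequenceleplus} and \ref{it:sequenceletimes}, both operations respect $\asymptoticle$, hence also $\asymptoticeq$, so they descend to equivalence classes. The preorder $\asymptoticle$ descends to a partial order on $\completion{S}$, since antisymmetry holds by construction of the quotient. Associativity, commutativity, the units $[G(0)]$ and $[G(1)]$, and compatibility of the order with the operations come from \cref{prop:sequenceoperationsproperties}. Distributivity comes from part \ref{it:approximatelygeometricdistributive} of \cref{prop:approximatelygeometricproperties}. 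Finally, $0\cdot x=0$ holds by part \ref{it:approximatelygeometrictimes}.

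\emph{Properties of $i$ and power universality.} By \cref{ex:geometric}, $i$ is a homomorphism, and it is monotone because $s\le s'$ implies $G(s)\asymptoticle G(s')$. We have $1\le i(u)$. Now suppose $[a]\le[b]$ with both classes nonzero. Then $a_n\le u^{k_n}b_n$, and we need $b\asymptoticle G(u)^p\sequencetimes a$ for some $p$. Since $a_n\sim a_1^n\sim b_1^n\sim b_n$, we have $b_n\le u^{pn+c}a_n$. The constant $c$ is absorbed because the relation is asymptotic, via the sublinear allowance.

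\emph{Closedness of $\completion{S}$.} Suppose $[a]^N\le i(u)^{k_N}[b]^N$ for all $N$, with $k_N$ sublinear. Here $[a]^N=[(a_n^{\,})_n^{\sequencetimes N}]=[(a_n^N)_n]$. The task is to extract a single sublinear sequence $\ell_n$ with $a_n\le u^{\ell_n}b_n$.

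The route is as follows. For each $N$ there is a sublinear $j^{(N)}$ with $a_n^N\le u^{k_Nn+j^{(N)}_n}b_n^N$. Using condition \ref{it:approxgeomproductsepsilon}, write $a_{Nm}\lesssim a_m^N$ and $b_m^N\lesssim b_{Nm}$ with error $Nc+\epsilon Nm$. Fixing $N$ and a large $n_0$ with $j^{(N)}_{n_0}\le\epsilon n_0$, this yields $a_{n}\le u^{(2\epsilon+k_N/N+\dots)n+C}b_n$ for $n$ multiples of $n_0$. Condition \ref{it:approxgeompowersr} then extends the bound to all $n$ with $n=qm+r$. A diagonalization in $\epsilon$ like the one in \ref{it:approxgeomproductsepsilon}$\implies$\ref{it:approxgeomproducts} produces the sublinear $\ell$.

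I expect this closedness step to be the main technical obstacle.

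\emph{Asymptotic completeness.} This follows from \cref{lem:approximatelygeometricdense} applied to $i(S)\subseteq\completion{S}$, with $u$ power universal as shown above. Every geometric sequence $G([a])$ in $\completion{S}$ is asymptotically equivalent to the sequence $(i(a_n))_n$, by the same splitting argument as in the closedness step. The lemma then reduces an approximately geometric sequence in $\completion{S}$ to one of the form $(i(x_n))_n$ with $x$ approximately geometric in $S$, by \cref{prop:asymptoticpreorderapproximatelygeometric}. That sequence is $\asymptoticeq G([x])$, for the same reason.

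\emph{Universal property and $\completion{\phi}$.} Given $f:S\to T$ with $T$ asymptotically complete, closed and partially ordered, define $g([a])$ as the unique $t$ with $f(a)\asymptoticeq G(t)$. Such a $t$ exists because $f(a)$ is approximately geometric by part \ref{it:approximatelygeometrichomomorphism}, and it is unique because $T$ is closed and antisymmetric. The map $g$ is well-defined and monotone by \cref{prop:sequenceoperationsproperties}, part \ref{it:sequencelehomomorphism}. It is additive and multiplicative because $G(t)\sequenceplus G(t')=G(t+t')$. We have $g\circ i=f$. Uniqueness holds because any such $g'$ satisfies $G(g'([a]))\asymptoticeq(g'(i(a_n)))_n=f(a)$, which again uses $G([a])\asymptoticeq(i(a_n))_n$. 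Finally, $\completion{\phi}$ is well defined and is a monotone homomorphism by the same propositions, and the identity $\completion{\phi}\circ i_S=i_T\circ\phi$ holds because $\phi(G(s))=G(\phi(s))$.
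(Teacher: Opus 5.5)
Your proposal is correct and follows the paper's proof essentially step by step. It uses the same verification of the semiring structure and of power universality of $i(u)$, and it proves closedness via condition \ref{it:approxgeomproductsepsilon}. It relies on the same key equivalence $G([a])\asymptoticeq i(a)$, which rests on the index/exponent exchange $a_n^m\le u^{mk_n}a_{mn}\le u^{mk_n+nk_m}a_m^n$. It obtains asymptotic completeness via \cref{lem:approximatelygeometricdense} and \cref{prop:asymptoticpreorderapproximatelygeometric}, and it proves the universal property in the same way.

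The differences are minor:
\begin{itemize}
\item For closedness, the paper splits every index as $N=mn+r$ and uses $a_n^m\lesssim b_n^m$ for all $n$, rather than extending from one base index. If you extend from a single base index, you need condition \ref{it:approxgeomproducts} or \ref{it:approxgeomproductsepsilon}, since \ref{it:approxgeompowersr} fixes its own $m$.
\item In the power-universality step, the linear bound $b_n\le u^{pn+c}a_n$ comes from $b_n\le u^{nk_1}b_1^n$ and $a_1^n\le u^{nk_1}a_n$, not from the relations $\sim$ alone.
\end{itemize}
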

\begin{proof}
By \cref{prop:sequenceoperationsproperties,prop:approximatelygeometricproperties}, the operations on $\completion{S}$ are well defined, associative, commutative, distributive, and compatible with the preorder, and $[G(0)]$ and $[G(1)]$ are the null and the unit elements. Since asymptotic equivalence is the largest equivalence relation containted in the preorder on sequences, the induced preorder on $\completion{S}$ is a partial order. The map $i$ is a monotone semiring homomorphism by \cref{ex:geometric}. In fact, for $a,b\in S$ we have $i(a)\le i(b)$ iff $a\asymptoticle b$.

We show that $i(u)$ is power universal. Since $1\le u$ in $S$, we have $[G(1)]\le[G(u)]$ in $\completion{S}$. Suppose that $a,b\in (\nonzeros{S})^\naturals$ are approximately geometric sequences and $[a]\le[b]$. Then there exists a sublinear sequence $(k_n)_{n\in\naturals}$ such that $a_n\le u^{k_n}b_n$ for all $n$, and for all $t\in\positiveintegers$, $n_1,\dots,n_t\in\naturals$ the inequalitites
\begin{align}
b_{n_1+\dots+n_t} & \le u^{k_{n_1}+\dots+k_{n_t}}b_{n_1}b_{n_2}\cdots b_{n_t}
\intertext{and}
a_{n_1}a_{n_2}\cdots a_{n_t} & \le u^{k_{n_1}+\dots+k_{n_t}}a_{n_1+\dots+n_t}
\end{align}
hold. Let $p\in\naturals$ such that $b_1\le u^p a_1$.
\begin{equation}
b_n
 \le u^{nk_1}b_1^n
 \le u^{nk_1}(u^p a_1)^n
 = u^{n(p+k_1)}a_1^n
 \le u^{n(p+2k_1)}a_n,
\end{equation}
therefore $[b]\le i(u)^{p+2k_1}[a]$.

We show that the partial order $\le$ on $\completion{S}$ is closed. Let $a,b$ be approximately geometric sequences in $S$ and suppose that $[a]\asymptoticle[b]$. Let $\epsilon>0$ and choose $m\in\naturals$ such that $[a]^m\le i(u)^{\lfloor\epsilon m\rfloor}[b]^m$. Let $c\in\naturals$ be such that for all $n$ the inequality $a_n^m\le u^{c+\lfloor\epsilon n\rfloor}u^{n\lfloor \epsilon m\rfloor}b_n^m$ holds, and for all $t\in\positiveintegers$, $n_1,\dots,n_t\in\naturals$ the inequalities
\begin{align}
a_{n_1+\dots+n_t} & \le u^{c+\lfloor\epsilon(n_1+\dots n_t)\rfloor}a_{n_1}a_{n_2}\cdots a_{n_t}
\intertext{and}
b_{n_1}b_{n_2}\cdots b_{n_t} & \le u^{c+\lfloor\epsilon(n_1+\dots n_t)\rfloor}b_{n_1+\dots+n_t}
\end{align}
hold. Let $p\in\naturals$ such that $a_1\le u^p b_1$. Then for all $N=mn+r$, $n,r\in\naturals$, $0\le r<m$ we have
\begin{equation}
\begin{split}
a_N
 & \le u^{c(m+r)+\lfloor\epsilon N\rfloor}a_n^ma_1^r  \\
 & \le u^{c(m+r)+\lfloor\epsilon N\rfloor+\lfloor\epsilon n\rfloor+n\lfloor\epsilon m\rfloor+pr}b_n^mb_1^r  \\
 & \le u^{2c(m+r)+2\lfloor\epsilon N\rfloor+\lfloor\epsilon n\rfloor+n\lfloor\epsilon m\rfloor+pr}b_N  \\
 & \le u^{(4c+p)m+4\lfloor\epsilon N\rfloor}b_N.
\end{split}
\end{equation}
This is true for arbitrary $\epsilon>0$ and for all $N$ (with $c$ depending only on $\epsilon$), therefore we have $a\asymptoticle b$, i.e., $[a]\le[b]$.

Let $a=(a_n)_{n\in\naturals}$ be approximately geometric in $S$. We claim that $i(a)\asymptoticeq G([a])$. Let $(k_n)_{n\in\naturals}$ be sublinear such that $a_{n_1+\dots+n_t}\le u^{k_{n_1}+\dots+k_{n_t}}a_{n_1}\cdots a_{n_t}$ and $a_{n_1}\cdots a_{n_t}\le u^{k_{n_1}+\dots+k_{n_t}}a_{n_1+\dots+n_t}$ holds for all $t\in\positiveintegers$, $n_1,\dots,n_t\in\naturals$. The elements \begin{equation}
i(a)_n=i(a_n)=[(1,a_n,a_n^2,\dots)]
\end{equation}
and
\begin{equation}
G([a])_n=[(a_0^n,a_1^n,a_2^n,\dots)]
\end{equation}
of $\completion{S}$ are represented by sequences that differ in the exchange of the index and the exponent, and
\begin{equation}
a_n^m
 \le u^{mk_n}a_{mn}
 \le u^{mk_n+nk_m}a_m^n,
\end{equation}
for all $m,n\in\naturals$, therefore $i(a)_n\le u^{k_n}G([a])_n$ and $G([a])_n\le u^{k_n}i(a)_n$. Since $n\mapsto k_n$ is sublinear, this implies $i(a)\asymptoticeq G([a])$.

We show that $\completion{S}$ is asymptotically complete. By the previous observation, every geometric sequence in $\completion{S}$ is asymptotically equivalent to an approximately geometric one in $i(S)$. Let $b$ be an approximately geometric sequence in $\completion{S}$. By \cref{lem:approximatelygeometricdense}, it is also asymptotically equivalent to an approximately geometric one in $i(S)$, say $i(a)=(i(a_n))_{n\in\naturals}$ with $a=(a_n)_{n\in\naturals}$, $a_n\in S$ for all $S$. This means that $(a_n)_{n\in\naturals}$ is an approximately geometric sequence with respect to the asymptotic preorder of $S$. By \cref{prop:asymptoticpreorderapproximatelygeometric}, there is a sequence $x$ in $S$ that is approximately geometric with respect to the preorder of $S$ and asymptotically equivalent to $a$ with respect to the asymptotic preorder, therefore $i(x)\asymptoticeq i(a)$ and $i(x)\asymptoticeq G([x])$, again by the previous observation. Therefore $b\asymptoticeq G([x])$.

Let $T$ be an asymptotically complete closed partially ordered semiring and $f:S\to T$ a monotone homomorphism. If $a$ is an approximately geometric sequence in $S$, then $f(a)$ is approximately geometric by \cref{prop:approximatelygeometricproperties}, therefore there is a $t\in T$ such that $f(a)\asymptoticeq G(t)$. Since the preorder of $T$ is a closed partial order, this $t$ is unique. If $a\asymptoticeq a'$, then $f(a)\asymptoticeq f(a')$ by \cref{prop:sequenceoperationsproperties}. This means that there is a unique map $g:\completion{S}\to T$ such that $G(g([a]))\asymptoticeq f(a)$. This map is a monotone semiring homomorphism by \cref{prop:sequenceoperationsproperties,ex:geometric}. If $s\in S$, then $i(s)=[G(s)]$, and $f(G(s))=G(f(s))$, therefore $g(i(s))=f(s)$, i.e., $f=g\circ i$.

Suppose that $g':\completion{S}\to T$ is a monotone semiring homomorphism that satisfies $f=g'\circ i$. Let $a$ be an approximately geometric sequence in $S$, and let $(k_n)_{n\in\naturals}$ be sublinear such that for all $t\in\positiveintegers$, $n_1,\dots,n_t$ the inequalities $a_{n_1}\cdots a_{n_t}\le u^{k_{n_1}+\dots+k_{n_t}}a_{n_1+\dots+n_t}$ and $a_{n_1+\dots+n_t}\le u^{k_{n_1}+\dots+k_{n_t}}a_{n_1}\cdots a_{n_t}$ hold. Then for all $m,n\in\naturals$ we have
\begin{equation}
a_n^m
 \le u^{mk_n}a_{mn}
 \le u^{mk_n+nk_m}a_m^n
 = u^{mk_n}G(u^{k_m}a_m)_n,
\end{equation}
which means $a^m\asymptoticle G(u^{k_m}a_m)$. Taking equivalence classes and applying $g'$, we obtain for all $m\in\naturals$ the inequality
\begin{equation}
g'([a])^m
 \asymptoticle g'([G(u^{k_m}a_m)])
 = f(u^{k_m}a_m)
 = f(u)^{k_m}f(a_m),
\end{equation}
i.e., $G(g'([a]))\asymptoticle f(a)$. In a similar way we obtain $G(g'([a]))\asymptoticge f(a)$. This implies that we have $G(g'([a]))\asymptoticeq f(a)\asymptoticeq G(g([a]))$, so by uniqueness $g'([a])=g([a])$.

Let $\phi:S\to T$ be a monotone semiring homomorphism, and let $i_S:S\to\completion{S}$ and $i_T:T\to\completion{T}$ denote the maps $a\mapsto [G(a)]$ for $a\in S$ and $a\in T$, respectively. By \cref{prop:sequenceoperationsproperties}, the map $\completion{\phi}([a]):=[\phi(a)]$ is a well-defined monotone semiring homomorphism. If $a\in S$, then
\begin{equation}
(\completion{\phi}\circ i_S)(a)
 = \completion{\phi}([G(a)])
 = [\phi(G(a))]
 = [G(\phi(a))]
 = (i_T\circ\phi)(a).
\end{equation}

\end{proof}

\begin{remark}
We can see from the proof (or from the definition, and combining \cref{lem:approximatelygeometricdense,prop:asymptoticpreorderapproximatelygeometric}) that every element $a$ of $\completion{S}$ can be represented by an approximately geometric sequence $x=(x_n)_{n\in\naturals}\in S^\naturals$ in $S$ in the sense that $G(a)\asymptoticeq i(x)$. In the concrete realization constructed in \cref{thm:asymptoticcompletion}, in addition $a=[x]$ holds.
\end{remark}

\section{Spectrum}\label{sec:spectrum}

There exist various related notions of a spectrum of a preordered semiring (with suitable properties) that encode asymptotic information in the sense that elements of the semiring give rise to evaluation maps on the spectrum, with the asymptotic preorder corresponding to the pointwise order between the functions. Our goal in this section is to extend these to the asymptotic preorder on approximately geometric sequences, i.e., the order on the asymptotic completion.

In particular, the \emph{asymptotic spectrum} of a Strassen semiring $S$ is (as a set) $\Hom(S,\nonnegativereals)$ \cite{strassen1988asymptotic}, the \emph{test spectrum} of a preordered semiring of polynomial growth satisfying $0\le 1$ is a subset of $\Hom(S,\nonnegativereals)\cup\Hom(S,\tropicalreals)$ \cite{fritz2023abstract}, while for a more general class of preordered semirings, the \emph{test spectrum} is constructed from monotone homomorphisms into $\nonnegativereals$, $\tropicalreals$, $\nonnegativereals\opposite$, $\tropicalreals\opposite$, and $\dualnumbers$ \cite{fritz2021abstract2}. In each case, the spectrum can be endowed with a compact topology generated by (logarithmic) evaluation or comparision maps, but we will not use this structure.

Since the spectra contain homomorphisms into asymptotically complete preordered semirings $\mathbb{K}$ (see \cref{ex:asymptoticallycomplete}), it follows from \cref{def:asymptoticcompletion} that composition with $i$ provides a bijection $\Hom(\completion{S},\mathbb{K})\to\Hom(S,\mathbb{K})$.

Under this bijection, a degenerate homomorphism $\norm[S]{\cdot}:S\to\nonnegativereals$ corresponds to a degenerate homomorphism $\norm[\completion{S}]{\cdot}:\completion{S}\to\nonnegativereals$, and if it is the first component of a monotone homomorphism $\psi:S\to\dualnumbers$, i.e., the second component is a derivation $D$ at $\norm[S]{\cdot}$, then the second component of $\completion{\psi}$ is a derivation $\completion{D}$ at $\norm[\completion{S}]{\cdot}$. We mention that if $D_1,D_2:S\to\reals$ are interchangeable (in the sense of \cite[8.2. Definition]{fritz2021abstract2}) derivations at $\norm[S]{\cdot}$, then the induced derivations $\completion{D_1},\completion{D_2}:S\to\reals$ are interchangeable as well.

\begin{theorem}\label{thm:dualitycompletion}
Let $\mathcal{K}$ be a set consisting of asymptotically closed partially ordered semirings. Suppose that $S$ is a preordered semiring of polynomial growth such that for all $a,b\in S$ the asymptotic inequality $a\asymptoticle b$ is equivalent to the collection of inequalities $\phi(a)\le\phi(b)$ for all $\mathbb{K}\in\mathcal{K}$ and all $\phi\in\Hom(S,\mathbb{K})$.

Let $a,b\in\completion{S}$ such that $a\sim b$. Then $a\le b$ is equivalent to the collection of inequalities $\completion{\phi}(a)\le\completion{\phi}(b)$ for all $\mathbb{K}\in\mathcal{K}$ and all $\completion{\phi}\in\Hom(\completion{S},\mathbb{K})$.
\end{theorem}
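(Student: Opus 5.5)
The forward implication is immediate: every $\completion{\phi}\in\Hom(\completion{S},\mathbb{K})$ is monotone, so $a\le b$ gives $\completion{\phi}(a)\le\completion{\phi}(b)$. For the converse, the plan is to reduce the inequality in $\completion{S}$ to asymptotic inequalities between elements of $S$, and then invoke the hypothesis on $S$. By the remark after \cref{thm:asymptoticcompletion}, we can write $a=[x]$ and $b=[y]$ with $x,y$ approximately geometric sequences in $S$. Here $\completion{\phi}=\phi'$ for a unique $\phi\in\Hom(S,\mathbb{K})$ with $\phi'\circ i=\phi$, and by the uniqueness argument in the proof of \cref{thm:asymptoticcompletion} we have $G(\completion{\phi}([x]))\asymptoticeq\phi(x)$ in $\mathbb{K}$. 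The case where one of $a,b$ is $0$ should be handled separately. I expect $a\sim b$ to be used there, together with the fact that $x\asymptoticeq G(0)$ forces $x_n\sim 0$, so that the $\mathbb{K}$-inequalities must be read via degenerate homomorphisms. I will therefore assume that $x,y$ have nonzero entries.

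The key step is to show that, for each fixed $m$, the elements $x_m$ and $y_m$ nearly satisfy the hypothesis. From the hypothesis $\completion{\phi}(a)\le\completion{\phi}(b)$ and $G(\completion{\phi}(a))\asymptoticeq\phi(x)$, closedness of $\mathbb{K}$ gives $\phi(x_m)\le \phi(u)^{k_m}\cdots$. More precisely, if $k$ is a sublinear sequence witnessing that $x$ and $y$ are approximately geometric, then $x_m^n\le u^{nk_m+mk_n}x_{mn}$ and $y_{mn}\le u^{mk_n+nk_m}y_m^n$. Applying $\phi$ and using that $\phi(x)$ and $\phi(y)$ are asymptotically equivalent to $G(\completion{\phi}(a))$ and $G(\completion{\phi}(b))$, I obtain
\begin{equation}
\phi(x_m)^n\asymptoticle \phi(u)^{2nk_m}\phi(y_m)^n
\end{equation}
as sequences in $n$. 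Closedness of $\mathbb{K}$ then yields $\phi(x_m)\le\phi(u^{2k_m}y_m)$ for every $\phi$ and every $\mathbb{K}\in\mathcal{K}$. The hypothesis on $S$ gives $x_m\asymptoticle u^{2k_m}y_m$ in $S$.

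It remains to lift these per-$m$ asymptotic inequalities to $[x]\le[y]$. Since $i$ is monotone for the asymptotic preorder and $\completion{S}$ is closed, $i(x_m)\le i(u)^{2k_m}i(y_m)$. The proof of \cref{thm:asymptoticcompletion} showed $i(x)\asymptoticeq G([x])$, with the explicit bound $G([x])_m\le i(u)^{k_m}i(x_m)$, and the analogous statement holds for $y$. Chaining these gives $[x]^m\le i(u)^{4k_m}[y]^m$ for all $m$, so $[x]\asymptoticle[y]$, and closedness of the partial order on $\completion{S}$ gives $a\le b$.

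The main obstacle is the middle step. There one has to track carefully that the $\mathbb{K}$-side comparison and the approximate-geometric bounds combine into a single sublinear exponent in $n$ for fixed $m$, with the leftover linear term $2nk_m$ absorbed into the constant $u^{2k_m}$. One also has to check that the hypothesis $a\sim b$ is used exactly where zero or degenerate homomorphisms, such as those into $\dualnumbers$, would otherwise break the comparison. If the direct reduction fails for $\dualnumbers$-valued maps because their order requires equal real parts, I would first use $a\sim b$ to show that $\phi(x_m)$ and $\phi(u^{2k_m}y_m)$ are $\sim$-related, so that those comparisons are meaningful.
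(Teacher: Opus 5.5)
Your proof is correct, and its core matches the paper's. From $\completion{\phi}(a)\le\completion{\phi}(b)$ you get $\phi(x)\asymptoticle\phi(y)$. You combine this with the approximate-geometric bounds at indices $mn$ and use closedness of $\mathbb{K}$ to get, for each fixed $m$, $\phi(x_m)\le\phi(u^{c_m}y_m)$ with $c_m$ sublinear. The hypothesis on $S$ then gives $x_m\asymptoticle u^{c_m}y_m$. The paper uses the $\epsilon$-form with $c_m=2\lceil\epsilon m\rceil$ where you use $2k_m$, which makes no difference.

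The two routes differ in the final lift. The paper stays in $S$: it redoes the decomposition $n=qm+r$, chains $x_{qm+r}\le u^{(\cdots)}x_m^qx_1^r\le\dots\le u^{(\cdots)}y_{qm+r}$, and controls the remainder with a fixed $p$ such that $x_1\le u^py_1$. You instead translate $x_m\asymptoticle u^{2k_m}y_m$ into $i(x_m)\le i(u)^{2k_m}i(y_m)$ in $\completion{S}$. You then sandwich this with the bounds $[x]^m\le i(u)^{k_m}i(x_m)$ and $i(y_m)\le i(u)^{k_m}[y]^m$ from the proof of $i(x)\asymptoticeq G([x])$, and finish with closedness of $\completion{S}$.

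Your version is shorter and more modular, since the remainder bookkeeping is delegated to the already proved closedness of $\completion{S}$, where the bound $x_1\le u^py_1$ comes for free from $[x]\asymptoticle[y]$. It also shows that, with the hypothesis on $S$ as literally stated for all pairs, $a\sim b$ plays no role. The paper's route is self-contained at the level of sequences.

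Your caveats are unnecessary:
\begin{itemize}
\item The zero class is represented by $G(0)$, which satisfies the product inequalities with $k\equiv 0$, so your argument runs verbatim.
\item Nothing breaks for $\dualnumbers$-valued maps, because $\completion{\phi}(a)\le\completion{\phi}(b)$ already forces equal real parts.
\end{itemize}
Where $a\sim b$ would genuinely matter is if the characterization of $S$ were only available for $\sim$-related pairs, as in \cref{thm:Vergleichsstellenderivations}. In that case what you need is $x_m\sim u^{2k_m}y_m$ in $S$, not a relation between the $\phi$-images. This follows from $[x]\le i(u)^p[y]$, i.e.\ from $a\sim b$ together with the power universality of $i(u)$.
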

\begin{proof}
We work with the concrete realization of $\completion{S}$ constructed in \cref{thm:asymptoticcompletion}. Let $x=(x_n)_{n\in\naturals}$ and $y=(y_n)_{n\in\naturals}$ be approximately geometric sequences in $S$ such that $i(x)\asymptoticeq G(a)$ and $i(y)\asymptoticeq G(b)$, therefore $a=[x]$ and $b=[y]$.

Let $\mathbb{K}\in\mathcal{K}$, and $v\in\mathbb{K}$ power universal. For $\phi\in\Hom(S,\mathbb{K})$ and the corresponding $\completion{\phi}\in\Hom(\completion{S},\mathbb{K})$, the inequality $[\phi(x)]=\completion{\phi}(a)\le\completion{\phi}(b)=[\phi(y)]$ means
$\phi(x)\asymptoticle \phi(y)$, i.e., there is a sublinear sequence $(k_n)_{n\in\naturals}$ such that $\phi(x_n)\le v^{k_n}\phi(y_n)$ holds for all $n\in\naturals$.

Let $p\in\naturals$ be such that $a_1\le u^pb_1$, and for $\epsilon>0$, let $m\in\naturals$ and $c\in\naturals$ be such that for all $q,r\in\naturals$, $0\le r<m$ the inequalities
\begin{align}
x_{qm+r} & \le u^{c+\lfloor\epsilon qm\rfloor}x_m^qx_1^r  \\
x_m^qa_1^r & \le u^{c+\lfloor\epsilon qm\rfloor}x_{qm+r}  \\
y_{qm+r} & \le u^{c+\lfloor\epsilon qm\rfloor}y_m^qy_1^r  \\
y_m^qy_1^r & \le u^{c+\lfloor\epsilon qm\rfloor}y_{qm+r}
\end{align}
hold.

Setting $r=1$, for every $q\in\naturals$ we have
\begin{equation}
\begin{split}
\phi(x_m)^q
 & = \phi(x_m^q)  \\
 & \le \phi(u^{c+\lfloor\epsilon qm\rfloor}x_{qm})  \\
 & = \phi(u)^{c+\lfloor\epsilon qm\rfloor}\phi(x_{qm})  \\
 & \le \phi(u)^{c+\lfloor\epsilon qm\rfloor}v^{k_{qm}}\phi(y_{qm})  \\
 & \le \phi(u)^{c+\lfloor\epsilon qm\rfloor}v^{k_{qm}}\phi(u^{c+\lfloor\epsilon qm\rfloor}y_m^q)  \\
 & \le \phi(u)^{2c}v^{k_{qm}}\phi(u^{2\lceil\epsilon m\rceil} y_m)^q.
\end{split}
\end{equation}
As this holds for all $q$ and the preorder of $\mathbb{K}$ is closed, we have $\phi(x_m)\le\phi(u^{2\lceil\epsilon m\rceil} y_m)$. The inequality is satisfied for all $\mathbb{K}\in\mathcal{K}$ and $\phi\in\Hom(S,\mathbb{K})$, therefore $x_m\asymptoticle u^{2\lceil\epsilon m\rceil} y_m$, which means that for some sublinear sequence $(k_n)_{n\in\naturals}$, the inequalities $x_m^q\asymptoticle u^{k_q+2\lceil\epsilon m\rceil q} y_m^q$ hold for all $q$. Then
\begin{equation}
\begin{split}
x_{qm+r}
 & \le u^{c+\lfloor\epsilon qm\rfloor}x_m^qx_1^r  \\
 & \le u^{c+\lfloor\epsilon qm\rfloor}u^{k_q+2\lceil\epsilon m\rceil q} y_m^q u^{rp}y_1^r  \\
 & \le u^{2c+2\lfloor\epsilon qm\rfloor+k_q+2\lceil\epsilon m\rceil q+rp} y_{qm+r},
\end{split}
\end{equation}
which holds for all $\epsilon>0$, all sufficiently large $m\in\naturals$, and all $q\in\naturals$, $r\in\{0,1,\dots,m-1\}$. This implies $x\asymptoticle y$, i.e., $a\le b$.
\end{proof}

If $S$ has a Strassen preorder, then the assumption of \cref{thm:dualitycompletion} is satisfied with $\mathcal{K}=\{\nonnegativereals\}$, which leads to the following corollary:
\begin{corollary}\label{cor:dualityrealhomomorphisms}
Let $S$ be a preordered semiring with a Strassen preorder, and let $a,b\in\completion{S}$. Then $a\le b$ if and only if $\phi(a)\le\phi(b)$ holds for all $\phi\in\Hom(\completion{S},\nonnegativereals)$.
\end{corollary}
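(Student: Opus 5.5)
The corollary should follow from \cref{thm:dualitycompletion} with $\mathcal{K}=\{\nonnegativereals\}$, once three points are checked. First, the hypothesis of the theorem holds. By Strassen's theorem, $a\asymptoticle b$ in $S$ is equivalent to $\phi(a)\le\phi(b)$ for all $\phi\in\Hom(S,\nonnegativereals)$. Here the power universal element is $u=2$. By \cref{ex:asymptoticallycomplete}, $\nonnegativereals$ is asymptotically complete, and it is a closed partially ordered semiring. Second, $\Hom(\completion{S},\nonnegativereals)$ is identified with $\Hom(S,\nonnegativereals)$ via $\phi\mapsto\phi\circ i$. This is the universal property in \cref{def:asymptoticcompletion}, realized concretely by $\phi\mapsto\completion{\phi}$ with $\completion{\phi}([x])$ the unique $t$ satisfying $G(t)\asymptoticeq\phi(x)$.

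The third point is the real content. \cref{thm:dualitycompletion} assumes $a\sim b$ in $\completion{S}$, while the corollary has no such hypothesis. So I must show that $a\sim b$ holds automatically for a Strassen preorder, at least whenever both sides are nonzero. If $a=[x]$ and $b=[y]$ with $x,y$ approximately geometric and nonzero, then $x_1,y_1$ are nonzero elements of $S$. For a Strassen preorder, every nonzero $s$ satisfies $1\le s\le \rank(s)$. I expect this from the standard consequence that $s\neq 0$ implies $s\sim 1$, via the power universality of $2$ and the embedding $\naturals\to S$. I would cite the standard treatment of Strassen preorders for this and not reprove it. Hence $x_1\sim 1\sim y_1$, so $x_n\sim x_1^n\sim 1\sim y_n$. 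Also $1\le u$ in the sense of $\sim$, and by the argument showing $i(u)$ is power universal in \cref{thm:asymptoticcompletion}, one gets $[x]\le i(u)^{k}[y]$ and similar bounds with $[G(1)]$ in between. This gives $a\sim[G(1)]\sim b$ in $\completion{S}$.

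The zero cases need separate care, and this is the main obstacle, since $0\le 1$ may fail in general. In a Strassen preorder, however, $0\le 1$ holds because $\naturals\to S$ is an order embedding. Then $0\le s$ for all $s$, and this transfers to $[G(0)]\le[x]$ for any approximately geometric $x$. The proof of \ref{it:sequenceleplus} applied to $G(0)\le x$ gives $G(0)\sequenceplus\cdot$; more directly, $G(0)_n=0\le x_n$ for $n\ge1$ and $1\le x_0$ after rescaling by $u^{k_0}$. So $[G(0)]\le a$, and therefore $a\sim b$ for all $a,b\in\completion{S}$. With this, \cref{thm:dualitycompletion} applies to every pair $a,b\in\completion{S}$. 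The converse direction, $a\le b\Rightarrow\phi(a)\le\phi(b)$, is simply monotonicity of $\phi$. This completes the plan.
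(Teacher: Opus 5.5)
Your proposal is correct and is the paper's argument: the corollary follows because Strassen's theorem supplies the hypothesis of \cref{thm:dualitycompletion} with $\mathcal{K}=\{\nonnegativereals\}$, and the converse direction is just monotonicity. Your check of $a\sim b$, which the paper leaves implicit, is worth making but takes one line ($0\le 1$ in $S$ gives $i(0)\le i(1)$, and multiplying by $a$ yields $0\le a$ for every $a\in\completion{S}$), so you can drop the nonzero-case detour, including the claim that $1\le s$ for every nonzero $s$, which is false in general (e.g.\ $s=\frac{1}{2}$ in $\nonnegativereals$).
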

More generally, the same conclusion holds under the assumptions of \cite[Theorem 2]{vrana2021generalization}.

If $S$ is a preordered semiring of polynomial growth with $1\ge 0$, then \cref{thm:Vergleichsstellentropical} applies, which implies the following by \cref{thm:dualitycompletion}:
\begin{corollary}\label{cor:dualitytropicalhomomorphisms}
Let $S$ be a preordered semiring of polynomial growth with $1\ge 0$, and let $a,b\in\completion{S}$. Then $a\le b$ if and only if $\phi(a)\le\phi(b)$ holds for all $\phi\in\Hom(\completion{S},\nonnegativereals)$ and for all $\phi\in\Hom(\completion{S},\tropicalreals)$.
\end{corollary}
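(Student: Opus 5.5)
The plan is to deduce this from \cref{thm:dualitycompletion} with $\mathcal{K}=\{\nonnegativereals,\tropicalreals\}$. \Cref{thm:Vergleichsstellentropical} supplies the hypothesis on $S$, but only for nonzero elements, so three points need checking.

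First, the test semirings must be admissible. By \cref{ex:asymptoticallycomplete}, $\nonnegativereals$ and $\tropicalreals$ are asymptotically complete, and they are closed partially ordered semirings. Hence composition with $i$ gives the bijection $\Hom(\completion{S},\mathbb{K})\to\Hom(S,\mathbb{K})$, and the homomorphisms in the statement are exactly the extensions $\completion{\phi}$ used in \cref{thm:dualitycompletion}.

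Second, the duality hypothesis on $S$ must hold for all $a,b\in S$, not just nonzero ones. \Cref{thm:Vergleichsstellentropical} covers nonzero $a,b$. If $a=0$, then $0\le 1$ gives $0\le b$ for every $b$, since $0=0\cdot b\le 1\cdot b$. Hence $0\asymptoticle b$ always holds, and $\phi(0)=0\le\phi(b)$ always holds, so both sides of the equivalence are true.

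If $b=0$ and $a\neq0$, I claim both sides are false. On the preorder side, $a^n\le u^{k_n}\cdot 0=0$ would give $a^n\approx 0$. On the homomorphism side, I want some $\phi$ with $\phi(a)>0$. The natural candidate is the tropical homomorphism sending every nonzero element to $1$ and $0$ to $0$. It is monotone provided $x\le y$ with $x\neq0$ forces $y\neq 0$. Checking this from power universality, together with the possible degenerate case where some nonzero element is $\le 0$, is the main obstacle. If it fails, I would handle this case separately, or note that such elements satisfy $a\asymptoticle 0$ and all $\phi(a)=0$, which is still consistent.

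Third, the hypothesis $a\sim b$ in \cref{thm:dualitycompletion} must be removed. With $0\le1$ in $S$, we get $0\le1$ in $\completion{S}$, and so $0\le x$ for every $x\in\completion{S}$. Thus $0\le a$ and $0\le b$, which gives $a\sim b$ automatically. Applying \cref{thm:dualitycompletion} then yields the corollary.
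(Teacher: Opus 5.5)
Your proposal follows the paper's argument, which consists only of combining \cref{thm:Vergleichsstellentropical} with \cref{thm:dualitycompletion} for $\mathcal{K}=\{\nonnegativereals,\tropicalreals\}$; your observations that $a\sim b$ is automatic from $0\le 1$, and that the hypothesis must also be checked at zero elements, are details the paper leaves implicit. The point you leave open closes as follows: if some nonzero $x$ satisfies $x\le 0$, then $x\le 1$ and power universality give $1\le u^kx\le 0$, so every element of $S$ lies below $0$, no monotone homomorphism into $\nonnegativereals$ or $\tropicalreals$ exists, and both sides of the equivalence hold trivially; otherwise no nonzero element lies below $0$, so $x\le x+y$ shows $S$ is zero-sum-free, and for nonzero $x,y$ the bounds $1\le x+1\le u^kx$ and $1\le u^ly$ rule out $xy=0$ (it would give $1\le 0$), hence your indicator map is a homomorphism, it is monotone, and it separates $a\neq 0$ from $0$ in $\tropicalreals$.
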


Finally, by combining \cref{thm:Vergleichsstellenderivations} with \cref{thm:dualitycompletion}, we obtain the following:
\begin{corollary}\label{cor:dualityderivations}
Let $S$ is a preordered semiring of polynomial growth, and suppose that for some $d\in\naturals$ there is a surjective homomorphism $\norm{\cdot}:S\to\positivereals^d\cup\{0\}$ such that $a\le b\implies\norm{a}=\norm{b}\implies a\sim b$ for $a,b\in S$. Let $a,b\in\completion{S}$. Then $a\le b$ if and only if $\phi(a)\le\phi(b)$ holds for all $\phi\in\Hom(\completion{S},\mathbb{K})$ with $\mathbb{K}\in\{\nonnegativereals,\nonnegativereals\opposite,\tropicalreals,\tropicalreals\opposite\}$, and for every $i=1,\dots,d$ and monotone $\norm[i]{\cdot}^{\completion{\phantom{.}}}$-derivation $D:\completion{S}\to\reals$, $D(a)\le D(b)$.
\end{corollary}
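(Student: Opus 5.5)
The plan is to reduce \cref{cor:dualityderivations} to \cref{thm:dualitycompletion}, taking $\mathcal{K}=\{\nonnegativereals,\tropicalreals,\nonnegativereals\opposite,\tropicalreals\opposite,\dualnumbers\}$. Each of these is a closed, partially ordered, asymptotically complete semiring by \cref{ex:asymptoticallycomplete}; for the opposite semirings we note that approximate geometricity and closedness are symmetric under reversing the order, once $u$ is replaced by a power universal element of the opposite semiring. The hypothesis of \cref{thm:dualitycompletion} on $S$ is then the reformulation of \cref{thm:Vergleichsstellenderivations} given right after its statement: monotone homomorphisms into $\dualnumbers$ are exactly the pairs $(\norm[i]{\cdot},D)$ with $D$ a monotone derivation, up to the degenerate part.

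First I will deal with the restriction in \cref{thm:Vergleichsstellenderivations} to nonzero $a,b$ with $\norm{a}=\norm{b}$, and the restriction $a\sim b$ in \cref{thm:dualitycompletion}.

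\begin{itemize}
\item \emph{Elements of $S$.} If $\norm{a}\neq\norm{b}$, then $a\asymptoticle b$ fails, since $\norm{\cdot}$ is constant on $\le$-comparable pairs and is multiplicative, so $\norm{a}^n=\norm{u}^{k_n}\norm{b}^n$ with $\norm{u}=1$. On the dual side, the degenerate homomorphisms $\norm[i]{\cdot}$ viewed as maps into $\nonnegativereals^=$ appear in $\dualnumbers$ through the first component. Since $\dualnumbers$ only compares elements with equal first component, the corresponding inequality fails. Hence the equivalence holds for all pairs, with the zero cases handled separately using $\norm{\cdot}$ having trivial kernel.
\item \emph{Elements of $\completion{S}$.} By the same reasoning, $a\le b$ forces $\norm{a}^{\completion{}}=\norm{b}^{\completion{}}$. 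Conversely, equal norms give $a\sim b$ in $\completion{S}$: representing $a=[x]$ and $b=[y]$, the equality $\norm{x_n}=\norm{y_n}$ holds for all $n$, because $\norm{x}$ is asymptotically equivalent to a geometric sequence in $\positivereals^d$ with the order $=$, hence exactly geometric. This gives $x_n\sim y_n$, and then $x\sim y$ via the power universal element.
\end{itemize}

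Applying \cref{thm:dualitycompletion} then yields the characterization by all monotone homomorphisms $\completion{S}\to\mathbb{K}$ with $\mathbb{K}\in\mathcal{K}$. Finally I translate the homomorphisms into $\dualnumbers$ back into derivations. Through the bijection $\Hom(\completion{S},\dualnumbers)\cong\Hom(S,\dualnumbers)$ discussed before \cref{thm:dualitycompletion}, each such homomorphism is $a\mapsto\norm[i]{a}^{\completion{}}+\completion{D}(a)X$ (after scaling) with $\completion{D}$ a monotone $\norm[i]{\cdot}^{\completion{}}$-derivation, and conversely. Given equal norms, the inequality in $\dualnumbers$ reduces to $\completion{D}(a)\le\completion{D}(b)$.

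The main obstacle is the case bookkeeping: making sure that pairs with $a\not\sim b$, or with zero elements, are correctly excluded on both sides. It also requires checking that the derivation characterization of $\Hom(\cdot,\dualnumbers)$ transfers verbatim to $\completion{S}$, which may be of polynomial growth without a surjective norm onto $\positivereals^d\cup\{0\}$ in the same form. I would handle this by arguing directly with the homomorphism/derivation correspondence, which only uses the semiring structure.
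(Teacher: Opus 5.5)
Your proposal is correct and is essentially the paper's argument. The paper proves this corollary in one line, by combining the $\dualnumbers$-reformulation of \cref{thm:Vergleichsstellenderivations} with \cref{thm:dualitycompletion} for $\mathcal{K}=\{\nonnegativereals,\tropicalreals,\nonnegativereals\opposite,\tropicalreals\opposite,\dualnumbers\}$ and reading homomorphisms into $\dualnumbers$ as (norm, derivation) pairs. Your bookkeeping fills in details the paper leaves implicit: that $\norm{x_n}$ is exactly geometric, so equal completed norms give $a\sim b$ in $\completion{S}$; that pairs with unequal norms are separated by the degenerate homomorphisms, which lie in both $\Hom(\completion{S},\nonnegativereals)$ and $\Hom(\completion{S},\nonnegativereals\opposite)$ and so still force norm equality once $\dualnumbers$ is dropped from the final list; and that the zero cases use the trivial-kernel assumption carried over from \cref{thm:Vergleichsstellenderivations}.
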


Some asymptotic parameters of interest (such as the asymptotic tensor rank or the Shannon capacity) are not spectral points but possess weaker properties in connection with the semiring operations and the preorder. We give a sufficient condition in the context of Strassen preorders in terms of the following definitions \cite{bugar2024interpolating}.
\begin{definition}
Let $S$ be a preordered semiring. A map $f:S\to\nonnegativereals$ is a \emph{lower functional} if $f(0)=0$, $f(1)=1$, and for $x,y\in S$ it satisfies $f(x+y)\ge f(x)+f(y)$ and $f(xy)\ge f(x)f(y)$, and $x\le y$ implies $f(x)\le f(y)$. A map $g:S\to\nonnegativereals$ is an \emph{upper functional} if $g(0)=0$, $g(1)=1$, and for $x,y\in S$ it satisfies $g(x+y)\le g(x)+g(y)$ and $g(xy)\le g(x)g(y)$, and $x\le y$ implies $g(x)\le g(y)$. The lower (upper) functional $f$ ($g$) is \emph{regular}, if $f(p(x))=p(f(x))$ ($g(p(x))=p(g(x))$) holds for all $x\in S$ and all univariate polynomials $p$ with natural numbers as coefficients.
\end{definition}

\begin{proposition}\label{prop:regularfunctionalextension}
Let $S$ be a semiring with a Strassen preorder, and $i:S\to\completion{S}$ an asymptotic completion.
\begin{enumerate}
\item If $f:S\to\nonnegativereals$ is a regular lower functional, then there exists a unique regular lower functional $\completion{f}:\completion{S}\to\nonnegativereals$ such that $\completion{f}\circ i=f$.
\item If $g:S\to\nonnegativereals$ is a regular upper functional, then there exists a unique regular upper functional $\completion{g}:\completion{S}\to\nonnegativereals$ such that $\completion{g}\circ i=g$.
\end{enumerate}
\end{proposition}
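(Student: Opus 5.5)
We work with the concrete realization of \cref{thm:asymptoticcompletion}, where every element of $\completion{S}$ is $[x]$ for an approximately geometric sequence $x$ in $S$. The natural candidate is $\completion{f}([x]):=\lim_{n\to\infty}f(x_n)^{1/n}$, and the same formula for $\completion{g}$.

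\emph{Step 1: the limit exists, depends only on $[x]$, and extends $f$.} We first use regularity together with the Strassen property. Monotonicity and $f(m)=m$ for $m\in\naturals$ give $f(u^kx)=2^kf(x)$ whenever $x\le u^k x'$. From $x\le 2^k x'$ we then get $f(x)\le 2^kf(x')$. Applying $f$ to the inequalities in \cref{prop:approxgeomcharacterizations}\ref{it:approxgeompowers1} gives
\begin{equation}
2^{-c-\lfloor\epsilon n\rfloor}f(x_m)^q f(x_1)^r\le f(x_n)\le 2^{c+\lfloor\epsilon n\rfloor}f(x_m)^qf(x_1)^r .
\end{equation}
Here we use $f(x^q)=f(x)^q$ from regularity, and $f(ab)\ge f(a)f(b)$ on the relevant side. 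The one-sided multiplicativity may need care: for the upper bound on $f(x_n)$ we need $f(x_m^qx_1^r)\le f(x_m)^qf(x_1)^r$, which lower functionals do not give for mixed products. To avoid this we use $x_1\sim 1$ (which holds under a Strassen preorder with $0\le1$), absorb $x_1^r$ into $u^c$, and use the simplified form $x_n\approx x_m^{\lfloor n/m\rfloor}$ up to $u^{c+\lfloor\epsilon n\rfloor}$, which involves only powers of a single element. Since nonzero elements satisfy $x_1\sim 1$, we have $f(x_m)>0$. The displayed bounds then show that $\frac1n\log f(x_n)$ is Cauchy, so the limit exists. Asymptotically equivalent sequences give the same limit, and for $x=G(s)$ the limit is $f(s)$ by regularity. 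The zero class maps to $0$.

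\emph{Step 2: $\completion{f}$ is a regular lower functional.} Monotonicity follows from $x_n\le u^{k_n}y_n$. For products, $f(x_ny_n)\ge f(x_n)f(y_n)$ passes to the limit. For sums, we write $(x\sequenceplus y)_n=\sum_m\binom nm x_{n-m}y_m$ and use superadditivity and supermultiplicativity to get $f((x\sequenceplus y)_n)\ge\sum_m\binom nm f(x_{n-m})f(y_m)$. Taking $n$-th roots, the largest term shows this is at least $\max_\lambda A^{1-\lambda}B^{\lambda}\cdot 2^{h(\lambda)}\approx(A+B)$, where $A=\completion{f}([x])$ and $B=\completion{f}([y])$. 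The uniform estimates in Step 1, giving $f(x_j)\ge 2^{-c-\epsilon j}A_\epsilon^j$, make this rigorous.

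\emph{Regularity, the main obstacle.} We must show $\completion{f}(p([x]))=p(\completion{f}([x]))$, and this is where the difficulty lies. By the Cauchy product, $p([x])$ is represented by a sequence whose $n$-th term is a sum over multinomial terms $\prod_j x_{n_j}$ with coefficients. The inequality $\ge$ follows as in Step 2. For $\le$, we replace each $x_{n_j}$ up to $u^{c+\epsilon n_j}$ by $x_m^{\lfloor n_j/m\rfloor}$, so that the $n$-th term is dominated by $u^{O(c+\epsilon n)}\,P_n(x_m)$, where $P_n$ is a polynomial with natural coefficients. Then regularity of $f$ gives $f(P_n(x_m))=P_n(f(x_m))$, and we estimate this against $p(A)^n$ up to $2^{o(n)}$. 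The proof for $\completion{g}$ is symmetric, with the inequalities reversed.

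\emph{Uniqueness.} Any regular extension $F$ satisfies $F(i(s))=f(s)$. Since $i(x)\asymptoticeq G([x])$ (shown in the proof of \cref{thm:asymptoticcompletion}), we have $i(x_n)\le i(u)^{k_n}[x]^n$ and the reverse. Monotonicity and regularity of $F$ then force $f(x_n)\le 2^{k_n}F([x])^n$ and the reverse inequality, so $F([x])=\lim f(x_n)^{1/n}$.
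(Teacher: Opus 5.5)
Your proposal is correct and follows essentially the same route as the paper. Both define $\completion{f}([x])=\lim_{n\to\infty}\sqrt[n]{f(x_n)}$ and use the simplified sandwich $x_n\approx x_m^{\lfloor n/m\rfloor}$ up to $2^{c+\lfloor\epsilon n\rfloor}$ (available since $0\le 1$), together with regularity, to get existence and independence of the representative. Both then pass super/sub-additivity, multiplicativity and monotonicity to the limit, prove regularity by dominating the multinomial expansion by a polynomial in $x_m$, and derive uniqueness from $i(x)\asymptoticeq G([x])$.

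Your treatment of the sum, via the uniform bounds from Step 1, is slightly more careful than the paper's, which simply asserts the limit of the binomial sum. One small correction: the identity $f(2^kx)=2^kf(x)$ comes from regularity with $p(X)=2^kX$, not from monotonicity and $f(m)=m$.
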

\begin{proof}
Let $a\in\completion{S}$, and let $x=(x_n)_{n\in\naturals}$ be an approximately geometric sequence in $S$ satisfying $G(a)\asymptoticeq i(x)$. This means that there exists a sublinear sequence $(k_n)_{n\in\naturals}$ such that $a^n\le i(2)^{k_n}i(x_n)$ and $i(x_n)\le i(2)^{k_n}a^n$ for all $n\in\naturals$. If some map $\completion{f}$ satisfies the stated properties, then
\begin{align}
\completion{f}(a)^n
 & = \completion{f}(a^n)
 \le \completion{f}(i(2^{k_n}x_n))
 = 2^{k_n}f(x_n)
\intertext{and}
f(x_n)
 & = \completion{f}(i(x_n))
 \le \completion{f}(i(2)^{k_n}a^n)
 = 2^{k_n}\completion{f}(a)^n,
\end{align}
therefore $2^{-\frac{k_n}{n}}\sqrt[n]{f(x_n)}\le\completion{f}(a)\le 2^{\frac{k_n}{n}}\sqrt[n]{f(x_n)}$ holds for all $n$. Since $(k_n)_{n\in\naturals}$ is sublinear, this implies
\begin{equation}
\completion{f}(a)=\lim_{n\to\infty}\sqrt[n]{f(x_n)},
\end{equation}
and similarly
\begin{equation}
\completion{g}(a)=\lim_{n\to\infty}\sqrt[n]{g(x_n)}.
\end{equation}
This shows that at most one such $\completion{f}$ and $\completion{g}$ exist.

We prove that the above limits exist. For $\epsilon>0$, let $m\in\positiveintegers$ and $c\in\naturals$ such that for all $n\in\naturals$ the inequalities $x_n\le 2^{c+\lfloor\epsilon n\rfloor}x_m^{\left\lfloor\frac{n}{m}\right\rfloor}$ and $x_m^{\left\lfloor\frac{n}{m}\right\rfloor}\le 2^{c+\lfloor\epsilon n\rfloor}x_n$ hold. Applying $f$, taking $n$th roots and $n\to\infty$ gives
\begin{equation}
2^{-\epsilon}\sqrt[m]{f(x_m)}\le\liminf_{n\to\infty}\sqrt[n]{f(x_n)}\le\limsup_{n\to\infty}\sqrt[n]{f(x_n)}\le 2^{\epsilon}\sqrt[m]{f(x_m)}.
\end{equation}
As $\epsilon>0$ is arbitrary, this implies that the limit exists. The proof for $g$ is similar.

If $x'$ is an approximately geometric sequence such that $i(x')\asymptoticeq G(a)$ as well, then $i(x)\asymptoticeq i(x')$, which implies $\lim_{n\to\infty}\sqrt[n]{f(x'_n)}=\lim_{n\to\infty}\sqrt[n]{f(x_n)}$ and $\lim_{n\to\infty}\sqrt[n]{g(x'_n)}=\lim_{n\to\infty}\sqrt[n]{g(x_n)}$, therefore $\completion{f}$ and $\completion{g}$ can be unambiguously defined by the above limits.

If $x$ is geometric, then the limits evaluate to $f(x_1)$ and $g(x_1)$, therefore $\completion{f}\circ i=f$ and $\completion{g}\circ i=g$. This also implies that $f$ and $g$ evaluate to $0$ on the zero element and to $1$ on the unit.

Let $a,b\in\completion{S}$, and let $x,y$ be approximately geometric sequences in $S$ such that $G(a)\asymptoticeq i(x)$ and $G(b)\asymptoticeq i(y)$. Then $G(a+b)=G(a)\sequenceplus G(b)\asymptoticeq i(x)\sequenceplus i(y)=i(x\sequenceplus y)$, therefore
\begin{equation}
\begin{split}
\completion{f}(a+b)
 & = \lim_{n\to\infty}\sqrt[n]{f\left(\textstyle\sum_{m=0}^n\binom{m}{n}x_{n-m}y_m\right)}  \\
 & \ge \lim_{n\to\infty}\sqrt[n]{\textstyle\sum_{m=0}^n\binom{m}{n}f(x_{n-m})f(y_m)}  \\
 & = \lim_{n\to\infty}\sqrt[n]{f(x_n)}+\lim_{n\to\infty}\sqrt[n]{f(y_n)}  \\
 & = \completion{f}(a)+\completion{f}(b)
\end{split}
\end{equation}
and
\begin{equation}
\begin{split}
\completion{g}(a+b)
 & = \lim_{n\to\infty}\sqrt[n]{g\left(\textstyle\sum_{m=0}^n\binom{m}{n}x_{n-m}y_m\right)}  \\
 & \le \lim_{n\to\infty}\sqrt[n]{\textstyle\sum_{m=0}^n\binom{m}{n}g(x_{n-m})g(y_m)}  \\
 & = \lim_{n\to\infty}\sqrt[n]{g(x_n)}+\lim_{n\to\infty}\sqrt[n]{g(y_n)}  \\
 & = \completion{g}(a)+\completion{g}(b).
\end{split}
\end{equation}

We also have $G(ab)=G(a)\sequencetimes G(b)\asymptoticeq i(x)\sequencetimes i(y)=i(x\sequencetimes y)$, therefore
\begin{equation}
\begin{split}
\completion{f}(a+b)
 & = \lim_{n\to\infty}\sqrt[n]{f(x_ny_n)}  \\
 & \ge \lim_{n\to\infty}\sqrt[n]{f(x_n)f(y_n)}  \\
 & = \lim_{n\to\infty}\sqrt[n]{f(x_n)}\lim_{n\to\infty}\sqrt[n]{f(y_n)}  \\
 & = \completion{f}(a)\completion{f}(b)
\end{split}
\end{equation}
and
\begin{equation}
\begin{split}
\completion{g}(a+b)
 & = \lim_{n\to\infty}\sqrt[n]{g(x_ny_n)}  \\
 & \le \lim_{n\to\infty}\sqrt[n]{g(x_n)f(y_n)}  \\
 & = \lim_{n\to\infty}\sqrt[n]{g(x_n)}\lim_{n\to\infty}\sqrt[n]{g(y_n)}  \\
 & = \completion{g}(a)\completion{g}(b).
\end{split}
\end{equation}

If $a\le b$, then $x\asymptoticle y$, therefore $x_n\le 2^{k_n}y_n$ for some sublinear $(k_n)_{n\in\naturals}$ and all $n$, which implies
\begin{equation}
\completion{f}(a)
 = \lim_{n\to\infty}\sqrt[n]{f(x_n)}
 \le \lim_{n\to\infty}\sqrt[n]{f(2^{k_n}y_n)}
 = \lim_{n\to\infty}2^{\frac{k_n}{n}}\sqrt[n]{f(y_n)}
 = \completion{f}(b)
\end{equation}
and similarly $\completion{g}(a)\le\completion{g}(b)$.

We show that $\completion{f}$ and $\completion{g}$ are regular. Let $p$ be a polynomial with nonnegative integer coefficients $p_0,p_1,\dots,p_d$. Let $a\in\completion{S}$, and let $x=(x_n)_{n\in\naturals}$ be an approximately geometric sequence in $S$ satisfying $G(a)\asymptoticeq i(x)$. For $\epsilon>0$, let $m\in\positiveintegers$ and $c\in\naturals$ such that for all $n\in\naturals$ the inequalities $x_n\le 2^{c+\lfloor\epsilon n\rfloor}x_m^{\left\lfloor\frac{n}{m}\right\rfloor}$ and $x_m^{\left\lfloor\frac{n}{m}\right\rfloor}\le 2^{c+\lfloor\epsilon n\rfloor}x_n$ hold.

The sequence $G(p(a))$ is asymptotically equivalent to the sequence
\begin{equation}
G(p_0)\sequenceplus \left(G(p_1)\sequencetimes x\right)\sequenceplus \left(G(p_2)\sequencetimes x^{\sequencetimes 2}\right)\sequenceplus\cdots\sequenceplus \left(G(p_d)\sequencetimes x^{\sequencetimes d}\right),
\end{equation}
the $n$th element of which is
\begin{equation}\label{eq:polynomialpowerelements}
\sum_{\substack{n_0,\dots,n_d\in\naturals  \\  n_0+\dots+n_d=n}}\binom{n}{n_0,\dots,n_d}p_0^{n_0}p_1^{n_1}x_{n_1}p_2^{n_2}x_{n_2}^2\cdots p_d^{n_d}x_{n_d}^d
\end{equation}
The factors coming from the sequence $x$ can be bounded as
\begin{equation}
\prod_{i=0}^dx_{n_i}^i
 \le \prod_{i=0}^d 2^{c+\lfloor\epsilon n_i\rfloor}x_m^{i\left\lfloor\frac{n}{m}\right\rfloor}
 \le 2^{(d+1)c+\lfloor\epsilon n\rfloor}x_m^{\left\lfloor\sum_{i=0}^d \frac{in_i}{m}\right\rfloor}
\end{equation}
and
\begin{equation}
x_m^{\left\lfloor\sum_{i=0}^d \frac{in_i}{m}\right\rfloor}
 \le 2^{(d+1)c+\lfloor\epsilon n\rfloor}\prod_{i=0}^dx_{n_i}^i
\end{equation}
The upper and lower bounds on the sum \eqref{eq:polynomialpowerelements} are polynomials in $x_m$ with nonnegative integer coefficients therefore, by the regularity of $f$, they evaluate on the sum to
\begin{equation}
\sum_{\substack{n_0,\dots,n_d\in\naturals  \\  n_0+\dots+n_d=n}}\binom{n}{n_0,\dots,n_d}p_0^{n_0}p_1^{n_1}p_2^{n_2}\cdots p_d^{n_d}f(x_m)^{\left\lfloor\sum_{i=0}^d \frac{in_i}{m}\right\rfloor}
\end{equation}
up to factors of $2^{(d+1)c+\lfloor\epsilon n\rfloor}$, which results in $2^{\pm\epsilon}$ after taking $n$th roots and limits. The limit of the $n$th root of the sum is $p(\sqrt[m]{f(x_m)})$, which can be seen using that it has polynomially many terms, therefore the largest exponential dominates. Since $\epsilon$ can be made arbitrarily small by choosing $m$ large enough, the equality $\completion{f}(p(a))=p(\completion{f}(a))$ follows. Regularity of $\completion{g}$ is shown in the same way.
\end{proof}

\section{Applications}\label{sec:applications}

\subsection{Powers with real exponents}\label{sec:powers}

\begin{lemma}\label{lem:approximatelygeometrictermdistance}
Let $a$ be an approximately geometric sequence, $(k_n)$ sublinear as in the definition, and let $p\in\naturals$ be such that $a_1\le u^p$ and $1\le u^pa_1$. Then for all $m,n\in\naturals$ the inequality
\begin{equation}
a_m\le u^{\lvert n-m\rvert(p+k_1)+k_{\min\{m,n\}}} a_n
\end{equation}
holds.
\end{lemma}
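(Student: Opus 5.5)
We prove the inequality by splitting into the cases $m\ge n$ and $m<n$. In each case we compare $a_m$ and $a_n$ through the common index $\min\{m,n\}$, using condition \ref{it:approxgeomproducts} of \cref{prop:approxgeomcharacterizations} with two or more factors. We take $(k_n)$ to be a sublinear sequence for which the two product inequalities of \ref{it:approxgeomproducts} hold; by \cref{rem:improvedsublinear} we may assume it is monotone increasing if needed.

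\emph{Case $m\ge n$.} Write $m=n+(m-n)$ and split it into the $t=1+(m-n)$ summands $n,1,\dots,1$. The second inequality of \ref{it:approxgeomproducts} gives
\begin{equation}
a_m\le u^{k_n+(m-n)k_1}a_na_1^{m-n}.
\end{equation}
By hypothesis $a_1\le u^p$, so $a_1^{m-n}\le u^{p(m-n)}$ by monotonicity of multiplication. Together these give
\begin{equation}
a_m\le u^{(m-n)(p+k_1)+k_n}a_n,
\end{equation}
and $k_n=k_{\min\{m,n\}}$ in this case.

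\emph{Case $m<n$.} From $1\le u^pa_1$ we get $1\le u^{p(n-m)}a_1^{n-m}$. Multiplying by $a_m$ gives
\begin{equation}
a_m\le u^{p(n-m)}a_ma_1^{n-m}.
\end{equation}
Now apply the first inequality of \ref{it:approxgeomproducts} to the summands $m,1,\dots,1$ (with $n-m$ ones):
\begin{equation}
a_ma_1^{n-m}\le u^{k_m+(n-m)k_1}a_n.
\end{equation}
Combining the two displays yields
\begin{equation}
a_m\le u^{(n-m)(p+k_1)+k_m}a_n,
\end{equation}
and $k_m=k_{\min\{m,n\}}$ here.

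\emph{Degenerate cases.} If $m=n$, the claimed inequality reads $a_m\le u^{k_m}a_m$, which holds because $1\le u$. If one of the indices is $0$, the splitting still makes sense: the sum with index $0$ included is allowed, and $t$ is still positive.

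The only real obstacle is bookkeeping. One must use the correct direction of the product inequality in each case, and multiply the one-sided bounds $a_1\le u^p$ and $1\le u^pa_1$ by the remaining factors. This is legitimate since the preorder is compatible with multiplication.
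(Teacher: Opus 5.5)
Your proposal is correct and follows essentially the same argument as the paper. In the case $m\le n$ you multiply $a_m$ by $(u^pa_1)^{n-m}\ge 1$ and then apply the first product inequality. In the case $n\le m$ you apply the second product inequality with summands $n,1,\dots,1$ and then bound $a_1^{m-n}\le u^{p(m-n)}$.

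Your monotonicity remark is unnecessary, since the argument works with the given $(k_n)$.
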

\begin{proof}
If $m\le n$, then
\begin{equation}
\begin{split}
a_m
 & \le a_m (u^pa_1)^{n-m}  \\
 & \le u^{(n-m)p+k_m+(n-m)k_1}a_n  \\
 & = u^{\lvert n-m\rvert(p+k_1)+k_{\min\{m,n\}}} a_n,
\end{split}
\end{equation}
while if $n\le m$, then
\begin{equation}
\begin{split}
a_m
 & \le u^{k_n+(m-n)k_1}a_n(a_1)^{m-n}  \\
 & \le u^{(m-n)p+k_n+(m-n)k_1}a_n  \\
 & = u^{\lvert n-m\rvert(p+k_1)+k_{\min\{m,n\}}} a_n.
\end{split}
\end{equation}
\end{proof}

\begin{proposition}\label{prop:rescaledproperties}
Let $a,b$ be approximately geometric sequences such that $0\neq a_1\sim 1$ and  $0\neq b_1\sim 1$. Let $\beta,\gamma:\naturals\to\naturals$ be sequences such that the limits $\lim_{n\to\infty}\frac{\beta_n}{n}$ and $\lim_{n\to\infty}\frac{\gamma_n}{n}$ exist (in $\nonnegativereals$). Then
\begin{enumerate}
\item\label{it:rescaledapproximatelygeometric} $a\circ\beta$ is approximately geometric
\item\label{it:rescaledequivalence} if $\lim_{n\to\infty}\frac{\beta_n}{n}=\lim_{n\to\infty}\frac{\gamma_n}{n}$, then $a\circ\beta\asymptoticeq a\circ\gamma$
\item\label{it:rescaledadditive} $a\circ(\beta+\gamma)\asymptoticeq(a\circ\beta)\sequencetimes(a\circ\gamma)$
\item\label{it:rescaledmultiplicative} $(a\sequencetimes b)\circ\beta=(a\circ\beta)\sequencetimes(b\circ\beta)$
\item\label{it:rescaledmonotone} if $a\asymptoticle b$, then $a\circ\gamma\asymptoticle b\circ\gamma$
\item\label{it:rescaledpower} if $\lim_{n\to\infty}\frac{\beta_n}{n}=t\in\naturals$, then $a\circ\beta\asymptoticeq a^d$.
\end{enumerate}
\end{proposition}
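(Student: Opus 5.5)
The common tool is \cref{lem:approximatelygeometrictermdistance}. Since $a_1\sim 1$ and $b_1\sim 1$, we can fix $p$ with $a_1\le u^p$, $1\le u^pa_1$, and the same for $b$. We also fix a monotone increasing, subadditive sublinear $(k_n)$ that serves for both $a$ and $b$ in condition \ref{it:approxgeomproducts} of \cref{prop:approxgeomcharacterizations}; it can be improved as in \cref{rem:improvedsublinear}. The lemma then says that $a_m$ and $a_n$ differ by a factor of at most $u^{\lvert n-m\rvert(p+k_1)+k_{\min\{m,n\}}}$. So changing the index by a sublinear amount costs only a sublinear exponent. The plan is to reduce every item to this estimate, combined with the product inequalities.

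\textbf{Items \ref{it:rescaledequivalence}, \ref{it:rescaledmultiplicative}, \ref{it:rescaledmonotone}.} For \ref{it:rescaledequivalence}, $\lvert\beta_n-\gamma_n\rvert$ is sublinear because both ratios tend to the same limit. Also $k_{\min\{\beta_n,\gamma_n\}}\le k_{\lceil Cn\rceil}$ with $C$ bounding both ratios, and this is sublinear in $n$. The lemma then gives $a\circ\beta\asymptoticeq a\circ\gamma$. Item \ref{it:rescaledmultiplicative} is a literal identity of elementwise products. For \ref{it:rescaledmonotone}, start from $a_m\le u^{l_m}b_m$ with $l$ monotone sublinear and substitute $m=\gamma_n$; then $l_{\gamma_n}$ is sublinear in $n$ because $\gamma_n\le Cn+O(1)$. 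A minor edge case: if $\gamma_n$ stays bounded, then $l_{\gamma_n}$ is bounded, which is still fine.

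\textbf{Items \ref{it:rescaledapproximatelygeometric}, \ref{it:rescaledadditive}, \ref{it:rescaledpower}.} For \ref{it:rescaledadditive}, the product inequality with $t=2$ gives $a_{\beta_n}a_{\gamma_n}$ versus $a_{\beta_n+\gamma_n}$ up to $u^{k_{\beta_n}+k_{\gamma_n}}$, which is sublinear in $n$. For \ref{it:rescaledpower} (reading the conclusion as $a\circ\beta\asymptoticeq a^{\sequencetimes t}$), use \ref{it:rescaledequivalence} to replace $\beta_n$ by $tn$. Then compare $a_{tn}$ with $a_n^t$ via the product inequality with $n_1=\dots=n_t=n$; the cost is $u^{tk_n}$, which is sublinear.

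For \ref{it:rescaledapproximatelygeometric}, with $\lambda=\lim\beta_n/n$, I would verify condition \ref{it:approxgeomproducts} of \cref{prop:approxgeomcharacterizations} with a new sublinear sequence $k'$. We compare $a_{\beta_{n_1}}\cdots a_{\beta_{n_t}}$ with $a_{\beta_{n_1}+\dots+\beta_{n_t}}$ at cost $u^{\sum k_{\beta_{n_i}}}$. We then compare that with $a_{\beta_{n_1+\dots+n_t}}$ via the lemma, at cost $u^{\lvert\sum_i\beta_{n_i}-\beta_{\sum n_i}\rvert(p+k_1)+k_{\dots}}$. Set $\delta_n=\lvert\beta_n-\lambda n\rvert$, which is sublinear. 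Then $\lvert\sum_i\beta_{n_i}-\beta_{\sum n_i}\rvert\le\sum_i\delta_{n_i}+\delta_{\sum n_i}$. With the subadditive improvement $\delta'$ of $\delta$ from \cref{rem:improvedsublinear}, this is at most $2\sum_i\delta'_{n_i}$. Likewise $k_{\min\{\cdot\}}\le k_{\sum_i\beta_{n_i}}\le\sum_i k_{\beta_{n_i}}$ by subadditivity and monotonicity. So we can take $k'_n=2k_{\beta_n}+2(p+k_1)\delta'_n$, which is sublinear. The nonzero condition holds because the entries of $a$ are nonzero. If $\lambda=0$, the argument still works: $\delta'_n=\beta_n$ is sublinear.

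The main obstacle is making all the error terms additive over the $n_i$ in \ref{it:rescaledapproximatelygeometric}, particularly the $k_{\min}$ and the $\delta_{\sum n_i}$ contributions. Subadditive, monotone replacements from \cref{rem:improvedsublinear} are what make these bookkeeping steps go through.
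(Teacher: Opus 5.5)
Your proposal is correct. Items \ref{it:rescaledequivalence}--\ref{it:rescaledpower} are argued exactly as in the paper: \cref{lem:approximatelygeometrictermdistance}, the two-factor product inequality, and sublinearity of $k\circ\beta$ and $k\circ\gamma$.

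For item \ref{it:rescaledapproximatelygeometric} you take a genuinely different route.

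\textbf{The paper's route.} It verifies the $(m,c,\epsilon)$-characterization \ref{it:approxgeompowersr} of \cref{prop:approxgeomcharacterizations}. It fixes $m$ and compares $a_{\beta_m}^qa_{\beta_r}$ with $a_{\beta_{qm+r}}$, using the product inequality and \cref{lem:approximatelygeometrictermdistance}. With $\lambda=\lim_{n\to\infty}\beta_n/n$, the resulting exponent grows at rate $\frac{k_{\beta_m}}{m}+\lvert\frac{\beta_m}{m}-\lambda\rvert(p+k_1)$ as $q\to\infty$, and this rate vanishes as $m\to\infty$. Getting back to the defining condition \ref{it:approxgeomproducts} is left to the equivalences of \cref{prop:approxgeomcharacterizations}, including the long implication \ref{it:approxgeomproductsepsilon}$\implies$\ref{it:approxgeomproducts}.

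\textbf{Your route.} You verify \ref{it:approxgeomproducts} directly for arbitrary $n_1,\dots,n_t$. The extra idea is to split the index mismatch as $\lvert\sum_i\beta_{n_i}-\beta_N\rvert\le\sum_i\delta_{n_i}+\delta_N$, with $\delta_n=\lvert\beta_n-\lambda n\rvert$. You then absorb $\delta_N$ and $k_{\sum_i\beta_{n_i}}$ into per-factor terms using subadditive, monotone majorants. This gives the explicit witness $k'_n=2k_{\beta_n}+2(p+k_1)\delta'_n$.

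\textbf{Comparison.} Your version produces an explicit sublinear sequence. It also avoids checking separately that $(a\circ\beta)_n\sim(a\circ\beta)_1^n$; the paper's use of \ref{it:approxgeompowersr} tacitly requires this, and it holds because every $a_n\sim 1$. The paper's version needs no subadditivity and only compares blocks of the single shape $(m,\dots,m,r)$.

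One bookkeeping detail in your version: \cref{rem:improvedsublinear} is stated for $\naturals$-valued sequences, so apply it to $\lceil\delta_n\rceil$. Its construction also need not dominate at $n=0$, and $\delta_0=\beta_0$ may be nonzero while indices $n_i=0$ are allowed. Adding the constant $\lceil\delta_0\rceil$ fixes this and preserves subadditivity.
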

\begin{proof}
Let $p\in\naturals$ such that $a_1\le u^p$, $1\le u^pa_1$, $b_1\le u^p$, and $1\le u^pb_1$. Let $(k_n)_{n\in\naturals}$ be a nondecreasing sublinear sequence such that for all $t\in\positiveintegers$, $n_1,\dots,n_t\in\naturals$ the inequalities $a_{n_1}\cdots a_{n_t}\le u^{k_{n_1}+\dots+k_{n_t}}a_{n_1+\dots+n_t}$, $a_{n_1}\cdots a_{n_t}\le u^{k_{n_1}+\dots+k_{n_t}}a_{n_1+\dots+n_t}$, $b_{n_1}\cdots b_{n_t}\le u^{k_{n_1}+\dots+k_{n_t}}b_{n_1+\dots+n_t}$ and $b_{n_1}\cdots b_{n_t}\le u^{k_{n_1}+\dots+k_{n_t}}b_{n_1+\dots+n_t}$ hold.

\ref{it:rescaledapproximatelygeometric}:
Let $m\in\positiveintegers$. For all $q,r\in\naturals$ we have
\begin{equation}
\begin{split}
(a\circ\beta)_m^q(a\circ\beta)_r
 & = a_{\beta_m}^qa_{\beta_r}  \\
 & \le u^{qk_{\beta_m}+k_{\beta_r}}a_{q\beta_m+\beta_r}  \\
 & \le u^{qk_{\beta_m}+k_{\beta_r}+\lvert q\beta_m+\beta_r-\beta_{qm+r}\rvert(p+k_1)+k_{\min\{q\beta_m+\beta_r,\beta_{qm+r}\}}}a_{\beta_{qm+r}}  \\
 & \le u^{qk_{\beta_m}+k_{\beta_r}+\lvert q\beta_m+\beta_r-\beta_{qm+r}\rvert(p+k_1)+k_{\beta_{qm+r}}}(a\circ\beta)_{qm+r},
\end{split}
\end{equation}
in the second inequality using \cref{lem:approximatelygeometrictermdistance}, and similarly
\begin{equation}
(a\circ\beta)_{qm+r}
 \le u^{qk_{\beta_m}+k_{\beta_r}+\lvert q\beta_m+\beta_r-\beta_{qm+r}\rvert(p+k_1)+k_{\beta_{qm+r}}}(a\circ\beta)_m^q(a\circ\beta)_r.
\end{equation}
With $n=qm+r$, $0\le r<m$, the growth rate of the exponent satisfies
\begin{multline}
\lim_{n\to\infty}\frac{qk_{\beta_m}+k_{\beta_r}+\lvert q\beta_m+\beta_r-\beta_{qm+r}\rvert(p+k_1)+k_{\beta_{qm+r}}}{n}  \\
 = \frac{k_{\beta_m}}{m}+\left\lvert\frac{\beta_m}{m}-\lim_{n\to\infty}\frac{\beta_n}{n}\right\rvert(p+k_1)
\end{multline}
since $k\circ\beta$ is sublinear. This can be made arbitrarily small by choosing $m$ sufficiently large, therefore $a\circ\beta$ is approximately geometric.

\ref{it:rescaledequivalence}:
By \cref{lem:approximatelygeometrictermdistance} we have
\begin{equation}
\begin{split}
(a\circ\beta)_n
 & = a_{\beta_n}  \\
 & \le u^{\lvert\beta_n-\gamma_n\rvert(p+k_1)+k_{\min\{\beta_n,\gamma_n\}}}a_{\gamma_n}  \\
 & \le u^{\lvert\beta_n-\gamma_n\rvert(p+k_1)+k_{\beta_n}}(a\circ\gamma)_n
\end{split}
\end{equation}
Since
\begin{equation}
\lim_{n\to\infty}\frac{\lvert\beta_n-\gamma_n\rvert(p+k_1)+k_{\beta_n}}{n}
 = \left\lvert\lim_{n\to\infty}\frac{\beta_n}{n}-\lim_{n\to\infty}\frac{\gamma_n}{n}\right\rvert(p+k_1)
 = 0 
\end{equation}
by sublinearity of $k\circ\beta$, we have $a\circ\beta\asymptoticle a\circ\gamma$. The opposite inequality is obtained by reversing the roles of $\beta$ and $\gamma$.

\ref{it:rescaledadditive}:
Using that $a$ is approximately geometric, we obtain
\begin{equation}
\begin{split}
(a\circ(\beta+\gamma))_n
 = a_{\beta_n+\gamma_n}
 \le u^{k_{\beta_n}+k_{\gamma_n}}a_{\beta_n}a_{\gamma_n}
 = u^{k_{\beta_n}+k_{\gamma_n}}((a\circ\beta)\sequencetimes(a\circ\gamma))_n
\end{split}
\end{equation}
and in a similar way $((a\circ\beta)\sequencetimes(a\circ\gamma))_n\le u^{k_{\beta_n}+k_{\gamma_n}}(a\circ(\beta+\gamma))_n$ for all $n\in\naturals$. Since $k\circ\beta$ and $k\circ\gamma$ are sublinear, these inequalities imply $a\circ(\beta+\gamma)\asymptoticeq(a\circ\beta)\sequencetimes(a\circ\gamma)$.

\ref{it:rescaledmultiplicative}:
For all $n\in\naturals$ we have
\begin{equation}
((a\sequencetimes b)\circ\beta)_n
 = a_{\beta_n}b_{\beta_n}
 = (a\circ\beta)_n(b\circ\beta)_n
 = ((a\circ\beta)\sequencetimes(b\circ\beta))_n,
\end{equation}
therefore $(a\sequencetimes b)\circ\beta=(a\circ\beta)\sequencetimes(b\circ\beta)$.

\ref{it:rescaledmonotone}:
Let $k_n$ be a sublinear sequence such that $a_n\le u^{k_n}b_n$ holds for all $n$. Then
\begin{equation}
(a\circ\gamma)_n
 = a_{\gamma_n}
 \le u^{k_{\gamma_n}}b_{\gamma_n}
 = u^{k_{\gamma_n}}(b\circ\gamma)_n
\end{equation}
for all $n$. Since $k\circ\gamma$ is sublinear, this implies $a\circ\gamma\asymptoticle b\circ\gamma$.

\ref{it:rescaledpower}:
By part \ref{it:rescaledequivalence} we may assume that $\beta_n=dn$ for all $n$. Then
\begin{equation}
(a^d)_n
 = a_n^d
 \le u^{dk_n}a_{dn}
 = u^{dk_n}(a\circ\beta)_n,
\end{equation}
and similarly $(a\circ\beta)_n\le u^{dk_n}(a^d)_n$ for all $n\in\naturals$, therefore $a\circ\beta\asymptoticeq a^d$.
\end{proof}

\begin{definition}
Let $S$ be a closed partially ordered asymptotically complete semiring and $a\in \nonzeros{S}$, $a\sim 1$. For $r\in\nonnegativereals$ we define $a^r$ to be the unique element of $S$ such that $G(a^r)\asymptoticeq (a^{\lfloor r n\rfloor})_{n\in\naturals}$.

If in addition $a$ is invertible, then we define $a^{-r}$ to be $(\frac{1}{a})^r$ for $r>0$.
\end{definition}
By part \ref{it:rescaledpower} of \cref{prop:rescaledproperties}, the notation is consistent with the notation for iterated multiplication when $r\in\naturals$.
\begin{corollary}\label{cor:powerproperties}
If $a$ and $b$ are nonzero elements of a closed and asymptotically complete partially ordered semiring, $a\sim b\sim 1$, and $r,s\in\nonnegativereals$, then
\begin{enumerate}
\item $a^{r+s}=a^ra^s$
\item $(a^r)^s=a^{rs}$
\item $(ab)^r=a^rb^r$
\item if $a\le b$, then $a^r\le b^r$
\item if $1\le a$, then the map $r\mapsto a^r$ is monotone increasing
\item if $a\le 1$, then the map $r\mapsto a^r$ is monotone decreasing
\end{enumerate}
If $a$ and $b$ are invertible, then these properties hold for all $r,s\in\reals$, except that the inequality is reversed if $r<0$.
\end{corollary}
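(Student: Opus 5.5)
The plan is to reduce every item to \cref{prop:rescaledproperties}, applied to geometric sequences inside $S$ itself, and then to transfer statements about sequences back to elements. For nonzero $a\sim 1$, the sequence $G(a)$ is approximately geometric and satisfies $0\neq a\sim 1$. For $\beta_n=\lfloor rn\rfloor$ we have $G(a)\circ\beta=(a^{\lfloor rn\rfloor})_{n\in\naturals}$. By part \ref{it:rescaledapproximatelygeometric} this sequence is approximately geometric, so asymptotic completeness gives an element $a^r$ with $G(a^r)\asymptoticeq G(a)\circ\beta$. Closedness together with antisymmetry makes this element unique. The transfer principle is: in a closed partially ordered semiring, $G(x)\asymptoticle G(y)$ iff $x\le y$, and $G(x)\asymptoticeq G(y)$ iff $x=y$.

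The multiplicative identities then go as follows.
\begin{itemize}
\item For (i), take $\beta_n=\lfloor rn\rfloor$ and $\gamma_n=\lfloor sn\rfloor$. Since $\lim(\beta_n+\gamma_n)/n=r+s$, part \ref{it:rescaledequivalence} gives $G(a)\circ(\beta+\gamma)\asymptoticeq G(a^{r+s})$. Part \ref{it:rescaledadditive} gives $G(a)\circ(\beta+\gamma)\asymptoticeq G(a^r)\sequencetimes G(a^s)=G(a^ra^s)$. Hence $a^{r+s}=a^ra^s$.
\item For (iii), part \ref{it:rescaledmultiplicative} gives $(G(a)\sequencetimes G(b))\circ\beta=(G(a)\circ\beta)\sequencetimes(G(b)\circ\beta)$. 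Note that $ab\sim 1$, so $(ab)^r$ is defined.
\item For (iv), part \ref{it:rescaledmonotone} applies because $a\le b$ implies $G(a)\asymptoticle G(b)$.
\end{itemize}

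Item (ii) is the step I expect to be the main obstacle. Here one must compose rescalings, and the inner sequence is only asymptotically equivalent to a geometric one. First check that $a^r\sim 1$: this follows from $a^{\lfloor rn\rfloor}\sim 1$, using the power-universal bounds that come with approximately geometric sequences, so that $(a^r)^s$ is defined. Then
\[
G((a^r)^s)\asymptoticeq G(a^r)\circ\gamma\asymptoticeq (G(a)\circ\beta)\circ\gamma,
\]
where the second step uses part \ref{it:rescaledmonotone} in both directions. The final sequence is $G(a)\circ(\beta\circ\gamma)$, and $\lim\beta_{\gamma_n}/n=rs$. This limit is clear when $s>0$, since $\gamma_n\to\infty$. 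When $s=0$, $\gamma_n=0$ and $\beta_0=0$, so the limit is again $rs=0$. Part \ref{it:rescaledequivalence} then identifies the sequence with $G(a^{rs})$.

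For (v) and (vi), write $a^s=a^ra^{s-r}$ for $s\ge r$ using (i). The hypothesis $1\le a$ gives $1=1^{s-r}\le a^{s-r}$ by (iv), since $1^t=1$ is immediate. Multiplying by $a^r$ then gives $a^r\le a^s$, and (vi) is symmetric. In the invertible case, first note that $1/a\sim 1$, since multiplying a zigzag chain from $a$ to $1$ by $1/a$ gives a chain from $1$ to $1/a$. From (iii), $a^r(1/a)^r=1^r=1$, so $(1/a)^r=(a^r)^{-1}$. The identities then extend to all real exponents by splitting into sign cases and using (i)–(iii) with inverses. Finally, $a\le b$ implies $1/b\le 1/a$, obtained by multiplying by $(ab)^{-1}$, which yields the reversed inequality for $r<0$.
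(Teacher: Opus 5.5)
Your proposal is correct and takes essentially the same route as the paper. The nonnegative-exponent cases come from \cref{prop:rescaledproperties} together with the transfer $G(x)\asymptoticeq G(y)\iff x=y$ in a closed partial order. The invertible case rests on $a^r(1/a)^r=1$: the paper gets this directly from $a^{\lfloor rn\rfloor}(1/a)^{\lfloor rn\rfloor}=1$, and you get it from item (iii) with $b=1/a$, which amounts to the same thing. Your treatment of (ii) via $(G(a)\circ\beta)\circ\gamma=G(a)\circ(\beta\circ\gamma)$, and of (v)--(vi) via (i) and (iv), simply spells out details the paper leaves implicit.
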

\begin{proof}
For nonnegative exponents, the properties directly follow from \cref{prop:rescaledproperties} and the definition.

If $a$ is invertible, then $a^{\lfloor rn\rfloor}(\frac{1}{a})^{\lfloor rn\rfloor}=1$ for all $n$, therefore $G(a^r)G(a^{-r})\asymptoticeq G(1)$, which implies that $a^r$ is invertible and its inverse is $a^{-r}$. Using this observation, the properties can be extended to all $r,s\in\reals$.
\end{proof}

\begin{example}
In the closed and asymptotically complete partially ordered semiring $\nonnegativereals$, $a^r$ agrees with the exponentiation for all $a\neq 0$ and $r\in\reals$.

Let $S$ be a semiring with a Strassen preorder, and $i:S\to\completion{S}$ its asymptotic completion. Then $\setbuild{i(2)^r}{r\in\nonnegativereals}\subseteq\completion{S}$ can be identified with the subset $[1,\infty)\subseteq\nonnegativereals$. If some natural number $n$ is invertible in $S$, then $\setbuild{i(n)^r}{r\in\reals}\subseteq\completion{S}$ can be identified with $(0,\infty)\subseteq\nonnegativereals$.
\end{example}

\begin{remark}\label{rem:asymptoticallycompletecriterion}
If $S$ and $T$ are closed and asymptotically complete partially ordered semirings, and $\phi:S\to T$ is a monotone homomorphism, then $\phi(a^r)=\phi(a)^r$ holds for all $r\in\nonnegativereals$, because left and right compositions commute. This gives a simple necessary condition for a preordered semiring to be asymptotically complete: if $\phi:S\to\nonnegativereals$ is a monotone homomorphism, and there is an element $a\in S$ such that $\phi(a)>1$ but the image of $\phi$ does not contain $(1,\infty)$, then $S$ is not asymptotically complete.
\end{remark}

If $S$ is a semiring with a Strassen preorder and $s\in S$, then the (abstract) rank of $s$ is $\rank(s)=\min\setbuild{m\in\naturals\subseteq S}{s\le m}$, and the (abstract) subrank of $s$ is $\subrank(s)=\max\setbuild{m\in\naturals}{m\le s}$. The asymptotic rank and the asymptotic subrank are defined as the limits $\asymptoticrank(s)=\lim_{n\to\infty}\sqrt[n]{\rank(s^n)}$, $\asymptoticsubrank(s)=\lim_{n\to\infty}\sqrt[n]{\subrank(s^n)}$, which exist by the Fekete lemma. In terms of the asymptotic completion, they can be characterized as follows:
\begin{proposition}
Let $S$ be a semiring with a Strassen preorder, $i:S\to\completion{S}$ its asymptotic completion, and $s\in S$. Then
\begin{align}
\asymptoticrank(s)
 & = \min\setbuild{m\in\{0\}\cup[1,\infty)=\completion{\naturals}\subseteq\completion{S}}{i(s)\le m}
\intertext{and if $2\le s^p$ for some $p\in\naturals$, then}
\asymptoticsubrank(s)
 & = \max\setbuild{m\in\{0\}\cup[1,\infty)=\completion{\naturals}\subseteq\completion{S}}{m\le i(s)}.
\end{align}
\end{proposition}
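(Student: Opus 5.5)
First I identify $\completion{\naturals}$ inside $\completion{S}$. The inclusion $\naturals\to S$ is an order-embedding, and the asymptotic preorder on $\naturals$ is the usual order. So approximately geometric sequences of naturals are asymptotically equivalent in $\completion{S}$ exactly when their growth rates $\lim\sqrt[n]{a_n}$ agree; this uses \cref{ex:approximatelygeometric}, applied inside $\naturals\subseteq\nonnegativereals$. Comparing two such sequences in $S$ reduces to comparing them in $\naturals$, because both the preorder and $u=2$ come from $\naturals$. Hence $\completion{\naturals}\to\completion{S}$ is injective, with image $\{0\}\cup[1,\infty)$: nonzero sequences of naturals have growth rate at least $1$, and every $r\ge1$ is attained, e.g.\ by $i(2)^{\log_2 r}$. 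I will write $\rho$ for the element of $\completion{S}$ corresponding to the real number $\rho$, i.e.\ the class of $G$-equivalents of $(\lceil\rho^n\rceil)_n$. Let $R=\asymptoticrank(s)$.

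\emph{Rank.} Monotonicity of $m\mapsto m$ into $\completion{S}$ means I need two facts: $i(s)\le R$, and $i(s)\le\rho$ implies $\rho\ge R$.
\begin{itemize}
\item For the first, put $a_n=\rank(s^n)$. Then $a$ is submultiplicative, and $R=\lim\sqrt[n]{a_n}$ (assume $s\ge1$ or $s\neq0$; the case $i(s)=0$ is trivial). I cannot claim that $a$ itself is approximately geometric. Instead I compare directly: for each $m$, $s^{qm+r}\le a_m^q s^r\le a_m^q a_1^r$, so in $\completion{S}$ we get $i(s)\le a_m^{1/m}$ via the real-power construction (\cref{prop:rescaledproperties}, \cref{cor:powerproperties}). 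Letting $m\to\infty$ along $a_m^{1/m}\to R$ and using closedness of $\completion{S}$ gives $i(s)\le R$. Concretely, $R\le a_m^{1/m}\le 2^{\epsilon}R$ inside the copy of $[1,\infty)$, so $i(s)\le i(2)^{\epsilon}R$ for every $\epsilon$, hence $i(s)\asymptoticle R$ and therefore $i(s)\le R$.
\item For the second, suppose $i(s)\le\rho$ with $\rho\ge1$. Represent $\rho$ by $(\lceil\rho^n\rceil)$. By definition $s^n\le 2^{k_n}\lceil\rho^n\rceil$ with $k_n$ sublinear. Since $2^{k_n}\lceil\rho^n\rceil$ is a natural number, $\rank(s^n)\le 2^{k_n}\lceil\rho^n\rceil$, and taking $n$-th roots gives $R\le\rho$.
\end{itemize}
The minimum is therefore attained at $R$.

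\emph{Subrank.} This is symmetric, with $b_n=\subrank(s^n)$, which is supermultiplicative. Let $Q=\lim\sqrt[n]{b_n}$. The gap hypothesis $2\le s^p$ ensures $b_n\ge1$ eventually, and more importantly gives control of the remainder factor. From $b_m^q s^r\le s^{qm+r}$ I also need a lower bound on $s^r$ by $u^{-c}$-type factors. Since $u=2$ is power universal and $s\sim1$ (because $1\le 2\le s^p$ up to $s^0$, $s$ is comparable to $1$), I get $1\le 2^{c}s^r$ for $r<m$. This yields $b_m^{1/m}\le 2^{\epsilon}\cdot i(s)$ asymptotically, hence $Q\le i(s)$. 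Conversely, $\rho\le i(s)$ gives $\lfloor\rho^n\rfloor\le 2^{k_n}s^n$. To turn this into a subrank bound I must remove the $2^{k_n}$ factor on the large side. Here I would use the gap: $2^{k_n}\le s^{pk_n}$, so $\lfloor\rho^n\rfloor\le s^{n+pk_n}$, giving $\subrank(s^{n+pk_n})\ge\lfloor\rho^n\rfloor$, and therefore $Q\ge\rho$ since $(n+pk_n)/n\to1$.

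The main obstacle is the first bullet of each part. The sequences $\rank(s^n)$ and $\subrank(s^n)$ are only sub- or supermultiplicative, and need not be approximately geometric in the two-sided sense. I therefore avoid representing $R$ by them and instead sandwich with the geometric-type elements $a_m^{1/m}$, using closedness. I would carefully check the remainder term $s^r\le a_1^r$ (and its subrank analogue, which needs $s\sim1$) and the passage from the rescaled sequences $(a_m^{\lfloor n/m\rfloor})$ to the real powers.
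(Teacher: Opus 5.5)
Your proof is correct. The minimality half for $\asymptoticrank(s)$ and the maximality half for $\asymptoticsubrank(s)$ are exactly the paper's argument, including the trick $2^{k_n}\le s^{pk_n}$, which correctly enlarges the larger side.

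The difference is in how you establish $\asymptoticsubrank(s)\le i(s)\le\asymptoticrank(s)$.
\begin{itemize}
\item \emph{Your route.} You sandwich $i(s)$ between the real powers $i(\subrank(s^m))^{1/m}$ and $i(\rank(s^m))^{1/m}$. You then let $m\to\infty$, using the identification of $\completion{\naturals}$ with $\{0\}\cup[1,\infty)$, the criterion ``$a\le u^\epsilon b$ for all $\epsilon>0$'', and closedness.
\item \emph{The paper's route.} The sequences $y_n=\rank(s^n)$ and $x_n=\subrank(s^n)$ are themselves approximately geometric in $\naturals$, and their classes in $\completion{S}$ are $\asymptoticrank(s)$ and $\asymptoticsubrank(s)$. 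The termwise inequalities $x_n\le s^n\le y_n$ then give the sandwich immediately, with no remainder terms and no limiting argument.
\end{itemize}

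Your stated reason for not taking the direct route is wrong. Approximate geometricity is defined using only products and the preorder. So for sequences of positive integers it is the same notion as in $\nonnegativereals$, which by \cref{ex:approximatelygeometric} is just convergence of $\sqrt[n]{a_n}$ to a nonzero limit. Fekete's lemma supplies exactly that. Your route is therefore valid but heavier. What it does add is an explicit check that $\completion{\naturals}\to\completion{S}$ is an order-embedding onto $\{0\}\cup[1,\infty)$, which the paper takes for granted.

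Two small repairs are needed in the subrank part.
\begin{itemize}
\item The remainder estimate $1\le 2^c s^r$ is true, but your justification is not: $1\le 2\le s^p$ shows that $s^p$, not $s$, is comparable to $1$. Use instead $1\le s^{pr}=s^r s^{(p-1)r}\le\rank(s)^{(p-1)r}s^r$. Here $p\ge 1$ because $2\not\le 1$ in $S$, since $\naturals\to S$ is an order-embedding.
\item The real power $\subrank(s^m)^{1/m}$ is only defined when $\subrank(s^m)\ge 1$. So restrict to, e.g., multiples $m$ of $p$, where $\subrank(s^m)\ge 2^{m/p}$. This loses nothing, since the limit defining $\asymptoticsubrank(s)$ can be taken along this subsequence.
\end{itemize}
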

\begin{proof}
The sequences $x_n=\subrank(s^n)$ and $y_n=\rank(s^n)$ are approximately geometric in $\naturals\subseteq S$, and satisfy $x_n\le s^n\le y_n$ by definition. The corresponding elements in $\completion{S}$ are $\asymptoticsubrank(s)$ and $\asymptoticrank(s)$, therefore $\asymptoticsubrank(s)\le i(s)\le\asymptoticrank(s)$.

If $x'$ and $y'$ are approximately geometric sequences in $\naturals$ such that $x'\asymptoticle G(s)$ and $G(s)\asymptoticle y'$, then there exists a sublinear sequence $(k_n)_{n\in\naturals}$ such that $x'_n\le 2^{k_n}s^n$ and $s^n\le 2^{k_n}y'_n$ for all $n$. The second inequality implies $y_n\le 2^{k_n}y'_n$, therefore $y\asymptoticle y'$.

In the case of the subrank, if $2\le s^p$ holds for some $p$, then $x'_n\le s^{n+k_np}$, therefore $x'_n\le\subrank(s^{n+k_np})$, which implies
\begin{equation}
\lim_{n\to\infty}\sqrt[n]{x'_n}
 \le \lim_{n\to\infty}\sqrt[n]{\subrank(s^{n+k_np})}
 = \lim_{n\to\infty}\left(\sqrt[n+k_np]{\subrank(s^{n+k_np})}\right)^{\frac{n+k_np}{n}}
 = \asymptoticsubrank(s),
\end{equation}
therefore $x'\asymptoticle x$.
\end{proof}

We conclude this section with two statements involving ``vanishingly small'' powers of $u$. The first one says that inequality with respect to a closed partial order is equivalent to the infinite family of formally weaker inequalities obtained by multiplying the larger element with arbitrarily small powers of $u$. The second one is a more direct formulation of the intuition that approximately geometric sequences are precisely the ones that can be sandwiched between geometric ones generated by roots of its elements, up to an arbitrarily small factor.
\begin{proposition}
Let $S$ be an asymptotically complete and closed partially ordered semiring, and let $a,b\in S$. Then $a\le b$ if and only if for all $\epsilon>0$ the inequality $a\le u^\epsilon b$ holds.
\end{proposition}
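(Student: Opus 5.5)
The forward direction is immediate. Since $1\le u$, monotonicity of $r\mapsto u^r$ (\cref{cor:powerproperties}) gives $1=u^0\le u^\epsilon$. Multiplying by $b$, we get $a\le b\le u^\epsilon b$. For this, $u^\epsilon$ must be defined: $u$ is nonzero, and $u\sim 1$ because $1\le u$, so the definition of real powers applies.

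For the converse, assume $a\le u^\epsilon b$ for every $\epsilon>0$. Because the order is closed, it suffices to prove $a\asymptoticle b$, that is, to find a sublinear sequence $(k_n)$ with $a^n\le u^{k_n}b^n$ for all $n$. I would take $\epsilon=1/m$ with $m\in\positiveintegers$. Raising $a\le u^{1/m}b$ to the $n$th power, using compatibility of the order with multiplication, gives $a^n\le (u^{1/m})^n b^n$. By \cref{cor:powerproperties}, $(u^{1/m})^n=u^{n/m}$, and $u^{n/m}\le u^{\lceil n/m\rceil}$ by monotonicity in the exponent. So for each fixed $m$ we have $a^n\le u^{\lceil n/m\rceil}b^n$ for all $n$.

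The remaining task is to combine these bounds into a single sublinear sequence. I would define $k_n=\min_{m\ge 1}\lceil n/m\rceil$, which is actually just $1$ for $n\ge1$ (taking $m\ge n$) and $0$ for $n=0$. One might want to check whether this is too good to be true, but nothing is wrong with it. It says $a^n\le u b^n$ for all $n\ge 1$, which follows from $a\le u^{1/n}b$ raised to the $n$th power: $a^n\le u\,b^n$. The sequence $k_n=1$ is certainly sublinear, so $a\asymptoticle b$, and closedness gives $a\le b$.

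The only delicate point is justifying the manipulations with real exponents, namely $(u^{1/n})^n=u$ and $1\le u^\epsilon$. These come from \cref{cor:powerproperties}, whose hypothesis $u\sim 1$ holds because $1\le u$. I expect no serious obstacle beyond checking that these hypotheses hold.
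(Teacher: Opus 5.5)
Your proof is correct. It follows the paper's overall strategy: the forward direction from $1\le u^\epsilon$, and the converse by proving $a\asymptoticle b$ and then using closedness. The key step of the converse, however, is organized differently.

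The paper fixes $\epsilon$ and uses the relation $G(u^\epsilon)\asymptoticeq(u^{\lfloor\epsilon n\rfloor})_{n\in\naturals}$. This gives $a^n\le u^{c+2\lfloor\epsilon n\rfloor}b^n$ for all $n$, with a constant $c$ that depends on $\epsilon$. The paper then passes from ``for every $\epsilon>0$'' to a single sublinear sequence, which tacitly requires combining the $\epsilon$-dependent bounds, e.g.\ by a minimum over a sequence $\epsilon_j\to 0$.

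You instead let $\epsilon=1/n$ depend on $n$ and use the exact identity $(u^{1/n})^n=u$ from \cref{cor:powerproperties}. This gives $a^n\le u\,b^n$ for every $n\ge 1$ directly, so the exponent sequence is bounded, and no constant $c$ and no merging step are needed. Your preliminary $\min_{m}\lceil n/m\rceil$ is exactly this merging step, collapsed to its trivial case.

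What each route buys: the paper's version shows that a single $\epsilon$ already yields a bound with linear rate $2\epsilon$. Yours is shorter and fully explicit. It relies only on the algebraic identity $(u^{1/n})^n=u$, which also follows by iterating $u^{r}u^{s}=u^{r+s}$. You also correctly verified that $u$ is nonzero with $u\sim 1$, so that the real powers of $u$ are defined.
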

\begin{proof}
Since $1\le u$, we have $1\le u^\epsilon$ as well for all $\epsilon>0$ by \cref{cor:powerproperties}. Therefore $a\le b$ implies $a\le u^\epsilon b$.

Suppose that $a\le u^\epsilon b$ for some $\epsilon>0$. Then there is a $c\in\naturals$ such that $a^n\le u^{c+2\lfloor \epsilon n\rfloor}b^n$ holds for all $n$. If this is true for all $\epsilon>0$, then $a\asymptoticle b$. Since the partial order is assumed to be closed, this implies $a\le b$.
\end{proof}

\begin{proposition}
Let $S$ be an asymptotically complete and closed partially ordered semiring, and let $a=(a_n)_{n\in\naturals}\in(\nonzeros{S})^\naturals$. Suppose that $a_n\sim 1$ holds for all $n$. Then $a$ is approximately geometric if and only if for all $\epsilon>0$ there is an $m\in\positiveintegers$ such that $a\asymptoticle G(u^\epsilon a_m^{1/m})$ and $G(a_m^{1/m})\asymptoticle G(u^\epsilon) a$.
\end{proposition}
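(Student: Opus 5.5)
The plan is to reduce both directions to condition \ref{it:approxgeompowers1} of \cref{prop:approxgeomcharacterizations}, using the definition of real powers to translate between $G(a_m^{1/m})$ and the sequence $(a_m^{\lfloor n/m\rfloor})_{n\in\naturals}$. Two preliminary observations are needed.

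First, since every $a_n\sim 1$, each $a_m$ is nonzero with $a_m\sim 1$, so $a_m^{1/m}$ is defined. By definition,
\begin{equation}
G(a_m^{1/m})\asymptoticeq(a_m^{\lfloor n/m\rfloor})_{n\in\naturals}.
\end{equation}
Second, $1\le u$ gives $u\sim 1$, so $u^\epsilon$ is defined and $G(u^\epsilon)\asymptoticeq(u^{\lfloor\epsilon n\rfloor})_{n\in\naturals}$. By \cref{cor:powerproperties} and \cref{ex:geometric},
\begin{equation}
G(u^\epsilon a_m^{1/m})=G(u^\epsilon)\sequencetimes G(a_m^{1/m}),
\end{equation}
and \cref{prop:sequenceoperationsproperties} (compatibility of $\asymptoticle$ with $\sequencetimes$) then gives
\begin{equation}
G(u^\epsilon a_m^{1/m})\asymptoticeq(u^{\lfloor\epsilon n\rfloor}a_m^{\lfloor n/m\rfloor})_{n\in\naturals}.
\end{equation}

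\emph{($\Rightarrow$)} Suppose $a$ is approximately geometric and fix $\epsilon>0$. Because $a_n\sim 1$ for all $n$, the remark after \cref{prop:approxgeomcharacterizations} lets us absorb the remainder factor into the constant. This yields $m$ and $c$ with
\begin{equation}
a_n\le u^{c+\lfloor\epsilon n\rfloor}a_m^{\lfloor n/m\rfloor}
\quad\text{and}\quad
a_m^{\lfloor n/m\rfloor}\le u^{c+\lfloor\epsilon n\rfloor}a_n
\end{equation}
for all $n$. A constant is sublinear, so the first inequality reads $a\asymptoticle(u^{\lfloor\epsilon n\rfloor}a_m^{\lfloor n/m\rfloor})_n\asymptoticeq G(u^\epsilon a_m^{1/m})$. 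The second reads $(a_m^{\lfloor n/m\rfloor})_n\asymptoticle(u^{\lfloor\epsilon n\rfloor}a_n)_n\asymptoticeq G(u^\epsilon)\sequencetimes a$, and the left-hand side is $\asymptoticeq G(a_m^{1/m})$. Transitivity of $\asymptoticle$ on sequences finishes this direction.

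\emph{($\Leftarrow$)} We verify condition \ref{it:approxgeompowers1}. The relation $a_n\sim a_1^n$ is immediate, since both sides are $\sim 1$. Given $\epsilon>0$, take $m$ from the hypothesis and unwind the two asymptotic inequalities with the equivalences above. This produces a single sublinear sequence $(k_n)_{n\in\naturals}$ (taking maxima as in \cref{rem:improvedsublinear}) with
\begin{equation}
a_n\le u^{k_n+\lfloor\epsilon n\rfloor}a_m^{\lfloor n/m\rfloor}
\quad\text{and}\quad
a_m^{\lfloor n/m\rfloor}\le u^{k_n+\lfloor\epsilon n\rfloor}a_n
\end{equation}
for all $n$. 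Sublinearity gives $c$ with $k_n\le c+\epsilon n$, so the exponent is at most $c+\lfloor 2\epsilon n\rfloor+1$.

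It remains to reinsert the factor $a_1^r$ for $n=qm+r$, $0\le r<m$. Since $a_1\sim 1$, there is $p$ with $a_1\le u^p$ and $1\le u^pa_1$. Hence $a_1^r$ and $1$ differ by at most $u^{pm}$ in either direction, a constant that is absorbed into $c$. This gives condition \ref{it:approxgeompowers1} with $2\epsilon$ in place of $\epsilon$, and since $\epsilon$ was arbitrary, $a$ is approximately geometric.

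The main obstacle is the bookkeeping in the ($\Leftarrow$) direction. The hypothesis is stated in $\completion{}$-style notation with real powers, and it must be converted back into concrete inequalities among the $a_n$ in $S$ with an explicitly linear-plus-constant exponent. This conversion relies on the definitional equivalences for $a_m^{1/m}$ and $u^\epsilon$ and on the multiplicativity of $G$ established above. The one point to check with care is that the floors in $\lfloor\epsilon n\rfloor$ and $\lfloor n/m\rfloor$ cost only bounded errors, which is what allows them to be absorbed into $c$.
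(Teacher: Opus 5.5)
Your proposal is correct and follows the paper's proof essentially step for step. Both directions go through $G(a_m^{1/m})\asymptoticeq(a_m^{\lfloor n/m\rfloor})_{n\in\naturals}$ and $G(u^\epsilon)\asymptoticeq(u^{\lfloor\epsilon n\rfloor})_{n\in\naturals}$. The forward direction uses the remainder-free inequalities available when $a_n\sim 1$. The converse bounds the sublinear $k_n$ by $c+\lfloor\epsilon n\rfloor$ to get the exponent $c+\lfloor 2\epsilon n\rfloor$. Your explicit reinsertion of $a_1^r$ and the check $a_n\sim a_1^n$, which land you exactly in condition \ref{it:approxgeompowers1} of \cref{prop:approxgeomcharacterizations}, spell out a step the paper leaves implicit; they are a clarification, not a different route.
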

\begin{proof}
Recall that $G(a_m^{1/m})\asymptoticeq (a_m^{\left\lfloor\frac{n}{m}\right\rfloor})_{n\in\naturals}$ and $G(u^\epsilon)\asymptoticeq (u^{\lfloor\epsilon n\rfloor})_{n\in\naturals}$.

Suppose that $a$ is approximately geometric. For $\epsilon>0$ arbitrary, let $m\in\positiveintegers$ and $c\in\naturals$ such that for all $n\in\naturals$ the inequalities $a_n\le u^{c+\lfloor\epsilon n\rfloor}a_m^{\left\lfloor\frac{n}{m}\right\rfloor}$ and $a_m^{\left\lfloor\frac{n}{m}\right\rfloor}\le u^{c+\lfloor\epsilon\rfloor}a_n$ hold. Since the constant sequence $k_n=c$ is sublinear, these inequalities imply
\begin{gather}
a
 \asymptoticle \left(u^{\lfloor\epsilon n\rfloor}a_m^{\left\lfloor\frac{n}{m}\right\rfloor}\right)_{n\in\naturals}
 \asymptoticeq G(u^\epsilon a_m^{1/m})
\intertext{and}
G(a_m^{1/m})
 \asymptoticeq \left(a_m^{\left\lfloor\frac{n}{m}\right\rfloor}\right)_{n\in\naturals}
 \asymptoticle \left(u^{\lfloor\epsilon n\rfloor}a_n\right)_{n\in\naturals}
 \asymptoticeq G(u^\epsilon)a.
\end{gather}

Assume now that for all $\epsilon>0$ there is an $m\in\positiveintegers$ such that the asymptotic inequalities $a\asymptoticle G(u^\epsilon a_m^{1/m})$ and $G(a_m^{1/m})\asymptoticle G(u^\epsilon) a$ hold. For such an $\epsilon$ and $m$, there exists a sublinear sequence $k_n$ such that for all $n\in\naturals$ we have
\begin{gather}
a_n
 \le u^{k_n+\lfloor\epsilon n\rfloor}a_m^{\left\lfloor\frac{n}{m}\right\rfloor}
\intertext{and}
a_m^{\left\lfloor\frac{n}{m}\right\rfloor}
 \le u^{k_n+\lfloor\epsilon n\rfloor}a_n.
\end{gather}
There exists $c\in\naturals$ such that $k_n\le c+\lfloor\epsilon n\rfloor$ holds for all $n$, which allows us to replace the exponent of $u$ with $c+2\lfloor\epsilon n\rfloor\le c+\lfloor 2\epsilon n\rfloor$. Since this is true for arbitrary $\epsilon>0$, the sequence is approximately geometric.
\end{proof}

\subsection{Linear recurrences}\label{sec:linearrecurrence}

Over the complex numbers, sequences satisfying a linear recurrence with constant coefficients are linear combinations of exponential polynomials, with long-term behaviour governed by the largest exponent. We consider analogous sequences in preordered semirings in their matrix form, i.e., $a_n=x^TA^ny$ where $A\in S^{d\times d}$ is a matrix, and $x,y\in S^d$ are column vectors. If $S=\nonnegativereals$, then the Perron--Frobenius theorem implies that the sequence grows like powers of the spectral radius of $A$ if $x$ and $y$ are not too special. 

In general, such a sequence is not necessarily approximately geometric. For instance,
\begin{equation}
a_n=\begin{bmatrix}
1 & 0
\end{bmatrix}
\begin{bmatrix}
1 & 1  \\
1 & 0
\end{bmatrix}
\begin{bmatrix}
1 \\
0
\end{bmatrix}
\end{equation}
provides the sequence of natural numbers $(1,1,2,3,5,8,13,\dots)$. Viewed as a sequence in $\naturals$, it is approximately geometric, but as a sequence in $\naturals^=$, it is not. Unfortunately, we do not have a characterization of triples $(A,x,y)$ such that $a_n=x^TA^ny$ is approximately geometric. On the other hand, we can present a sufficient condition that seems to be reasonably broad.
\begin{proposition}\label{prop:linearrecursionapproximatelygeometric}
Let $x,y\in S^d$, $A\in S^{d\times d}$, and $a_n=x^TA^ny$ for all $n\in\naturals$. Suppose that there exist $p,m\in\naturals$ such that the entrywise inequalities
\begin{align}
A^{2m} & \le u^pA^myx^TA^m  \\
A^myx^TA^m & \le u^pA^{2m}
\end{align}
and the inequalities $a_n\le u^pa_1^n$ and $a_1^n\le u^pa_n$ for all $n<2m$ hold. Then $a=(a_n)_{n\in\naturals}$ is approximately geometric.

If in addition $x',y'\in S^d$, $a'=(a'_n)_{n\in\naturals}$ where $a'_n={x'}^TA^ny'$, $A^myx^TA^m\le u^p A^my'{x'}^TA^m$ and $A^my'{x'}^TA^m\le u^p A^myx^TA^m$, and $a'_n\le u^pa_n$ and $a_n\le u^pa'_n$ for all $n<2m$, then $a\asymptoticeq a'$.
\end{proposition}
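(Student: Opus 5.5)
Our plan is to verify condition \ref{it:approxgeomproducts} of \cref{prop:approxgeomcharacterizations} with a sublinear sequence that is in fact bounded by a constant multiple of $1$ plus a constant, i.e.\ we aim for inequalities of the form $a_{n_1}\cdots a_{n_t}\le u^{Ct}a_{n_1+\dots+n_t}$ and the reverse, with $C$ depending only on $p,m$. The key identity is the factorization $a_{n+n'+2m}=x^TA^nA^{2m}A^{n'}y$. The entrywise hypothesis $A^{2m}\le u^pA^myx^TA^m$ (and its reverse), multiplied on the left by $x^TA^n$ and on the right by $A^{n'}y$, and using compatibility of $\le$ with $+$ and $\cdot$ entrywise (matrix products are sums of products of entries), gives
\begin{equation*}
a_{n+n'+2m}\le u^p\,(x^TA^{n+m}y)(x^TA^{m+n'}y)=u^pa_{n+m}a_{m+n'}
\end{equation*}
and the reverse inequality. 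Thus for $N_1,N_2\ge m$ we get $a_{N_1+N_2}\le u^pa_{N_1}a_{N_2}$ and $a_{N_1}a_{N_2}\le u^pa_{N_1+N_2}$: an approximate multiplicativity with constant loss, valid once both indices are at least $m$.

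Next we handle small indices. For $n<2m$ we have $a_n\sim a_1^n$ with constant $u^p$, and we also need to absorb indices below $m$. First, iterating the two-term inequality gives, for $N_1,\dots,N_t\ge m$, $a_{N_1+\dots+N_t}\le u^{(t-1)p}a_{N_1}\cdots a_{N_t}$ and the reverse (using that sums of indices $\ge m$ remain $\ge m$). For general $n_1,\dots,n_t$, group the indices: any $n$ can be written as a sum of pieces in $[m,2m)$ plus possibly one piece less than $m$ merged into a neighbour; more concretely, we compare each $a_n$ with $a_1^n$-type products. Specifically, every $n\ge m$ splits as $n=N_1+\dots+N_s$ with $N_j\in[m,2m)$, $s\le n/m$, so $a_n$ is within $u^{(s-1)p+sp}$ of $a_1^n$ using $a_{N_j}\sim a_1^{N_j}$ with constant $u^p$; indices $n<2m$ are within $u^p$ of $a_1^n$ directly. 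Hence $a_n\le u^{2pn/m+p}a_1^n$ and $a_1^n\le u^{2pn/m+p}a_n$ for all $n$. This already yields $a_n\sim a_1^n$, and with the same splitting applied to $n=qm'+r$ it gives the inequalities of \ref{it:approxgeompowers1} — but only with $\epsilon\approx 4p/m$, not arbitrary. So instead we should use the constant-loss multiplicativity directly: for the block index $M\ge m$ and $n=qM+r$, write $n=(q-1)M+(M+r)$ with all pieces $\ge m$, giving $a_n\le u^{qp}a_M^{q-1}a_{M+r}$, and $a_{M+r}$ compares to $a_Ma_r$ or $a_Ma_1^r$ with a constant depending on $M$. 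The exponent $qp\approx pn/M$, so choosing $M$ large makes $\epsilon=p/M$ arbitrarily small, verifying \ref{it:approxgeompowers1} (or \ref{it:approxgeompowersr}) with $c$ depending on $M$. The main obstacle is exactly this bookkeeping for the remainder $r$ and for $q=0$, where one needs the finitely many comparisons $a_{M+r}$ versus $a_M a_1^r$; these follow from the bound $a_n\approx a_1^n$ up to $u^{O(n)}$ obtained above, which is a constant for fixed $M$.

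For the second claim, apply the hypothesis similarly: for $n\ge 2m$, $a_n=x^TA^{n-2m}\cdot A^mA^m\cdots$; more usefully write $a_{n+2m}=x^TA^mA^{n}A^my$ and, inserting $A^myx^TA^m\approx A^my'{x'}^TA^m$ requires an $A^{2m}$ factor, so for $n\ge 4m$ write $a_n=x^TA^{2m}A^{n-4m}A^{2m}y$, replace both $A^{2m}$ by $u^pA^myx^TA^m$, getting $u^{2p}a_{m}\,a_{n-2m}\,a_m$ expressed via inner factor $x^TA^{m}A^{n-4m}A^my$; then swap each $A^myx^TA^m$ for $A^my'{x'}^TA^m$ at cost $u^p$ and reverse the steps for $a'$, the outer scalars $x^TA^my$ versus $x'^TA^my'$ being handled by the small-index hypothesis $a_n\sim a'_n$ for $n<2m$. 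This gives $a_n\le u^{Cp}a'_n$ and conversely with a constant exponent for $n\ge 4m$, and the finitely many remaining $n$ contribute another constant, so $a\asymptoticeq a'$.
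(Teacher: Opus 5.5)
Your argument for the first claim is correct, but it takes a different route from the paper. The paper verifies condition \ref{it:approxgeomproducts} of \cref{prop:approxgeomcharacterizations} directly, with the constant sequence $k_n=12p$. It does this by a case analysis of the indices below $m$: it collects them into $a_1^l$, regroups into factors with index in $[m,2m)$, and, when needed, replaces a factor $a_n$ with $n\ge 2m$ by $a_ma_{n-m}$. You use the constant-loss multiplicativity only for indices $\ge m$, plus the crude comparison of $a_n$ with $a_1^n$ on the finite range $n<2M$, to verify condition \ref{it:approxgeompowers1} with $\epsilon=p/M$. This is shorter and avoids the case analysis. However, it goes through the implication \ref{it:approxgeompowers1}$\implies$\ref{it:approxgeomproducts}, so it only produces some sublinear $k$, whereas the paper obtains a bounded one.

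The second claim has a step that fails as written. Inserting $A^{2m}\le u^pA^myx^TA^m$ twice and then swapping each $A^myx^TA^m$ for $u^pA^my'{x'}^TA^m$ turns $x^TA^{2m}A^{n-4m}A^{2m}y$ into $u^{4p}(x^TA^my')\,({x'}^TA^{n-2m}y')\,({x'}^TA^my)$. The outer scalars here are mixed: they equal neither $a_m$ nor $a'_m$. So the hypothesis comparing $a_k$ and $a'_k$ for $k<2m$ says nothing about them, contrary to what you assert. The step can be patched by applying the matrix hypothesis once more to their product, $x^T(A^my'{x'}^TA^m)y\le u^pa_m^2$, and the indices $2m\le n<4m$ also need an argument you do not supply.

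The missing observation is that no $A^{2m}$ is needed at all. For every $n\ge 2m$, $a_n$ already contains $A^myx^TA^m$ cyclically:
\[
a_n=\sum_{i,j}(A^{n-2m})_{ij}\,(A^myx^TA^m)_{ji}\le u^p\sum_{i,j}(A^{n-2m})_{ij}\,(A^my'{x'}^TA^m)_{ji}=u^pa'_n .
\]
The paper writes this as $a_n=\Tr A^myx^TA^mA^{n-2m}$. Together with the symmetric inequality and the hypothesis for $n<2m$, this gives $a\asymptoticeq a'$ with the constant sequence $k_n=p$.
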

\begin{proof}
Let $t\in\positiveintegers$, $n_1,\dots,n_t\in\naturals$. If $n_1,n_2,\dots,n_t\ge m$, then we get
\begin{equation}\label{eq:allfactorslarge}
\begin{split}
a_{n_1}\cdots a_{n_t}
 & = x^TA^{n_1}yx^TA^{n_2}y\cdots x^TA^{n_t}y  \\
 & \le u^p x^TA^{n_1+n_2}y\cdots x^TA^{n_t}y  \\
 & \le u^{p(t-1)} x^TA^{n_1+n_2+\dots+n_t}y  \\
 & = u^{p(t-1)}a_{n_1+\dots+n_t}
\end{split}
\end{equation}
by repeatedly applying the first inequality, and we get $a_{n_1+\dots+n_t}\le u^{p(t-1)}a_{n_1}\cdots a_{n_t}$ by similar steps in the opposite direction.

Otherwise suppose that there are exactly $r$ factors $a_n$ with $n<m$, and the sum of their indices is $l$. Their product can be replaced with $a_1^l$ up to an additional factor of $u^{rp}$. If $l\ge m$, then
\begin{align}
a_1^l
 & \le u^{p\left\lfloor\frac{l}{m}\right\rfloor}a_m^{\left\lfloor\frac{l}{m}\right\rfloor-1}a_{l+m-m\left\lfloor\frac{l}{m}\right\rfloor}
\intertext{and}
a_m^{\left\lfloor\frac{l}{m}\right\rfloor-1}a_{l+m-m\left\lfloor\frac{l}{m}\right\rfloor}
 & \le u^{p\left\lfloor\frac{l}{m}\right\rfloor}a_1^l
\end{align}
At this point we have $t-r+\left\lfloor\frac{l}{m}\right\rfloor\le t$ factors with all indices at least $m$, so we can apply \eqref{eq:allfactorslarge}, for a total cost of $u^{rp+(t-1)p}\le u^{2(t-1)p}$.

If $l<m$ and there are no other factors (i.e., $a_n$ with $m\le n$), then we use $a_1^l\le u^pa_l$ and $a_l\le u^pa_1^l$.

If $l<m$ and there is a factor $a_n$ with $m\le n<2m$, then using $a_n\le u^pa_1^n$ and $a_1^n\le u^pa_n$ we can ensure that $a_1$ appears with exponent $n+l\ge m$, so the previous case applies.

Finally, if $l<m$ and the factor with the next smallest index is $a_n$ with $2m\le n$, then by \eqref{eq:allfactorslarge} we have $a_ma_{n-m}\le u^pa_n$ and $a_n\le u^pa_ma_{n-m}$, reducing to the previous case at an extra cost of $p$.

Taking all cases into account, we have $a_{n_1}\cdots a_{n_t}\le u^ha_{n_1+\dots+n_t}$ and $a_{n_1+\dots+n_t}\le u^h a_{n_1}\cdots a_{n_t}$ with
\begin{equation}
h
 \le p+p+rp+p\frac{n+l}{m}+p(t-1)
 \le p(2r+4+t-1)
 \le 3p(t+3)
 \le 12pt.
\end{equation}

By choosing $k_n=12p$ for all $n\in\naturals$, we can thus ensure
\begin{align}
a_{n_1}\cdots a_{n_t} & \le u^{k_{n_1}+\cdots+k_{n_t}}a_{n_1+\dots+n_t}  \\
a_{n_1+\dots+n_t} & \le u^{k_{n_1}+\cdots+k_{n_t}}a_{n_1}\cdots a_{n_t}.
\end{align}

Let $x',y'\in S^d$ and $a'_n={x'}^TA^ny'$ satisfy the assumptions. Then if $n\ge 2m$, we
\begin{align}
a_n
 & = \Tr A^nyx^T
 = \Tr A^myx^TA^mA^{n-2m}
 \le \Tr u^pA^my'{x'}^TA^mA^{n-2m}
 = u^pa'_n
\intertext{and}
a'_n
 & = \Tr A^ny'{x'}^T
 = \Tr A^my'{x'}^TA^mA^{n-2m}
 \le \Tr u^pA^myx^TA^mA^{n-2m}
 = u^pa_n,
\end{align}
while for smaller $n$ we have $a_n\le u^pa'_n$ and $a_n\le u^pa'_n$ by assumption, therefore $a\asymptoticeq a'$.
\end{proof}

In the following we will assume that $0\le 1$ in $S$. If $a,b\in \nonzeros{S}$, then multiplying the inequality by $a$ and $b$ gives $0\le a$ and $0\le b$, which in turn imply $b\le a+b$ and $a\le a+b$. If $u$ is a power universal element, then $a+b\le u^pa$ and $a+b\le u^pb$ for some $p$, therefore $a\le u^pb$ and $b\le u^pa$. In this case, the conditions in \cref{prop:linearrecursionapproximatelygeometric} reduce to combinatorial ones that depend only on the zero patterns of $A$, $x$, and $y$. In particular, let us say that $A\in S^{d\times d}$ is \emph{primitive} if $A^m\in\nonzeros{S}^{d\times d}$ for some $m$. If $A$ is primitive, then $a_n=x^TA^ny$ is approximately geometric for all $x,y\in \nonzeros{S}^d$, and the sequences obtained in this way from the same $A$ are asymptotically equivalent.
\begin{definition}
Let $S$ be a preordered semiring of polynomial growth with $0\le 1$. If $A\in S^{d\times d}$ is primitive, then we denote by $\rho(A)$ the element of $\completion{S}$ that satisfies $G(\rho(A))\asymptoticeq i(a)$ for any sequence $a=(a_n)_{n\in\naturals}$ defined by $a_n=x^TA^ny$, with $x,y\in \nonzeros{S}^d$.
\end{definition}

\begin{example}\label{ex:spectralradius}\leavevmode
\begin{enumerate}
\item If $S=\nonnegativereals$, then we can choose the Perron--Frobenius eigenvector of $A$ for $y$ and $x=y/\norm{y}^2$. If $\lambda$ denotes the corresponding eigenvalue, then $a_n=x^TA^ny=x^T\lambda^ny=\lambda^n=G(\lambda)_n$, therefore $\rho(A)$ is equal to the spectral radius of $A$ (after identifying $\completion{S}$ with $\nonnegativereals$ as well).
\item If $S=\tropicalreals$ then, considering $A$ as a weighted graph, the entries of $A^n$ are the maximal products along $n$-step walks with start and endpoints given by the row and column index. It follows that $\rho(A)$ is the maximum cycle geometric mean.
\item If $T$ is a preordered semiring of polynomial growth and $\phi:S\to T$ is a monotone homomorphism (note that this requires $0\le 1$ in $T$ as well), then we can extend $\phi$ to matrices and vectors by applying it entrywise. Clearly, this is compatible with the matrix product, and preserves the zero patterns. In particular, if $A\in S^{d\times d}$ is primitive, then $\phi(A)$ is primitive as well, and $\rho(\phi(A))=\completion{\phi}(\rho(A))$. This allows us to compute $\completion{\phi}(\rho(A))$ when $T\in\{\nonnegativereals,\tropicalreals\}$ by applying $\phi$ entrywise and finding the dominant eigenvalue or the maximum cycle geometric mean.
\end{enumerate}
\end{example}

\subsection{Tensors}\label{sec:tensors}

We continue with applications in concrete preordered semirings, starting with the semiring of tensors. Instead of the generic notations used in the abstract setting so far, we will adopt the notations for the operations that are already in use in the literature. In particular, the notation for powers with respect to the product operation will be indicated in exponent (such as $t^{\otimes 2}=t\otimes t$), and we modify the notation for the case of real exponents accordingly (with the exception of powers of natural numbers and their images in the semiring, which we identify with $\{0\}\cup[1,\infty)$). In addition, to simplify notation, we will omit the canonical map $i:S\to\completion{S}$, i.e., view elements of $S$ as elements of the asymptotic completion as well, even if $i$ is not injective. However, we will distinguish between the preorders by using $\le$ for $S$ and $\asymptoticle$ for $\completion{S}$.

A tensor (of order $3$) is an element of $V_1\otimes V_2\otimes V_3$ where $V_1,V_2,V_3$ are finite-dimensional complex vector spaces. A tensor $s\in V_1\otimes V_2\otimes V_3$ restricts to another tensor $t\in W_1\otimes W_2\otimes W_3$ if there exist linear maps $A_j:V_j\to W_j$ for $j=1,2,3$ such that $(A_1\otimes A_2\otimes A_3)s=t$. $s$ and $t$ are equivalent if $s$ restricts to $t$ and $t$ restricts to $s$. Note that every tensor is equivalent to one in $\complexes^{d_1}\otimes\complexes^{d_2}\otimes\complexes{d_3}$ for suitably large $d_1,d_2,d_3\in\naturals$.

Let $S$ be the set of equivalence classes of tensors in $\complexes^{d_1}\otimes\complexes^{d_2}\otimes\complexes{d_3}$ for all $d_1,d_2,d_3\in\naturals$. The direct sum and (Kronecker) tensor product of tensors induce operations $\oplus,\otimes$ on $S$ that, together with the restriction preorder $\le$, endow $S$ with the structure of a preordered semiring. \cite{strassen1988asymptotic}. The image of the natural numbers in this semiring are the (equivalence classes of) \emph{unit tensors}
\begin{equation}
\unittensor{r}=\sum_{i=1}^r e_i\otimes e_i\otimes e_i\in\complexes^r\otimes\complexes^r\otimes\complexes^r.
\end{equation}

The semiring of tensors is not asymptotically complete, which can be seen using the criterion in \cref{rem:asymptoticallycompletecriterion}. Every nonzero tensor restricts to $\unittensor{1}$, and the flattening ranks are $\naturals$-valued, therefore their images do not contain $(1,\infty)$. More generally, the image of every element of the asymptotic spectrum of tensors is well ordered \cite{christandl2025asymptotic}, therefore none of them contains $(1,\infty)$.

Tensors can be viewed either as trilinear maps $\complexes^{d_1}\otimes\complexes^{d_2}\otimes\complexes^{d_3}\to\complexes$ or bilinear maps $\complexes^{d_1}\otimes\complexes^{d_2}\to\complexes^{d_3}$ (in three different ways), and restrictions correspond to reductions between such maps, regarded as computational problems. In particular, the matrix multiplication tensors, parametrized by a triple $l,m,n\in\naturals$ are
\begin{equation}
\langle l,m,n\rangle=\sum_{i=1}^l\sum_{j=1}^m\sum_{k=1}^n e_{(i,j)}\otimes e_{(j,k)}\otimes e_{(k,i)}\in\complexes^{lm}\otimes\complexes^{mn}\otimes\complexes^{nl},
\end{equation}
with double-indexed basis vectors corresponding to matrix units. In the bilinear picture, $\langle l,m,n\rangle$ correspond to the multiplication of an $l\times m$ and an $m\times n$ matrix (up to permuting the parameters), while as a trilinear map it computes the trace of the product of three matrices.

The relevance of the semiring of tensors and its asymptotic preorder in the study of the complexity of matrix multiplication stems from two facts. First, the tensor rank (which is equal to the abstract rank in this semiring) is, up to a bounded factor, the arithmetic complexity of computing the corresponding bilinear map. Second, block matrices can be multiplied in the same way as if the blocks were scalars (if commutativity is not used), which can be concisely expressed as $\langle l_1,m_1,n_1\rangle\otimes\langle l_2,m_2,n_2\rangle=\langle l_1l_2,m_1m_2,n_1n_2\rangle$ in the semiring. These imply that $\omega:=\log\asymptoticrank(\langle 2,2,2\rangle)$ is the smallest exponent such that two $n\times n$ matrices can be multiplied using $O(n^{\omega+\epsilon})$ arithmetic operations for every $\epsilon>0$.

It is a very special property that the family of square matrix multiplications of different sizes form (when viewed on an exponential scale) a geometric sequence in the semiring of tensors, which is generally not the case for computational problems. Moreover, more general sequences can be useful for bounding sequences of tensor powers as well. In fact, a variant of the matrix multiplication exponent already refers to sequences that are not tensor powers. In the asymptotic study of \emph{rectangular} matrix multiplications, we consider the multiplication of a $n\times\lceil n^p\rceil$ and an $\lceil n^p\rceil\times n$ matrix for fixed $p$ as $n\to\infty$. As in the square case, it is sufficient to consider a subsequence where $n$ grows as a geometric sequence, but this time $\lceil n^p\rceil$ will not be exactly geometric unless $p$ is an integer. The analogue of $\omega$ for this problem is the exponent $\omega(p)$ such that the above rectangular matrix multiplication is possible using $O(n^{\omega(p)+\epsilon})$ arithmetic operations for every $\epsilon>0$. Since
\begin{equation}
\langle 2^n,2^n,2^{\lceil pn\rceil-1}\rangle\le\langle 2^n,2^n,\lceil (2^n)^p\rceil\rangle\le\langle 2^n,2^n,2^{\lceil pn\rceil}\rangle
\end{equation}
and, by the multiplicative property of matrix multiplication tensors,
\begin{equation}
\langle 2^n,2^n,2^{\lceil pn\rceil}\rangle
 = \langle 2,1,1\rangle^{\otimes n}\otimes\langle 1,2,1\rangle^{\otimes n}\otimes\langle 1,1,2\rangle^{\otimes \lceil pn\rceil},
\end{equation}
we can extend the notation as $\langle 2,2,2^p\rangle=\langle 2,1,1\rangle\otimes\langle 1,2,1\rangle\otimes\langle 1,1,2\rangle^{\otimes p}$ or, more generally, as $\langle x,y,z\rangle=\langle 2,1,1\rangle^{\otimes\log x}\otimes\langle 1,2,1\rangle^{\otimes\log y}\otimes\langle 1,1,2\rangle^{\otimes\log z}$ when $x,y,z\ge 1$, and write $\omega(p)=\log\asymptoticrank(\langle 2,2,2^p\rangle)$, in complete analogy with the square case.

To our knowledge, sequences of tensors on an exponential scale have been explicitly studied in two papers, in the context of bounds on the complexity of matrix multiplication in \cite{christandl2025barriers} (where $\langle 2,2,2^p\rangle$ is also introduced as a formal symbol called \emph{virtual matrix multiplication tensor}), and as computational problems in their own right in \cite{bjorklund2026kronecker}. In the following we compare their notions with our definition.

In \cite{christandl2025barriers} Christandl, Le Gall, Lysikov, and Zuiddam formulate a barrier result for a type of upper bound methods on the complexity of rectangular matrix multiplication, called asymptotic mixed methods. Such methods are defined in terms of sequences of tensors behaving well with respect to \emph{adequate} tensor parameters, i.e., functionals $F:S\to\nonnegativereals$ that are monotone, $\otimes$-submultiplicative, multiplicative on matrix multiplication tensors, satisfy $F(\unittensor{r}\otimes T)=r F(T)$, and $F\le\asymptoticrank$. For a tensor $T$ and adequate $F$ we also introduce $\undertilde{F}(T)=\lim_{n\to\infty}\sqrt[n]{F(T^{\otimes n})}$, where the limit exists by submultiplicativity, and $\undertilde{F}$ is adequate as well.
\begin{definition}
A sequence of tensors $T_0,T_1,T_2,T_3,\dots$ is \emph{almost exponential} if the sequence $\sqrt[n]{\asymptoticrank(T_n)}$ converges and $\sqrt[n]{F(T_n)}$ is bounded for each adequate $F$. For such a sequence $T=(T_n)_{n\in\naturals}$ they define $\asymptoticrank(T)=\lim_{n\to\infty}\sqrt[n]{\asymptoticrank(T_n)}$ and $F(T)=\limsup_{n\to\infty}\sqrt[n]{F(T_n)}$.
\end{definition}
\begin{proposition}\label{prop:almostexponential}
Let $T=(T_n)_{n\in\naturals}$ be an approximately geometric sequence in the semiring of tensors. Then the sequence is almost exponential. Moreover, the limit
\begin{equation}
\lim_{n\to\infty}\sqrt[n]{F(T_n)}
\end{equation}
exists for every adequate $F$, and is equal to $\lim_{n\to\infty}\sqrt[n]{\undertilde{F}(T_n)}$.
\end{proposition}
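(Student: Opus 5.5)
Since the semiring of tensors has a Strassen preorder with $u=\unittensor{2}$, I would use characterization \ref{it:approxgeompowers1} of \cref{prop:approxgeomcharacterizations} in its simplified form for $0\le 1$ (stated right after that proposition). For every $\epsilon>0$ there are $m\in\positiveintegers$ and $c\in\naturals$ with
\begin{equation*}
T_n\le \unittensor{2^{c+\lfloor\epsilon n\rfloor}}\otimes T_m^{\otimes\lfloor n/m\rfloor},
\qquad
T_m^{\otimes\lfloor n/m\rfloor}\le\unittensor{2^{c+\lfloor\epsilon n\rfloor}}\otimes T_n
\end{equation*}
for all $n$. (If $T\asymptoticeq G(0)$, the statement is immediate, because all terms with $n\ge1$ are zero.) The approach is to apply the functionals $\asymptoticrank$, $F$ and $\undertilde{F}$ to these two inequalities. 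Each of them is monotone, $\otimes$-submultiplicative and satisfies $F(\unittensor{r}\otimes T)=rF(T)$. The point of this form is that the factor $u^{k}$ becomes exactly the scalar $2^{k}$, and that only submultiplicativity is used, never supermultiplicativity.

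For an adequate $F$, the first inequality and submultiplicativity give $F(T_n)\le 2^{c+\epsilon n}F(T_m)^{\lfloor n/m\rfloor}$, so $\limsup_n\sqrt[n]{F(T_n)}\le 2^{\epsilon}\sqrt[m]{F(T_m)}$. This already shows that $\sqrt[n]{F(T_n)}$ is bounded. For the lower bound, monotonicity and the scaling property applied to the second inequality give $F(T_m^{\otimes q})\le 2^{c+\epsilon n}F(T_n)$ with $q=\lfloor n/m\rfloor$. Here submultiplicativity goes the wrong way, so I would pass through $\undertilde{F}$. Since $\undertilde{F}$ is adequate, the same computation gives $\undertilde{F}(T_m)^{q}\ge\undertilde{F}(T_m^{\otimes q})$. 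By the definition of $\undertilde F$ as a limit, $\undertilde{F}(T_m^{\otimes q})=\undertilde{F}(T_m)^q$, because $\undertilde{F}(S^{\otimes q})=\lim_j F(S^{\otimes qj})^{1/j}$ is the $q$th power of $\undertilde{F}(S)$. Therefore
\begin{equation*}
2^{-\epsilon}\sqrt[m]{\undertilde{F}(T_m)}\le\liminf_n\sqrt[n]{\undertilde{F}(T_n)}\le\limsup_n\sqrt[n]{\undertilde{F}(T_n)}\le2^{\epsilon}\sqrt[m]{\undertilde{F}(T_m)},
\end{equation*}
and, letting $\epsilon\to0$, the limit $L=\lim_n\sqrt[n]{\undertilde{F}(T_n)}$ exists. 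Taking $F=\asymptoticrank$, which equals its own regularization and is adequate, gives the convergence required for almost exponentiality.

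It remains to show that $\lim_n\sqrt[n]{F(T_n)}$ exists and equals $L$. Since $\undertilde{F}\le F$, we get $\liminf_n\sqrt[n]{F(T_n)}\ge L$. For the upper bound, I would use the first inequality with a large $m$: $\limsup_n\sqrt[n]{F(T_n)}\le2^{\epsilon}\sqrt[m]{F(T_m)}$. By the bound $F(T_m^{\otimes j})$ versus $T_{mj}$ coming from approximate geometricity, $\sqrt[m]{F(T_m)}$ should be close to $L$ for suitable $m$. Concretely, I would choose $m'=mj$ with $j$ large: then $\sqrt[mj]{F(T_m^{\otimes j})}\to\sqrt[m]{\undertilde{F}(T_m)}$, and $T_{mj}\le u^{k}T_m^{\otimes j}$ with $k\le jk_m$ by property \ref{it:approxgeomproducts}. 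This gives
\begin{equation*}
\limsup_n\sqrt[n]{F(T_n)}\le 2^{\epsilon+k_m/m}\sqrt[m]{\undertilde{F}(T_m)}+o(1).
\end{equation*}
Letting $m\to\infty$ makes $k_m/m\to0$ and $\sqrt[m]{\undertilde{F}(T_m)}\to L$. The main obstacle is exactly this interchange of the two limits: the regularization over copies of $T_m$ against the index $n$. I expect to handle it by the choice $m'=mj$ together with the uniform control of the exponent given by characterization \ref{it:approxgeomproducts}.
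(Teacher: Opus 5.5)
Your argument is correct, but it takes a longer route than the paper's.

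\textbf{How the paper does it.} The paper applies $F$ directly to both inequalities and never replaces $F(T_m^{\otimes q})$ by $F(T_m)^q$. By the definition of $\undertilde{F}$, $\sqrt[qm]{F(T_m^{\otimes q})}\to\sqrt[m]{\undertilde{F}(T_m)}$ as $q\to\infty$. Hence the first inequality gives $\limsup_n\sqrt[n]{F(T_n)}\le 2^{\epsilon}\sqrt[m]{\undertilde{F}(T_m)}$, and the second gives $\liminf_n\sqrt[n]{F(T_n)}\ge 2^{-\epsilon}\sqrt[m]{\undertilde{F}(T_m)}$. Submultiplicativity is not needed in either direction. The same sandwich holds for $\undertilde{F}$ around the same center, since $\undertilde{F}$ is adequate and equals its own regularization. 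So both limits exist and coincide. The convergence of $\sqrt[n]{\asymptoticrank(T_n)}$ is taken from \cref{prop:regularfunctionalextension}.

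\textbf{Your route.} The ``interchange of limits'' you call the main obstacle comes only from using $F(T_m^{\otimes q})\le F(T_m)^q$ in the upper bound; keeping $F(T_m^{\otimes q})$ removes it. Your detour does still close. The lower bound via $\undertilde{F}\le F$ is fine. For the diagonal argument with $m'=mj$, you should state explicitly that property \ref{it:approxgeomproducts} of \cref{prop:approxgeomcharacterizations} gives
\[
\limsup_n\sqrt[n]{F(T_n)}\le 2^{k_{m'}/m'}\sqrt[m']{F(T_{m'})}
\]
for every $m'$, not only for the particular $m$ supplied by the $\epsilon$-form. This follows by absorbing the remainder factor $T_r$, $r<m'$, into a constant. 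Then $T_{mj}\le\unittensor{2^{jk_m}}\otimes T_m^{\otimes j}$, letting $j\to\infty$ and then $m\to\infty$, yields $\limsup\le L$. What your version buys is a clean separation: the exact two-sided computation is done only for the power-multiplicative $\undertilde{F}$, and only one-sided information is used for $F$ itself. The price is the diagonal argument.

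\textbf{A small correction.} $\asymptoticrank$ is not adequate in the sense defined here, because it is not multiplicative on matrix multiplication tensors. For example, $\langle 2,1,1\rangle\otimes\langle 1,2,1\rangle\otimes\langle 1,1,2\rangle=\langle 2,2,2\rangle$, yet $\asymptoticrank(\langle 2,2,2\rangle)=2^\omega<8$. This does not break your argument. Your computation for $\asymptoticrank$ only uses monotonicity, submultiplicativity, $\asymptoticrank(\unittensor{r}\otimes T)=r\asymptoticrank(T)$, and power-multiplicativity, and $\asymptoticrank$ has all of these.
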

\begin{proof}
The asymptotic rank $\asymptoticrank$ is a regular upper functional, therefore $\completion{\asymptoticrank}(T)=\lim_{n\to\infty}\sqrt[n]{\asymptoticrank(T_n)}=\asymptoticrank(T)$ exists by \cref{prop:regularfunctionalextension}.

Let $\epsilon>0$ and $m\in\positiveintegers$, $c\in\naturals$ such that for all $n\in\naturals$ the inequalities $T_n\le 2^{c+\lfloor\epsilon n\rfloor}T_m^{\otimes\left\lfloor\frac{n}{m}\right\rfloor}$ and $T_m^{\otimes\left\lfloor\frac{n}{m}\right\rfloor}\le 2^{c+\lfloor\epsilon n\rfloor}T_n$ hold (see the discussion following \cref{prop:approxgeomcharacterizations}). Then
\begin{equation}
\begin{split}
\limsup_{n\to\infty}\sqrt[n]{F(T_n)}
 & \le \limsup_{n\to\infty}\sqrt[n]{F\left(2^{c+\lfloor\epsilon n\rfloor}T_m^{\otimes\left\lfloor\frac{n}{m}\right\rfloor}\right)}  \\
 & \le \limsup_{n\to\infty}\sqrt[n]{2^{c+\lfloor\epsilon n\rfloor}F\left(T_m^{\otimes\left\lfloor\frac{n}{m}\right\rfloor}\right)}  \\
 & = 2^\epsilon\sqrt[m]{\undertilde{F}(T_m)},
\end{split}
\end{equation}
therefore $\sqrt[n]{F(T_n)}$ is bounded. Similarly,
\begin{equation}
F\left(T_m^{\otimes\left\lfloor\frac{n}{m}\right\rfloor}\right)\le 2^{c+\lfloor\epsilon n\rfloor}F(T_n),
\end{equation}
therefore
\begin{equation}
2^{-\epsilon}\sqrt[m]{\undertilde{F}(T_m)}\le\liminf_{n\to\infty}\sqrt[n]{F(T_n)}
\end{equation}
Since $\epsilon$ can be chosen arbitrarily small, the limit exists, and is equal to the limit of $\sqrt[n]{\undertilde{F}(T_n)}$.
\end{proof}
We note that the multiplicativity of each adequate $F$ on matrix multiplication tensors extends to the elements $\langle x,y,z\rangle$ of $\completion{S}$.

As a concrete example, sequences of tensors of the form $T_n=S_1^{\otimes f_1(n)}\otimes S_2^{\otimes f_2(n)}$ where $f_1,f_2:\naturals\to\naturals$, $f_i(n)=a_in+o(n)$ are approximately geometric due to \cref{prop:rescaledproperties}, therefore they are almost exponential as well by \cref{prop:almostexponential}. This is proved in \cite{christandl2025barriers} using the spectral characterization of the asymptotic rank.

For an approximately geometric sequence $T=(T_0,T_1,\dots)$, a $\kappa$-catalytic asymptotic mixed method in the sense of \cite[Definition 3.19]{christandl2025barriers} can be phrased as the asymptotic inequality $\unittensor{2}^{\otimes\kappa}\otimes\langle 2,2,2^p\rangle\asymptoticle T$, which proves the upper bound $\omega(p)\le\hat{\omega}(p)=\log\asymptoticrank(T)-\kappa$.

In \cite{bjorklund2026kronecker} Björklund, Kaski, Koana, and Nederlof consider sequences of tensors that behave like Kronecker powers, and extend the notion of exponent of a tensor, as well as the connection of trilinear and algebraic complexity to such sequences. More precisely, they consider the following property.
\begin{definition}
A sequence of $3$-tensors $T_0,T_1,T_2,T_3,\dots$ has the \emph{Kronecker scaling} property if for all $\delta>0$ there exist infinitely many $d=1,2,\dots$ such that for all large enough $n=1,2,\dots$ in an arithmetic progression, the tensor $T_n$ is a sum of at most $2^{\delta n}$ tensors, each of which is a restriction of $T_d^{\otimes s}$ for $s\le(1+\delta)n/d$.
\end{definition}
\begin{proposition}
Let $(T_n)_{n\in\naturals}$ be an approximately geometric sequence in the semiring of tensors. Then the sequence has the Kronecker scaling property.
\end{proposition}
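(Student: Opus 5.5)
We use characterization \ref{it:approxgeompowers1} of \cref{prop:approxgeomcharacterizations}, in the simplified form recorded after its proof. The semiring of tensors has a Strassen preorder, so $0\le 1$ and every nonzero tensor satisfies $T_n\sim 1$. Hence for every $\epsilon>0$ there are $m\in\positiveintegers$ and $c\in\naturals$ such that
\begin{equation}
T_n\le \unittensor{2}^{\otimes(c+\lfloor\epsilon n\rfloor)}\otimes T_m^{\otimes\left\lfloor\frac{n}{m}\right\rfloor}
\end{equation}
holds for all $n\in\naturals$. The case $T\asymptoticeq G(0)$ is degenerate. There $T_n=0$ for $n\ge 1$, and the zero tensor is an empty sum, so the property holds trivially.

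The key observation is that $\unittensor{2^k}\otimes X$ is the direct sum of $2^k$ copies of $X$. A restriction of a direct sum of $2^k$ copies of $X$ is a sum of $2^k$ tensors, each a restriction of $X$. To see this, apply the restricting maps $A_1\otimes A_2\otimes A_3$ blockwise: the image of the direct sum is the sum of the images of the individual summands, each of which is a restriction of $X$.

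So, given $\delta>0$, choose $\epsilon=\delta/2$ and take the corresponding $m,c$. For every $n$, $T_n$ is a sum of at most $2^{c+\lfloor\epsilon n\rfloor}$ tensors, each a restriction of $T_m^{\otimes s}$ with $s=\lfloor n/m\rfloor\le n/m\le(1+\delta)n/m$. For $n\ge 2c/\delta$ we have $c+\epsilon n\le\delta n$, which gives the bound $2^{\delta n}$ on the number of terms. This works for all large $n$, so in particular for all large $n$ in any arithmetic progression.

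The remaining issue is the requirement of infinitely many admissible $d$. The characterization only provides one $m$ per $\epsilon$. The remedy is that the proof of \ref{it:approxgeomproducts}$\implies$\ref{it:approxgeompowersr} (with the subsequent adjustment replacing $T_r$ by $T_1^{\otimes r}$ and absorbing the bounded factor for $r<m$ into $c$, as $0\le 1$) works for \emph{every} $m$ with $k_m/m\le\epsilon$. Since $k$ is sublinear, this holds for all sufficiently large $m$. Thus for fixed $\delta$ every large enough $d$ is admissible, and the Kronecker scaling property follows.
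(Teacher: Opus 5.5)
Your proof is correct. It differs from the paper's in how it produces infinitely many admissible $d$.

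\textbf{The paper's route.} The paper fixes a single pair $(m,c)$ for $\epsilon=\delta/4$ and takes $d=km$ for every $k\ge c/(\epsilon m)$. It pads $T_m^{\otimes\lfloor n/m\rfloor}$ up to $T_m^{\otimes k\lceil\lfloor n/m\rfloor/k\rceil}$, which is allowed because $1\le T_m$. It then converts each block $T_m^{\otimes k}$ into $T_d$ using the \emph{reverse} inequality $T_m^{\otimes k}\le 2^{c+\lfloor\epsilon d\rfloor}T_d$. This yields $s\le n/d+1$, at the cost of a total exponent of $2$ of size $c+3\epsilon n+2\epsilon d$.

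\textbf{Your route.} You go back to characterization \ref{it:approxgeomproducts} and observe that sublinearity of $k$ makes every sufficiently large $m$ admissible for the given $\epsilon$. The constant then depends on $m$, e.g.\ $c=k_{m-1}+\max_{r<m}\lceil\log\tensorrank(T_r)\rceil$ for monotone $k$. So you can take $d=m$ directly, with $s=\lfloor n/d\rfloor\le n/d$.

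\textbf{Comparison.} Your argument is shorter and needs only the upper half of the sandwich. It requires no rounding of $s$, and it gives all large $d$ rather than all large multiples of one $m$. The paper's argument works purely from the $(\epsilon,m,c)$ formulation without revisiting the sublinear sequence. It shows that a single admissible $m$ already generates infinitely many admissible $d$, at the price of needing both inequalities. You also treat the degenerate case $T\asymptoticeq G(0)$ explicitly, which the paper passes over.
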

\begin{proof}
Let $\delta>0$. For $\epsilon=\frac{\delta}{4}$, choose $m\in\positiveintegers$ and $c\in\naturals$ such that for all $n\in\naturals$ the inequalities $T_n\le 2^{c+\lfloor\epsilon n\rfloor}T_m^{\otimes\left\lfloor\frac{n}{m}\right\rfloor}$ and $T_m^{\otimes\left\lfloor\frac{n}{m}\right\rfloor}\le 2^{c+\lfloor\epsilon n\rfloor}T_n$ hold. For $k\in\naturals$, $k\ge\frac{c}{\epsilon m}$ but otherwise arbitrary, let $d=km$. In particular, $T_m^{\otimes k}\le 2^{c+\lfloor\epsilon d\rfloor}T_d$. For all $n\in\naturals$ we have
\begin{equation}
\begin{split}
T_n
 & \le 2^{c+\lfloor\epsilon n\rfloor}T_m^{\otimes\left\lfloor\frac{n}{m}\right\rfloor}  \\
 & \le 2^{c+\lfloor\epsilon n\rfloor}T_m^{\otimes k\left\lceil\frac{\left\lfloor\frac{n}{m}\right\rfloor}{k}\right\rceil}  \\
 & \le 2^{c+\lfloor\epsilon n\rfloor+\left\lceil\frac{\left\lfloor\frac{n}{m}\right\rfloor}{k}\right\rceil(c+\lfloor\epsilon d\rfloor)}T_d^{\otimes\left\lceil\frac{\left\lfloor\frac{n}{m}\right\rfloor}{k}\right\rceil}.
\end{split}
\end{equation}
In this inequality, the exponent $s$ of $T_d$ satisfies
\begin{equation}
s
 = \left\lceil\frac{\left\lfloor\frac{n}{m}\right\rfloor}{k}\right\rceil
 \le \frac{n}{d}+1
 = \left(1+\frac{d}{n}\right)\frac{n}{d},
 \le (1+\delta)\frac{n}{d},
\end{equation}
for all large enough $n$. Since $d\ge\frac{c}{\epsilon}$, for the exponent of $2$ we have
\begin{equation}
\begin{split}
c+\lfloor\epsilon n\rfloor+\left\lceil\frac{\left\lfloor\frac{n}{m}\right\rfloor}{k}\right\rceil(c+\lfloor\epsilon d\rfloor)
 & \le c+\epsilon n+\left(\frac{n}{d}+1\right)(c+\epsilon d)  \\
 & \le c+\epsilon n+\left(\frac{n}{d}+1\right)2\epsilon d  \\
 & = c+3\epsilon n+2\epsilon d  \\
 & = (c+2\epsilon d)+\frac{3}{4}\delta n,
\end{split}
\end{equation}
which is less than $\delta n$ for all large enough $n$.

This shows that for infinitely many $d$ all large $n$, $T_n$ is a sum of at most $2^{\delta n}$ tensors that are restrictions of $T_d^{\otimes s}$, therefore $T$ has the Kronecker scaling property.
\end{proof}

In \cite{bjorklund2026kronecker} the Kronecker scaling property is shown to hold for two concrete sequences of tensors, the \emph{balanced tripartitioning tensors}, and the \emph{matchings connectivity tensors}. We will now carefully apply and strengthen their Steinitz balancing technique to prove that a large class of sequences of tensors, which includes the balanced tripartitioning tensors, are approximately geometric.

In the following we consider block tensors that can be defined in terms of direct sum decompositions of the appearing vector spaces (essentially a coordinate-free version of the inner and outer structure \cite[Definition 8.1.]{blaser2013fast}). If $I$ is a finite set and
\begin{equation}
U=\bigoplus_{i\in I}U^{(i)}
\end{equation}
is a direct sum decomposition of the vector space $U$, then for $i\in I$ we let $E_i:U\to U$ denote the projection onto $U^{(i)}$ (followed by inclusion in $U$). If $J$ and $K$ are finite sets as well and
\begin{align}
V & = \bigoplus_{j\in J}V^{(j)}  \\
W & = \bigoplus_{k\in K}W^{(k)},
\end{align}
then a tensor $t\in U\otimes V\otimes W$ can be decomposed as
\begin{equation}
t=\sum_{\substack{i\in I \\ j\in J \\ k\in K}}t_{i,j,k}
\end{equation}
where $t_{i,j,k}=(E_i\otimes E_j\otimes E_k)t$. With respect to such a decomposition, we will say that $t$ is a \emph{block tensor} and refer to the tensors $t_{i,j,k}$ as the blocks of $t$. The \emph{support} of $t$ is the set $\support t=\setbuild{(i,j,k)\in I\times J\times K}{t_{i,j,k}\neq 0}$. For a subset $\Psi\subseteq I\times J\times K$ we let
\begin{equation}
t[\Psi]=\sum_{(i,j,k)\in\Psi}t_{i,j,k}.
\end{equation}
In particular, if $\Psi=X\times Y\times Z$ for subsets $X\subseteq I$, $Y\subseteq J$, $Z\subseteq K$, then
\begin{equation}
t[\Psi\cap\support t]
 = t[\Psi]
 = \left(\left(\sum_{i\in X}E_i\right)\otimes\left(\sum_{j\in Y}E_j\right)\otimes\left(\sum_{k\in Z}E_k\right)\right)t
 \le t.
\end{equation}

Let $t\in U\otimes V\otimes W$ be a block tensor with index sets $\mathcal{A}$, $\mathcal{B}$, $\mathcal{C}$ labelling the decompositions in the three factors. Kronecker powers of $t$ will also be considered as block tensors. The blocks of $t^{\otimes m}$ are labelled by $I=\mathcal{A}^m$, $J=\mathcal{B}^m$, and $K=\mathcal{C}^m$ where, e.g., the string $i=(i_1,i_2,\dots,i_m)\in \mathcal{A}^m$ corresponds to the subspace $U^{(i_1)}\otimes U^{(i_2)}\otimes\dots\otimes U^{(i_m)}\le U^{\otimes m}$.

For every $m\in\positiveintegers$ and $m$-type $P\in\distributions[m](\mathcal{A})$, we define the type class projection
\begin{equation}
\typeclassprojector{m}{P}=\sum_{(i_1,\dots,i_m)\in\typeclass{m}{P}}E_{i_1}\otimes E_{i_2}\otimes\cdots\otimes E_{i_m}\in\End(U^{\otimes m}),
\end{equation}
and similarly for elements of $\distributions[m](\mathcal{B})$ and $\distributions[m](\mathcal{C})$ we define projections acting on $V^{\otimes m}$ and $W^{\otimes m}$.

In addition, if $P\in\distributions[m](\mathcal{A}\times\mathcal{B}\times\mathcal{C})$ is a \emph{joint} $m$-type for some $m\in\positiveintegers$, then we will consider the joint type class $\typeclass{m}{P}\subseteq \mathcal{A}^m\times \mathcal{B}^m\times \mathcal{C}^m\simeq(\mathcal{A}\times\mathcal{B}\times\mathcal{C})^m$ consisting of strings $((i_1,j_1,k_1),(i_2,j_2,k_2),\dots,(i_n,j_n,k_n))$ in which each triple $(i,j,k)$ occurs $mP(i,j,k)$ times. The marginals of $P$ on the three factors will be denoted by $P_A$, $P_B$, and $P_C$. The type classes satisfy $\typeclass{m}{P}\subseteq\typeclass{m}{P_A}\times\typeclass{m}{P_B}\times\typeclass{m}{P_C}$.

We will use the following lemma.
\begin{lemma}[Steinitz concentration, {\cite[Lemma 3.1]{bjorklund2026kronecker}}]\label{lem:Steinitz}
Let $\norm{\cdot}$ be a norm on $\reals^d$ and $v_1,v_2,\ldots,v_r\in\reals^d$ with $\norm{v_i}\le 1$ for all $i=1,2,\dots,r$. Let $g_1,\dots,g_s\in\positiveintegers$, $g_1+\dots+g_s=r$. Then there exists a set partition $G_1\cup G_2\cup\dots\cup G_s=[r]$ such that for all $j=1,2,\dots,s$ we have $\lvert G_j\rvert=g_j$ and
\begin{equation}
\norm{\frac{1}{g_j}\sum_{i\in G_j}v_i-\frac{1}{r}\sum_{i=1}^rv_i}\le\frac{4d}{g_j}.
\end{equation}
\end{lemma}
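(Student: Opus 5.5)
The plan is to deduce the lemma from the classical Steinitz lemma, in the sharp form of Grinberg and Sevast'yanov. That lemma says: if $w_1,\dots,w_r\in\reals^d$ satisfy $\norm{w_i}\le 1$ for an arbitrary norm and $\sum_{i=1}^r w_i=0$, then there is a permutation $\pi$ of $[r]$ such that every prefix sum $\sum_{l=1}^{h}w_{\pi(l)}$, $h=0,1,\dots,r$, has norm at most $d$. I would take this as a black box from the literature, since it is the only nontrivial ingredient. Its proof goes through linear-programming vertex arguments on a nested family of polytopes.

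First, center the vectors. Let $\bar v=\frac{1}{r}\sum_{i=1}^r v_i$ and $w_i=v_i-\bar v$. Then $\sum_i w_i=0$, and since $\norm{\bar v}\le 1$, the triangle inequality gives $\norm{w_i}\le 2$. Applying Steinitz to $w_i/2$ yields an ordering $\pi$ in which every prefix sum $S_h=\sum_{l=1}^h w_{\pi(l)}$ satisfies $\norm{S_h}\le 2d$, with $S_0=S_r=0$.

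Second, cut this ordering into consecutive blocks. Set $h_j=g_1+\dots+g_j$ (with $h_0=0$, $h_s=r$) and
\begin{equation}
G_j=\{\pi(h_{j-1}+1),\dots,\pi(h_j)\}.
\end{equation}
These sets partition $[r]$, and $\lvert G_j\rvert=g_j$. Moreover
\begin{equation}
\sum_{i\in G_j}v_i-g_j\bar v=\sum_{i\in G_j}w_i=S_{h_j}-S_{h_{j-1}},
\end{equation}
whose norm is at most $4d$. Dividing by $g_j$ gives $\norm{\frac{1}{g_j}\sum_{i\in G_j}v_i-\bar v}\le\frac{4d}{g_j}$, which is the claimed bound.

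The main obstacle is the Steinitz lemma with the constant $d$, independent of $r$ and of the norm. I would cite it rather than reprove it. If a self-contained argument were needed, I would follow the standard route: choose sets $A_r\supseteq A_{r-1}\supseteq\dots\supseteq A_d$ with $\lvert A_k\rvert=k$, together with weights $\lambda^{(k)}\in[0,1]^{A_k}$ satisfying $\sum_i\lambda^{(k)}_i w_i=0$ and $\sum_i\lambda^{(k)}_i=k-d$. These are obtained by passing at each step to a vertex of the corresponding polytope, which has at most $d$ fractional coordinates; one of those fractional coordinates is then dropped. The prefix sum over $A_k$ then equals $\sum_i(1-\lambda^{(k)}_i)w_i$, and its norm is bounded by $\sum_i(1-\lambda^{(k)}_i)=d$. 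The remaining two steps above are routine.
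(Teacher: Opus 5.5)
Your argument is correct. The paper gives no proof of this lemma; it imports it from \cite[Lemma 3.1]{bjorklund2026kronecker}. Your reduction to the Grinberg--Sevast'yanov form of the Steinitz lemma reproduces exactly the constant $4d/g_j$: a factor $2$ comes from centering, since $\|v_i-\bar v\|\le 2$, and another factor $2$ from writing each block sum as a difference of two prefix sums. It is presumably the intended derivation. Citing the Steinitz bound as a black box is on the same footing as the paper's own citation.

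There is one slip in your optional self-contained sketch. Consider a vertex $x^*$ of $\{x\in[0,1]^{A_k}:\sum_i x_iw_i=0,\ \sum_i x_i=k-1-d\}$. The system has $d+1$ equality constraints, so up to $d+1$ coordinates (not $d$) can be fractional. More importantly, the index you remove must satisfy $x^*_{i_0}=0$, not be a fractional coordinate. Removing an index with nonzero weight destroys the invariant $\sum_i\lambda_iw_i=0$.

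A zero coordinate always exists. If none were zero, at least $k-d-1$ coordinates would equal $1$ and the remaining $d+1\ge 1$ coordinates would be positive. The coordinate sum would then strictly exceed $k-1-d$, a contradiction. The polytope is nonempty because it contains $\frac{k-1-d}{k-d}\lambda^{(k)}$. With this correction the sketch is the standard proof.
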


\begin{lemma}\label{lem:marginaluniqueproperties}
Let $\Phi=\support t$, and suppose that the map $\distributions(\Phi)\to\distributions(\mathcal{A})\times\distributions(\mathcal{B})\times\distributions(\mathcal{C})$ that maps a distribution to the collection of its marginals is injective.
\begin{enumerate}
\item\label{it:typetensorbounds} For all $m\in\positiveintegers$ and $P\in\distributions[m](\Phi)$ the inequalities
\begin{equation}
\unittensor{1}\le t^{\otimes m}[\typeclass{m}{P}]\le t^{\otimes m}
\end{equation}
hold.
\item\label{it:typetensorsupermultiplicative} For all $m,n\in\positiveintegers$, $P\in\distributions[m]$, and $Q\in\distributions[n]$, we have
\begin{equation}
t^{\otimes m}[\typeclass{m}{P}]\otimes t^{\otimes n}[\typeclass{n}{Q}]
 = t^{\otimes(m+n)}[\typeclass{m}{P}\times\typeclass{n}{Q}]
 \le t^{\otimes(m+n)}[\typeclass{m+n}{\frac{mP+nQ}{m+n}}].
\end{equation}
\item\label{it:typetensorpowerbound} Let $P\in\distributions[k](\Phi)$ for some $k\in\positiveintegers$. For all $b,g,s\in\naturals$ the inequality
\begin{equation}
t^{\otimes bgsk}[\typeclass{bgsk}{P}]\le\unittensor{(bk+1)^{\lvert\Phi\rvert gs}}\otimes \left(t^{\otimes (bgk+4\lvert\Phi\rvert bk^2)}\left[\typeclass{bgk+4\lvert\Phi\rvert bk^2}{P}\right]\right)^{\otimes s}
\end{equation}
holds.
\end{enumerate}
\end{lemma}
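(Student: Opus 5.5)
The plan is to exploit the fact that, under the injectivity hypothesis on the marginal map, a joint type class can be cut out of $t^{\otimes m}$ by a \emph{product} of projections, so that taking such a piece is a restriction. For \ref{it:typetensorbounds}, I will show
\begin{equation}
t^{\otimes m}[\typeclass{m}{P}]=\left(\typeclassprojector{m}{P_A}\otimes\typeclassprojector{m}{P_B}\otimes\typeclassprojector{m}{P_C}\right)t^{\otimes m}.
\end{equation}
The right-hand side is $t^{\otimes m}[\Psi\cap\support t^{\otimes m}]$ with $\Psi=\typeclass{m}{P_A}\times\typeclass{m}{P_B}\times\typeclass{m}{P_C}$. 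A string in this set has some joint type $Q\in\distributions[m](\Phi)$ with the same marginals as $P$, so $Q=P$ by injectivity. Conversely, $\typeclass{m}{P}\subseteq\Psi$. This gives the displayed identity and hence the upper bound $t^{\otimes m}[\typeclass{m}{P}]\le t^{\otimes m}$. For the lower bound, note that $P$ is supported in $\Phi$, so the blocks indexed by $\typeclass{m}{P}$ are tensor products of nonzero blocks and are therefore nonzero. Every nonzero tensor restricts to $\unittensor{1}$.

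For \ref{it:typetensorsupermultiplicative}, the equality follows directly from how blocks of Kronecker products are labelled. For the inequality, set $R=\frac{mP+nQ}{m+n}$ and regard $t^{\otimes(m+n)}[\typeclass{m+n}{R}]$ as a block tensor. I will apply to it the product projection onto strings whose first $m$ coordinates lie in $\typeclass{m}{P_A}$ (resp.\ $P_B$, $P_C$) and whose last $n$ coordinates lie in $\typeclass{n}{Q_A}$ (resp.\ $Q_B$, $Q_C$). By the same injectivity argument, applied separately to the two parts, the surviving support strings are exactly those in $\typeclass{m}{P}\times\typeclass{n}{Q}$. Moreover, all of these lie in $\typeclass{m+n}{R}$, so the projection produces precisely $t^{\otimes(m+n)}[\typeclass{m}{P}\times\typeclass{n}{Q}]$, and this exhibits it as a restriction.

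For \ref{it:typetensorpowerbound}, which I expect to be the main work, set $N=bgsk$ and split the $N$ positions into $gs$ consecutive chunks of length $bk$. Each string in $\typeclass{N}{P}$ determines a tuple of chunk types $(R_1,\dots,R_{gs})\in\distributions[bk](\Phi)^{gs}$, and there are at most $(bk+1)^{\lvert\Phi\rvert gs}$ such tuples. I will fix one tuple whose average is $P$ and apply \cref{lem:Steinitz} with $d=\lvert\Phi\rvert$, the vectors $v_i=R_i$, and all $g_j=g$. This partitions the chunks into $s$ groups of $g$ chunks each, with every group average within $4\lvert\Phi\rvert/g$ of $P$ (for a suitable norm, e.g.\ $\ell_\infty$ after rescaling). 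After a fixed permutation of tensor factors, which is an equivalence, the set of strings with this chunk-type tuple is contained in a product of chunk type classes. The corresponding piece is then, by \ref{it:typetensorsupermultiplicative} applied twice, at most $\bigotimes_{j=1}^s t^{\otimes bgk}[\typeclass{bgk}{Q_j}]$, where each $Q_j$ is the type of the $j$th group. The count of each symbol $x$ in group $j$ differs from $bgkP(x)$ by at most $4\lvert\Phi\rvert bk$. Since $P(x)\ge 1/k$ whenever $P(x)>0$, I can pad with $L=4\lvert\Phi\rvert bk^2$ extra positions, using a type class multiplied in via \ref{it:typetensorbounds} ($\unittensor{1}\le\cdot$) and \ref{it:typetensorsupermultiplicative}, so that each factor is bounded by $t^{\otimes(bgk+L)}[\typeclass{bgk+L}{P}]$. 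The pad needs $(bgk+L)P(x)-bgkQ_j(x)\ge 0$ symbols $x$ in each group, which the bound guarantees, and the pad counts are integers because $kP$ is integral.

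Finally, $t^{\otimes N}[\typeclass{N}{P}]$ is the sum of the pieces indexed by the admissible chunk-type tuples, and $t[\Psi_1\cup\Psi_2]\le t[\Psi_1]\oplus t[\Psi_2]$. Summing the at most $(bk+1)^{\lvert\Phi\rvert gs}$ pieces, each bounded by the same $s$-fold power, gives the stated unit-tensor factor. The delicate points are checking the norm normalization in \cref{lem:Steinitz}, and checking that the consecutive-chunk decomposition (rather than an arbitrary partition of positions) is what keeps the number of pieces within the claimed bound.
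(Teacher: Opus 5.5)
Your proposal is correct and follows essentially the same route as the paper: the product-projection identity obtained from injectivity of the marginal map gives (i) and (ii), and for (iii) you use the same decomposition of $\typeclass{bgsk}{P}$ by consecutive chunk types, Steinitz regrouping into $s$ groups of $g$ chunks, padding with a type class of length $4\lvert\Phi\rvert bk^2$ via (i) and (ii), and a direct-sum bound over the at most $(bk+1)^{\lvert\Phi\rvert gs}$ pieces. Two details to state explicitly: for symbols with $P(x)=0$ the pad count is nonnegative only because every chunk type is supported in $\support(P)$, which the Steinitz bound alone does not give; and merging the $g$ chunks of a group needs $g-1$ applications of (ii), not two.
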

\begin{proof}
\ref{it:typetensorbounds}:
The set $\typeclass{m}{P}$ is not empty since $P\in\distributions[m](\Phi)$. If $x\in\typeclass{m}{P}$ then $t^{\otimes m}[x]\le t^{\otimes m}[\typeclass{m}{P}]$ and the corresponding block is a tensor product of blocks of $t$ with indices from $\Phi=\support t$. This implies that $t^{\otimes m}[x]\neq 0$ and, since every nonzero tensor restricts to $\unittensor{1}$, that $\unittensor{1}\le t^{\otimes m}[x]\le t^{\otimes m}[\typeclass{m}{P}]$.

For the upper bound, we use the decomposition
\begin{equation}
t^{\otimes m}
 = \sum_{Q\in\distributions[m](\Phi)}t^{\otimes m}[\typeclass{m}{Q}]
\end{equation}
into sums of blocks over joint type classes. The set $\typeclass{m}{Q}$ is either disjoint from or contained in $\typeclass{m}{P_A}\times\typeclass{m}{P_B}\times\typeclass{m}{P_C}$ depending on whether the marginals of $Q$ agree with those of $P$, and by assumption they agree if and only if $Q=P$. This implies that
\begin{equation}
\begin{split}
\left(\typeclassprojector{m}{P_A}\otimes\typeclassprojector{m}{P_B}\otimes\typeclassprojector{m}{P_C}\right)t^{\otimes m}
 & = t^{\otimes m}[\typeclass{m}{P_A}\times\typeclass{m}{P_B}\times\typeclass{m}{P_C}]  \\
 & = \sum_{Q\in\distributions[m](\Phi)}t^{\otimes m}[\typeclass{m}{Q}\cap(\typeclass{m}{P_A}\times\typeclass{m}{P_B}\times\typeclass{m}{P_C})]  \\
 & = \sum_{\substack{Q\in\distributions[m](\Phi)  \\  \typeclass{m}{Q}\subseteq \typeclass{m}{P_A}\times\typeclass{m}{P_B}\times\typeclass{m}{P_C}}}t^{\otimes m}[\typeclass{m}{Q}]  \\
 & = t^{\otimes m}[\typeclass{m}{P}].
\end{split}
\end{equation}

\ref{it:typetensorsupermultiplicative}:
Since $\typeclass{m}{P}\times\typeclass{n}{Q}=\typeclass{m+n}{\frac{mP+nQ}{m+n}}\cap\left((\typeclass{m}{P_A}\times\typeclass{n}{Q_A})\times(\typeclass{m}{P_B}\times\typeclass{n}{Q_B})\times(\typeclass{m}{P_C}\times\typeclass{n}{Q_C})\right)$, we have
\begin{equation}
\begin{split}
t^{\otimes m}[\typeclass{m}{P}]\otimes t^{\otimes n}[\typeclass{n}{Q}]
 & = t^{\otimes (m+n)}[\typeclass{m}{P}\times\typeclass{n}{Q}]  \\
 & = t^{\otimes (m+n)}[\typeclass{m+n}{\frac{mP+nQ}{m+n}}\cap\left((\typeclass{m}{P_A}\times\typeclass{n}{Q_A})\times(\typeclass{m}{P_B}\times\typeclass{n}{Q_B})\times(\typeclass{m}{P_C}\times\typeclass{n}{Q_C})\right)]  \\
 & = \left((\typeclassprojector{m}{P_A}\otimes\typeclassprojector{n}{Q_A})\otimes(\typeclassprojector{m}{P_B}\otimes\typeclassprojector{n}{Q_B})\otimes(\typeclassprojector{m}{P_C}\otimes\typeclassprojector{n}{Q_C})\right)t^{\otimes (m+n)}[\typeclass{m+n}{\frac{mP+nQ}{m+n}}]  \\
 & \le t^{\otimes(m+n)}[\typeclass{m+n}{\frac{mP+nQ}{m+n}}].
\end{split}
\end{equation}

\ref{it:typetensorpowerbound}:
We group the $bgsk$ factors of $t^{\otimes bgsk}$ into $gs$ blocks of size $bk$, and partition $\typeclass{bgsk}{P}$ according to the type within each block, encoded in $Q=(Q_1,Q_2,\dots,Q_{gs})\in\distributions[bk](\Phi)^{gs}$:
\begin{equation}\label{eq:blocktypedecomposition}
\begin{split}
t^{\otimes bgsk}[\typeclass{bgsk}{P}]
 & = \sum_{Q\in\distributions[bk](\Phi)^{gs}}t^{\otimes bgsk}[\typeclass{bgsk}{P}\cap(\typeclass{bk}{Q_1}\times\cdots\times\typeclass{bk}{Q_{gs}})]  \\
 & = \sum_{\substack{Q\in\distributions[bk](\Phi)^{gs}  \\  Q_1+\dots+Q_{gs}=gsP}}t^{\otimes bgsk}[\typeclass{bk}{Q_1}\times\cdots\times\typeclass{bk}{Q_{gs}}]  \\
 & \le \bigoplus_{\substack{Q\in\distributions[bk](\Phi)^{gs}  \\  Q_1+\dots+Q_{gs}=gsP}}t^{\otimes bgsk}[\typeclass{bk}{Q_1}\times\cdots\times\typeclass{bk}{Q_{gs}}].
\end{split}
\end{equation}

Consider the term corresponding to some $Q$. We apply \cref{lem:Steinitz} to the vectors $Q_1,\dots,Q_{gs}\in\reals^\Phi$ ($d=\lvert\Phi\rvert$) of $l^\infty$-norm at most $1$, $r=gs$, and $g_1=g_2=\dots=g_s=g$. It asserts the existence of a partition of $[gs]$ into $s$ subsets $G_1,\dots,G_s$ of size $g$ each such that
\begin{equation}
\norm[\infty]{\frac{1}{g}\sum_{i\in G_j}Q_i-\frac{1}{gs}\sum_{i=1}^{gs}Q_i}\le\frac{4\lvert\Phi\rvert}{g},
\end{equation}
i.e.,
\begin{equation}
\norm[\infty]{\sum_{i\in G_j}bkQ_i-gbkP}\le 4\lvert\Phi\rvert bk.
\end{equation}
Since $\support(Q_i)\subseteq\support(P)$ and $P\in\distributions[k](\Phi)$, therefore $1_{\support(P)}\le kP$, this implies the entrywise inequality
\begin{equation}
\sum_{i\in G_j}bkQ_i-gbkP
 \le 4\lvert\Phi\rvert bk 1_{\support(P)}
 \le 4\lvert\Phi\rvert bk^2P,
\end{equation}
i.e.,
\begin{equation}
R:=\frac{1}{4\lvert\Phi\rvert bk^2}\left((gbk+4\lvert\Phi\rvert bk^2)P-\sum_{i\in G_j}bkQ_i\right)\ge 0.
\end{equation}
Since $\norm[1]{R}=1$, we have $R\in\distributions[4\lvert\Phi\rvert bk^2](\Phi)$. It follows that for each $j$,
\begin{equation}
\begin{split}
t^{\otimes bgk}\left[\prod_{i\in G_j}\typeclass{bk}{Q_i}\right]
 & \le t^{\otimes (bgk+4\lvert\Phi\rvert bk^2)}\left[\typeclass{4\lvert\Phi\rvert bk^2}{R}\times\prod_{i\in G_j}\typeclass{bk}{Q_i}\right]  \\
 & \le t^{\otimes (bgk+4\lvert\Phi\rvert bk^2)}\left[\typeclass{4\lvert\Phi\rvert bk^2}{R}\times\typeclass{gbk}{\frac{1}{g}\sum_{i\in G_j}Q_i}\right]  \\
 & \le t^{\otimes (bgk+4\lvert\Phi\rvert bk^2)}\left[\typeclass{bgk+4\lvert\Phi\rvert bk^2}{P}\right].
\end{split}
\end{equation}

Multiplying the inequalities for $j=1,2,\ldots,s$, we obtain $t^{\otimes bgsk}[\typeclass{bk}{Q_1}\times\cdots\times\typeclass{bk}{Q_{gs}}] \le \left(t^{\otimes (bgk+4\lvert\Phi\rvert bk^2)}\left[\typeclass{bgk+4\lvert\Phi\rvert bk^2}{P}\right]\right)^{\otimes s}$ for each term in the direct sum in \eqref{eq:blocktypedecomposition}. The number of terms is bounded by $\lvert\distributions[bk](\Phi)^{gs}\rvert\le(bk+1)^{\lvert\Phi\rvert gs}$, therefore
\begin{equation}
t^{\otimes bgsk}[\typeclass{bgsk}{P}]\le\unittensor{(bk+1)^{\lvert\Phi\rvert gs}}\otimes \left(t^{\otimes (bgk+4\lvert\Phi\rvert bk^2)}\left[\typeclass{bgk+4\lvert\Phi\rvert bk^2}{P}\right]\right)^{\otimes s}.
\end{equation}
\end{proof}

\begin{theorem}\label{thm:typetensorapproximatelygeometric}
Let $\Phi=\support t$, and suppose that the map $\distributions(\Phi)\to\distributions(\mathcal{A})\times\distributions(\mathcal{B})\times\distributions(\mathcal{C})$ that maps a distribution to the collection of its marginals is injective. Let $k\in\positiveintegers$ and $P\in\distributions[k](\Phi)$. Then the sequence $a=(a_n)_{n\in\naturals}$ given by
\begin{equation}
a_n=t^{\otimes nk}[\typeclass{nk}{P}]
\end{equation}
is approximately geometric.
\end{theorem}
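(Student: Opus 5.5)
We verify condition \ref{it:approxgeompowersr} of \cref{prop:approxgeomcharacterizations} with $u=\unittensor{2}$; all of it will come from \cref{lem:marginaluniqueproperties}.

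First, the relation $a_n\sim a_1^n$. By part \ref{it:typetensorbounds} of \cref{lem:marginaluniqueproperties}, $\unittensor{1}\le a_n$ holds for every $n$, so $a_n\sim\unittensor{1}\sim a_1^n$. In particular every $a_n$ is nonzero, as the proposition requires.

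Second, the lower inequality, which is easy. Iterating part \ref{it:typetensorsupermultiplicative} gives, for all $q,m,r$,
\begin{equation}
a_m^{\otimes q}\otimes a_r\le a_{qm+r},
\end{equation}
with no factor of $u$ at all. (For $r=0$ the factor $a_0=\unittensor{1}$ is harmless.)

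Third, the upper inequality $a_n\le u^{c+\lfloor\epsilon n\rfloor}a_m^{\otimes q}\otimes a_r$ for $n=qm+r$. This is where part \ref{it:typetensorpowerbound} comes in. Apply it with the given $P$ and $k$, and with parameters $b,g,s$ chosen so that $bgsk$ is comparable to $nk$. It yields $s$ copies of the $(bg+4\lvert\Phi\rvert bk)$-th term of the sequence, up to a unit tensor factor of size $(bk+1)^{\lvert\Phi\rvert gs}$.

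The plan for fixed $\epsilon>0$ is as follows.
\begin{itemize}
\item[(a)] Take $b=1$ and $g$ large. Set $m:=g+4\lvert\Phi\rvert k$, so that the right-hand side of part \ref{it:typetensorpowerbound} is $\unittensor{(k+1)^{\lvert\Phi\rvert gs}}\otimes a_m^{\otimes s}$, while the left-hand side is $a_{gs}$.
\item[(b)] The unit tensor cost in bits is $\lvert\Phi\rvert gs\log(k+1)$. This is linear in $gs$ with a fixed coefficient, not small.
\end{itemize}
So $b$ must grow: with block size $bk$, the cost per block becomes $\lvert\Phi\rvert\log(bk+1)/(bk)$ per unit of index, which tends to $0$. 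Accordingly, choose $b$ with $\lvert\Phi\rvert\log(bk+1)/b<\epsilon/2$. Then choose $g$ with $4\lvert\Phi\rvert k/g<\epsilon'$, so that the overhead in the index is relatively small. Set $m:=bg+4\lvert\Phi\rvert bk$. For any $n$, write $n=bgs+r'$ with $0\le r'<bg$. Part \ref{it:typetensorpowerbound} bounds $a_{bgs}$ by $u^{\lceil\epsilon bgs/2\rceil}a_m^{\otimes s}$.

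The remaining work is index bookkeeping. We need to compare $a_n$ with $a_{bgs}\otimes(\text{bounded stuff})$, and $a_m^{\otimes s}$ with $a_m^{\otimes q}\otimes a_r$ where $n=qm+r$. Here $s$ and $q$ differ by roughly a factor $m/(bg)=1+4\lvert\Phi\rvert k/g$, that is, by about $\epsilon' n/m$ copies.

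\begin{itemize}
\item Converting $a_n$ to $a_{bgs}$ (a bounded index difference less than $bg$) is done by extending $P$-type classes, as in the proof of part \ref{it:typetensorpowerbound}. For the nonincreasing direction, part \ref{it:typetensorbounds} together with restriction to subtypes gives
\begin{equation}
a_n\le a_{bgs}\otimes t^{\otimes r'k}.
\end{equation}
This holds since $a_n$ is a sub-block of $t^{\otimes nk}$ which, after splitting into the first $bgsk$ and last $r'k$ factors, decomposes into at most polynomially many joint types. I expect this to need a direct-sum-over-types argument costing $\mathrm{poly}(n)$, hence sublinear in $u$-exponent.
\item The excess copies of $a_m$ (about $\epsilon' n/m$ of them) are absorbed as follows. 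Since $a_m\le t^{\otimes mk}$ and $t\le u^{p}$ for some $p$ (every nonzero tensor is $\sim\unittensor{1}$), each extra copy costs $u^{pmk}$. The total extra cost is about $pk\epsilon' n$, which is small once $\epsilon'$ is small.
\item The factor $a_r$ is handled by $\unittensor{1}\le a_r$.
\end{itemize}

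The main obstacle is this last reconciliation. The natural grouping from the Steinitz bound uses blocks of length $bg$ but produces factors indexed by $m=bg+4\lvert\Phi\rvert bk>bg$. So one must show that replacing $a_{bg}$-sized blocks by $a_m$-sized ones, and mismatched counts $s$ versus $q$, costs only $O(\epsilon n)$ powers of $u$. I would try to do this using only the upper bound $a_j\le t^{\otimes jk}\le u^{pjk}$ and the lower bound $\unittensor{1}\le a_j$. Then every surplus or deficit of index length $\ell$ costs at most $u^{pk\ell}$, and the choices of $b$ and $g$ make the total surplus at most $\epsilon n$ plus a constant.
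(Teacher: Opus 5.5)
The overall structure of your proposal matches the paper's proof:
\begin{itemize}
\item the lower bound comes from part \ref{it:typetensorsupermultiplicative} of \cref{lem:marginaluniqueproperties};
\item the upper bound comes from part \ref{it:typetensorpowerbound} with $m=bg+4\lvert\Phi\rvert bk$;
\item $b$ is taken large so that the $\unittensor{(bk+1)^{\lvert\Phi\rvert gs}}$ overhead is negligible;
\item $g$ is taken large so that the $s-q$ surplus copies of $a_m$ cost little, each bounded by $a_m\le t^{\otimes mk}\le u^{pmk}$. The paper writes this as $a_m^{\otimes(s-q)}\le\unittensor{\tensorrank(t)^{mk(s-q)}}$, which is the same estimate.
\end{itemize}

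\textbf{The gap.} The problem is the step where you pass from $a_n$ to $a_{bgs}$ with $bgs\le n$. Nothing available gives $a_n\le a_{bgs}\otimes t^{\otimes r'k}$, even up to a polynomial factor. The decomposition you sketch does not produce it. Splitting $\typeclass{nk}{P}$ by the types of the first $bgsk$ and the last $r'k$ coordinates gives terms $t^{\otimes bgsk}[\typeclass{bgsk}{Q_1}]\otimes t^{\otimes r'k}[\typeclass{r'k}{Q_2}]$ with $bgsk\,Q_1+r'k\,Q_2=nk\,P$. Hence $Q_1\neq P$ whenever $Q_2\neq P$.

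To finish, you would need an upper bound on $t^{\otimes bgsk}[\typeclass{bgsk}{Q_1}]$ in terms of $a_{bgs}$, i.e.\ a comparison of type-class tensors of the same length but different types. The lemma does not provide this:
\begin{itemize}
\item parts \ref{it:typetensorbounds} and \ref{it:typetensorsupermultiplicative} only bound a type-class tensor by one of \emph{greater} length;
\item part \ref{it:typetensorpowerbound} only handles the fixed type $P$.
\end{itemize}
This is the hard direction, and proving it would need a new concentration argument.

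\textbf{The fix.} Round up instead of down. From $\unittensor{1}\le a_j$ and $a_i\otimes a_j\le a_{i+j}$ you get monotonicity: $a_n\le a_n\otimes a_{n'-n}\le a_{n'}$ for $n\le n'$. So choose $s$ with $bgs\ge n$, e.g.\ $s=\lceil n/(bg)\rceil$ (the paper takes $s=\lceil (q+1)m/(bg)\rceil$). This gives $a_n\le a_{bgs}\le\unittensor{(bk+1)^{\lvert\Phi\rvert gs}}\otimes a_m^{\otimes s}$ at no cost.

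You still have $s\ge q$ and $s-q\le\frac{4\lvert\Phi\rvert k}{g}\cdot\frac{n}{m}+2$. So your surplus-copy accounting finishes the proof as in the paper, and $\unittensor{1}\le a_r$ inserts the factor $a_r$.

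This also removes what you called the main obstacle. Part \ref{it:typetensorpowerbound} already outputs factors $a_m$, so no conversion between length-$bg$ and length-$m$ blocks is needed. What remains is the count mismatch between $s$ and $q$, which you handle correctly.
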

\begin{proof}
Let $m=bg+4\lvert\Phi\rvert bk$ with $b,g\in\positiveintegers$ to be specified later. For all $q,r\in\naturals$ satisfying $0\le r<m$, by \cref{lem:marginaluniqueproperties} we have
\begin{equation}
a_m^q\le a_m^qa_r\le a_{qm+r}
\end{equation}
and
\begin{equation}
\begin{split}
a_{qm+r}
 & \le a_{qm+r}a_{bgs-(qm+r)}  \\
 & \le a_{bgs}  \\
 & \le\unittensor{(bk+1)^{\lvert\Phi\rvert gs}}\otimes a_m^{\otimes s}  \\
 & \le\unittensor{(bk+1)^{\lvert\Phi\rvert gs}}\otimes\unittensor{\tensorrank(a_m^{\otimes(s-q)})}\otimes a_m^{\otimes q}  \\
 & \le\unittensor{(bk+1)^{\lvert\Phi\rvert gs}\tensorrank(t)^{mk(s-q)}}\otimes a_m^{\otimes q}  \\
\end{split}
\end{equation}
provided that $qm+r\le bgs$. To ensure this condition, we set
\begin{equation}
s=\left\lceil\frac{(q+1)m}{bg}\right\rceil.
\end{equation}
Then
\begin{equation}
\log\left[(bk+1)^{\lvert\Phi\rvert gs}\tensorrank(t)^{mk(s-q)}\right]
 = \lvert\Phi\rvert gs\log(bk+1)+mk(s-q)\log\tensorrank(t)
\end{equation}
is finite, and the growth of the terms is bounded as
\begin{equation}
\begin{split}
\limsup_{q\to\infty}\frac{\lvert\Phi\rvert gs\log(bk+1)}{qm+r}
 & = \limsup_{q\to\infty}\frac{\lvert\Phi\rvert g\left\lceil\frac{(q+1)m}{bg}\right\rceil\log(bk+1)}{qm+r}  \\
 & = \lvert\Phi\rvert\frac{1}{b}\log(bk+1)
\end{split}
\end{equation}
and
\begin{equation}
\begin{split}
\limsup_{q\to\infty}\frac{mk(s-q)\log\tensorrank(t)}{qm+r}
 & = \limsup_{q\to\infty}\frac{mk\left(\left\lceil\frac{(q+1)m}{bg}\right\rceil-q\right)\log\tensorrank(t)}{qm+r}  \\
 & \le \limsup_{q\to\infty}\frac{mk\left(\frac{(q+1)m}{bg}+1-q\right)\log\tensorrank(t)}{qm+r}  \\
 & = k\left(\frac{m}{bg}-1\right)\log\tensorrank(t)  \\
 & = \frac{4\lvert\Phi\rvert k^2}{g}\log\tensorrank(t).
\end{split}
\end{equation}

For $\epsilon>0$, choose $b,g\in\positiveintegers$ sufficiently large so that
\begin{equation}
\lvert\Phi\rvert\frac{1}{b}\log(bk+1)+\frac{4\lvert\Phi\rvert k^2}{g}\log\tensorrank(t)<\epsilon.
\end{equation}
Then there is some $c\in\naturals$ such that for all $n=qm+r$ the inequality
\begin{equation}
\log\left[(bk+1)^{\lvert\Phi\rvert gs}\tensorrank(t)^{mk(s-q)}\right]\le c+\lfloor\epsilon n\rfloor
\end{equation}
holds, therefore $a_m^q\le a_n\le 2^{c+\lfloor\epsilon n\rfloor}a_m^q$.
\end{proof}

\begin{definition}\label{def:typetensorelement}
With $t\in U\otimes V\otimes W$, $\Phi=\support t$, $k\in\positiveintegers$, $P\in\distributions[k](\Phi)$, and $a$ as in \cref{thm:typetensorapproximatelygeometric}, we define $t[P]$ to be the element $a^{1/k}$ of $\completion{S}$.
\end{definition}
It follows from \cref{lem:marginaluniqueproperties} that these elements satisfy $t[P]\asymptoticle t$ and the following log-concavity property: if $P$ and $Q$ are rational distributions on $\Phi$ and $\lambda\in[0,1]\cap\rationals$, then
\begin{equation}
t[\lambda P+(1-\lambda)Q]\asymptoticge t[P]^{\otimes\lambda}\otimes t[Q]^{\otimes(1-\lambda)}.
\end{equation}

\begin{example}
Let $U=V=W=\complexes\oplus\complexes$ with basis $(e_0,e_1)$ in all three factors, and let $t=e_1\otimes e_0\otimes e_0+e_0\otimes e_1\otimes e_0+e_0\otimes e_0\otimes e_1$. Then $\Phi:=\support t=\{(1,0,0),(0,1,0),(0,0,1)\}$. Let $P\in\distributions[3](\Phi)$ be the uniform distribution. Then $t[P]^{\otimes 3}$ is the element of $\completion{S}$ represented by the sequence of balanced tripartitioning tensors.
\end{example}

Next we consider two special cases of the construction, where $t$ has even more structure. Recall that a subset $\Phi\in\mathcal{A}\times\mathcal{B}\times\mathcal{C}$ is \emph{free} if any two elements differ in at least two positions. Suppose that the decomposition of $t$ is into $1\times 1\times 1$ blocks and $\Phi$ is free in addition to the property that the marginals on singletons uniquely determine every probability distribution on $\Phi$. By \cite[Theorem 4.18.]{christandl2023universal}, the quantum functionals at $t$ can be evaluated as
\begin{equation}
\log F^\theta(t)
 = E^\theta(t)
 = \max_{Q\in\distributions(\Phi)}\theta(A)\entropy(Q_A)+\theta(B)\entropy(Q_B)+\theta(C)\entropy(Q_C)
\end{equation}
for all $\theta\in\distributions(\{A,B,C\})$.

For all $m$, any subset of the support of $t^{\otimes m}$ is also free. In particular, $E^\theta$ can be evaluated in a similar way at $t^{\otimes m}[\typeclass{m}{P}]$. Since the symmetric group $S_m$ acts transitively on $\typeclass{m}{P}$ and all three projections $\typeclass{m}{P_A}$, $\typeclass{m}{P_B}$, $\typeclass{m}{P_C}$, with equivariant projection maps, and the entropy functional is symmetric and concave, all three terms are maximized when the distribution $Q$ is uniform on $\typeclass{m}{P}$. This implies
\begin{equation}
\begin{split}
\log F^\theta(t^{\otimes m}[\typeclass{m}{P}])
 & = E^\theta(t^{\otimes m}[\typeclass{m}{P}])  \\
 & = \theta(A)\log\lvert\typeclass{m}{P_A}\rvert + \theta(B)\log\lvert\typeclass{m}{P_B}\rvert + \theta(C)\log\lvert\typeclass{m}{P_C}\rvert,
\end{split}
\end{equation}
therefore
\begin{equation}
\begin{split}
\log\completion{F^\theta}(t[P])
 & = \frac{1}{k}\lim_{n\to\infty}\frac{1}{n}\log F^\theta(t^{\otimes nk}[\typeclass{nk}{P}])  \\
 & = \theta(A)\entropy(P_A)+\theta(B)\entropy(P_B)+\theta(C)\entropy(P_C).
\end{split}
\end{equation}

In the second special case we allow blocks of arbitrary sizes, but require a stronger property for $\Phi$, called tightness. Recall that a subset $\Phi\subseteq\mathcal{A}\times\mathcal{B}\times\mathcal{C}$ is \emph{tight} if there exist injective maps $\alpha:\mathcal{A}\to\integers$, $\beta:\mathcal{B}\to\integers$, and $\gamma:\mathcal{C}\to\integers$ such that for all $(i,j,k)\in\Phi$ the equality $\alpha(i)+\beta(j)+\gamma(k)=0$ holds. For $\Phi$ tight and $P\in\distributions[k](\Phi)$, Strassen proves in \cite{strassen1991degeneration} using the method of Coppersmith and Winograd that there exist $X_n\subseteq\typeclass{nk}{P_A}$, $Y_n\subseteq\typeclass{nk}{P_B}$, and $Z_n\subseteq\typeclass{nk}{P_C}$ such that $(X_n\times Y_n\times Z_n)\cap\Phi^{nk}$ is a diagonal of size $2^{nk\min\{\entropy(P_A),\entropy(P_B),\entropy(P_C)\}-o(n)}$.

Combining Strassen's result with \cref{thm:typetensorapproximatelygeometric}, and using the notation from \cref{def:typetensorelement}, if $\Phi=\support t$ is tight and the map from $\distributions(\Phi)$ to the triples of marginals is injective, then for every rational distribution $P$ on $\Phi$ we have the inequality
\begin{equation}\label{eq:CWS}
t[P]\ge \unittensor{2^{\min\{\entropy(P_A),\entropy(P_B),\entropy(P_C)\}}}\otimes\prod_{(i,j,k)\in\Phi} t_{i,j,k}^{\otimes P(i,j,k)}.
\end{equation}

\begin{example}
As an illustration, we rephrase the upper bound on $\omega$ by Coppermith and Winograd using our notations. For $q\in\positiveintegers$, we view the tensors
\begin{multline}
\CW{q}
 = e_0\otimes e_0\otimes e_{q+1} + 
   e_0\otimes e_{q+1}\otimes e_0 + 
   e_{q+1}\otimes e_0\otimes e_0  \\  + 
   \sum_{i=1}^q\left(
   e_0\otimes e_i\otimes e_i + 
   e_i\otimes e_0\otimes e_i + 
   e_i\otimes e_i\otimes e_0 
\right)
\end{multline}
as block tensors with respect to the partition of the bases $(\{e_0\},\{e_1,e_2,\dots,e_q\},\{e_{q+1}\})$, for which we use the labels $0,1,2$. The support is $\Phi=\{(0,0,2),(0,2,0),(2,0,0),(1,1,0),(1,0,1),(0,1,1)\}$, which is tight, and probability distributions on $\Phi$ are uniquely determined by their marginals. There are four types of blocks, isomorphic to $\langle 1,1,1\rangle$, $\langle q,1,1\rangle$, $\langle 1,q,1\rangle$, $\langle 1,1,q\rangle$.

For a rational distribution $P$ on $\Phi$, we have the chain of inequalities
\begin{equation}\label{eq:CWmethod}
\unittensor{q+2}
 \asymptoticge \CW{q}
 \asymptoticge \CW{q}[P]
 \asymptoticge \unittensor{2^{\min\{\entropy(P_A),\entropy(P_B),\entropy(P_C)\}}}\otimes\langle q^{P(1,0,1)},q^{P(1,1,0)},q^{P(0,1,1)}\rangle.
\end{equation}
Here the first inequality is due to an explicit border rank decomposition of $\CW{q}$, the second inequality is a consequence of \cref{lem:marginaluniqueproperties}, and the third inequality is from \eqref{eq:CWS}.

Applying $\asymptoticrank$ to the inequality, we obtain
\begin{equation}
q+2\ge 2^{\min\{\entropy(P_A),\entropy(P_B),\entropy(P_C)\}}\asymptoticrank(\langle q^{P(1,0,1)},q^{P(1,1,0)},q^{P(0,1,1)}\rangle).
\end{equation}

Assuming a symmetric distribution $P(0,0,2)=P(0,2,0)=P(2,0,0)=\frac{\beta}{3}$ and $P(1,1,0)=P(1,0,1)=P(0,1,1)=\frac{1-\beta}{3}$, the inequality \eqref{eq:CWmethod} becomes
\begin{equation}
\unittensor{q+2}
 \asymptoticge \unittensor{2^{\entropy\left(\frac{1+\beta}{3},2\frac{1-\beta}{3},\frac{\beta}{3}\right)}}\otimes\langle 2,2,2\rangle^{\otimes\frac{1-\beta}{3}\log q},
\end{equation}
and after applying $\asymptoticrank$ and taking logarithms, we obtain
\begin{equation}
\frac{1-\beta}{3}\log(q)\omega\le\log(q+2)-\entropy\left(\frac{1+\beta}{3},2\frac{1-\beta}{3},\frac{\beta}{3}\right).
\end{equation}
We set $q=6$ and $\beta=0.048$ to get $\omega\le 2.39$.
\end{example}

\subsection{Graphs}\label{sec:graphs}

Following \cite{zuiddam2019asymptotic}, we let $\graphs$ be the semiring of isomorphism classes of finite simple undirected graphs with disjoint union $\disjointunion$ as addition, the strong product $\strongproduct$ as multiplication, equipped with the cohomomorphism preorder $\le$. This is a semiring with a Strassen preorder and, the Shannon capacity of any graph $G$ is equal to its asymptotic subrank in this semiring, which provides a dual characterization as a minimum over the asymptotic spectrum \cite[Theorem 1.1]{zuiddam2019asymptotic}. We continue using the convention that the canonical map $i:\graphs\to\completion{\graphs}$ will be omitted, but the preorder on $\graphs$ is $\le$ and the preorder on $\completion{\graphs}$ is $\asymptoticle$. The natural numbers are embedded in $\graphs$ as the complements of complete graphs $\complement{K_n}$.

Some known graph parameters are elements of the asymptotic spectrum: the Lovász number $\vartheta$ \cite{lovasz1979shannon}, the fractional clique cover number $\complement{\chi}_f$, the fractional Haemers bounds $\mathcal{H}_{\mathbb{F}}$ over arbitrary fields \cite{haemers1978upper,haemers1979some,blasiak2013graph,bukh2018fractional}, and the fractional orthogonal rank $\xi_f$ \cite{cubitt2014bounds}. Of these, the fractional clique cover number is rational for every graph \cite{scheinerman2011fractional}, therefore, by \cref{rem:asymptoticallycompletecriterion}, the semiring of graphs is not asymptotically complete.

In \cite{vrana2021probabilistic} it is shown that the asymptotic spectrum has a parametrization by a convex set. The main idea of the proof is to evaluate elements of the spectrum on sequences of subgraphs of graph powers induced on type classes, and study their appropriately normalized limits. In the following we will show that such sequences are in fact approximately geometric, and that the properties of the limiting parameters correspond to relations between elements of the asymptotic completion $\completion{\graphs}$. We take a route different from \cite{vrana2021probabilistic} in order to maximize the flexibility of the result. Concretely, we will allow arbitrary converging types and linearly growing powers as opposed to a fixed rational distribution and powers in an arithmetic sequence.

The following relation holds in $\graphs$ between any vertex-transitive graph and an arbitrary induced subgraph. In general, for a graph $G$ with vertex set $V(G)$ and $S\subseteq V(G)$, the induced subgraph $G[S]$ has vertex set $S$ and its edges are the edges of $G$ that have both endpoints in $S$.
\begin{lemma}[{\cite[Lemma 3.1.]{vrana2021probabilistic}}]\label{lem:transitiveinducedsubgraph}
Let $H$ be a vertex-transitive graph and $S\subseteq V(H)$. Then $H[S]\le H\le\complement{K_N}\strongproduct H[S]$ with
\begin{equation}
N=\left\lfloor\frac{\lvert V(H)\rvert}{\lvert S\rvert}\ln\lvert V(H)\rvert\right\rfloor+1.
\end{equation}
\end{lemma}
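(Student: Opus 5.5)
The plan is to handle the two inequalities separately. The lower bound is immediate; the upper bound comes from covering $V(H)$ by automorphic images of $S$, found by a probabilistic argument. Throughout, $G\le G'$ in the cohomomorphism preorder means that there is a map $f:V(G)\to V(G')$ sending distinct non-adjacent vertices to distinct non-adjacent vertices. We assume $S\neq\emptyset$, as is implicit in the formula for $N$.

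For $H[S]\le H$, I take the inclusion $S\hookrightarrow V(H)$. Since $H[S]$ is an \emph{induced} subgraph, two distinct vertices of $S$ are non-adjacent in $H[S]$ exactly when they are non-adjacent in $H$. So the inclusion is a cohomomorphism.

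For $H\le\complement{K_N}\strongproduct H[S]$, note that $\complement{K_N}\strongproduct H[S]$ is the disjoint union of $N$ copies of $H[S]$, with no edges between copies. The key step is to find automorphisms $\sigma_1,\dots,\sigma_N$ of $H$ such that the sets $\sigma_i^{-1}(S)$ cover $V(H)$. I would pick the $\sigma_i$ independently and uniformly from $\mathrm{Aut}(H)$.
\begin{itemize}
\item By vertex-transitivity, for a fixed vertex $v$ the image $\sigma_i(v)$ is uniformly distributed on $V(H)$. Hence $\Pr[\sigma_i(v)\in S]=\lvert S\rvert/\lvert V(H)\rvert$.
\item The probability that $v$ is covered by none of the $\sigma_i^{-1}(S)$ is therefore $(1-\lvert S\rvert/\lvert V(H)\rvert)^N\le e^{-N\lvert S\rvert/\lvert V(H)\rvert}$.
\item A union bound over the vertices shows that some vertex is uncovered with probability at most $\lvert V(H)\rvert e^{-N\lvert S\rvert/\lvert V(H)\rvert}$. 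This is strictly less than $1$ because $N>\frac{\lvert V(H)\rvert}{\lvert S\rvert}\ln\lvert V(H)\rvert$.
\end{itemize}
So a covering family of $N$ automorphisms exists.

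Given such a family, I define $f(v)=(i(v),\sigma_{i(v)}(v))$, where $i(v)$ is the least index $i$ with $\sigma_i(v)\in S$. To check that $f$ is a cohomomorphism, take distinct non-adjacent $v,w$ and split into two cases.
\begin{itemize}
\item If $i(v)\neq i(w)$, the images lie in different copies. They are then distinct and non-adjacent, since the strong product with an edgeless graph has no edges between copies.
\item If $i(v)=i(w)=i$, then $\sigma_i(v)$ and $\sigma_i(w)$ are distinct and non-adjacent in $H$, because $\sigma_i$ is an automorphism. As both lie in $S$, they are also non-adjacent in the induced subgraph $H[S]$.
\end{itemize}

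The only real obstacle is the covering step. It is exactly where vertex-transitivity is needed, and the constant in $N$ is what makes the union bound go through.
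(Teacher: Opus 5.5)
Your proof is correct. The union bound closes precisely because $N=\lfloor \frac{\lvert V(H)\rvert}{\lvert S\rvert}\ln\lvert V(H)\rvert\rfloor+1$ is the least integer strictly exceeding $\frac{\lvert V(H)\rvert}{\lvert S\rvert}\ln\lvert V(H)\rvert$, and the first-index map $v\mapsto(i(v),\sigma_{i(v)}(v))$ is a valid cohomomorphism into the $N$ disjoint copies of $H[S]$. The paper does not reprove this lemma but imports it from \cite[Lemma 3.1.]{vrana2021probabilistic}; the form of $N$ indicates the same random-automorphism covering argument, though that proof is not reproduced here to compare directly.
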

This implies, in particular, that in $\completion{\graphs}$, we have $H[S]\asymptoticle H\asymptoticle \frac{\lvert V(H)\rvert}{\lvert S\rvert}H[S]$. However, we will use directly \cref{lem:transitiveinducedsubgraph} instead of this simpler form, so that we get statements about $\le$ that hold in $\graphs$.

The next two lemmas are used for bounding the multiplicative distance of subgraphs induced on different type classes. The idea is to embed both in a larger type graph corresponding to the maximum of the distributions rescaled by exponent.
\begin{lemma}\label{lem:maxmeasure}
Let $\mathcal{X}$ be a finite set, $P_1,P_2\in\distributions(\mathcal{X})$, $w_1,w_2\in\nonnegativereals$, $M(x)=\max\{w_1P_1(x),w_2P_2(x)\}$ for all $x\in\mathcal{X}$, and $Q=\frac{M}{\norm[1]{M}}$. Let $\bar{w}=\max\{w_1,w_2\}$ and $\ubar{w}=\min\{w_1,w_2\}$. Then
\begin{align}
\bar{w} \le \norm[1]{M} & \le \bar{w}+\ubar{w}\norm[1]{P_1-P_2}  \\
\norm[1]{Q-P_1} & \le 2\frac{\lvert w_1-w_2\rvert}{\bar{w}}+2\norm[1]{P_1-P_2}  \\
\norm[1]{Q-P_2} & \le 2\frac{\lvert w_1-w_2\rvert}{\bar{w}}+2\norm[1]{P_1-P_2}.
\end{align}
\end{lemma}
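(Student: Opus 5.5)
The whole argument will rest on the elementary identities $\max\{x,y\}=\frac{x+y+\lvert x-y\rvert}{2}$, applied pointwise, together with the triangle inequality in $\ell^1$. Without loss of generality I assume $w_1\ge w_2$, so $\bar{w}=w_1$ and $\ubar{w}=w_2$. The case $\bar w=0$ is degenerate: then $M=0$, $Q$ is undefined, and the second and third bounds must be read as vacuous. So I assume $\bar w>0$.

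\textbf{Bounds on $\norm[1]{M}$.} For the lower bound I use $M(x)\ge w_1P_1(x)$ and sum over $x$, which gives $\norm[1]{M}\ge w_1=\bar w$. For the upper bound I write
\begin{equation}
M(x)=w_1P_1(x)+\bigl(w_2P_2(x)-w_1P_1(x)\bigr)_+ .
\end{equation}
Since $w_2\le w_1$, the positive part is at most $w_2\bigl(P_2(x)-P_1(x)\bigr)_+$. Summing over $x$, the sum of positive parts is $\frac12\norm[1]{P_1-P_2}$. This yields $\norm[1]{M}\le\bar w+\frac{\ubar w}{2}\norm[1]{P_1-P_2}$, which is even stronger than the stated bound.

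\textbf{Bound on $\norm[1]{Q-P_1}$.} I split it as
\begin{equation}
\norm[1]{Q-P_1}\le \norm[1]{\tfrac{M}{\norm[1]{M}}-\tfrac{M}{w_1}}+\frac{1}{w_1}\norm[1]{M-w_1P_1}.
\end{equation}
The first term equals $\norm[1]{M}\bigl(\frac{1}{w_1}-\frac{1}{\norm[1]{M}}\bigr)=\frac{\norm[1]{M}-w_1}{w_1}$. The second term is $\frac{\norm[1]{M}-w_1}{w_1}$ as well, because $M-w_1P_1\ge0$. By the previous step, both are at most $\frac{w_2}{w_1}\cdot\frac12\norm[1]{P_1-P_2}\le\frac12\norm[1]{P_1-P_2}$. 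So $\norm[1]{Q-P_1}\le\norm[1]{P_1-P_2}$, which is within the claim.

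\textbf{Bound on $\norm[1]{Q-P_2}$.} Here I use $\norm[1]{Q-P_2}\le\norm[1]{Q-P_1}+\norm[1]{P_1-P_2}\le 2\norm[1]{P_1-P_2}$. That is already within the claimed bound without the $\frac{\lvert w_1-w_2\rvert}{\bar w}$ term. The weight term is only needed in the symmetric formulation, where one does not know which $w_i$ is larger.

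If the reader prefers the symmetric formulation, I would instead bound $\norm[1]{M-\bar wP_i}\le\norm[1]{M-w_iP_i}+\lvert w_1-w_2\rvert$ and then normalize. There I would use $\lvert\frac{1}{\norm[1]{M}}-\frac{1}{\bar w}\rvert\norm[1]{M}\le\frac{\norm[1]{M}-\bar w}{\bar w}$, which produces exactly the $2\frac{\lvert w_1-w_2\rvert}{\bar w}+2\norm[1]{P_1-P_2}$ form.

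There is no real obstacle here. The only care needed is the normalization step (comparing $M/\norm[1]{M}$ with $M/\bar w$), and handling the sign of $M-w_iP_i$ for the smaller weight. For that smaller weight, $M-w_2P_2\ge0$ still holds, so $\norm[1]{M-w_2P_2}=\norm[1]{M}-w_2\le(w_1-w_2)+\ubar w\norm[1]{P_1-P_2}$. Dividing by $\bar w$ and adding the normalization error gives the stated constant $2$ directly, even without the triangle-inequality detour.
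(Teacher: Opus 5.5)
Your proof is correct, and it takes a different and slightly sharper route than the paper's.

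The paper never orders the weights. It bounds $\norm[1]{M-w_iP_i}\le\norm[1]{w_1P_1-w_2P_2}\le\ubar{w}\norm[1]{P_1-P_2}+\lvert w_1-w_2\rvert$. It then gets $\norm[1]{M}\le\bar{w}+\ubar{w}\norm[1]{P_1-P_2}$ as the smaller of the two bounds $w_i+\lvert w_1-w_2\rvert+\ubar{w}\norm[1]{P_1-P_2}$. Finally it uses $\norm[1]{Q-P_i}\le(\norm[1]{M-w_iP_i}+\norm[1]{M}-w_i)/\norm[1]{M}$ for both indices in the same way. The term $2\lvert w_1-w_2\rvert/\bar{w}$ enters twice: through the triangle inequality on $w_1P_1-w_2P_2$, and through $\norm[1]{M}-w_i$, which for the lighter index is at least $\bar{w}-\ubar{w}$.

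You instead order the weights and use the exact decomposition $M=w_1P_1+(w_2P_2-w_1P_1)_+$ with $(w_2P_2-w_1P_1)_+\le w_2(P_2-P_1)_+$. You compare only with the heavier measure, where $M-w_1P_1\ge 0$ turns everything into exact mass computations, and you reach $P_2$ with one triangle inequality.

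What this buys is a weight-free conclusion:
\begin{itemize}
\item $\norm[1]{M}\le\bar{w}+\frac{\ubar{w}}{2}\norm[1]{P_1-P_2}$;
\item $\norm[1]{Q-P_i}\le 2\norm[1]{P_1-P_2}$ for both $i$;
\item $\norm[1]{Q-P_i}\le\frac{\ubar{w}}{\bar{w}}\norm[1]{P_1-P_2}$ for the heavier index.
\end{itemize}
So in \cref{lem:typegraphcontinuity} the term $\lvert m_1-m_2\rvert/\max\{m_1,m_2\}$ in $\epsilon$ could be dropped. The paper's version avoids a case distinction but loses constants and carries an unnecessary weight term.

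One correction to your commentary. The hypotheses and the pair of conclusions are invariant under swapping the labels $1$ and $2$. Your WLOG argument therefore already proves the symmetric statement as written, and the weight term is never needed. Your remark that it is ``only needed in the symmetric formulation'' is inaccurate, and the alternative route in your last two paragraphs is superfluous.
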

\begin{proof}
Since $0\le w_1P_1\le M$ and $0\le w_2P_2\le M$ by construction, the norms satisfy $w_1=\norm[1]{w_1P_1}\le\norm[1]{M}$ and $w_2=\norm[1]{w_2P_2}\le\norm[1]{M}$, and therefore $\bar{w}\le\norm[1]{M}$.

We have
\begin{equation}
\begin{split}
\norm[1]{M-w_1P_1}
 & = \sum_{x\in\mathcal{X}}\max\{0,w_2P_2(x)-w_1P_1(x)\}  \\
 & \le \norm[1]{w_1P_1-w_2P_2}  \\
 & \le \norm[1]{w_1P_1-\ubar{w}P_1}+\norm[1]{\ubar{w}P_1-\ubar{w}P_2}+\norm[1]{\ubar{w}P_2-w_2P_2}  \\
 & = \lvert w_1-\ubar{w}\rvert+\ubar{w}\norm[1]{P_1-P_2}+\lvert\ubar{w}-w_2\rvert  \\
 & = \ubar{w}\norm[1]{P_1-P_2}+\lvert w_1-w_2\rvert,
\end{split}
\end{equation}
and similarly
\begin{equation}
\norm[1]{M-w_2P_2} \le \ubar{w}\norm[1]{P_1-P_2}+\lvert w_1-w_2\rvert,
\end{equation}
therefore
\begin{align}
\norm[1]{M}
 & \le \norm[1]{M-w_1P_1}+\norm[1]{w_1P_1} \le \ubar{w}\norm[1]{P_1-P_2}+\lvert w_1-w_2\rvert+w_1
\intertext{and}
\norm[1]{M}
 & \le \norm[1]{M-w_2P_2}+\norm[1]{w_2P_2} \le \ubar{w}\norm[1]{P_1-P_2}+\lvert w_1-w_2\rvert+w_2.
\end{align}
These minimum of the upper bounds gives $\norm[1]{M} \le \bar{w}+\ubar{w}\norm[1]{P_1-P_2}$.

Finally,
\begin{equation}
\begin{split}
\norm[1]{Q-P_1}
 & = \frac{1}{\norm[1]{M}}\norm[1]{M-\norm[1]{M}P_1}  \\
 & = \frac{1}{\norm[1]{M}}\norm[1]{M-w_1P_1+w_1P_1-\norm[1]{M}P_1}  \\
 & \le \frac{\ubar{w}\norm[1]{P_1-P_2}+\lvert w_1-w_2\rvert}{\norm[1]{M}}+\frac{\norm[1]{M}-w_1}{\norm[1]{M}}  \\
 & \le \frac{\ubar{w}\norm[1]{P_1-P_2}+\lvert w_1-w_2\rvert}{\norm[1]{M}}+\frac{\bar{w}+\ubar{w}\norm[1]{P_1-P_2}-w_1}{\norm[1]{M}}  \\
 & \le 2\frac{\lvert w_1-w_2\rvert}{\bar{w}}+2\norm[1]{P_1-P_2},
\end{split}
\end{equation}
while the bound on $\norm[1]{Q-P_2}$ follows by reversing the roles of $P_1$ and $P_2$.
\end{proof}

\begin{lemma}\label{lem:typegraphcontinuity}
Let $G$ be a graph, $m_1,m_2\in\positiveintegers$, $P_1\in\distributions[m_1](V(G))$, $P_2\in\distributions[m_2](V(G))$. Then $\typegraph{G}{m_1}{P_1}\le\complement{K_2}^{\strongproduct k}\strongproduct\typegraph{G}{m_2}{P_2}$ where
\begin{multline}
k\le 1+\log(\ln\lvert V(G)\rvert)+(\lvert V(G)\rvert+1)\log(m_1+m_2)  \\  +2\left(\lvert m_1-m_2\rvert+m_2\norm[1]{P_1-P_2}\right)\log\lvert V(G)\rvert+m_2h(\epsilon)
\end{multline}
with
\begin{equation}
\epsilon=\min\left\{\frac{1}{2},\frac{\lvert m_1-m_2\rvert}{\max\{m_1,m_2\}}+\norm[1]{P_1-P_2}\right\}.
\end{equation}
\end{lemma}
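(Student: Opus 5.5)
The plan is to embed both type graphs into a common larger type graph, following the idea announced before \cref{lem:maxmeasure}. Set $w_1=m_1$, $w_2=m_2$ and $M(x)=\max\{m_1P_1(x),m_2P_2(x)\}$. Since $m_iP_i(x)\in\naturals$, $M$ is integer valued. Let $m=\norm[1]{M}$ and $Q=M/m\in\distributions[m](V(G))$.

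\textbf{Step 1: $\typegraph{G}{m_1}{P_1}\le\typegraph{G}{m}{Q}$.} The vector $M-m_1P_1$ is a nonnegative integer vector of total mass $m-m_1$. Fix a string $z$ of length $m-m_1$ of that type, and map $x\mapsto(x,z)$, with coordinates permuted so they are listed in a fixed order. This is an injection from $\typeclass{m_1}{P_1}$ into $\typeclass{m}{Q}$. It preserves adjacency and non-adjacency in the strong product, because the appended coordinates coincide. So it is an induced subgraph embedding, and induced subgraphs are $\le$ (cohomomorphism).

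\textbf{Step 2: reduce from the big graph to $P_2$.} Symmetrically, $\typegraph{G}{m_2}{P_2}\le\typegraph{G}{m}{Q}$, realised as $\typegraph{G}{m_2}{P_2}\strongproduct\{z'\}$. For the reverse direction I will not use $\typegraph{G}{m}{Q}$ directly, since it is not vertex transitive in the needed way. Instead write $\typegraph{G}{m}{Q}\le \typegraph{G}{m_2}{P_2}\strongproduct \typegraph{G}{m-m_2}{R}$, where $R=(M-m_2P_2)/(m-m_2)$: the type class of $Q$ is covered by concatenations after summing over splittings. More cleanly, $\typeclass{m}{Q}$ is, up to coordinate permutation, a subset of the union over positions, and the simplest bound is
\[
\typegraph{G}{m}{Q}\le G^{\strongproduct m}[\typeclass{m}{Q}]\le \complement{K_{N}}\strongproduct\typegraph{G}{m}{Q'}
\]
in the form below. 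The actual key tool is \cref{lem:transitiveinducedsubgraph}, applied to $H=G^{\strongproduct m}[\typeclass{m_2}{P_2}\times V(G)^{m-m_2}]\cong \typegraph{G}{m_2}{P_2}\strongproduct G^{\strongproduct(m-m_2)}$. This graph is vertex transitive only if $G$ is, so I instead use that $H$ is a subgraph of $G^{\strongproduct m}[S_{\sigma}\typeclass{m}{Q}]$-orbits. Concretely, I apply \cref{lem:transitiveinducedsubgraph} to the graph on $\typeclass{m}{Q}$. Its automorphism group contains $S_m$, which acts transitively on the type class, so it is vertex transitive. Take $S$ = the strings whose last $m-m_2$ coordinates equal a fixed $z'$ of type $M-m_2P_2$; then $H[S]\cong\typegraph{G}{m_2}{P_2}$. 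The lemma gives $\typegraph{G}{m}{Q}\le\complement{K_N}\strongproduct\typegraph{G}{m_2}{P_2}$ with
\[
N\le\frac{\lvert\typeclass{m}{Q}\rvert}{\lvert\typeclass{m_2}{P_2}\rvert}\ln\lvert V(G)\rvert^m+1 .
\]
Then bound $N\le 2^k$ by rounding.

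\textbf{Step 3: estimate.} Use the type class bounds $\lvert\typeclass{m}{Q}\rvert\le2^{m\entropy(Q)}$ and $\lvert\typeclass{m_2}{P_2}\rvert\ge (m_2+1)^{-\lvert V(G)\rvert}2^{m_2\entropy(P_2)}$. Then $\log N$ is at most
\[
1+\log(m\ln\lvert V(G)\rvert)+\lvert V(G)\rvert\log(m_2+1)+(m-m_2)\entropy(Q)+m_2(\entropy(Q)-\entropy(P_2)).
\]
By \cref{lem:maxmeasure}, $m\le m_1+m_2$, which absorbs $\log m$ and $\log(m_2+1)$ into $(\lvert V(G)\rvert+1)\log(m_1+m_2)$. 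Also $m-m_2\le\lvert m_1-m_2\rvert+m_2\norm[1]{P_1-P_2}$, using $m\le\bar w+\ubar w\norm[1]{P_1-P_2}$. Together with $\entropy\le\log\lvert V(G)\rvert$, this gives one copy of that term. Finally, \cref{lem:entropycontinuity} with $\frac12\norm[1]{Q-P_2}\le\epsilon$ (by \cref{lem:maxmeasure}) gives $m_2(\epsilon\log\lvert V(G)\rvert+h(\epsilon))$. Here $m_2\epsilon\le\lvert m_1-m_2\rvert+m_2\norm[1]{P_1-P_2}$, giving the second copy. Monotonicity of $h$ on $[0,\frac12]$ justifies the cap at $\frac12$; when the cap is active, the crude bound $\entropy\le\log\lvert V(G)\rvert$ already suffices.

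\textbf{Main obstacle.} The delicate step is the bookkeeping in Step 3 so that the constants match exactly: $\log(\ln\lvert V(G)\rvert^{m})=\log m+\log\ln\lvert V(G)\rvert$, the floor plus one, and the factor-of-2 slack in \cref{lem:maxmeasure}'s norm bound versus the $\frac12$ in the entropy continuity bound. Combining Steps 1 and 2 by transitivity yields the claim.
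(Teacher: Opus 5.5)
Your proposal is correct and follows the paper's proof. The paper likewise takes $Q=M/\norm[1]{M}$ for the entrywise maximum $M$ and embeds both type graphs into $\typegraph{G}{n}{Q}$ as induced subgraphs. It then applies \cref{lem:transitiveinducedsubgraph} to $\typegraph{G}{n}{Q}$, which is vertex transitive under coordinate permutations, with $S$ a copy of $\typeclass{m_2}{P_2}$. Finally it bounds $\lceil\log N\rceil$ via the type-class estimates, \cref{lem:maxmeasure} and \cref{lem:entropycontinuity}, just as you do.

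You should delete the false start at the beginning of your Step 2. The worry that $\typegraph{G}{m}{Q}$ is not vertex transitive is unfounded, and the inequality involving $R$ is not valid as stated. Neither is needed, since the argument you settle on is exactly the paper's.
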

\begin{proof}
Let $M(v)=\max\{m_1P_1(v),m_2P_2(v)\}$ for all $v\in V(G)$, $n=\norm[1]{M}$, and $Q=\frac{M}{\norm[1]{M}}$. Both $\typegraph{G}{m_1}{P_1}$ and $\typegraph{G}{m_2}{P_2}$ appear in $\typegraph{G}{n}{Q}$ as induced subgraphs (e.g., by appending to each string in $\typeclass{m_1}{P_1}$ the missing symbols in some fixed order so that the frequencies match $Q$). By \cref{lem:transitiveinducedsubgraph}, we have
\begin{equation}
\typegraph{G}{m_1}{P_1}
 \le \typegraph{G}{n}{Q}
 \le \complement{K_N}\strongproduct\typegraph{G}{m_2}{P_2},
\end{equation}
where
\begin{equation}
N
 = \left\lfloor\frac{\lvert\typeclass{n}{Q}\rvert}{\lvert\typeclass{m_2}{P_2}\rvert}\ln\lvert\typeclass{n}{Q}\rvert\right\rfloor+1  \\
\end{equation}
Using $\lceil\log(\lfloor x\rfloor+1)\rceil\le 1+\log x$ for $x\ge 1$ and $\complement{K_N}\le\complement{K_2}^{\strongproduct \lceil\log N\rceil}$, the claimed inequality holds with
\begin{equation}
\begin{split}
k
 & = \lceil\log N\rceil  \\
 & \le 1+\log\left(\frac{\lvert\typeclass{n}{Q}\rvert}{\lvert\typeclass{m_2}{P_2}\rvert}\ln\lvert\typeclass{n}{Q}\rvert\right)  \\
 & \le 1+\log\left(n\ln(\lvert V(G)\rvert)(m_2+1)^{\lvert V(G)\rvert}2^{n\entropy(Q)-m_2\entropy(P_2)}\right)  \\
 & = 1+\log(\ln\lvert V(G)\rvert)+\log n +\lvert V(G)\rvert\log(m_2+1)+n\entropy(Q)-m_2\entropy(P_2)  \\
 & \le 1+\log(\ln\lvert V(G)\rvert)+(\lvert V(G)\rvert+1)\log(m_1+m_2)+n\entropy(Q)-m_2\entropy(P_2),
\end{split}
\end{equation}
where in the last step we used the simple bound $n\le m_1+m_2$. We split the entropy difference as
\begin{equation}
n\entropy(Q)-m_2\entropy(P_2)
  = \left(\norm[1]{M}-m_2\right)\entropy(Q)+m_2\left(\entropy(Q)-\entropy(P_2)\right),
\end{equation}
and bound the two terms separately using \cref{lem:maxmeasure,lem:entropycontinuity} as
\begin{equation}
\begin{split}
\left(\norm[1]{M}-m_2\right)\entropy(Q)
 & \le (\lvert m_1-m_2\rvert+\min\{m_1,m_2\}\norm[1]{P_1-P_2})\entropy(Q)  \\
 & \le (\lvert m_1-m_2\rvert+m_2\norm[1]{P_1-P_2})\log\lvert V(G)\rvert
\end{split}
\end{equation}
and
\begin{equation}
\begin{split}
m_2\left(\entropy(Q)-\entropy(P_2)\right)
 & \le m_2\left(\entropy(Q)-\entropy(P_2)\right)  \\
 & \le m_2\frac{1}{2}\norm[1]{Q-P_2}\log(\lvert V(G)\rvert-1)+m_2h(\frac{1}{2}\norm[1]{Q-P_2})  \\
 & \le m_2\left(\frac{\lvert m_1-m_2\rvert}{\max\{m_1,m_2\}}+\norm[1]{P_1-P_2}\right)\log\lvert V(G)\rvert+m_2h(\epsilon)  \\
 & \le \left(\lvert m_1-m_2\rvert+m_2\norm[1]{P_1-P_2}\right)\log\lvert V(G)\rvert+m_2h(\epsilon).
\end{split}
\end{equation}
We obtain the bound on $k$ by combining the preceding inequalities.
\end{proof}

\begin{theorem}\label{thm:typegraphsequenceapproximatelygeometric}
Let $G$ be a graph, $(m_n)_{n\in\naturals}\in\naturals^\naturals$ such that the limit $\lim_{n\to\infty}\frac{m_n}{n}=:\mu$ exists in $\nonnegativereals$, and for each $n$ let $P_n\in\distributions[m_n](V(G))$ such that $P_n$ converges to some $P\in\distributions(V(G))$. Then $(\typegraph{G}{m_n}{P_n})_{n\in\naturals}$ is an approximately geometric sequence, and its asymptotic equivalence class only depends on $P$ and $\mu$.
\end{theorem}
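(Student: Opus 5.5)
We verify condition \ref{it:approxgeomproductsepsilon} of \cref{prop:approxgeomcharacterizations} for $a_n:=\typegraph{G}{m_n}{P_n}$, rather than \ref{it:approxgeomproducts}, because the $t$-dependent constant $tc$ absorbs the non-splittable error terms. Each $a_n$ is nonzero, since the type class is nonempty. Since $\graphs$ carries a Strassen preorder, $0\le 1$ holds, so $a_n\sim 1$ is automatic and any bounded exponent can go into $c$. Throughout we use $u=\complement{K_2}$.

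\emph{Step 1 (product versus a single type graph).} Let $n_1,\dots,n_t$ be given and set $M=\sum_i m_{n_i}$ and $R=\frac{1}{M}\sum_i m_{n_i}P_{n_i}$. The strong product $a_{n_1}\strongproduct\cdots\strongproduct a_{n_t}$ is the subgraph of $G^{\strongproduct M}$ induced on $\prod_i\typeclass{m_{n_i}}{P_{n_i}}\subseteq\typeclass{M}{R}$. Hence it is $\le\typegraph{G}{M}{R}$, because induced subgraphs are $\le$ the ambient graph. Coordinate permutations act transitively on $\typeclass{M}{R}$ by automorphisms, so $\typegraph{G}{M}{R}$ is vertex-transitive. \Cref{lem:transitiveinducedsubgraph} then gives $\typegraph{G}{M}{R}\le\complement{K_N}\strongproduct\prod_i a_{n_i}$. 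By the type-class size bounds,
\[
\log N\le 1+\log M+\log\ln\lvert V(G)\rvert+\sum_i\lvert V(G)\rvert\log(m_{n_i}+1)+\sum_i m_{n_i}\bigl(\entropy(R)-\entropy(P_{n_i})\bigr).
\]
The terms in $\log$ are sublinear per factor, using $\log M\le\sum_i\log(m_{n_i}+2)$. For the entropy term we bound $\entropy(R)-\entropy(P_{n_i})$ via \cref{lem:entropycontinuity} through $P$, using $\norm[1]{R-P}\le\max_i\norm[1]{P_{n_i}-P}$.

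\emph{Step 2 (single type graph versus $a_{n_1+\dots+n_t}$).} Let $N'=\sum_i n_i$. We apply \cref{lem:typegraphcontinuity} in both directions between $(M,R)$ and $(m_{N'},P_{N'})$. The cost is governed by $\lvert M-m_{N'}\rvert$ and $m\norm[1]{R-P_{N'}}$, together with an $m\,h(\epsilon)$ term and logarithmic terms. We bound
\[
\lvert M-m_{N'}\rvert\le\sum_i\lvert m_{n_i}-\mu n_i\rvert+\lvert m_{N'}-\mu N'\rvert .
\]

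\emph{Step 3 (the obstacle: splitting errors across factors).} The terms attached to the total index, such as $\lvert m_{N'}-\mu N'\rvert$ and $\norm[1]{P_{N'}-P}$, are not sums of per-factor quantities. I expect this bookkeeping to be the main difficulty. Fix $\delta>0$ and choose $n_0$ such that for all $n\ge n_0$ we have $\lvert m_n-\mu n\rvert\le\delta n$ and $\norm[1]{P_n-P}\le\delta$. For every $n$ we then get $\lvert m_n-\mu n\rvert\le\delta n+C_\delta$ with $C_\delta$ covering $n<n_0$, and in particular $\lvert m_{N'}-\mu N'\rvert\le\delta\sum_i n_i+C_\delta$.

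Factors with $n_i<n_0$ have bounded size. They contribute a bounded amount to every quantity above, and that amount goes into the per-factor constant $c$. Every remaining quantity has the form $m_{n_i}\cdot g(\delta)$ or $N'\cdot g(\delta)$ with $g(\delta)\to 0$; here $g$ involves $\delta\log\lvert V(G)\rvert+h(2\delta+\dots)$, and $m_n\le(\mu+1)n+C$. Summing over factors yields an exponent of the form $tc+\lfloor\epsilon(\delta)\sum_i n_i\rfloor$ with $\epsilon(\delta)\to 0$. This establishes \ref{it:approxgeomproductsepsilon}, so the sequence is approximately geometric.

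\emph{Step 4 (independence of the class).} Suppose $(m'_n,P'_n)$ has the same limits $\mu$ and $P$. Apply \cref{lem:typegraphcontinuity} termwise between $(m_n,P_n)$ and $(m'_n,P'_n)$, in both directions. The resulting exponent is $O(\log n)+o(n)$, since $\lvert m_n-m'_n\rvert=o(n)$ and $\norm[1]{P_n-P'_n}\to 0$ force $h(\epsilon)\to 0$. Hence the two sequences are asymptotically equivalent.
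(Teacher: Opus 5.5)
Your route differs from the paper's: you check condition \ref{it:approxgeomproductsepsilon} for arbitrary tuples by passing through the single vertex-transitive graph $\typegraph{G}{M}{R}$. The graph comparisons in Steps 1 and 2 are sound, and Step 4 matches the paper. The gap is in the entropy estimates of Steps 1 and 3, which you yourself flag as the crux, and two of the claims there are false as stated.

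First, the bound $\norm[1]{R-P}\le\max_i\norm[1]{P_{n_i}-P}$ is useless. A tuple may repeat one of the finitely many bad indices, and a single factor whose type is far from $P$ makes the maximum of order $1$. Bounding $m_{n_j}(\entropy(R)-\entropy(P_{n_j}))$ through $P$ then gives a bound linear in $n_j$ for the large factors.

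Second, small factors do not contribute a bounded amount. Take $G$ with two vertices $v,w$, $m_n=n$, $P=P_n$ the point mass at $w$ for $n\ge 2$, and $P_1$ the point mass at $v$.
\begin{itemize}
\item For the tuple $(1,N)$ one gets $\lvert\typeclass{N+1}{R}\rvert/(\lvert\typeclass{1}{P_1}\rvert\,\lvert\typeclass{N}{P_N}\rvert)=N+1$. So the Step 1 cost caused by one factor of length $1$ is about $\log N$.
\item For $t-1$ copies of index $1$ plus one index $N=Kt$, the Jensen gap $M\entropy(R)-\sum_i m_{n_i}\entropy(P_{n_i})$ is about $t\log K$. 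This is neither bounded per factor nor of the form $m_{n_i}g(\delta)$ or $N'g(\delta)$.
\end{itemize}
The same nonlinearity enters the $m_2h(\epsilon)$ term of \cref{lem:typegraphcontinuity} in Step 2, because $\lvert M-m_{N'}\rvert$ contains the deviations $\lvert m_{n_i}-\mu n_i\rvert$ of all the small factors.

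These terms are still absorbable, but only with estimates you do not supply. Let $M_S$ be the total length of the factors with $n_i<n_0$. You need:
\begin{itemize}
\item the weighted bound $\norm[1]{R-P}\le\frac{1}{M}\sum_i m_{n_i}\norm[1]{P_{n_i}-P}\le\delta+2M_S/M$, or equivalently the grouping inequality for the entropy of a mixture;
\item subadditivity of $h$;
\item an absorption inequality such as $M\,h(M_S/M)\le M_S\log(eM/M_S)\le c_\epsilon M_S+\epsilon M$, which follows from $\log(er)\le c_\epsilon+\epsilon r$ for $r\ge 1$.
\end{itemize}
Since $M_S\le tC_0$ and $M\le(\mu+1)\sum_i n_i+tC$, this yields an exponent $tc+\lfloor\epsilon\sum_i n_i\rfloor$ with $c$ depending on $\epsilon$. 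The $h$-terms in Step 2 are handled the same way. So your route can be completed, but not by the argument in Step 3.

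The paper avoids the issue entirely. It verifies the block condition \ref{it:approxgeompowersr}, with the remainder factor absorbed since $a_r\sim 1$, for $n=ql+r$ with $l$ large. It compares $a_l^{\strongproduct q}$ with $a_n$ through $\typegraph{G}{qm_l}{P_l}$. Because all factors are equal, the concatenated type is exactly $P_l$, so no mixing term appears. The cost of \cref{lem:transitiveinducedsubgraph} is then about $q\lvert V(G)\rvert\log(m_l+1)$, whose rate vanishes as $l\to\infty$. The bookkeeping for arbitrary tuples is delegated to \cref{prop:approxgeomcharacterizations}.
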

\begin{proof}
If $\mu=0$, then $1\le \typegraph{G}{m_n}{P_n}\le G^{\strongproduct m_n}\le 2^{\lceil m_n\log\lvert V(G)\rvert\rceil}$ implies that the sequence is asymptotically equivalent to $(1,1,1,\dots)$, therefore it is approximately geometric, and all such sequences with $\mu=0$ are asymptotically equivalent.

Let $(m'_n)_{n\in\naturals}\in\naturals^\naturals$ and $P'_n\in\distributions[m'_n](V(G))$ sequences such that $\lim_{n\to\infty}\frac{m'_n}{n}=\lim_{n\to\infty}\frac{m_n}{n}=\mu$ and $\lim_{n\to\infty}P'_n=\lim_{n\to\infty}P_n=P$ as well. By \cref{lem:typegraphcontinuity}, we have
\begin{equation}
\typegraph{G}{m_n}{P_n}\le\complement{K_2}^{\strongproduct k_n}\strongproduct\typegraph{G}{m'_n}{P'_n},
\end{equation}
where
\begin{multline}
k_n\le 1+\log(\ln\lvert V(G)\rvert)+(\lvert V(G)\rvert+1)\log(m_n+m'_n)  \\  +2\left(\lvert m_n-m'_n\rvert+m'_n\norm[1]{P_n-P'_n}\right)\log\lvert V(G)\rvert+m'_nh(\epsilon_n)
\end{multline}
with
\begin{equation}
\epsilon_n=\min\left\{\frac{1}{2},\frac{\lvert m_n-m'_n\rvert}{\max\{m_n,m'_n\}}+\norm[1]{P_n-P'_n}\right\}.
\end{equation}
These satisfy $\epsilon_n\ge 0$,
\begin{equation}
\begin{split}
\limsup_{n\to\infty}\epsilon_n
 & \le \limsup_{n\to\infty}\left(\frac{\lvert m_n-m'_n\rvert}{\max\{m_n,m'_n\}}+\norm[1]{P_n-P'_n}\right)  \\
 & \le \limsup_{n\to\infty}\left(\frac{\lvert \frac{m_n}{n}-\frac{m'_n}{n}\rvert}{\max\{\frac{m_n}{n},\frac{m'_n}{n}\}}+\norm[1]{P_n-P}+\norm[1]{P-P'_n}\right)
 = 0,
\end{split}
\end{equation}
and
\begin{equation}
\begin{split}
\limsup_{n\to\infty}\frac{k_n}{n}
 & = \limsup_{n\to\infty}2\left(\left\lvert \frac{m_n}{n}-\frac{m'_n}{n}\right\rvert+\frac{m'_n}{n}\norm[1]{P_n-P'_n}\right)\log\lvert V(G)\rvert+\frac{m'_n}{n}h(\epsilon_n)  \\
 & = 0.
\end{split}
\end{equation}
Since the roles of the two sequences can be exchanged, we conclude that $(\typegraph{G}{m_n}{P_n})_{n\in\naturals}$ and $(\typegraph{G}{m'_n}{P'_n})_{n\in\naturals}$ are asymptotically equivalent.

We show that the sequence $a_n=\typegraph{G}{m_n}{P_n}$ is approximately geometric. Let $l\in\positiveintegers$ and $q,r\in\naturals$ satisfy $0\le r<q$ and $n=ql+r$. Then
\begin{equation}
a_l^q
 = \typegraph{G}{m_l}{P_l}^{\strongproduct q}
 \le \typegraph{G}{qm_l}{P_l}
 \le \complement{K_2}^{\strongproduct k_q}\strongproduct\typegraph{G}{m_n}{P_n}
 = \complement{K_2}^{\strongproduct k_q}\strongproduct a_n
\end{equation}
where
\begin{multline}
k_q\le 1+\log(\ln\lvert V(G)\rvert)+(\lvert V(G)\rvert+1)\log(qm_l+m_n)  \\  +2(\lvert qm_l-m_n\rvert+m_n\norm[1]{P_l-P_n})\log\lvert V(G)\rvert+m_nh(\epsilon_q)
\end{multline}
with
\begin{equation}
\epsilon_q=\min\left\{\frac{1}{2},\frac{\lvert qm_l-m_n\rvert}{\max\{qm_l,m_n\}}+\norm[1]{P_l-P_n}\right\}.
\end{equation}
They satisfy
\begin{equation}
\begin{split}
\lim_{q\to\infty}\epsilon_q
 & = \lim_{q\to\infty}\min\left\{\frac{1}{2},\frac{\lvert \frac{qm_l}{n}-\frac{m_n}{n}\rvert}{\max\{\frac{qm_l}{n},\frac{m_n}{n}\}}+\norm[1]{P_l-P_n}\right\}  \\
 & = \lim_{q\to\infty}\min\left\{\frac{1}{2},\frac{\lvert \frac{m_l}{l}-\mu\rvert}{\max\{\frac{m_l}{l},\mu\}}+\norm[1]{P_l-P}\right\}=:\delta_l
\end{split}
\end{equation}
and
\begin{equation}
\begin{split}
\limsup_{q\to\infty}\frac{k_q}{n}
 & \le \limsup_{q\to\infty}2\left\lvert \frac{qm_l}{n}-\frac{m_n}{n}\right\rvert+2\frac{m_n}{n}\norm[1]{P_l-P_n}\log\lvert V(G)\rvert+\frac{m_n}{n}h(\epsilon_q)  \\
 & = 2\left\lvert \frac{m_l}{l}-\mu\right\rvert+2\mu\norm[1]{P_l-P}\log\lvert V(G)\rvert+\mu h(\delta_l),
\end{split}
\end{equation}
which can be made arbitrarily close to $0$ by choosing $l$ sufficiently large.

Similarly,
\begin{equation}
\begin{split}
a_n
 & = \typegraph{G}{m_n}{P_n}
 \le \complement{K_2}^{\strongproduct k'_q}\strongproduct\typegraph{G}{qm_l}{P_l}  \\
 & \le \complement{K_2}^{\strongproduct k'_q}\strongproduct\complement{K_{N_q}}\strongproduct\typegraph{G}{m_l}{P_l}^{\strongproduct q}
 \le \complement{K_2}^{\strongproduct(k'_q+\lceil\log N_q\rceil)}\strongproduct a_l^{\strongproduct q}
\end{split}
\end{equation}
where
\begin{multline}
k'_q\le 1+\log(\ln\lvert V(G)\rvert)+(\lvert V(G)\rvert+1)\log(qm_l+m_n)  \\  +2(\lvert qm_l-m_n\rvert+qm_l\norm[1]{P_l-P_n})\log\lvert V(G)\rvert+qm_lh(\epsilon_q)
\end{multline}
and
\begin{equation}
\begin{split}
N_q
 & = \left\lfloor\frac{\lvert\typeclass{qm_l}{P_l}\rvert}{\lvert\typeclass{m_l}{P_l}\rvert^q}\ln\lvert\typeclass{qm_l}{P_l}\rvert\right\rfloor+1  \\
 & \le \left\lfloor\frac{2^{qm_l\entropy(P_l)}}{\left((m_l+1)^{-\lvert V(G)\rvert}2^{m_l\entropy(P_l)}\right)^q}qm_l\ln\lvert V(G)\rvert\right\rfloor+1  \\
 & = \left\lfloor(m_l+1)^{q\lvert V(G)\rvert}qm_l\ln\lvert V(G)\rvert\right\rfloor+1  \\
\end{split}
\end{equation}

We find in the same way as before that
\begin{equation}
\limsup_{q\to\infty}\frac{k'_q}{n}
 \le 2\left\lvert \frac{m_l}{l}-\mu\right\rvert+2\frac{m_l}{l}\norm[1]{P_l-P}\log\lvert V(G)\rvert+\frac{m_l}{l} h(\delta_l)
\end{equation}
and
\begin{equation}
\limsup_{q\to\infty}\frac{\lceil\log N_q\rceil}{n}
 \le \frac{1}{l}\lvert V(G)\rvert\log(m_l+1),
\end{equation}
and both can be made arbitrarily small by choosing $l$ sufficiently large.
\end{proof}

\begin{definition}\label{def:typegraphlimit}
For a graph $G$ and $P\in\distributions(V(G))$, let $m_n=\sum_{v\in V(G)}\lceil nP(v)\rceil$ and $P_n(v)=\frac{\lceil nP(v)\rceil}{m_n}$ for all $n\in\positiveintegers$ (and $m_0\in\positiveintegers$, $P_0\in\distributions[m_0](V(G))$ arbitrary). We define $G[P]\in\completion{\graphs}$ as the element that generates the geometric sequence which is asymptotically equivalent to the sequence $n\mapsto \typegraph{G}{m_n}{P_n}$.
\end{definition}
\begin{corollary}
In the setting of \cref{thm:typegraphsequenceapproximatelygeometric}, the resulting sequence represents $G[P]^{\strongproduct\mu}$.
\end{corollary}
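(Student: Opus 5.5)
The plan is to reduce the corollary to the rescaling results of \cref{prop:rescaledproperties} together with the independence statement in \cref{thm:typegraphsequenceapproximatelygeometric}. Let $a_n=\typegraph{G}{m'_n}{P'_n}$ be the sequence of \cref{def:typegraphlimit}, with $m'_n=\sum_v\lceil nP(v)\rceil$ and $P'_n(v)=\lceil nP(v)\rceil/m'_n$. Then $m'_n/n\to 1$ and $P'_n\to P$, so $a$ is approximately geometric and $G(G[P])\asymptoticeq a$. Let $b_n=\typegraph{G}{m_n}{P_n}$ be the given sequence.

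The case $\mu=0$ is already settled in the first paragraph of the proof of \cref{thm:typegraphsequenceapproximatelygeometric}: $b\asymptoticeq G(1)$. This agrees with $G[P]^{\strongproduct 0}=1$ by the definition of real powers, since $(x^{\lfloor 0\cdot n\rfloor})_n=G(1)$.

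For $\mu>0$, set $\beta_n=\lfloor\mu n\rfloor$, so that $\beta_n/n\to\mu$. The sequence $a\circ\beta$ is $\typegraph{G}{m'_{\beta_n}}{P'_{\beta_n}}$, with $m'_{\beta_n}/n\to\mu$ and $P'_{\beta_n}\to P$. By the uniqueness part of \cref{thm:typegraphsequenceapproximatelygeometric}, we get $b\asymptoticeq a\circ\beta$. (For the $n$ with $\beta_n=0$, only finitely many when $\mu>0$, the index-$0$ entry can be adjusted harmlessly.)

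It then remains to show that $a\circ\beta$ represents $G[P]^{\strongproduct\mu}$. I would pass to $\completion{\graphs}$ through $i$. The sequence $i(a)$ is asymptotically equivalent to $G(x)$ with $x=G[P]$, using the remark after \cref{thm:asymptoticcompletion} that $i(a)\asymptoticeq G([a])$. Applying \cref{prop:rescaledproperties}\ref{it:rescaledmonotone} in both directions gives $i(a)\circ\beta\asymptoticeq G(x)\circ\beta=(x^{\lfloor\mu n\rfloor})_n$. By the definition of real powers, the latter is $\asymptoticeq G(x^{\mu})$. The hypotheses $x\neq 0$ and $x\sim 1$ hold because $1\le a_n\le \complement{K_2}^{\strongproduct\lceil m'_n\log|V(G)|\rceil}$.

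Combining these, $i(b)\asymptoticeq i(a\circ\beta)\asymptoticeq G(G[P]^{\strongproduct\mu})$, and hence $[b]=G[P]^{\strongproduct\mu}$.

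The only delicate point is applying \cref{prop:rescaledproperties} in $\completion{\graphs}$. That proposition requires its sequences to be approximately geometric with first element $\sim 1$. This holds for $G(x)$ and for $i(a)$, which is approximately geometric by \cref{prop:approximatelygeometricproperties}\ref{it:approximatelygeometrichomomorphism}. Part \ref{it:rescaledmonotone} in fact needs no approximate geometricity at all, so I expect this step to go through directly.
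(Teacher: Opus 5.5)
Your proof is correct. The paper states this corollary without proof, and your argument is the reasoning it leaves implicit: by the independence part of \cref{thm:typegraphsequenceapproximatelygeometric}, the given sequence is asymptotically equivalent to the defining sequence of $G[P]$ reindexed by $n\mapsto\lfloor\mu n\rfloor$. Passing through $i$ and using \cref{prop:rescaledproperties}\ref{it:rescaledmonotone} together with the definition of real powers then identifies its class with $G[P]^{\strongproduct\mu}$.
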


In \cite{vrana2021probabilistic}, to every element $f\in\Delta(\graphs)$ a probabilistic refinement was introduced that takes a pair $(G,P)$ where $G$ is a finite simple undirected graph and $P\in\distributions(V(G))$, and returns a number $f(G,P)\in\nonnegativereals$. This functional can be characterized as follows: if $P_n\in\distributions[n](V(G))$ such that $P_n\to P$, then $f(G,P)=\lim_{n\to\infty}\sqrt[n]{f(\typegraph{G}{n}{P_n})}$ (the existence of the limit and its independence of the sequence follows from \cref{thm:typegraphsequenceapproximatelygeometric} as well). With the notation of \cref{def:typegraphlimit}, we have $f(G,P)=\completion{f}(G[P])$. The main properties of the map $(G,P)\mapsto f(G,P)$ can be reinterpreted as inequalities and equations in $\completion{G}$, with essentially the same proofs as in \cite{vrana2021probabilistic}:
\begin{proposition}\leavevmode
\begin{enumerate}
\item For any graph $G$, $P_1,P_2\in\distributions(V(G))$, and $p\in[0,1]$, the inequality $G[P_1]^{\strongproduct p}\strongproduct G[P_2]^{\strongproduct(1-p)}\asymptoticle G[pP_1+(1-p)P_2]$ holds
\item If $G$, $H$ are graphs, $P\in\distributions(V(G)\times V(H))$ with marginals $P_G$ and $P_H$, then $(G\strongproduct H)[P]\asymptoticle G[P_G]\strongproduct H[P_H]\asymptoticle \complement{K_2}^{\strongproduct \mutualinformation(G:H)_P}\strongproduct(G\strongproduct H)[P]$.
\item If $G$, $H$ are graphs, $P_G\in\distributions(V(G))$, $P_H\in\distributions(V(H))$, and $p\in[0,1]$, then $(G\disjointunion H)[pP_G\oplus(1-p)P_H]=\complement{K_2}^{\strongproduct h(p)}\strongproduct G[P_G]^{\strongproduct p}H[P_H]^{\strongproduct(1-p)}$
\item If $\varphi:\complement{H}\to\complement{G}$ is a homomorphism and $P\in\distributions(V(H))$, then $H[P]\asymptoticle G[\varphi_*(P)]$.
\end{enumerate}
\end{proposition}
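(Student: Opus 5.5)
Each of the four items asserts an inequality or equation in $\completion{\graphs}$. My plan is to establish it at the level of representing sequences: I pick explicit type-class sequences $n\mapsto\typegraph{G}{m_n}{P_n}$ that represent each side via \cref{thm:typegraphsequenceapproximatelygeometric} and its corollary, and I compare them termwise in $\graphs$ up to factors $\complement{K_2}^{\strongproduct k_n}$ with $k_n$ sublinear. Real exponents $G[P]^{\strongproduct p}$ are handled by choosing $m_n\approx pn$. The key flexibility is the corollary: any sequence of type classes whose lengths grow like $\mu n$ and whose types converge to $P$ represents $G[P]^{\strongproduct\mu}$. Transitivity of $\asymptoticle$ and the fact that $\complement{K_2}^{\strongproduct x}$ with $x\in\nonnegativereals$ is represented by $(2^{\lfloor xn\rfloor})_n$ are used throughout.

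For (i), I choose types $Q_n\to P_1$ with length $\approx pn$ and $R_n\to P_2$ with length $\approx(1-p)n$. The strong product $\typegraph{G}{m}{Q}\strongproduct\typegraph{G}{m'}{R}$ is the induced subgraph of $G^{\strongproduct(m+m')}$ on $\typeclass{m}{Q}\times\typeclass{m'}{R}$. That set is contained in the type class of the mixture, whose limit is $pP_1+(1-p)P_2$, and induced subgraphs are $\le$ the ambient graph.

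For (ii), the first inequality uses that $\typeclass{n}{P}\subseteq\typeclass{n}{P_G}\times\typeclass{n}{P_H}$, and that the induced subgraph of $(G\strongproduct H)^{\strongproduct n}=G^{\strongproduct n}\strongproduct H^{\strongproduct n}$ on this product is $\typegraph{G}{n}{P_G}\strongproduct\typegraph{H}{n}{P_H}$. For the second inequality I would use that this product graph is vertex-transitive under $S_n\times S_n$, and that the set $\typeclass{n}{P}$ has relative size about $2^{-n\mutualinformation(G:H)_P}$. \cref{lem:transitiveinducedsubgraph} then gives the bound with $N\approx 2^{n\mutualinformation}\cdot\mathrm{poly}(n)$, and the polynomial factor is sublinear in the exponent.

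For (iii), the vertex set of $(G\disjointunion H)^{\strongproduct n}$ restricted to a type class splits according to which coordinates lie in $G$. This gives $\binom{n}{pn}$ disjoint, pairwise non-adjacent copies of $\typegraph{G}{pn}{\cdot}\strongproduct\typegraph{H}{(1-p)n}{\cdot}$. So the graph equals $\complement{K_{\binom{n}{pn}}}\strongproduct(\dots)$ exactly, and $\binom{n}{pn}$ is asymptotically equivalent to $2^{nh(p)}$. The rounding of types is absorbed by the corollary.

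For (iv), a homomorphism $\complement{H}\to\complement{G}$ gives a cohomomorphism $H\to G$. Applied coordinatewise, it maps $\typeclass{n}{P}$ into a single type class of $G$ with type $\varphi_*(P)$, and it restricts to a cohomomorphism between the induced subgraphs. By definition this is $\le$ in $\graphs$.

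The main obstacle is bookkeeping rather than ideas. Type lengths such as $\lceil nP(v)\rceil$ and $\lfloor pn\rfloor$ must be matched so that the hypotheses of \cref{thm:typegraphsequenceapproximatelygeometric} hold, and in (ii) the entropy estimate $\lvert\typeclass{n}{P_G}\rvert\lvert\typeclass{n}{P_H}\rvert/\lvert\typeclass{n}{P}\rvert\le(n+1)^{c}2^{n\mutualinformation}$ must be carried out; this follows from the type-class cardinality bounds in the preliminaries.
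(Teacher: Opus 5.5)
Your proposal is correct and is essentially the argument the paper has in mind. The paper gives no separate proof but defers to the type-class arguments of \cite{vrana2021probabilistic}: products of type classes sit inside the mixture type class; the joint type class sits inside the vertex-transitive graph $\typegraph{G}{n}{P_G}\strongproduct\typegraph{H}{n}{P_H}$, to which \cref{lem:transitiveinducedsubgraph} is applied; the disjoint-union case splits exactly into $\binom{n}{m}$ pairwise non-adjacent blocks; and the coordinatewise cohomomorphism handles item (iv), with \cref{thm:typegraphsequenceapproximatelygeometric} and its corollary supplying the freedom to use arbitrary converging types and real exponents. The only bookkeeping point worth making explicit is in (ii): the rounded types $P_n$ give the exponent $n\,\mutualinformation(G:H)_{P_n}+O(\log n)$, and you need continuity of entropy (\cref{lem:entropycontinuity}) to replace it by $n\,\mutualinformation(G:H)_P+o(n)$.
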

Note, however, that while for every $f\in\Delta(\graphs)$ and graph $G$ there is a distribution $P\in\distributions(V(G))$ such that $f(G)=f(G,P)$, in general there is no $P$ such that $G=G[P]$ in $\completion{\graphs}$. On the other hand, if $G$ is vertex-transitive, then $G=G[U]$, where $U$ is the uniform distribution on $V(G)$.

In the remainder of this section, we connect a special class of approximately geometric sequences of graphs to another notion of approximation introduced in \cite{de2024asymptotic}. In that paper, de Boer, Buys, and Zuiddam consider a pseudometric on $\graphs$ (or a metric on $\graphs/\asymptoticeq$) defined as
\begin{equation}
d(G,H)=\max_{f\in\Hom(\graphs,\nonnegativereals)}\left\lvert f(G)-f(H)\right\rvert,
\end{equation}
where the maximum is over monotone semiring homomorphisms, which they call the \emph{asymptotic spectrum distance}. By the asymptotic spectrum duality, $d(G,H)\le\frac{a}{b}$ if and only if both $\complement{K_b}\strongproduct G\asymptoticle\complement{K_b}\strongproduct G\disjointunion\complement{K_a}$ and $\complement{K_b}\strongproduct H\asymptoticle\complement{K_b}\strongproduct H\disjointunion\complement{K_a}$ hold.

Informally speaking, the distance $d$ measures how far the graphs are in an additive sense (after scaling up by multiplication with natural numbers), whereas our notion of approximation is multiplicative (after taking powers). However, since every nonzero element in $\graphs$ is at least $1$ and thus nonzero graphs are bounded away from zero, the two notions become equivalent in a particular limit.
\begin{proposition}\label{prop:spectrumdistanceconvergence}
Let $G_0,G_1,G_2,\dots$ be nonempty graphs. Then $(G_n)_{n\in\naturals}$ is a Cauchy sequence with respect to the asymptotic spectrum distance if and only if the sequence $(G_n^{\strongproduct n})_{n\in\naturals}$ is approximately geometric with respect to the asymptotic cohomomorphism preorder.
\end{proposition}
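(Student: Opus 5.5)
The plan is to translate both conditions into uniform statements about the numbers $\log f(G_n)$, $f\in\Hom(\graphs,\nonnegativereals)$. There are two bridges.

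\emph{Strassen duality.} Since $\graphs$ has a Strassen preorder with $u=\complement{K_2}$, Strassen's theorem gives that for graphs $A,B$ and $k\in\naturals$, $A\asymptoticle \complement{K_2}^{\strongproduct k}\strongproduct B$ holds if and only if $f(A)\le 2^kf(B)$ for all $f$.

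\emph{Multiplicative versus additive closeness.} Every spectral point is at least $1$ on a nonempty graph, and $f(G)\le\lvert V(G)\rvert$. So on a set where $\sup_f f(G_n)\le e^B$ uniformly, the metric $d$ is uniformly equivalent to $d_{\log}(G,H)=\max_f\lvert\log f(G)-\log f(H)\rvert$, because $\log$ is bi-Lipschitz on $[1,e^B]$.

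Also, $0\le 1$ in $\graphs$, so the relation $a_n\sim a_1^n$ in \cref{prop:approxgeomcharacterizations} is automatic. Hence I will use characterization \ref{it:approxgeomproductsepsilon} there, applied to the preorder $\asymptoticle$. Via duality, with $a_n=G_n^{\strongproduct n}$ and $N=n_1+\dots+n_t$, it says: for every $\epsilon>0$ there is $c$ such that for all $f$, all $t$ and all $n_1,\dots,n_t$,
\begin{equation*}
\Bigl\lvert\sum_{i=1}^t n_i\log f(G_{n_i})-N\log f(G_N)\Bigr\rvert\le tc+\epsilon N .
\end{equation*}
Passing between the $u$-exponents and $\log_2$ only costs a rounding, which can be absorbed into $c$.

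\emph{Cauchy implies approximately geometric.} First, $d(G_n,G_0)$ is bounded, so $\sup_{n,f}f(G_n)<\infty$ and there is a uniform bound $0\le\log f(G_n)\le B$. By the bridge above, $(G_n)$ is then Cauchy for $d_{\log}$: for each $\delta>0$ there is $M$ with $\lvert\log f(G_n)-\log f(G_m)\rvert\le\delta$ for all $n,m\ge M$ and all $f$.

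Now split the sum according to whether $n_i\ge M$.
\begin{itemize}
\item If $N\ge M$, each index with $n_i\ge M$ contributes at most $\delta n_i$. Each index with $n_i<M$ contributes at most $n_iB\le MB$.
\item If $N<M$, then every $n_i<M$, and the whole difference is at most $2MB\le t\cdot 2MB$.
\end{itemize}
So $c=2MB$ and $\epsilon=\delta$ work.

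\emph{Approximately geometric implies Cauchy.} Fix $\epsilon$ and the corresponding $c$. Take $t=q$ copies of $n_i=m$, so $N=qm$. This gives $\lvert\log f(G_m)-\log f(G_{qm})\rvert\le c/m+\epsilon$ uniformly in $f$.

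Applying this with the pairs $(m,n)$ and $(n,m)$ through $G_{mn}$ gives
\begin{equation*}
\lvert\log f(G_m)-\log f(G_n)\rvert\le \frac{c}{m}+\frac{c}{n}+2\epsilon ,
\end{equation*}
so $(G_n)$ is $d_{\log}$-Cauchy.

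Taking $m=1$ gives $\log f(G_q)\le\log\lvert V(G_1)\rvert+c+\epsilon$. This is the uniform bound $B$, and the equivalence of $d_{\log}$ with $d$ then yields the Cauchy property for $d$.

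The main obstacle is the bookkeeping in the first direction: the error must be split as $tc+\epsilon N$, with the bounded-index terms charged to the $t$-part. This requires the uniform bound $B$, which has to be extracted from the Cauchy property before anything else.
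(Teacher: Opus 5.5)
Your proof is correct. It uses the same ingredients as the paper: Strassen duality with $u=\complement{K_2}$, the bound $f\ge 1$ on nonempty graphs, and a uniform upper bound on $f(G_n)$. The organization is different.

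\textbf{Characterization used.} You work throughout with condition~\ref{it:approxgeomproductsepsilon} of \cref{prop:approxgeomcharacterizations}. Via duality it becomes the additive inequality $\lvert\sum_i n_i\log f(G_{n_i})-N\log f(G_N)\rvert\le tc+\epsilon N$. The role of $f\ge 1$ and boundedness is packaged into the bi-Lipschitz comparison of $d$ with $d_{\log}$. The paper instead uses the single-$m$ sandwich $a_n\le u^{c+\lfloor\epsilon n\rfloor}a_m^{\lfloor n/m\rfloor}$, $a_m^{\lfloor n/m\rfloor}\le u^{c+\lfloor\epsilon n\rfloor}a_n$, which is available because $0\le 1$. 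It then works with the ratios $f(G_m)/f(G_n)$ directly.

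\textbf{Approximately geometric implies Cauchy.} Your choice $n_1=\dots=n_q=m$ with the pivot $G_{mn}$ gives an explicit two-index estimate $\lvert\log f(G_m)-\log f(G_n)\rvert\le c/m+c/n+2\epsilon$. You also get the uniform bound for free by taking $m=1$. The paper compares every $G_n$ with one fixed $G_m$ and imports the bound $f(G_n)\le 2^p$ from the remark after \cref{prop:approxgeomcharacterizations}. It also appeals to the existence of $\lim_n f(G_n)$. Your version of this direction is somewhat more self-contained and gives the Cauchy estimate directly.

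\textbf{Cauchy implies approximately geometric.} Here the paper's route is lighter. It only compares $G_n^{\strongproduct n}$ with a single power of $G_m$ and absorbs the factor $M^m$ into a constant. You must verify the condition for arbitrary tuples and charge the short indices to the $tc$ term. That is why you need the bound $B$ before anything else, and your case split on $N\ge M$ versus $N<M$ handles this correctly.
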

\begin{proof}
Suppose that $(G_n^{\strongproduct n})_{n\in\naturals}$ is approximately geometric with respect to $\asymptoticle$. Let $p\in\naturals$ such that $G_n^{\strongproduct n}\asymptoticle 2^{pn}$ for all $n\in\positiveintegers$. Let $\epsilon>0$ and $m\in\positiveintegers$, $c\in\naturals$ such that $G_n^{\strongproduct n}\asymptoticle 2^{c+\lfloor\epsilon n\rfloor}(G_m^{\strongproduct m})^{\strongproduct\left\lfloor\frac{n}{m}\right\rfloor}$ and $(G_m^{\strongproduct m})^{\strongproduct\left\lfloor\frac{n}{m}\right\rfloor}\asymptoticle 2^{c+\lfloor\epsilon n\rfloor}G_n^{\strongproduct n}$ hold for all $n\in\naturals$.

Applying an arbitrary $f\in\Hom(\graphs,\nonnegativereals)$ to the inequalities, and taking $n$th roots, we obtain
\begin{equation}
f(G_n)
 \le 2^{\frac{c+\lfloor\epsilon n\rfloor}{n}}f(G_m)^{\frac{m}{n}\left\lfloor\frac{n}{m}\right\rfloor}
 \le 2^{\frac{c+\lfloor\epsilon n\rfloor}{n}}f(G_m)
\end{equation}
and
\begin{equation}
f(G_n)
 \ge 2^{-\frac{c+\lfloor\epsilon n\rfloor}{n}}f(G_m)^{\frac{m}{n}\left\lfloor\frac{n}{m}\right\rfloor}
 \ge 2^{-\frac{c+\lfloor\epsilon n\rfloor}{n}}f(G_m)^{\frac{m}{n}\left(\frac{n}{m}-1\right)}
 \ge 2^{-\frac{pm+c+\lfloor\epsilon n\rfloor}{n}}f(G_m),
\end{equation}
i.e.,
\begin{equation}\label{eq:uniformfGbound}
\left\lvert f(G_n)-f(G_m)\right\rvert
 \le \left(2^{\frac{pm+c+\lfloor\epsilon n\rfloor}{n}}-1\right)f(G_m)
 \le \left(2^{\frac{pm+c+\lfloor\epsilon n\rfloor}{n}}-1\right)2^p.
\end{equation}

We know that for any $f\in\Hom(\graphs,\nonnegativereals)$ the sequence $f(G_n^{\strongproduct n})$ is approximately geometric, and therefore the limit
\begin{equation}
\lim_{n\to\infty}\sqrt[n]{f(G_n^{\strongproduct n})}=\lim_{n\to\infty}f(G_n)
\end{equation}
exists. The limit of \eqref{eq:uniformfGbound} gives
\begin{equation}
\left\lvert \lim_{n\to\infty}f(G_n)-f(G_m)\right\rvert\le (2^{\epsilon}-1)2^p,
\end{equation}
therefore the convergence of $n\mapsto f(G_n)$ is uniform in $f\in\Hom(\graphs,\nonnegativereals)$, i.e., the sequence is a Cauchy sequence.

Conversely, suppose that $(G_n)_{n\in\naturals}$ is a Cauchy sequence with respect to the asymptotic spectrum distance, and let
\begin{equation}
M
 = \sup_{n\in\naturals}\max_{f\in\Hom(\graphs,\nonnegativereals)}f(G_n)
 = \sup_{n\in\naturals}\complement{\chi}_f(G_n).
\end{equation}
Let $\epsilon>0$ and choose $m\in\naturals$ such that for all $n\ge m$ and all $f\in\Hom(\graphs,\nonnegativereals)$ the value $f(G_n)$ differs from its limit by at most $\epsilon$. Then for all $n\ge m$ we have
\begin{equation}
\begin{split}
\frac{f\left(G_m^{\strongproduct m\left\lceil\frac{n}{m}\right\rceil}\right)}{f(G_n^{\strongproduct n})}
 & = \frac{f(G_m)^{m\left\lceil\frac{n}{m}\right\rceil}}{f(G_n)^n}
 \le \left(\frac{f(G_m)}{f(G_n)}\right)^n  \\
 & \le \left(\frac{f(G_n)+2\epsilon}{f(G_n)}\right)^n
 = \left(1+\frac{2\epsilon}{f(G_n)}\right)^n  \\
 & \le \left(1+2\epsilon\right)^n
 \le e^{2\epsilon n},
\end{split}
\end{equation}
since $G_n\neq 0$ implies $f(G_n)\ge 1$. This is true for all $f$ therefore, by the asymptotic spectrum duality,
\begin{equation}
G_m^{\strongproduct m\left\lceil\frac{n}{m}\right\rceil}
 \asymptoticle 2^{\left\lceil\frac{2\epsilon}{\ln 2} n\right\rceil}G_n^{\strongproduct n}.
\end{equation}
Similarly,
\begin{equation}
\begin{split}
\frac{f(G_n^{\strongproduct n})}{f\left(G_m^{\strongproduct m\left\lceil\frac{n}{m}\right\rceil}\right)}
 & = \frac{f(G_n)^n}{f(G_m)^{m\left\lceil\frac{n}{m}\right\rceil}}
 \le \frac{f(G_n)^n}{f(G_m)^{n-m}}
 \le M^m\left(\frac{f(G_n)}{f(G_m)}\right)^n
 \le M^m e^{2\epsilon n},
\end{split}
\end{equation}
therefore
\begin{equation}
G_n^{\strongproduct n}
 \asymptoticle 2^{\left\lceil m\log M+\frac{2\epsilon}{\ln 2} n\right\rceil}G_m^{\strongproduct m\left\lceil\frac{n}{m}\right\rceil}.
\end{equation}
The coefficient of $n$ in the exponent of $u$ can be made arbitrarily small by choosing $m$ large, and the inequalities for $n<m$ can be ensured by an additional constant factor, therefore the sequence $n\mapsto G_n^{\strongproduct n}$ is approximately geometric with respect to $\asymptoticle$.
\end{proof}
As explained in \cite{de2024asymptotic}, the space of continuous functions on $\Hom(\graphs,\nonnegativereals)$ provides an abstract way to realize a completion of $\graphs$ with repsect to the asymptotic spectrum distance. They show that, some Cauchy sequences admit infinite graphs as concrete models for their limit points. \Cref{prop:spectrumdistanceconvergence} can be seen as another (abstract) way to realize all limits as certain elements in $\completion{\graphs}$: the limit of the sequence $(G_n)_{n\in\naturals}$ (which we may assume to be either all nonzero, or zero precisely when $n\ge 1$) corresponds to the element $g\in\completion{\graphs}$ that generates the geometric sequence $(1,g,g^2,g^3,\dots)$ which is asymptotically equivalent to $(1,G_1,G_2^{\strongproduct 2},G_3^{\strongproduct 3},\dots)$. We can see from the proof that this element satisfies
\begin{equation}
\completion{f}(g)=\lim_{n\to\infty}f(G_n)
\end{equation}
for all $f\in\Hom(\graphs,\nonnegativereals)$.

The set of elements of $\completion{\graphs}$ arising in this way is a proper subsemiring. To see this, note that every nonempty graph $G$ is either a complete graph (which is cohomomorphically equivalent to $\complement{K_1}$), or it has at least one non-edge, which implies that $\complement{K_2}\le G$. Consequently, given an element $f$ of the asymptotic spectrum, a Cauchy sequence $(G_n)_{n\in\naturals}$ of nonempty graphs either satisfies $\lim_{n\to\infty} f(G_n)=1$ or $\lim_{n\to\infty} f(G_n)\ge 2$, which implies that the value of $\completion{f}$ on the corresponding element of $\completion{\graphs}$ is in $\{1\}\cup[2,\infty)$. However, the image of $\completion{f}$ is $\{0\}\cup[1,\infty)$, therefore there must be elements outside this subsemiring. A simple concrete example is $\complement{K_2}^{\strongproduct\frac{1}{2}}$.

\subsection{Majorization semirings}\label{sec:majorization}

We consider semirings of families of measures on finite sets and preorders induced by various versions of majorization. Let $I$ be an index set (finite, for simplicity), $\mathcal{X},\mathcal{X}'$ finite sets, and $P:I\to\nonnegativereals^{\mathcal{X}}$, $P':I\to\nonnegativereals^{\mathcal{X}'}$. We think of the $P_i$ and $P'_i$ as column vectors and $P,P'$ as matrices, with columns indexed by $I$. We say that $P$ majorizes $P'$ if there is a column-stochastic matrix $T$ such that $P'=TP$. $P$ and $P'$ are equivalent if there is a bijection $\phi:\mathcal{X}\to\mathcal{X}'$ such that $P_i(x)=P'_i(\phi(x))$ holds for all $x\in\mathcal{X}$. On the set of equivalent classes, we define addition and multiplication as columnwise direct sum $\oplus$ and columnwise Kronecker product $\otimes$. With the majorization preorder $\le$, this set becomes a preordered semiring.

In the case when the measures are probability measures, the elements can be interpreted as statistical experiments. The outcome of the experiment is a random sample drawn from a probability distribution that depends on the unknown parameter $i\in I$. Multiplication corresponds to performing experiments independently of each other, but with the same parameter value, while the majorization preorder formalizes the relation that one experiment is more informative than another.

The semiring as described above is not of polynomial growth, but it has subsemirings of polynomial growth that are formed by imposing conditions on the supports of the measures. In the following we restrict to the simplest case, where the supports are all assumed to be equal to $\mathcal{X}$. In this case $U:I\to\nonnegativereals^{\mathcal{X}}$ is power universal if and only if $\norm[1]{U_i}=1$ for all $i\in I$, and $U_i\neq U_{i'}$ for all $i,i'\in I$, $i\neq i'$. Semirings of families with more general support conditions have been studied in \cite{verhagen2025matrix}. In \cite{farooq2024matrix}, Farooq, Fritz, Haapasalo, and Tomamichel classify monotone homomorphisms into $\nonnegativereals$, $\nonnegativereals\opposite$, $\tropicalreals$, and $\tropicalreals\opposite$, and monotone derivations at degenerate homomorphisms to $\nonnegativereals$ (there are finitely many degenerate homomorphisms). The monotone homomorphisms into $\nonnegativereals$ and $\nonnegativereals\opposite$ are of the form
\begin{equation}
f_{\underline{\alpha}}(P)=\sum_{x\in\mathcal{X}}\prod_{i\in I}P_i(x)^{\alpha_i},
\end{equation}
where $\underline{\alpha}=(\alpha_i)_{i\in I}$ is a tuple of real numbers subject to the following conditions: all $\alpha_i\ge 0$ and $\sum_{i\in I}\alpha_i=1$ for homomorphisms into $\nonnegativereals\opposite$; $\alpha_i>0$ holds for exactly one $i\in I$, and $\sum_{i\in I}\alpha_i=1$ for homomorphisms into $\nonnegativereals$. The non-constant monotone homomorphisms into $\tropicalreals$ are of the form
\begin{equation}
f_{\underline{\beta}}(P)=\max_{x\in\mathcal{X}}\prod_{i\in I}P_i(x)^{\beta_i},
\end{equation}
where $\beta_i>0$ holds for exactly one $i\in I$ and $\sum_{i\in I}\beta_i=0$. There are no monotone homomorphisms into $\tropicalreals\opposite$. The degenerate homomorphisms into $\nonnegativereals$ are the norms $f_{e_i}(P)=\norm[1]{P_i}$. Up to interchangeability, the monotone derivations at $f_{e_i}$ are exactly the elements of the convex cone generated by the maps $P\mapsto\relativeentropy{P_i}{P_{i'}}$, $i'\in I\setminus\{i\}$.

The monotone homomorphisms and derivations can be expressed in a unified way in terms of the multivariate Rényi divergences
\begin{equation}
D_{\underline{\alpha}}(P)=\frac{1}{\max_{i\in I}\alpha_i-1}\log\sum_{x\in\mathcal{X}}\prod_{i\in I}P_i(x)^{\alpha_i},
\end{equation}
with $\sum_{i\in I}\alpha_i=1$ and either $0\le\alpha_i<1$ for all $i\in I$ or $\alpha_i>1$ and $\alpha_{i'}\le 0$ for some $i\in I$ and all $i'\in I\setminus\{i\}$. These quantities are monotone decreasing, and together with their pointwise limits, give the same functionals as above, up to logarithm and rescaling.

By applying \cite[1.2. Theorem]{fritz2021abstract2}, this classification of homomorphisms into the semirings $\mathcal{K}=\{\nonnegativereals,\nonnegativereals\opposite,\tropicalreals,\tropicalreals\opposite,\dualnumbers\}$ results in a concrete necessary and sufficient condition for matrix majorization of powers up to a sublinear number of power universal elements. \Cref{cor:dualityderivations} directly allows us to extend this to approximately geometric sequences:
\begin{proposition}
Let $(P_n)_{n\in\naturals}$ and $(Q_n)_{n\in\naturals}$ be approximately geometric sequences of $I$-indexed families of probability distributions, and let $U$ be a power universal. Then the multivariate Rényi divergence rates
\begin{equation}
\completion{D}_{\underline{\alpha}}(P):=\lim_{n\to\infty}\frac{1}{n}D_{\underline{\alpha}}(P_n)
\end{equation}
exist (similarly defined for $Q$, and including the pointwise limits), and the following are equivalent:
\begin{enumerate}
\item there is a sublinear sequence $(k_n)_{n\in\naturals}$ such that $U^{\otimes k_n}\otimes P_n\ge Q_n$ for all $n\in\naturals$
\item $\completion{D}_{\underline{\alpha}}(P)\ge \completion{D}_{\underline{\alpha}}(Q)$ for all $\underline{\alpha}$ in the above range.
\end{enumerate}
\end{proposition}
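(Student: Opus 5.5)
The plan is to reduce everything to \cref{cor:dualityderivations} applied to the elements $[P]$ and $[Q]$ of $\completion{S}$, where $S$ is the majorization semiring with full support $\mathcal{X}$. The preliminary step is to check that $S$ satisfies the hypotheses of \cref{thm:Vergleichsstellenderivations}. The map $\norm{\cdot}$ sends a family to its column norms $(\norm[1]{P_i})_{i\in I}$; it is a homomorphism onto $\positivereals^{I}\cup\{0\}$. Majorization preserves column sums, which gives $P\le P'\implies\norm{P}=\norm{P'}$. For equal norms, $P\sim P'$ holds via the common lower bound given by the one-point family with the same norms. With this, \cref{cor:dualityderivations} yields that $[Q]\le[P]$ holds if and only if $\completion{\phi}([Q])\le\completion{\phi}([P])$ for all monotone homomorphisms into $\nonnegativereals,\nonnegativereals\opposite,\tropicalreals,\tropicalreals\opposite$, and $\completion{D}([Q])\le\completion{D}([P])$ for all monotone derivations at the degenerate homomorphisms. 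Moreover, by definition, $[Q]\le[P]$ is exactly condition (i), with $u=U$. Since the sequences consist of probability distributions, all norms are $1$, so the assumption $a\sim b$ in \cref{thm:dualitycompletion} is automatic.

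Next I identify the extended homomorphisms. In the concrete realization of \cref{thm:asymptoticcompletion}, $\completion{\phi}([P])=[\phi(P)]$. Since $\phi(P)$ is approximately geometric in the asymptotically complete semiring $\mathbb{K}$, it is asymptotically equivalent to $G(\lim_n\sqrt[n]{\phi(P_n)})$ by \cref{ex:asymptoticallycomplete,ex:approximatelygeometric}. In particular, this limit exists and is nonzero, as \cref{prop:approximatelygeometricproperties} \ref{it:approximatelygeometrichomomorphism} makes $\phi(P)$ approximately geometric. For $\phi=f_{\underline{\alpha}}$ (into $\nonnegativereals$ or $\nonnegativereals\opposite$), or $f_{\underline{\beta}}$ (into $\tropicalreals$), this says that $\frac1n\log f_{\underline{\alpha}}(P_n)$ converges. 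After dividing by $\max_i\alpha_i-1$, this is precisely the existence of $\completion{D}_{\underline{\alpha}}(P)$. The tropical maps correspond to the pointwise limits $\alpha\to\infty$ of the Rényi divergences, which are handled the same way.

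For derivations, I use that a monotone homomorphism $\psi=\phi+DX:S\to\dualnumbers$ extends to $\completion{\psi}$. By \cref{ex:asymptoticallycomplete} (the $\dualnumbers$ item), $\completion{D}([P])=\lim_n\frac1n D(P_n)$ when the norm is $1$. Applied to $D=\relativeentropy{\cdot_i}{\cdot_{i'}}$, this gives existence of relative entropy rates, which are the $\alpha\to1$ limit points. Conic combinations and interchangeability are preserved, as noted before \cref{thm:dualitycompletion}.

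The last step matches the sign conventions. The divergences are monotone decreasing, which is why (ii) reads $\completion{D}_{\underline{\alpha}}(P)\ge\completion{D}_{\underline{\alpha}}(Q)$. For each class of test maps I need to verify that the inequality $\completion{\phi}(Q)\le\completion{\phi}(P)$ in $\mathbb{K}$ translates, after taking $\frac1n\log$ of the limit and dividing by the sign of $\max_i\alpha_i-1$, into this single direction of the rate inequality. This is the step I expect to be the main obstacle. It requires carefully tracking the reversed orders in $\nonnegativereals\opposite$ against the sign of $\max\alpha_i-1$, and confirming that the parameter range listed for $D_{\underline{\alpha}}$, together with its pointwise limits, exactly exhausts the classified homomorphisms and derivation cone, with no missing or extra points. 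Everything else is a direct invocation of the earlier results.
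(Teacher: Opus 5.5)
Your proposal is correct and takes essentially the same route as the paper, which gets the statement directly from the Farooq--Fritz--Haapasalo--Tomamichel classification of monotone homomorphisms and derivations together with \cref{cor:dualityderivations}; as you say, the rates exist because the extended homomorphisms and derivations on $\completion{S}$ are exactly these limits. The sign bookkeeping you flag as the main obstacle is routine: for $\max_i\alpha_i>1$ the map $f_{\underline{\alpha}}$ goes to $\nonnegativereals$ and you divide the log-rate inequality by a positive number, while for $\max_i\alpha_i<1$ it goes to $\nonnegativereals\opposite$, whose reversed order is undone by the negative factor $\frac{1}{\max_i\alpha_i-1}$, so both cases give $\completion{D}_{\underline{\alpha}}(Q)\le\completion{D}_{\underline{\alpha}}(P)$.
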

Similar extensions can be derived for the characterizations of asymptotic majorization between probability distributions with weaker support conditions \cite{verhagen2025matrix}, distributions on standard Borel spaces \cite{haapasalo2026multivariate}, and a special set of pairs of quantum states \cite{verhagen2026conditions}.

In \cite{renes2016relative}, Renes introduced a modification of majorization (with two columns, known as relative majorization) in connection with applications to quantum thermodynamics and hypothesis testing, called relative submajorization, an asymptotic version of which (as well as its quantum generalization) was later studied using preordered semirings \cite{perry2022semiring}, and subsequently extended to more general index sets \cite{bunth2021asymptotic,bunth2023equivariant}. For finite index sets $I,J$, the semiring consists of equivalence classes of pairs $(P,Q)$ where $P:I\to\nonnegativereals^\mathcal{X}$ and $Q:J\to\nonnegativereals^\mathcal{X}$, operations defined similarly as above, and the \emph{relative submajorization preorder}: $(P,Q)\ge(P',Q')$ if there exists a column-substochastic matrix $T$ such that the entrywise inequalities $P'\le TP$ and $Q'\ge TQ$ hold. Assuming full support for simplicity, the resulting preordered semiring satisfies the polynomial growth property. An example of a power universal element is $u=(p,q)$ with $\lvert\mathcal{X}\rvert=1$, $p=(2,2,\dots,2)$ and $q=(1,1,\dots,1)$. In this semiring, the zero element is the unique tuple with $\mathcal{X}=\emptyset$, and the unit is $((1,1,\dots,1),(1,\dots,1))$. We allow for $T$ the zero matrices with $0$ rows and columns indexed by an arbitrary finite set, which in particular implies $0\le 1$.

In the context of asymptotic hypothesis testing, relative submajorization is well suited for the study of (strong converse) error exponents. Given a pair $(P,Q)$ where $P:I\to\nonnegativereals^\mathcal{X}$ and $Q:J\to\nonnegativereals^\mathcal{X}$ map to probability distributions, a (deterministic) test is a subset $A\subseteq\mathcal{X}$, which is interpreted as follows. After drawing a sample $x\in\mathcal{X}$ from an unknown distribution that is either some $P_i$ or $Q_j$, we decide that the distribution was one of the $P_i$ if $x\in A$ (null hypothesis), and one of $Q_j$ otherwise (alternative hypothesis). Note that $P$ and $Q$ are considered as two composite hypotheses, and the tester is not required to find out which $i\in I$ or $j\in J$ corresponds to the true distribution. The test $A$ can be encoded in a single-row matrix $T$ with columns indexed by $\mathcal{X}$, and a $1$ entry for each $x\in A$ and $0$ entries for $x\notin A$.

More generally, we allow the test to be a randomized procedure, which can be described by a column-substochastic matrix $T$ with a single row containing the probabilities of accepting the null hypothesis for each outcome in $\mathcal{X}$. In this case, the elements of the row vectors $TP$ and $TQ$ are probabilities that the null hypothesis is accepted, for every possible distribution. We distinguish between two cases of incorrect decision. A type I error occurs if the tester reports $Q$ but the distribution was drawn from one of the $P_i$, while the opposite case is the type II error. Given a test $T$ and hypotheses $(P,Q)$, the worst-case probability of a type I error is
\begin{equation}
\alpha(T)=\max_{i\in I}(1-TP_i),
\end{equation}
and the worst-case probability of a type II error is
\begin{equation}
\beta(T)=\max_{j\in J}TQ_j.
\end{equation}
Note that since $T$ is substochastic, by the definition of relative submajorization, we have
\begin{equation}
(P,Q)\ge((1-\alpha(T),1-\alpha(T),\dots,1-\alpha(T)),(\beta(T),\dots,\beta(T))).
\end{equation}
Conversely, if $(P,Q)\ge((a,a,\dots,a),(b,\dots,b))$, then there exists a (randomized) test $T$ such that $\alpha(T)\le 1-a$ and $\beta(T)\le b$.

In general, the two error probabilities cannot be both arbitrarily small. In asymptotic hypothesis testing, we are interested in the scaling of the error probabilities as we draw more samples (i.i.d.\ or more general, described by the joint distributions $(P_n,Q_n)$). We will require an exponentially low $\beta_n(T_n)$ in the number of samples and tune the exponent to a desired value $r$, i.e., $\beta_n(T_n)=2^{-rn+o(n)}$, and study the asymptotics of smallest $\alpha_n(T_n)$. In the simple i.i.d.\ case ($\lvert I\rvert=\lvert J\rvert=1$), the Stein lemma gives a threshold $r_0=\relativeentropy{P}{Q}$ such that we can set $r=r_0$ and have $\alpha_n(T_n)\to 0$, but, if $r>r_0$, then $\alpha_n(T_n)\to 1$ exponentially (strong converse). In this case the optimal scaling is given by the lowest $R$ such that $\alpha_n(T_n)=1-2^{-nR+o(n)}$ for some test sequence.

We define the exponents in an analogous way with the worst-case error for composite and correlated hypotheses. By the preceding discussion, if the hypotheses are described by an approximately geometric sequence $(P,Q)=((P_n,Q_n))_{n\in\naturals}$, then the condition that $(R,r)$ is a valid exponent pair for some test sequence can be expressed in the relative submajorization semiring as $(P,Q)\asymptoticge(2^{-R},2^{-r})$.

The asymptotic relation can be characterized using \cref{cor:dualitytropicalhomomorphisms} (and if $\lvert I\rvert=\lvert J\rvert$, also \cref{cor:dualityrealhomomorphisms}). The relevant spectrum is a subset of the homomorphisms $f_{\underline{\alpha}}$ and $f_{\underline{\beta}}$ given above. This was determined in \cite{perry2022semiring,bunth2021asymptotic,bunth2023equivariant}, and can be parametrized as follows. For a probability vector $\gamma\in\distributions(J)$, $i\in I$, and $\alpha\ge 1$, we set
\begin{equation}
f_{\alpha,i,\gamma}(P,Q)=\sum_{x\in\mathcal{X}}P_i(x)^\alpha\prod_{j\in J}Q_j(x)^{(1-\alpha)\gamma(j)}
\end{equation}
and
\begin{equation}
f_{i,\gamma}(P,Q)=\max_{x\in\mathcal{X}}P_i(x)\prod_{j\in J}Q_j(x)^{\gamma(j)}.
\end{equation}
The set of all such $f_{\alpha,i,\gamma}$ are precisely the homomorphisms into $\nonnegativereals$, while the $f_{i,\gamma}(P,Q)$ are the homomorphisms into $\tropicalreals$, up to normalization. Again, this can be written in a unified way as
\begin{equation}
D_{\alpha,i,\gamma}(P,Q)=\relativeentropy[\alpha]{P_i}{\textstyle\prod_{j\in J}Q_j(x)^{\gamma(j)}}=\frac{1}{\alpha-1}\log f_{\alpha,i,\gamma}(P,Q),
\end{equation}
together with the limits
\begin{equation}
D_{\infty,i,\gamma}(P,Q):=\lim_{\alpha\to\infty}D_{\alpha,i,\gamma}(P,Q)=\relativeentropy[\infty]{P_i}{\textstyle\prod_{j\in J}Q_j(x)^{\gamma(j)}}=\log f_{i,\gamma}(P,Q).
\end{equation}

The classification together with \cref{cor:dualitytropicalhomomorphisms} implies the following result:
\begin{proposition}\label{prop:approximatelygeometricstrongconverse}
Let $(P,Q)=((P_n,Q_n))_{n\in\naturals}$ be an approximately geometric sequence in the semiring with the relative submajorization preorder. Then for all $\alpha\in[1,\infty]$, $i\in I$, and $\gamma\in\distributions(J)$ the multivariate Rényi divergence rates
\begin{equation}
\completion{D}_{\alpha,i,\gamma}(P,Q)
 := \lim_{n\to\infty}\frac{1}{n}D_{\alpha,i,\gamma}(P_n,Q_n)
  = \frac{1}{\alpha-1}\log \completion{f_{\alpha,i,\gamma}}(P,Q)
\end{equation}
exist, and the optimal strong converse exponent $R^*(r)$ for the type I error as a function of the decay rate $r$ of the type II error probability is
\begin{equation}
R^*(r)=\sup_{\alpha\ge 0}\max_{\substack{i\in I  \\  \gamma\in\distributions(J)}}\frac{\alpha-1}{\alpha}\left[r-\completion{D}_{\alpha,i,\gamma}(P,Q)\right].
\end{equation}
\end{proposition}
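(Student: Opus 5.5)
Two things need proof: that the rates exist, and that $R^*(r)$ has the stated formula. The route is to pass to the asymptotic completion $\completion{S}$ of the relative submajorization semiring $S$. There the sequence $(P,Q)$ becomes a single element $a=[(P,Q)]$, and the test $\bigl(2^{-R},2^{-r}\bigr)$ becomes an element of $\completion{S}$ obtained from one-point pairs.

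\textbf{Existence of the rates.} Each $f_{\alpha,i,\gamma}$ for $\alpha\in[1,\infty)$ is a monotone homomorphism into $\nonnegativereals$, and each $f_{i,\gamma}$ is one into $\tropicalreals$. By \cref{prop:approximatelygeometricproperties}\ref{it:approximatelygeometrichomomorphism}, the sequence $\bigl(f(P_n,Q_n)\bigr)_n$ is approximately geometric. By \cref{ex:approximatelygeometric}, this means $\frac{1}{n}\log f(P_n,Q_n)$ converges to $\log\completion{f}(a)$. Dividing by $\alpha-1$ gives the rates for $\alpha\in(1,\infty]$. The case $\alpha=1$ is degenerate, since $f_{1,i,\gamma}=\norm[1]{P_i}=1$; there the rate is read as the limit $\alpha\to1$, i.e.\ a relative entropy rate. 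Its existence would follow from monotonicity and convexity in $\alpha$ of $\frac{1}{n}\log f_{\alpha}$, which passes to pointwise limits. Alternatively, one can check it with the same sandwich inequalities $P_n\approx P_m^{\lfloor n/m\rfloor}$ via the derivation $D(P,Q)=\relativeentropy{P_i}{\cdot}$, which is monotone. I expect this to need care but not to be the main obstacle.

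\textbf{Reduction to spectral inequalities.} From the discussion preceding the proposition, $(R,r)$ is achievable if and only if $a\ge b_{R,r}:=\bigl[G\bigl((2^{-R}\mathbf 1,2^{-r}\mathbf 1)\bigr)\bigr]$ in $\completion{S}$. Here the type I error is $1-2^{-nR+o(n)}$, so the success probability is $2^{-nR}$. Since $0\le1$ holds in this semiring, \cref{cor:dualitytropicalhomomorphisms} gives the criterion $\completion{f}(a)\ge\completion{f}(b_{R,r})$ for all $f$ in the spectrum, i.e.\ for the $f_{\alpha,i,\gamma}$ with $\alpha\ge1$ and the $f_{i,\gamma}$. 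Evaluating on one-point families gives $f_{\alpha,i,\gamma}(b_{R,r})=2^{-\alpha R-(1-\alpha)r}$. Taking logarithms, the condition becomes
\[
(\alpha-1)\completion{D}_{\alpha,i,\gamma}\ge -\alpha R+(\alpha-1)r,
\qquad\text{i.e.}\qquad
R\ge\tfrac{\alpha-1}{\alpha}\bigl[r-\completion{D}_{\alpha,i,\gamma}\bigr].
\]
The tropical homomorphisms give exactly the $\alpha=\infty$ instance, namely $R\ge r-\completion{D}_{\infty,i,\gamma}$. The infimum of valid $R$ is therefore the stated supremum over $\alpha\ge1$. The range $\alpha\in[0,1)$ contributes a nonpositive term whenever $r\ge\completion{D}$, and the formula is written with $\alpha\ge0$, presumably for this reason. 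I would check that adding that range does not change the value, using $R\ge0$ trivially. The point $\alpha=1$ gives $0$.

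\textbf{Main obstacle.} The hard step is the exact optimality claim. The definition of $R^*$ allows $o(n)$ slack in both exponents, while the duality statement characterizes $\asymptoticge$ with slack measured by powers of $u=((2,\dots),(1,\dots))$. I would verify that multiplying by $u^{k_n}$ with $k_n$ sublinear changes $R$ and $r$ only by $o(n)$. Conversely, when $R$ is at the infimum, the set of valid $R$ is closed: each spectral inequality is closed in $R$, and $\le$ on $\completion{S}$ is a closed partial order, so the infimum is attained. This identifies $R^*(r)$ with the supremum.
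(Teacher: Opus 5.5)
Your main line is the paper's own argument, which the paper compresses to ``the classification together with \cref{cor:dualitytropicalhomomorphisms}''. You get the rates for $\alpha\in(1,\infty]$ from \cref{prop:approximatelygeometricproperties} and \cref{ex:approximatelygeometric}, and then test $(P,Q)\asymptoticge(2^{-R},2^{-r})$ on the spectrum. Your inequalities $R\ge\frac{\alpha-1}{\alpha}[r-\completion{D}_{\alpha,i,\gamma}]$ and $R\ge r-\completion{D}_{\infty,i,\gamma}$ are correct. The $o(n)$ slack you call the main obstacle is routine: rescaling a test by $2^{-s_n}$ moves slack from the type II side to the type I side, and $u^{k_n}$ absorbs it there. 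Two other steps, however, fail.

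First, adding $\alpha\in[0,1)$ does change the supremum. There $\frac{\alpha-1}{\alpha}<0$, so the term equals $\frac{1-\alpha}{\alpha}(\completion{D}_{\alpha,i,\gamma}-r)$, which is positive as soon as the rate exceeds $r$. For example, for i.i.d.\ $P\neq Q$ and $r=0$ these terms are positive, whereas always accepting gives $R^*(0)=0$. The bound $R\ge0$ only lets you discard negative terms, not positive ones. Besides, $f_{\alpha,i,\gamma}$ with $\alpha<1$ is not monotone: $(P,Q)\ge(P,2Q)$, but the value increases by $2^{1-\alpha}$. So duality gives no information in that range. What your argument proves is the formula with the supremum over $\alpha\in[1,\infty]$. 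The range $\alpha\ge0$ in the statement must be read as $\alpha\ge1$, and you should say so rather than claim the two agree.

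Second, existence of the rate at $\alpha=1$. Convexity of $\alpha\mapsto\frac1n\log f_\alpha(P_n,Q_n)$ only yields $\limsup_n\frac1n\relativeentropy{P_n}{Q_n}\le\completion{D}_\alpha$ for $\alpha>1$, because derivatives at the endpoint of convex functions converging on $[1,\infty)$ need not converge. The derivation $\relativeentropy{P_i}{\cdot}$ is monotone only between elements of equal norm, so it does not survive the factors $u^{c+\lfloor\epsilon n\rfloor}$ in the sandwich.

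In fact the relative entropy rate is not an invariant of the asymptotic class. Take $\lvert I\rvert=\lvert J\rvert=1$, $x=((1),(2^{-d}))$ on a point, $\delta,\delta',\eta\in(0,\frac13)$ with $\delta\neq\delta'$, $\delta_n$ alternating between $\delta$ and $\delta'$, and
\[
P_n=(1-\delta_n-\eta,\ \delta_n,\ \eta),\qquad Q_n=\bigl(1-\delta_n-\eta,\ \delta_n2^{-nd},\ \eta+\delta_n(1-2^{-nd})\bigr).
\]
The column $T=P_n$ shows $(P_n,Q_n)\le x^n$. The row $(0,1,0)$ shows $x^n\le u^k(P_n,Q_n)$ for any constant $k\ge\log\frac{1}{\min\{\delta,\delta'\}}$. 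So the sequence is $\asymptoticeq G(x)$ and hence approximately geometric, yet $\frac1n\relativeentropy{P_n}{Q_n}=\delta_nd+O(\frac1n)$ oscillates.

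Existence should therefore be claimed only for $\alpha\in(1,\infty]$, which is also the only range where $\frac{1}{\alpha-1}\log\completion{f}$ makes sense. Nothing is lost for $R^*$. At $\alpha=1$ the only input is $\completion{f_{1,i,\gamma}}(P,Q)=\lim_n\sqrt[n]{\norm[1]{P_{i,n}}}=1$, i.e.\ the constraint $R\ge0$, which is the zero term of the supremum.
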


\begin{theorem}
Let $P_i(x)$, $Q_j(x)$ initial probability distributions with full support on some finite set $\mathcal{X}$, and let $P_i(x_2|x_1)$ and $Q_j(x_2|x_1)$ be matrices of strictly positive transition probabilities so that the joint distributions of $n$ samples are
\begin{align}
P_{i,n}(x_0,x_1,x_2,\dots,x_n) & = P_i(x_0)P_i(x_1|x_0)P_i(x_2|x_1)\cdots P_i(x_n|x_{n-1}),  \\
Q_{j,n}(x_0,x_1,x_2,\dots,x_n) & = Q_j(x_0)Q_j(x_1|x_0)Q_j(x_2|x_1)\cdots Q_j(x_n|x_{n-1}).
\end{align}
Consider the composite hypothesis testing problem with samples drawn from sets of Markov processes $P$ and $Q$. Imposing a bound of $2^{-nr+o(n)}$ on the type II error probability, the optimal strong converse exponent $R^*(r)$ for the type I error is
\begin{equation}
R^*(r)=\sup_{\alpha\ge 0}\max_{\substack{i\in I  \\  \gamma\in\distributions(J)}}\frac{\alpha-1}{\alpha}\left[r-\frac{1}{\alpha-1}\log\rho\left(\left(\textstyle P_i(x|x')^\alpha\prod_{j\in J}Q_j(x|x')^{(1-\alpha)\gamma(j)}\right)_{x,x'\in\mathcal{X}}\right)\right],
\end{equation}
where $\rho$ denotes the spectral radius.
\end{theorem}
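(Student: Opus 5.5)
The plan is to reduce the statement to \cref{prop:approximatelygeometricstrongconverse}. For this I need two facts: the sequence $((P_n,Q_n))_{n\in\naturals}$ is approximately geometric in the relative submajorization semiring, and its Rényi divergence rates can be computed through the spectral radius $\rho$ of \cref{sec:linearrecurrence}.

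\emph{Linear recurrence form.} First, write the Markov families as a linear recurrence in the semiring $S$ of relative submajorization. Let $A\in S^{\mathcal{X}\times\mathcal{X}}$ be the matrix whose $(x,x')$ entry is the one-point element $\bigl((P_i(x|x'))_{i\in I},(Q_j(x|x'))_{j\in J}\bigr)$, i.e. a pair with $\lvert\mathcal{X}\rvert=1$. Let $y$ be the vector with entries $((P_i(x))_i,(Q_j(x))_j)$ and let $x^T$ be the all-ones row vector, with entries the unit $1$.

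Expanding $x^TA^ny$ in $S$ gives a direct sum, over paths $(x_0,\dots,x_n)$, of one-point elements whose coordinates are exactly $P_{i,n}(x_0,\dots,x_n)$ and $Q_{j,n}(x_0,\dots,x_n)$. This sum is equivalent to $(P_n,Q_n)$.

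All entries of $A$, $x$ and $y$ are nonzero, because the transition probabilities and initial distributions are strictly positive. So $A$ is primitive with $m=1$. Since $0\le 1$ in this semiring, the discussion after \cref{prop:linearrecursionapproximatelygeometric} shows that $a_n=x^TA^ny$ is approximately geometric. Its class in $\completion{S}$ is $\rho(A)$, independent of the choice of nonzero $x$ and $y$.

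\emph{Applying the abstract result.} \Cref{prop:approximatelygeometricstrongconverse} now applies. It gives the formula for $R^*(r)$ in terms of the rates $\completion{D}_{\alpha,i,\gamma}(P,Q)=\frac{1}{\alpha-1}\log\completion{f_{\alpha,i,\gamma}}(\rho(A))$.

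\emph{Evaluating the rates.} By \cref{ex:spectralradius}, part (iii), $\completion{f_{\alpha,i,\gamma}}(\rho(A))$ equals $\rho$ of the entrywise image $f_{\alpha,i,\gamma}(A)$, and by part (i) this is the Perron--Frobenius spectral radius. On a one-point element, $f_{\alpha,i,\gamma}$ evaluates to
\[
P_i(x|x')^\alpha\prod_{j}Q_j(x|x')^{(1-\alpha)\gamma(j)},
\]
which is exactly the stated matrix. The tropical homomorphisms $f_{i,\gamma}$ are handled the same way through part (ii) (maximum cycle geometric mean). They also arise as the $\alpha\to\infty$ limits inside the supremum, consistently with how the range of $\alpha$ is treated in \cref{prop:approximatelygeometricstrongconverse}.

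The main obstacle is the bookkeeping that identifies $x^TA^ny$ with $(P_n,Q_n)$ up to equivalence in $S$. This needs a check that direct sums of one-point elements reproduce the sample space $\mathcal{X}^{n+1}$, and that the unit vector $x$ contributes no extra normalization. A second point to verify is that \cref{ex:spectralradius} applies to $f_{\alpha,i,\gamma}$: these maps are monotone homomorphisms into $\nonnegativereals$ only up to the normalization mentioned in the text. Any normalization factor is multiplicative per step and shifts the rates by a constant that cancels consistently, but I would check this explicitly for $\alpha$ in the full range, including $0\le\alpha<1$ as allowed in the supremum.
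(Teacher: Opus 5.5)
Your proposal is correct and follows the paper's proof essentially step for step. The paper likewise writes $(P_n,Q_n)$ as $e^TA^ny$, with the same one-point transition matrix $A$, initial vector $y$ and all-ones vector $e$. It then invokes \cref{prop:linearrecursionapproximatelygeometric} (via primitivity, using $0\le 1$) to get an approximately geometric sequence representing $\rho(A)$, and concludes directly from \cref{prop:approximatelygeometricstrongconverse} and \cref{ex:spectralradius}. The paper does not treat the bookkeeping and normalization points you flag separately. The range of $\alpha$ in the formula is simply inherited from \cref{prop:approximatelygeometricstrongconverse}.
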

\begin{proof}
Let $S$ be the semiring of equivalence classes of families (indexed by $I$ and $J$) of measures with full support on finite sets with the relative submajorization preorder.
Let $e\in S^{\mathcal{X}}$ be the column vector with entries $1\in S$, $A$ the matrix whose $(x,x')$ entry is the tuple $((P_i(x|x'))_{i\in I},(Q_j(x|x'))_{j\in J})$, considered as measures on a singleton set, and let $y\in S^{\mathcal{X}}$ be the column vector with entries $((P_i(x))_{i\in I},(Q_j(x))_{j\in J})_{x\in\mathcal{X}}$. Then $A^ny\in S^{\mathcal{X}}$ is a column vector whose $x$ entry contains the probabilities of all strings in $\mathcal{X}$ ending with $x$, under all the Markov processes above. Consequently, $e^TA^ny$ contains the full joint distributions of the first $n$ samples, for all $i\in I$ and $j\in J$.

By \cref{prop:linearrecursionapproximatelygeometric}, the sequence $(e^TA^ny)_{n\in\naturals}$ is approximately geometric, and it represents the element $\rho(A)$. The claim follows from \cref{prop:approximatelygeometricstrongconverse,ex:spectralradius}.
\end{proof}
In the special case when $\lvert I\rvert=\lvert J\rvert$, a different expression for the strong converse exponent is given in \cite{nakagawa1993converse}.

\section*{Acknowledgement}

I thank Péter Frenkel for the idea of considering linear recursions, and ChatGPT for valuable discussion on proof ideas for the implication \ref{it:approxgeomproductsepsilon}$\implies$\ref{it:approxgeomproducts} in \cref{prop:approxgeomcharacterizations} and encouragement. This work was supported by the János Bolyai Research Scholarship of the Hungarian Academy of Sciences, and the Ministry of Culture and Innovation of Hungary from the National Research, Development and Innovation Fund via the research grants FK~146643, K~146380, and EXCELLENCE~151342.

\bibliography{refs}{}

\end{document}